\documentclass[sn-mathphys-num]{sn-jnl}
\usepackage[T1]{fontenc}
\usepackage{amsmath,amssymb,amsfonts,amsthm,mathtools,mathrsfs}
\usepackage[title]{appendix}
\usepackage{booktabs,enumitem}

\usepackage{manyfoot}
\usepackage{xcolor}
\makeatletter
\newtheoremstyle{thmstyleone}{.5\baselineskip}{.5\baselineskip}%
{\itshape}{}{\bfseries}{}{ }%
{\thmname{#1}\thmnumber{ #2}\thmnote{ (#3)}}%
\newtheoremstyle{thmstyletwo}{.5\baselineskip}{.5\baselineskip}%
{}{}{\bfseries}{}{ }%
{\thmname{#1}\thmnumber{ #2}\thmnote{ (#3)}}%
\newtheoremstyle{thmstylethree}{.5\baselineskip}{.5\baselineskip}%
{}{}{\bfseries}{}{ }%
{\thmname{#1}\thmnumber{ #2}\thmnote{ (#3)}}%
\@ifundefined{color@artcatboxgray}%
{\definecolor{artcatboxgray}{cmyk}{0,0,0,0.30}}{}
\makeatother
\theoremstyle{thmstyleone}%
\newtheorem{proposition}{Proposition}%
\newtheorem{lemma}{Lemma}%
\newtheorem{corollary}{Corollary}%
\makeatletter
\let\c@proposition\c@theorem
\let\c@lemma\c@theorem
\let\c@corollary\c@theorem
\makeatother
\theoremstyle{thmstyletwo}%
\newtheorem{example}{Example}%
\newtheorem{remark}{Remark}%
\theoremstyle{thmstylethree}%
\newtheorem{definition}{Definition}%
\theoremstyle{thmstyleone}%
\usepackage{thmtools,thm-restate}
\providecommand{\R}{\mathbb{R}}
\providecommand{\C}{\mathbb{C}}

\providecommand{\Z}{\mathbb{Z}}
\providecommand{\E}{\mathbb{E}}
\providecommand{\Prob}{\mathbb{P}}
\providecommand{\Var}{\operatorname{Var}}
\providecommand{\Cov}{\operatorname{Cov}}
\providecommand{\KL}{\operatorname{KL}}

\providecommand{\osc}{\operatorname{osc}}
\providecommand{\Rpos}{\mathbb{R}_{>0}}
\providecommand{\Leh}{\mathscr{L}}
\providecommand{\Mellin}{\mathcal{M}}
\providecommand{\dd}{\,\mathrm{d}}
\providecommand{\supp}{\operatorname{supp}}
\providecommand{\esssup}{\operatorname{ess\,sup}}
\providecommand{\essinf}{\operatorname{ess\,inf}}
\providecommand{\eqdef}{\mathrel{\vcentcolon=}}
\providecommand{\xv}{\mathbf{x}}

\providecommand{\pp}{\boldsymbol p}

\begin{document}
	
	\title[Lehmer Transform]{Lehmer Transform}
	
	\author*[1]{\fnm{Masoud} \sur{Ataei}}\email{masoud.ataei@utoronto.ca}
	
	\affil[1]{\normalsize\orgdiv{Department of Mathematical and
			Computational Sciences},\\ \orgname{University of Toronto},
		\country{Canada}}
	
	\abstract{The Lehmer transform encodes a positive dataset as a single analytic
		curve, the ratio of its power sums at consecutive orders read as a
		function of the order. As the order sweeps the real line the curve
		generates every location statistic between the sample extremes, and
		its logarithm is the unit increment of the free energy of an
		exponentially tilted ensemble; the statistical theory of the curve
		flows from that identity. The slope of the curve is a Jeffreys
		divergence, the arithmetic--geometric gap is a relative entropy, and
		multiplicativity over independent factors makes deconvolution of
		multiplicative noise a pointwise division. A quantitative rigidity
		theorem locates the frontier of identifiability at geometric growth
		of the curve, with resonant log-periodic families of distinct laws
		sharing one curve beyond it. The empirical curve is an exact
		re-coordinatization of the order statistic, hence minimal
		sufficient, and its influence function, exact leading bias and limit
		theory are developed in full. At heavy tails the curve undergoes a
		phase transition whose two knees, one unit apart, sit at the tail
		index and its unit shift, a whole-sample diagnostic complementing
		threshold-based estimators.}
	
	\keywords{Lehmer transform, Gini means, suddency, escort distributions, exponential families, Mellin transform, moment problems, multiplicative deconvolution, R\'enyi entropy, heavy tails, sufficiency}
	
	\maketitle
	
	\clearpage
	
	%=====================================================================
	\section{Introduction}\label{sec:intro}
	%=====================================================================
	
	The classical theory of means treats each summary of positive data
	as a separate functional: the harmonic, geometric, arithmetic and
	contraharmonic means are compared through inequalities, and the
	families interpolating them are studied one order at a time,
	yielding a list rather than a function. When the ratio of power sums
	at consecutive orders is read instead as a single analytic curve in
	its order, the separate statistics become values of one object, the
	inequalities become increments along it, and the tools of analysis
	apply to the summary itself. Such a curve is a statistic-generating
	function. It generates every location statistic between the smallest
	and the largest observation, and it carries more than any fixed
	finite list of its values. What it carries, and how well it can be
	estimated, are questions of statistics.
	
	This paper develops the statistical theory of that curve, the Lehmer
	transform of a dataset. The reading of the Lehmer family of
	means~\cite{lehmer1971,gini1938,hlp1934,bullen2003} as one function
	of its order, the \emph{suddency moment}, was introduced by Ataei
	and Wang~\cite{ataei2022}, where the order served as a dial
	controlling how sharply the statistic responds to sudden excursions
	of a signal; Lehmer aggregation has since served as a trainable
	activation in neural networks~\cite{ataei2025}. Beyond
	monotonicity, a formula for the derivative and a study of the
	inflection points~\cite{sluciak2015}, and a maximum weighted
	likelihood derivation of the family~\cite{ziou2024}, its theory as
	a statistical object is missing. The present work supplies it, from
	one identity: the logarithm of the transform is the unit increment
	of the free energy of an exponentially tilted ensemble.
	
	The objects behind this identity are classical. The tilted measures
	are Esscher transforms~\cite{esscher1932} and the escort
	distributions of nonextensive statistical
	mechanics~\cite{tsallis2009,beck1993,halsey1986}; the derivative of
	the curve, recognized here as a Jeffreys divergence, is displayed
	in~\cite{sluciak2015}; the power sum is a Mellin transform, the
	classical tool for products of random
	variables~\cite{springer1979,epstein1948}. Whether the curve
	determines the law is a moment problem in the sense of Heyde and
	Stoyanov~\cite{heyde1963,stoyanov2013,linstoyanov2017}, and the
	rigidity theory descends from the Bohr--Mollerup theory of
	Gamma-type functional
	equations~\cite{bohr1922,krull1948,webster1997}. On the statistical
	side the transform meets heavy-tail
	inference~\cite{hill1975,resnick2007}, superquantile
	risk~\cite{rockafellar2002} and the R\'enyi--Hill diversity
	spectrum~\cite{renyi1961,hillnumbers1973}.
	
	What does the Lehmer reading add to the Mellin transform? Three
	things. The ratio of consecutive orders is scale-free, so the curve
	depends on the data only through their internal proportions and
	determinacy is posed, correctly, up to scale. The reading replaces
	derivatives by unit differences, and the kernel of that difference
	structure, a log-periodic gauge, is what organizes the inversion
	theory, its rigidity theorems and its counterexamples alike. And the
	family read as one curve repairs a defect of every one of its
	members: no single fixed mean is multiplicative over independent
	factors, while the curve as a whole is.
	
	What this paper contributes is the statistical theory built on these
	objects. The free-energy identity converts the classical
	inequalities between means into exact information identities: the
	slope of the curve is a Jeffreys divergence, equivalently
	unit-averaged Fisher information, and the arithmetic--geometric gap
	is a relative entropy. Multiplicativity makes deconvolution of
	multiplicative noise a pointwise division, stable in relative error.
	A quantitative rigidity theorem locates the frontier of
	identifiability at geometric growth of the curve and bounds the
	residual ambiguity explicitly. A period-group dichotomy separates
	the resonant indeterminacy of Heyde type from that forced by a
	lattice of scales, and resonance and subgeometric growth are proved
	mutually exclusive. A lattice-supported companion of the lognormal
	shares the entire curve, strengthening the classical integer-moment
	indeterminacy of the lognormal to indeterminacy of the whole curve.
	
	A dictionary of closed forms identifies the classical families from
	the shape of the curve alone. An exact-recovery theorem inverts the
	curve on finite samples, and a sufficiency theorem shows the
	empirical curve to be an exact re-coordinatization of the order
	statistic, hence minimal sufficient. The sampling theory is
	developed in full: the influence function, the exact leading bias,
	central and functional limit theorems in independent and mixing
	regimes, finite-sample bounds, and large deviations. It closes with
	a heavy-tail phase diagram whose two knees, at known unit
	separation, sit at the tail index and its unit shift; the knees
	sharpen only at logarithmic resolution, so the diagram is developed
	as a whole-sample diagnostic, with an internal consistency check
	that threshold-based estimators such as Hill's~\cite{hill1975} do
	not possess.
	
	Section~\ref{sec:foundations} sets up the partition function, its
	escort family and the master identity;
	Sections~\ref{sec:divergence} and~\ref{sec:multiplicative} develop
	the divergence calculus and the multiplicative structure;
	Section~\ref{sec:rigidity} settles inversion;
	Section~\ref{sec:characterization} builds the dictionary and proves
	that its entries characterize; Section~\ref{sec:statistics} develops
	the sampling theory; Section~\ref{sec:conclusion} closes with a
	synthesis and open problems. Proofs not given in the body are
	collected in the Appendix.
	
	%=====================================================================
	\section{Foundations}\label{sec:foundations}
	%=====================================================================
	
	For a data vector $\xv=(x_1,\dots,x_n)\in\Rpos^n$ the Lehmer
	transform is the ratio of consecutive power sums,
	\begin{equation}\label{eq:defdata}
		\Leh(s)=\frac{\sum_{i=1}^{n}x_i^{\,s}}
		{\sum_{i=1}^{n}x_i^{\,s-1}} .
	\end{equation}
	As the order $s$ runs over the real line, the statistic sweeps
	monotonically from the smallest observation through the harmonic,
	arithmetic and contraharmonic means to the largest. The transform
	is not primarily a mean, however, but a ratio of a partition
	function at consecutive orders: reading $x^{s}=e^{s\log x}$ as a
	Boltzmann weight at inverse temperature $s$ on the energy $\log x$
	makes $\Leh$ the exponential of a unit increment of the associated
	free energy. Every result below flows from this reframing, so we
	set it up for general measures, which costs nothing and covers the
	empirical, absolutely continuous, spectral and arithmetic cases at
	once.
	
	The parameter $s$ is called the \emph{suddency moment}. It acts as
	a dial controlling how sharply the statistic responds to sudden
	large excursions of the data: large positive values of $s$
	concentrate the ratio on the largest observations, large negative
	values on the smallest. The name also marks a contrast. Where the
	Fourier transform reads a signal against oscillations, and so
	emphasizes its periodicities, the Lehmer transform reads positive
	data against powers, and so emphasizes their sudden movements.
	
	Throughout the paper $\mu$ denotes a positive Borel measure on
	$\Rpos=(0,\infty)$, the two guiding instances being the empirical
	measure $\sum_{i=1}^n\delta_{x_i}$ of a data vector and an absolutely
	continuous law with density $f$; $X$ denotes the identity variable, so
	that $\E_\mu[X]=\int x\dd\mu$; and $\log$ is the natural
	logarithm.
	
	\begin{definition}[Partition function]\label{def:Z}
		Let $\mu$ be a nonzero positive Borel measure on $\Rpos$. The
		\emph{suddency partition function} of $\mu$ is
		\begin{equation}\label{eq:defZ}
			Z_\mu(s)\eqdef\int_{\Rpos}x^{s}\dd\mu(x)\in(0,\infty],
			\qquad s\in\R,
		\end{equation}
		its \emph{moment domain} is
		\begin{equation}\label{eq:defI}
			I_\mu\eqdef\{s\in\R:Z_\mu(s)<\infty\},
		\end{equation}
		its \emph{suddency domain} is
		\begin{equation}\label{eq:defS}
			S_\mu\eqdef I_\mu\cap(I_\mu+1)
			=\{s\in\R:\ s\in I_\mu\ \text{and}\ s-1\in I_\mu\},
		\end{equation}
		and its \emph{free energy} is $K_\mu\eqdef\log Z_\mu$. We call
		$\mu$ \emph{admissible} if $I_\mu\neq\emptyset$.
	\end{definition}
	
	One convention will be used silently: a statement about
	$\Leh_\mu$ that names no domain is asserted on the suddency
	domain, and it carries content only when
	$\mathring S_\mu\neq\emptyset$.
	
	Admissibility is the only standing hypothesis. It is weaker than
	finiteness of $\mu$, and deliberately so: the geometric ensemble
	$\sum_{n\ge0}\delta_{q^{\,n}}$ has
	infinite total mass, as do the arithmetic ensembles to which the
	theory applies equally. Admissibility already forces the
	regularity one needs.
	
	\begin{remark}[$\sigma$-finiteness]\label{rem:sigmafinite}
		For $s\in I_\mu$ and compact $[a,b]\subset\Rpos$ one has the
		bound $\mu([a,b])\le Z_\mu(s)/\min(a^{s},b^{s})<\infty$, so
		an admissible measure is finite on compacts, hence
		$\sigma$-finite.
		It is finite in the ordinary sense exactly when $0\in I_\mu$,
		which holds for empirical measures and probability laws but not
		for the infinite-mass ensembles just mentioned.
	\end{remark}
	
	For the empirical measure $\mu=\sum_{i=1}^n\delta_{x_i}$ of a data
	vector one has $Z_\mu(s)=\sum_ix_i^{s}$, a finite sum, and
	$I_\mu=S_\mu=\R$. For measures of unbounded support, or of support
	accumulating at the origin, the domain may be a proper subinterval of
	the line, and when it is, its endpoints turn out to carry the tail
	indices.
	
	\begin{restatable}[Regularity]{lemma}{lemlogconvex}\label{lem:logconvex}
		Let $\mu$ be admissible. Then:
		\begin{enumerate}[label=\textup{(\roman*)},leftmargin=2.2em]
			\item $I_\mu$ is an interval and $K_\mu$ is convex on $I_\mu$;
			\item $K_\mu$ is strictly convex on $\mathring I_\mu$ unless
			$\mu$ is concentrated at a single point;
			\item for every $[a,b]\subset\mathring I_\mu$ there are
			$\varepsilon>0$ and constants $C_{k,\varepsilon}$ such that,
			for $\Re s\in(a,b)$ and every integer $k\ge0$,
			\begin{equation}\label{eq:domination}
				\bigl|x^{s}\log^kx\bigr|
				\le(x^{a}+x^{b})\,|\log x|^{k}
				\le C_{k,\varepsilon}\bigl(x^{a-\varepsilon}
				+x^{b+\varepsilon}\bigr)\in L^1(\mu);
			\end{equation}
			\item consequently $Z_\mu$ extends holomorphically to the
			vertical strip $\{\Re s\in\mathring I_\mu\}$, differentiation
			under the integral sign is valid there to all orders, and
			$K_\mu\in C^\infty(\mathring I_\mu)$;
			\item $S_\mu$ is an interval and $\Leh_\mu\eqdef
			Z_\mu/Z_\mu(\cdot-1)$ is real-analytic on $\mathring S_\mu$.
		\end{enumerate}
	\end{restatable}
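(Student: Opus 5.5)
The plan is to derive everything from Hölder's inequality and a single pointwise domination estimate.

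\emph{Part (i).} Take $s_0,s_1\in I_\mu$ and $\theta\in(0,1)$. Write
\[
x^{(1-\theta)s_0+\theta s_1}=(x^{s_0})^{1-\theta}(x^{s_1})^{\theta}
\]
and apply Hölder with exponents $1/(1-\theta)$ and $1/\theta$. This gives
\[
Z_\mu\bigl((1-\theta)s_0+\theta s_1\bigr)\le Z_\mu(s_0)^{1-\theta}Z_\mu(s_1)^{\theta}<\infty .
\]
Hence $I_\mu$ is convex, i.e. an interval, and taking logarithms shows that $K_\mu$ is convex. Admissibility guarantees $Z_\mu>0$, since $\mu$ is nonzero and $x^s>0$, so $K_\mu$ is real-valued on $I_\mu$.

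\emph{Part (ii).} Equality in Hölder forces $x^{s_0}$ and $x^{s_1}$ to be proportional $\mu$-a.e. That means $x^{s_1-s_0}$ is $\mu$-a.e. constant, so $\mu$ is concentrated at a single point. Alternatively, once (iv) is available, $K_\mu''(s)$ is the variance of $\log X$ under the escort law $x^s\,d\mu/Z_\mu(s)$. This variance vanishes only for a point mass, which gives strict convexity on the interior.

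\emph{Part (iii).} This is the main technical step. Choose $\varepsilon>0$ with $[a-\varepsilon,b+\varepsilon]\subset\mathring I_\mu$, which is possible because $[a,b]$ is a compact subset of the open interior. For $\Re s\in(a,b)$ we have $|x^s|=x^{\Re s}\le\max(x^a,x^b)\le x^a+x^b$, using $x^a$ on $x\le1$ and $x^b$ on $x\ge1$. Next, on $x\ge1$ the elementary inequality
\[
|\log x|^k\le (k/(e\varepsilon))^k x^{\varepsilon}
\]
holds, and it is obtained by maximizing $u^ke^{-\varepsilon u}$. On $x\le1$ the same bound holds with $x^{-\varepsilon}$ in place of $x^{\varepsilon}$. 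Splitting at $x=1$ then gives
\[
x^a|\log x|^k\le C\,(x^{a-\varepsilon}+x^{a+\varepsilon}),
\]
and similarly for $x^b$. The cross terms $x^{a+\varepsilon}$ and $x^{b-\varepsilon}$ have exponents inside $[a-\varepsilon,b+\varepsilon]$, so by the case distinction at $x=1$ they are themselves bounded by $x^{a-\varepsilon}+x^{b+\varepsilon}$. The final majorant lies in $L^1(\mu)$ since $a-\varepsilon,\,b+\varepsilon\in I_\mu$.

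\emph{Part (iv).} The bound in (iii) is locally uniform in $s$ on the strip. Holomorphy then follows by Morera's theorem combined with Fubini, or directly by dominated convergence applied to difference quotients, using the $k=1$ bound. Iterating with the $k$-th bound justifies $Z_\mu^{(k)}(s)=\int x^s\log^k x\,d\mu$. Since $Z_\mu>0$ on the real interior, $K_\mu=\log Z_\mu$ is $C^\infty$ there, and indeed real-analytic.

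\emph{Part (v).} $S_\mu=I_\mu\cap(I_\mu+1)$ is an intersection of intervals, so it is an interval. Its interior is $\mathring I_\mu\cap(\mathring I_\mu+1)$, because an intersection of two intervals has interior equal to the intersection of their interiors. On this set both $Z_\mu(s)$ and $Z_\mu(s-1)$ are real-analytic, being restrictions of holomorphic functions, and the denominator is positive. The quotient $\Leh_\mu$ is therefore real-analytic on $\mathring S_\mu$.

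I expect the only real care to be needed in (iii): choosing $\varepsilon$ so that the enlarged exponents stay in $I_\mu$, and absorbing the intermediate powers into the two extreme ones. The remaining parts are routine.
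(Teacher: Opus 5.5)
Your proposal is correct and follows essentially the same route as the paper. You use H\"older for (i) and its equality case for (ii), absorb $|\log x|^k$ by $x^{\pm\varepsilon}$ after splitting at $x=1$ for (iii), apply dominated convergence for (iv), and for (v) intersect intervals and take a ratio of positive analytic functions; you additionally make explicit the constant $(k/(e\varepsilon))^k$ and the absorption of the cross terms $x^{a+\varepsilon},x^{b-\varepsilon}$, which the paper leaves implicit.
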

	
	Bound \eqref{eq:domination} justifies every differentiation under an
	integral sign in what follows.
	
	\begin{definition}[Transform and escorts]\label{def:L}
		For $s\in S_\mu$ the \emph{Lehmer transform} of $\mu$ is
		\begin{equation}\label{eq:defL}
			\Leh_\mu(s)\eqdef\frac{Z_\mu(s)}{Z_\mu(s-1)} ,
		\end{equation}
		and we write $\Lambda_\mu\eqdef\log\Leh_\mu$. For $u\in I_\mu$ the
		\emph{escort measure} of order $u$ is the probability measure
		\begin{equation}\label{eq:defescort}
			\dd\mu_u\eqdef\frac{x^{u}\,\dd\mu}{Z_\mu(u)} ;
		\end{equation}
		for empirical data the escort weights are
		\begin{equation}\label{eq:escortweights}
			p_i(u)=\frac{x_i^{\,u}}{\sum_jx_j^{\,u}} .
		\end{equation}
		The family $\{\mu_u\}_{u\in\mathring I_\mu}$ is the exponential
		family with natural parameter $u$, sufficient statistic $\log X$,
		base measure $\mu$ and log-partition function $K_\mu$. The
		normalization of $\mu$ is immaterial:
		$\Leh_{c\mu}=\Leh_\mu$ for every $c>0$.
	\end{definition}
	
	Definition~\ref{def:L} imports a standard object: the measures
	\eqref{eq:defescort} are the escort distributions of the
	thermodynamic and multifractal
	formalisms~\cite{beck1993,halsey1986}, and in that language the
	transform is the exponential of a unit increment of the free energy
	along the temperature axis. Four objects have now appeared, the
	Mellin transform, the log-pushforward, the escort family and the
	free energy; a single identity ties them together.
	
	\begin{restatable}[Master identity]{proposition}{propmaster}\label{prop:master}
		Let $\mu$ be admissible, let its Mellin transform be
		\begin{equation}\label{eq:defmellin}
			(\Mellin\mu)(s)\eqdef\int_{\Rpos} x^{s-1}\dd\mu ,
		\end{equation}
		and let $M$ be the moment generating function of the
		log-pushforward $(\log)_*\mu$, namely
		\begin{equation}\label{eq:defmgf}
			M(s)\eqdef\int_{\R}e^{st}\dd\bigl((\log)_*\mu\bigr)(t).
		\end{equation}
		For all $s\in\mathring S_\mu$,
		\begin{equation}\label{eq:master}
			\Leh_\mu(s)
			=\frac{(\Mellin\mu)(s+1)}{(\Mellin\mu)(s)}
			=\frac{M(s)}{M(s-1)}
			=\E_{\mu_{s-1}}[X]
			=\exp\bigl(K_\mu(s)-K_\mu(s-1)\bigr).
		\end{equation}
		Moreover
		\begin{equation}\label{eq:logL}
			\Lambda_\mu(s)=\int_{s-1}^{s}K_\mu'(u)\dd u ,
		\end{equation}
		where
		\begin{equation}\label{eq:Kderivs}
			\begin{gathered}
				K_\mu'(u)=\E_{\mu_u}[\log X],\\
				K_\mu''(u)=\Var_{\mu_u}(\log X).
			\end{gathered}
		\end{equation}
	\end{restatable}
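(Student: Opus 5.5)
The plan is to verify each equality in \eqref{eq:master} directly from the definitions, then obtain \eqref{eq:logL} and \eqref{eq:Kderivs} from Lemma~\ref{lem:logconvex}(iv) and the fundamental theorem of calculus. Fix $s\in\mathring S_\mu$, so that both $s$ and $s-1$ lie in $I_\mu$ and $Z_\mu(s),Z_\mu(s-1)\in(0,\infty)$. Positivity holds because $\mu$ is nonzero and $x^{u}>0$ on $\Rpos$.

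The first two equalities are changes of variable. By definition $(\Mellin\mu)(s+1)=\int x^{s}\dd\mu=Z_\mu(s)$ and $(\Mellin\mu)(s)=Z_\mu(s-1)$. The change of variables formula for the pushforward, applied to the nonnegative function $t\mapsto e^{st}$, gives $M(s)=\int_{\Rpos}e^{s\log x}\dd\mu(x)=Z_\mu(s)$, and likewise $M(s-1)=Z_\mu(s-1)$. Since these are nonnegative integrands, no integrability issue arises beyond finiteness, which we already have.

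For the escort expectation, write $\E_{\mu_{s-1}}[X]=\int x\cdot x^{s-1}\dd\mu/Z_\mu(s-1)=Z_\mu(s)/Z_\mu(s-1)$. This is legitimate because $s-1\in I_\mu$ makes $\mu_{s-1}$ a probability measure, and $s\in I_\mu$ makes the expectation finite. The last equality in \eqref{eq:master} is just $\exp(\log Z_\mu(s)-\log Z_\mu(s-1))$.

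For \eqref{eq:Kderivs}, use Lemma~\ref{lem:logconvex}(iv): on $\mathring I_\mu$ one may differentiate under the integral, so $Z_\mu'(u)=\int x^{u}\log x\dd\mu$ and $Z_\mu''(u)=\int x^{u}\log^2x\dd\mu$. Then
\[
K_\mu'=\frac{Z_\mu'}{Z_\mu}=\E_{\mu_u}[\log X],
\qquad
K_\mu''=\frac{Z_\mu''}{Z_\mu}-\Bigl(\frac{Z_\mu'}{Z_\mu}\Bigr)^2=\Var_{\mu_u}(\log X).
\]

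Finally, \eqref{eq:logL} follows from $\Lambda_\mu(s)=K_\mu(s)-K_\mu(s-1)$ and the fundamental theorem of calculus, provided $[s-1,s]\subset I_\mu$ and $K_\mu$ is $C^1$ there. The interval containment holds because $I_\mu$ is an interval by Lemma~\ref{lem:logconvex}(i).

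The main obstacle is the endpoints. Since $s\in\mathring S_\mu$, the interior of $S_\mu=I_\mu\cap(I_\mu+1)$ equals $\mathring I_\mu\cap(\mathring I_\mu+1)$, so $s,s-1\in\mathring I_\mu$. Hence the whole closed interval $[s-1,s]$ lies in $\mathring I_\mu$ by convexity of that interval. So $K_\mu$ is $C^\infty$ on a neighbourhood of $[s-1,s]$, and the fundamental theorem of calculus applies without needing any one-sided limits.
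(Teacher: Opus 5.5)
Your proposal is correct and follows the paper's proof essentially step for step. The Mellin and moment-generating-function equalities come from change of variables, the escort mean from direct computation, and \eqref{eq:Kderivs} and \eqref{eq:logL} from differentiation under the integral sign (Lemma~\ref{lem:logconvex}) together with the fundamental theorem of calculus. Your explicit check that $\mathring S_\mu=\mathring I_\mu\cap(\mathring I_\mu+1)$, so that $[s-1,s]\subset\mathring I_\mu$, spells out a point the paper leaves implicit.
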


	The partition function on an open interval determines the measure.
	This uniqueness principle is elementary and is used repeatedly.
	Determination of the measure by the \emph{transform}, which only
	prescribes increments of the free energy, is a different and
	subtler matter.
	
	\begin{restatable}[Strip uniqueness]{lemma}{lemstrip}\label{lem:strip}
		Let $\mu,\nu$ be admissible and suppose $Z_\mu=Z_\nu$ on a
		nonempty open interval
		$U\subset\mathring I_\mu\cap\mathring I_\nu$. Then $\mu=\nu$.
	\end{restatable}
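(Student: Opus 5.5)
The plan is to reduce the statement to uniqueness of the Fourier transform of a finite measure, in three steps: first extend the equality $Z_\mu=Z_\nu$ from $U$ to a full vertical strip by analytic continuation, then tilt both measures by a fixed power to make them finite, and finally read the equality on a vertical line as equality of characteristic functions.

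\textbf{Step 1 (continuation).} By Lemma~\ref{lem:logconvex}(i), $\mathring I_\mu$ and $\mathring I_\nu$ are open intervals, so their intersection $J$ is an open interval containing $U$. By Lemma~\ref{lem:logconvex}(iv), both $Z_\mu$ and $Z_\nu$ are holomorphic on the strip $\{\Re s\in J\}$, and this strip is connected. The two functions agree on the segment $U$, which has accumulation points in the strip. The identity theorem therefore gives $Z_\mu(s)=Z_\nu(s)$ throughout $\{\Re s\in J\}$.

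\textbf{Step 2 (tilting).} Fix $s_0\in U$. Define the finite measures $\dd\tilde\mu=x^{s_0}\dd\mu$ and $\dd\tilde\nu=x^{s_0}\dd\nu$ on $\Rpos$. They have equal finite total mass $Z_\mu(s_0)=Z_\nu(s_0)$; this mass is positive because the measures are nonzero. Push both forward under $\log$ to finite measures $\alpha=(\log)_*\tilde\mu$ and $\beta=(\log)_*\tilde\nu$ on $\R$.

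\textbf{Step 3 (characteristic functions).} For every real $t$,
\[
\int_\R e^{ity}\dd\alpha(y)=\int_{\Rpos}x^{s_0+it}\dd\mu(x)=Z_\mu(s_0+it),
\]
and likewise $\int_\R e^{ity}\dd\beta(y)=Z_\nu(s_0+it)$. The line $\Re s=s_0$ lies in the strip of Step~1, so these two Fourier--Stieltjes transforms coincide. Uniqueness of the Fourier transform of finite Borel measures, applied after normalizing by the common mass, yields $\alpha=\beta$. Since $\log$ is a homeomorphism $\Rpos\to\R$, this gives $\tilde\mu=\tilde\nu$.

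\textbf{Step 4 (untilting).} The density $x^{-s_0}$ is positive and finite on $\Rpos$. Hence $\mu(A)=\int_A x^{-s_0}\dd\tilde\mu$ for every Borel $A$, with the identity read in $[0,\infty]$ by monotone convergence. The same formula holds for $\nu$ with $\tilde\nu$, and therefore $\mu=\nu$. Remark~\ref{rem:sigmafinite} guarantees $\sigma$-finiteness, although the direct density argument already suffices.

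The main obstacle is Step~1. One must check that holomorphy really holds on the vertical strip, with values on the line $\Re s=s_0$ finite and given by the integral, rather than merely on the real segment. This is exactly what the domination bound \eqref{eq:domination} of Lemma~\ref{lem:logconvex}(iii)--(iv) supplies. Everything after that step is classical.
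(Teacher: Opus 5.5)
Your proposal is correct and follows essentially the same route as the paper: extend $Z_\mu=Z_\nu$ off the real axis by the identity theorem, read the values on the vertical line through a point of $U$ as the characteristic function of $\log X$ under the tilted measure, and then untilt. The differences are cosmetic. You continue to the larger strip over $\mathring I_\mu\cap\mathring I_\nu$ rather than over $U$, you tilt without normalizing and normalize only at the Fourier-uniqueness step, and you write out the untilting step that the paper leaves implicit.
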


	The complex strip carries one more piece of elementary structure:
	the partition function is strictly contractive off the real axis
	unless the data sit on a geometric progression.
	
	\begin{restatable}[Modulus bound]{proposition}{propmodulus}\label{prop:modulus}
		Let $\mu$ be admissible and $a\in\mathring I_\mu$. Then:
		\begin{enumerate}[label=\textup{(\roman*)},leftmargin=2.2em]
			\item $Z_\mu(\bar s)=\overline{Z_\mu(s)}$ on the strip, and
			\begin{equation}\label{eq:modbound}
				|Z_\mu(a+ib)|\le Z_\mu(a)\qquad\text{for all }b\in\R;
			\end{equation}
			\item for a fixed $b\neq0$, equality holds in
			bound \eqref{eq:modbound} if and only if $\mu$ is carried by a
			geometric progression of ratio $e^{2\pi/|b|}$, that is,
			\begin{equation}\label{eq:lattice}
				\supp\mu\subset\bigl\{c\,e^{2\pi k/|b|}:k\in\Z\bigr\}
				\quad\text{for some }c>0 ;
			\end{equation}
			\item if equality holds in bound \eqref{eq:modbound} for two
			frequencies $b_1,b_2>0$ with $b_1/b_2$ irrational, then $\mu$
			is concentrated at a single point.
		\end{enumerate}
	\end{restatable}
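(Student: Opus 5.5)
Part (i) comes straight from the integral representation. By Lemma~\ref{lem:logconvex}(iv), $Z_\mu$ is holomorphic on the strip and is given there by the absolutely convergent integral $\int x^{a+ib}\dd\mu$. Since $\overline{x^{s}}=x^{\bar s}$ for $x>0$, conjugation passes inside the integral and gives $Z_\mu(\bar s)=\overline{Z_\mu(s)}$. For the bound I apply the triangle inequality for integrals, $|\int x^{a}e^{ib\log x}\dd\mu|\le\int x^{a}\dd\mu=Z_\mu(a)$.

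For (ii) I reduce to the escort measure. Dividing by $Z_\mu(a)$, the bound reads $|\E_{\mu_a}[e^{ib\log X}]|\le1$, so equality means that the characteristic function of $\log X$ under the probability measure $\mu_a$ has modulus one at $b$. Write $\E_{\mu_a}[e^{ib\log X}]=e^{i\theta}$. Then
\begin{equation*}
\int\bigl(1-\cos(b\log x-\theta)\bigr)\dd\mu_a(x)=0 .
\end{equation*}
The integrand is nonnegative, so it vanishes $\mu_a$-a.e. Because $x^{a}>0$, the measures $\mu_a$ and $\mu$ are mutually absolutely continuous, so it also vanishes $\mu$-a.e. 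Hence $b\log x\in\theta+2\pi\Z$ for $\mu$-a.e.\ $x$, that is, $x\in\{c\,e^{2\pi k/|b|}:k\in\Z\}$ with $c=e^{\theta/b}$. This set is closed and discrete in $\Rpos$, because its only accumulation points are $0$ and $\infty$. A closed set carrying full measure contains the support, which gives \eqref{eq:lattice}.

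For the converse, suppose $\mu$ is carried by that progression. Then $x^{ib}=c^{ib}e^{\pm2\pi ik}=c^{ib}$ on the support, so $Z_\mu(a+ib)=c^{ib}Z_\mu(a)$, and equality holds.

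For (iii) I combine the two lattices. By (ii) there are constants $\theta_1,\theta_2$ such that $\log x\in(\theta_1+2\pi\Z/b_1)\cap(\theta_2+2\pi\Z/b_2)$ for $\mu$-a.e.\ $x$. Take two points $t,t'$ in this intersection. Their difference $t-t'$ lies in $(2\pi/b_1)\Z\cap(2\pi/b_2)\Z$. If $t-t'=2\pi m/b_1=2\pi n/b_2$ with $(m,n)\neq(0,0)$, then $b_1/b_2=m/n$ is rational, which is excluded. So $t=t'$, the intersection has at most one point, and a nonzero measure is concentrated at that single point.

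I see no real obstacle here. The only step needing care is the a.e.-to-support passage in (ii), which uses the closedness of the progression in $\Rpos$ and the equivalence of $\mu$ and $\mu_a$. Working with $\mu_a$ rather than $\mu$ itself also matters, since $\mu$ may have infinite mass.
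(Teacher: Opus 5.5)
Your proof is correct and follows the paper's own argument. You normalize by $Z_\mu(a)$ so the bound reads $|\E_{\mu_a}[e^{ib\log X}]|\le1$, characterize equality by $e^{ib\log X}$ being $\mu_a$-a.s.\ constant, transfer this to $\mu$ by mutual absolute continuity, and intersect the two lattices for (iii); your $1-\cos$ integral, the closedness step from almost-everywhere to the support, and the explicit converse fill in details the paper leaves implicit.
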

	
	We next record the exact discrete identity
	on which inversion, the power law and the diversity theory all rest.
	The logarithm of the transform is the unit backward difference of the
	free energy, and differences telescope.
	
	\begin{lemma}[Cocycle]\label{lem:cocycle}
		Let $\mu$ be admissible, let $k\ge1$ be an integer, and let
		$s\in I_\mu$ with $s-k\in I_\mu$, so that the whole ladder
		$s,s-1,\dots,s-k$ lies in $I_\mu$. Then
		\begin{equation}\label{eq:cocycle}
			\begin{gathered}
				\prod_{j=0}^{k-1}\Leh_\mu(s-j)=\frac{Z_\mu(s)}{Z_\mu(s-k)},\\
				\sum_{j=0}^{k-1}\Lambda_\mu(s-j)=K_\mu(s)-K_\mu(s-k).
			\end{gathered}
		\end{equation}
		If in addition $\{0,k\}\subset I_\mu$, so that the whole ladder
		$0,1,\dots,k$ lies in $I_\mu$ by convexity, the power moments
		telescope,
		\begin{equation}\label{eq:momenttelescope}
			Z_\mu(k)=Z_\mu(0)\prod_{j=1}^{k}\Leh_\mu(j),
		\end{equation}
		so the transform on the positive integers is a complete moment
		coordinate.
	\end{lemma}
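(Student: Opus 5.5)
The plan is to reduce everything to the definition $\Leh_\mu(s)=Z_\mu(s)/Z_\mu(s-1)$ and telescope. The first step is to check that every factor in the product is well defined. By Lemma~\ref{lem:logconvex}(i), $I_\mu$ is an interval. It contains $s$ and $s-k$, so it contains the whole ladder $s,s-1,\dots,s-k$. For each $j=0,\dots,k-1$, both $s-j$ and $s-j-1$ then lie in $I_\mu$, which means $s-j\in S_\mu$.

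The values $Z_\mu(s-j)$ are finite because the points lie in $I_\mu$. They are strictly positive because $\mu$ is a nonzero positive measure and $x^{u}>0$ on $\Rpos$. So each ratio $\Leh_\mu(s-j)=Z_\mu(s-j)/Z_\mu(s-j-1)$ is a finite positive number, and its logarithm $\Lambda_\mu(s-j)$ is finite.

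The product identity in \eqref{eq:cocycle} then follows from a telescoping product:
\[
\prod_{j=0}^{k-1}\frac{Z_\mu(s-j)}{Z_\mu(s-j-1)}=\frac{Z_\mu(s)}{Z_\mu(s-k)} ,
\]
since every intermediate factor $Z_\mu(s-j)$ with $1\le j\le k-1$ appears once in a numerator and once in a denominator. Taking logarithms gives the additive form, with $K_\mu=\log Z_\mu$ finite at every point of the ladder. I will not route this through the master identity \eqref{eq:master}, because that identity is stated only on $\mathring S_\mu$, whereas here the ladder may touch the boundary of $I_\mu$. Working directly from Definition~\ref{def:L} avoids that issue.

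For \eqref{eq:momenttelescope}, apply the first identity with $s=k$, so that $s-k=0$. Convexity of $I_\mu$ again gives $\{0,1,\dots,k\}\subset I_\mu$. Reindexing $j\mapsto k-j$ turns $\prod_{j=0}^{k-1}\Leh_\mu(k-j)$ into $\prod_{j=1}^{k}\Leh_\mu(j)$, and multiplying through by $Z_\mu(0)$ gives the claim. Here $Z_\mu(0)=\mu(\Rpos)$ is finite by Remark~\ref{rem:sigmafinite}.

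The closing remark that the transform is a complete moment coordinate means that the integer moments $Z_\mu(k)$ are determined by $\Leh_\mu(1),\dots,\Leh_\mu(k)$ up to the normalization $Z_\mu(0)$, and conversely. This is read off directly from the identity.

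There is no real obstacle. The only point that needs care is domain bookkeeping: every factor must lie in $S_\mu$, and the values must be positive so that the logarithms exist. That care is exactly why I verify the interval property of Lemma~\ref{lem:logconvex}(i) first.
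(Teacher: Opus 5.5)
Your proposal is correct and matches the paper's proof: telescope the ratios $Z_\mu(s-j)/Z_\mu(s-j-1)$, take logarithms for the additive form, and set $s=k$ using that $I_\mu$ is an interval containing $0$ and $k$. The domain bookkeeping you add (finiteness and positivity of each $Z_\mu(s-j)$, and $s-j\in S_\mu$) is left implicit in the paper and is a harmless refinement.
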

	
	\begin{proof}
		Each factor is $\Leh_\mu(s-j)=Z_\mu(s-j)/Z_\mu(s-j-1)$, and in the
		product each intermediate value
		$Z_\mu(s-1),\dots,Z_\mu(s-k+1)$ appears once in a numerator and
		once in a denominator, leaving $Z_\mu(s)/Z_\mu(s-k)$. Taking
		logarithms gives the additive form. Setting $s=k$, which is
		legitimate because $I_\mu$ is an interval containing $0$ and $k$,
		gives identity \eqref{eq:momenttelescope}.
	\end{proof}
	
	The behaviour of the transform at the endpoints of its suddency
	domain is already determined by the domain itself: a divergence at
	the right endpoint records an upper tail, while a zero at the left
	endpoint
	records an accumulation of mass at the origin, displaced upward by
	exactly one suddency unit.
	
	\begin{restatable}[Endpoints]{proposition}{propendpoints}\label{prop:endpoints}
		Let $\mu$ be admissible with $\mathring S_\mu\neq\emptyset$.
		\begin{enumerate}[label=\textup{(\roman*)},leftmargin=2.2em]
			\item If $\tau\eqdef\sup I_\mu<\infty$, if
			$\tau-1\in\mathring I_\mu$ and if $Z_\mu(u)\to\infty$ as
			$u\uparrow\tau$, then $\sup S_\mu=\tau$ and
			\begin{equation}\label{eq:rightendpoint}
				\Leh_\mu(s)\longrightarrow\infty,\qquad s\uparrow\tau .
			\end{equation}
			\item If $\iota\eqdef\inf I_\mu>-\infty$, if
			$\iota+1\in\mathring I_\mu$ and if $Z_\mu(u)\to\infty$ as
			$u\downarrow\iota$, then $\inf S_\mu=\iota+1$ and
			\begin{equation}\label{eq:leftendpoint}
				\Leh_\mu(s)\longrightarrow0,\qquad s\downarrow\iota+1 .
			\end{equation}
		\end{enumerate}
	\end{restatable}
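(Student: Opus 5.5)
The plan is to identify the suddency domain near each endpoint from the fact that $I_\mu$ is an interval (Lemma~\ref{lem:logconvex}(i)), and then to read off the limit of $\Leh_\mu=Z_\mu/Z_\mu(\cdot-1)$ by letting the numerator or the denominator blow up while the other factor stays finite and positive.

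For (i), the hypothesis $\tau-1\in\mathring I_\mu$ together with convexity of $I_\mu$ gives $[\tau-1,\tau)\subset I_\mu$. Since $\tau=\sup I_\mu$ and $Z_\mu(u)\to\infty$ as $u\uparrow\tau$, the endpoint $\tau$ itself is excluded from $I_\mu$: if $\tau\in I_\mu$, then $Z_\mu$ would be finite and convex on $[\tau-1,\tau]$, hence bounded there by $\max(Z_\mu(\tau-1),Z_\mu(\tau))$, which contradicts the blow-up. So $I_\mu\cap[\tau-1,\infty)=[\tau-1,\tau)$.

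For $s\in(\tau-1+\delta,\tau)$ we have $s\in I_\mu$. Moreover $s-1\in I_\mu$ for $s$ close enough to $\tau$, because $\tau-1$ is an interior point and so $s-1$ lies in a neighbourhood of it inside $I_\mu$. This gives $\sup S_\mu\ge\tau$, and $S_\mu\subset I_\mu$ gives $\sup S_\mu\le\tau$.

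For the limit itself:
\begin{enumerate}[label=\textup{(\alph*)},leftmargin=2.2em]
\item the denominator $Z_\mu(s-1)\to Z_\mu(\tau-1)\in(0,\infty)$ as $s\uparrow\tau$, by continuity of $Z_\mu$ on $\mathring I_\mu$ (Lemma~\ref{lem:logconvex}(iv)); positivity holds because $\mu$ is nonzero;
\item the numerator $Z_\mu(s)\to\infty$ by hypothesis.
\end{enumerate}
Hence $\Leh_\mu(s)\to\infty$.

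Part (ii) is the mirror image. We get $(\iota,\iota+1]\subset I_\mu$, and $\iota\notin I_\mu$ by the same convexity argument. Then $s\in S_\mu$ requires $s-1\in I_\mu$, i.e.\ $s-1>\iota$ (for $s$ near $\iota+1$). Near $\iota+1$ the condition $s\in I_\mu$ is automatic since $\iota+1$ is interior, so $\inf S_\mu=\iota+1$. As $s\downarrow\iota+1$ the numerator tends to $Z_\mu(\iota+1)\in(0,\infty)$ while the denominator $Z_\mu(s-1)\to\infty$, so $\Leh_\mu(s)\to0$.

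I expect no real obstacle. The only points needing care are:
\begin{itemize}
\item showing that the endpoint is excluded from $I_\mu$, which the convexity bound above handles;
\item establishing the inequality $\sup S_\mu\ge\tau$ (respectively $\inf S_\mu\le\iota+1$), for which the interior-point hypothesis on $\tau-1$ (respectively $\iota+1$) is exactly what is needed.
\end{itemize}
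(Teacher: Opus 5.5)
Your proof is correct and follows the paper's argument. You locate the endpoint of $S_\mu$ from the interval structure of $I_\mu$, then let one factor of $Z_\mu(s)/Z_\mu(s-1)$ blow up while the other converges to a finite positive value by continuity at the interior point $\tau-1$ (resp.\ $\iota+1$). The paper reads $\sup S_\mu=\min(\tau,\tau+1)$ and $\inf S_\mu=\max(\iota,\iota+1)$ directly off $S_\mu=I_\mu\cap(I_\mu+1)$, so your convexity argument excluding $\tau$ (resp.\ $\iota$) from $I_\mu$ is valid but unnecessary.
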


	The divergence hypothesis in the proposition is not automatic. A
	law with tail $\Prob(X>x)=x^{-\alpha}\log^{-2}x$ has
	$Z_P(\alpha)<\infty$, so its transform stays finite at the
	endpoint of the moment domain. Under a regularly varying tail,
	however, divergence at the tail index becomes a theorem, and the
	Abelian tail law proved in this paper makes it quantitative.
	
	Monotonicity of the Lehmer family in its order is classical, going
	back to the Gini means~\cite{gini1938,bullen2003}; what the escort
	structure adds is the exact reading of both rates of increase.
	
	\begin{restatable}[Monotonicity]{theorem}{thmmono}\label{thm:mono}
		Let $\mu$ be admissible with $\mathring S_\mu\neq\emptyset$ and let
		$s\in\mathring S_\mu$. Then:
		\begin{enumerate}[label=\textup{(\roman*)},leftmargin=2.2em]
			\item the derivative of the transform is an escort covariance,
			\begin{equation}\label{eq:covid}
				\Leh_\mu'(s)=\Cov_{\mu_{s-1}}\bigl(X,\log X\bigr)\;\ge\;0,
			\end{equation}
			with equality if and only if $\mu$ is concentrated at a single
			point;
			\item the derivative of its logarithm is an integrated escort
			variance,
			\begin{equation}\label{eq:logderiv}
				\Lambda_\mu'(s)
				=\int_{s-1}^{s}\Var_{\mu_u}(\log X)\dd u\;\ge\;0 ;
			\end{equation}
			\item consequently $\Leh_\mu$ is nondecreasing on $S_\mu$, and
			strictly increasing unless the data are constant.
		\end{enumerate}
	\end{restatable}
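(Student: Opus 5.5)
The plan is to differentiate the master identity \eqref{eq:master} directly, using the domination bound \eqref{eq:domination} of Lemma~\ref{lem:logconvex} to justify every interchange of derivative and integral on $\mathring S_\mu$.

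For part~(i), write $\Leh_\mu(s)=Z_\mu(s)/Z_\mu(s-1)$. Both $s$ and $s-1$ lie in $\mathring I_\mu$, and Lemma~\ref{lem:logconvex}(iv) gives $Z_\mu'(u)=\int x^u\log x\dd\mu$ there. The quotient rule yields
\[
\Leh_\mu'(s)=\frac{Z_\mu'(s)}{Z_\mu(s-1)}-\frac{Z_\mu(s)\,Z_\mu'(s-1)}{Z_\mu(s-1)^2}.
\]
Dividing each integral by $Z_\mu(s-1)$ converts it into an expectation under the escort $\mu_{s-1}$, giving $\E_{\mu_{s-1}}[X\log X]-\E_{\mu_{s-1}}[X]\,\E_{\mu_{s-1}}[\log X]=\Cov_{\mu_{s-1}}(X,\log X)$. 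All three moments are finite, since $x^{s-1}\cdot x\log x=x^s\log x$ and $x^{s-1}\log x$ are dominated by \eqref{eq:domination}.

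For nonnegativity, let $X'$ be an independent copy of $X$ under $\mu_{s-1}$ and use the symmetrization
\[
\Cov(X,\log X)=\tfrac12\,\E\bigl[(X-X')(\log X-\log X')\bigr].
\]
The integrand is pointwise $\ge0$ because $\log$ is increasing, and it vanishes only where $X=X'$. So the covariance is zero exactly when $\mu_{s-1}\otimes\mu_{s-1}$ is carried by the diagonal, which means $\mu_{s-1}$, and hence $\mu$ (they are mutually absolutely continuous since $x^{s-1}>0$), is a point mass.

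For part~(ii), take \eqref{eq:logL}, $\Lambda_\mu(s)=\int_{s-1}^sK_\mu'(u)\dd u$, or more simply $\Lambda_\mu=K_\mu(s)-K_\mu(s-1)$. Then $\Lambda_\mu'(s)=K_\mu'(s)-K_\mu'(s-1)=\int_{s-1}^sK_\mu''(u)\dd u$. The one point to check is that the whole segment $[s-1,s]$ lies in $\mathring I_\mu$: this holds because $\mathring I_\mu$ is an interval by Lemma~\ref{lem:logconvex}(i) and contains both endpoints. Since $K_\mu\in C^\infty$ there, the fundamental theorem of calculus applies, and \eqref{eq:Kderivs} identifies $K_\mu''(u)=\Var_{\mu_u}(\log X)\ge0$.

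For part~(iii), the interior statement follows from (i), or from (ii) together with $\Leh_\mu=e^{\Lambda_\mu}$. Strictness away from the point-mass case comes from the strict positivity in (i).

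The main obstacle is extending monotonicity from $\mathring S_\mu$ to all of $S_\mu$, which may contain endpoints where the transform is defined but the derivative formula is not available. Here I would argue by continuity. If $s_0$ is an endpoint in $S_\mu$, then $s_0$ and $s_0-1$ lie in $I_\mu$, and $Z_\mu$ is continuous along segments ending at $s_0$ from inside $I_\mu$: by monotone or dominated convergence, $x^u\le x^{s_0}+x^{a}$ for $u$ between an interior point $a$ and $s_0$. So $\Leh_\mu$ is continuous on $S_\mu$, and a function that is nondecreasing (strictly increasing) on the interior of an interval and continuous on the interval remains so on the closure within $S_\mu$. If $\mathring S_\mu$ is empty there is nothing to prove, by the paper's standing convention.
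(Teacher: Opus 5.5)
Your proposal is correct and follows the paper's proof. Your quotient-rule computation is the paper's covariance rule $\frac{\dd}{\dd u}\E_{\mu_u}[h]=\Cov_{\mu_u}(h,\log X)$ taken at $h=X$, $u=s-1$, and your symmetrization with an independent copy is the standard proof of the Chebyshev association inequality that the paper invokes. Part (ii) is the same differentiation of $K_\mu(s)-K_\mu(s-1)$ in both. Your dominated-convergence argument, which extends monotonicity to any endpoints of $S_\mu$ belonging to it, is correct and fills in a detail the paper leaves implicit.
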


	Identity \eqref{eq:covid} says more than monotonicity. The rate at
	which the statistic rises with the suddency moment is a genuine
	dispersion measure on the tilted sample. At the arithmetic node the
	tilt is trivial and the escort is the normalized measure
	$\mu_0=\mu/Z_\mu(0)$, so that
	\begin{equation}\label{eq:slopeatone}
		\Leh_\mu'(1)=\Cov_{\mu_0}\bigl(X,\log X\bigr) ,
	\end{equation}
	the ordinary sample covariance of the $x_i$ and $\log x_i$ for a
	sample of size $n$. Identity \eqref{eq:covid} also yields sharp
	quantitative control, and organizes the higher derivatives into an
	exact cumulant hierarchy.
	
	\begin{restatable}[Slope bounds and cumulants]{proposition}{propgruss}\label{prop:gruss}
		Let $\mu$ be admissible.
		\begin{enumerate}[label=\textup{(\roman*)},leftmargin=2.2em]
			\item If $\supp\mu\subset[m,M]\subset\Rpos$, then for all
			$s\in\R$,
			\begin{equation}\label{eq:gruss}
				\begin{gathered}
					0\le\Leh_\mu'(s)\le\tfrac14\,(M-m)\log\tfrac Mm,\\
					0\le\Lambda_\mu'(s)\le\tfrac14\log^2\tfrac Mm .
				\end{gathered}
			\end{equation}
			\item With $\kappa_k(u)$ the $k$th cumulant of $\log X$ under
			$\mu_u$, one has for all $k\ge1$ and $s\in\mathring S_\mu$
			\begin{equation}\label{eq:Lamk}
				\Lambda_\mu^{(k)}(s)=\kappa_k(s)-\kappa_k(s-1),
			\end{equation}
			and for all $n\ge1$
			\begin{equation}\label{eq:Belln}
				\Leh_\mu^{(n)}(s)=\Leh_\mu(s)\,
				B_n\bigl(\Lambda_\mu'(s),\dots,\Lambda_\mu^{(n)}(s)\bigr),
			\end{equation}
			with $B_n$ the complete Bell polynomial.
		\end{enumerate}
	\end{restatable}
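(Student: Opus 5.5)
For part (i), I would start from the covariance identity \eqref{eq:covid} of Theorem~\ref{thm:mono}, which gives $\Leh_\mu'(s)=\Cov_{\mu_{s-1}}(X,\log X)$. When $\supp\mu\subset[m,M]$ the measure is finite, $I_\mu=\R$, and every escort $\mu_{s-1}$ is a probability measure carried by $[m,M]$. Under any such law, $X$ takes values in $[m,M]$ and $\log X$ takes values in $[\log m,\log M]$. The Grüss inequality for probability measures, $|\Cov(f,g)|\le\frac14(\sup f-\inf f)(\sup g-\inf g)$, then gives the first bound at once. Nonnegativity is already part of Theorem~\ref{thm:mono}. I would include a short proof of Grüss for completeness: by Cauchy--Schwarz, $|\Cov(f,g)|\le\sqrt{\Var f\,\Var g}$, and Popoviciu's bound gives $\Var f\le(\sup f-\inf f)^2/4$.

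For the second bound I would use \eqref{eq:logderiv}: $\Lambda_\mu'(s)=\int_{s-1}^s\Var_{\mu_u}(\log X)\dd u$. Each integrand is a variance of a variable supported in an interval of length $\log(M/m)$, so Popoviciu bounds it by $\frac14\log^2(M/m)$. Integrating over an interval of length one finishes the argument.

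For part (ii), I would use Proposition~\ref{prop:master}, which gives $\Lambda_\mu=K_\mu-K_\mu(\cdot-1)$ on $\mathring S_\mu$, together with the smoothness from Lemma~\ref{lem:logconvex}(iv). Differentiating $k$ times yields $\Lambda_\mu^{(k)}(s)=K_\mu^{(k)}(s)-K_\mu^{(k)}(s-1)$. The remaining point is the standard exponential-family fact that $K_\mu^{(k)}(u)=\kappa_k(u)$. Its proof goes as follows:
\[
\log\E_{\mu_u}\bigl[e^{t\log X}\bigr]=K_\mu(u+t)-K_\mu(u),
\]
so the cumulant generating function of $\log X$ under $\mu_u$ is a translate of $K_\mu$, and its $k$th derivative at $t=0$ equals $K_\mu^{(k)}(u)$. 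Differentiation under the integral is justified by \eqref{eq:domination}, and $u+t$ stays in $\mathring I_\mu$ for small $t$ because both $u$ and $u-1$ lie in the open set where this is needed. This is the step most in need of care: I must check that $s$ and $s-1$ both lie in $\mathring I_\mu$ when $s\in\mathring S_\mu$. That holds because $\mathring S_\mu=\mathring I_\mu\cap(\mathring I_\mu+1)$ for intervals.

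Finally, \eqref{eq:Belln} is Faà di Bruno's formula applied to $\Leh_\mu=\exp\circ\Lambda_\mu$. The $n$th derivative of $e^{g}$ equals $e^{g}B_n(g',\dots,g^{(n)})$, which one proves by induction using the recurrence $B_{n+1}=\sum_{j}\binom{n}{j}B_{n-j}x_{j+1}$. This matches differentiating $\Leh_\mu^{(n)}=\Leh_\mu B_n$ once more via the product rule. Real-analyticity from Lemma~\ref{lem:logconvex}(v) ensures all derivatives exist. No step is a real obstacle; the only delicate points are the domain bookkeeping in (ii) and checking that the Grüss constant $\frac14$ is the sharp one.
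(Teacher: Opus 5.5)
Your proposal is correct and follows the paper's proof essentially step for step. For (i), both apply Cauchy--Schwarz with Popoviciu's inequality to the escort covariance \eqref{eq:covid} and to the integrated escort variance \eqref{eq:logderiv}; for (ii), both combine $K_\mu^{(k)}=\kappa_k$ with $\Lambda_\mu=K_\mu-K_\mu(\cdot-1)$ and then apply Fa\`a di Bruno's formula. Your cumulant-generating-function translate and your check that $s,s-1\in\mathring I_\mu$ just spell out what the paper compresses into ``differentiation under the integral sign.''
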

	
	The bounds \eqref{eq:gruss} certify that the transform of bounded
	data is Lipschitz in the suddency moment with constants read off the
	dynamic range, so discretizing the suddency axis of a signal incurs a
	controlled error. The hierarchy \eqref{eq:Lamk} shows that
	$\Lambda_\mu$ is convex exactly where the escort log-variance at $s$
	exceeds that at $s-1$, so the inflection points of $\Lambda_\mu$
	solve
	\begin{equation}\label{eq:inflection}
		\Var_{\mu_s}(\log X)=\Var_{\mu_{s-1}}(\log X).
	\end{equation}
	The transform itself inflects on a different locus. By the case
	$n=2$ of formula \eqref{eq:Belln}, the second derivative
	$\Leh_\mu''$ vanishes where
	$\Lambda_\mu''+(\Lambda_\mu')^{2}=0$, that is, where
	\begin{equation}\label{eq:inflectionL}
		\kappa_2(s)-\kappa_2(s-1)
		=-\bigl(\kappa_1(s)-\kappa_1(s-1)\bigr)^{2}.
	\end{equation}
	Both loci are thereby settled in exact escort terms.
	
	Two elementary symmetries govern how the transform responds to
	transformations of the data.
	
	\begin{restatable}[Homogeneity and reflection]{proposition}{prophomogrefl}\label{prop:homogrefl}
		Let $\mu$ be admissible.
		\begin{enumerate}[label=\textup{(\roman*)},leftmargin=2.2em]
			\item For $\lambda>0$, the dilation $x\mapsto\lambda x$ sends
			$Z_\mu(s)$ to $\lambda^{s}Z_\mu(s)$ and $\Leh_\mu$ to
			$\lambda\Leh_\mu$: the transform is homogeneous of degree one,
			so that after division by any one of its values it depends on
			the data only through their ratios.
			\item If $\check\mu$ is the pushforward of $\mu$ under
			$x\mapsto1/x$, then $I_{\check\mu}=-I_\mu$ and, wherever both
			sides are defined,
			\begin{equation}\label{eq:reflection}
				\Leh_{\check\mu}(s)\,\Leh_\mu(1-s)=1 .
			\end{equation}
			If $\check\mu=\mu$, then $\Lambda_\mu$ is odd about
			$s=\tfrac12$; in particular $\Leh_\mu(\tfrac12)=1$ and every
			even derivative of $\Lambda_\mu$ vanishes at $\tfrac12$.
		\end{enumerate}
	\end{restatable}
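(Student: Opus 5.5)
The proof is a change of variables in the partition function, after which the claims follow from Definition~\ref{def:L} and the master identity (Proposition~\ref{prop:master}).

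For (i), let $\mu^\lambda$ be the pushforward of $\mu$ under $x\mapsto\lambda x$. Then
\[
Z_{\mu^\lambda}(s)=\int(\lambda x)^{s}\dd\mu(x)=\lambda^{s}Z_\mu(s).
\]
Since $\lambda^{s}$ is finite and positive, $I_{\mu^\lambda}=I_\mu$ and hence $S_{\mu^\lambda}=S_\mu$. Dividing the partition function at order $s$ by its value at order $s-1$ gives
\[
\Leh_{\mu^\lambda}(s)=\lambda^{s}/\lambda^{s-1}\cdot\Leh_\mu(s)=\lambda\Leh_\mu(s).
\]
Now fix a reference order $s_0$ and form the normalized curve $\Leh_\mu(s)/\Leh_\mu(s_0)$. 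It is invariant under every dilation. To make precise that it "depends only through ratios", take empirical data: dividing all $x_i$ by $x_1$ is a dilation, so the normalized curve is a function of the ratios $x_i/x_1$ alone.

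For (ii), the same substitution for $\check\mu$ gives
\[
Z_{\check\mu}(s)=\int x^{-s}\dd\mu=Z_\mu(-s),
\]
so $I_{\check\mu}=-I_\mu$. Then
\[
\Leh_{\check\mu}(s)=\frac{Z_\mu(-s)}{Z_\mu(1-s)}.
\]
Also $\Leh_\mu(1-s)=Z_\mu(1-s)/Z_\mu(-s)$, and the product of the two is $1$. "Wherever both sides are defined" means $s\in S_{\check\mu}$, which is equivalent to $1-s\in S_\mu$. I would check this equivalence explicitly: $s\in S_{\check\mu}$ means $s,s-1\in -I_\mu$, that is, $-s,\,1-s\in I_\mu$, which is exactly $1-s\in S_\mu$. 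So the two domains coincide.

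If $\check\mu=\mu$, the reflection identity becomes $\Leh_\mu(s)\Leh_\mu(1-s)=1$. Taking logarithms, $\Lambda_\mu(s)=-\Lambda_\mu(1-s)$. Writing $s=\tfrac12+t$, this says $\Lambda_\mu(\tfrac12+t)=-\Lambda_\mu(\tfrac12-t)$, so $\Lambda_\mu$ is odd about $\tfrac12$. At $t=0$ we get $\Lambda_\mu(\tfrac12)=0$, i.e.\ $\Leh_\mu(\tfrac12)=1$. For the even derivatives, differentiate the oddness relation $2m$ times in $t$. This gives
\[
\Lambda_\mu^{(2m)}(\tfrac12+t)=-\Lambda_\mu^{(2m)}(\tfrac12-t),
\]
and evaluating at $t=0$ shows $\Lambda_\mu^{(2m)}(\tfrac12)=0$.

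The only real subtlety is that these derivative statements require $\tfrac12\in\mathring S_\mu$, where Lemma~\ref{lem:logconvex}(v) gives real-analyticity. This holds automatically under the symmetry whenever the suddency domain has interior. Indeed, $S_\mu$ is then an interval invariant under $s\mapsto1-s$, so it is centered at $\tfrac12$, and a centered interval with nonempty interior contains $\tfrac12$ in its interior. By the paper's silent convention, that nonempty interior is the case with content. I expect no step to pose a genuine obstacle beyond this domain bookkeeping.
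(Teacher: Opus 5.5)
Your proof is correct and follows the paper's argument essentially line for line. Change of variables gives $Z_{\mu^\lambda}(s)=\lambda^{s}Z_\mu(s)$ and $Z_{\check\mu}(s)=Z_\mu(-s)$, the extra factors cancel in the consecutive-order ratios, and oddness of $\Lambda_\mu$ about $\tfrac12$ forces $\Lambda_\mu(\tfrac12)=0$ and the vanishing of the even derivatives. Your additional domain bookkeeping is also correct: $s\in S_{\check\mu}$ if and only if $1-s\in S_\mu$, and under the symmetry $\tfrac12\in\mathring S_\mu$ whenever the interior is nonempty. It makes explicit what the paper leaves to its silent convention.
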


	Finally, the global shape: on compactly supported data the transform
	is a strictly increasing real-analytic bijection onto the open range
	of the data, meeting the classical means at data-independent nodes,
	harmonic at $s=0$, arithmetic at $s=1$, contraharmonic at $s=2$.
	This is the precise sense in which it is a
	\emph{statistic-generating function}.
	
	\begin{restatable}[Generating bijection]{theorem}{thmstatgen}
		\label{thm:statgen}
		Let $\mu$ be admissible and compactly supported in $\Rpos$, with
		$m=\min\supp\mu<M=\max\supp\mu$; such a measure is automatically
		finite, by Remark~\ref{rem:sigmafinite}, and $I_\mu=S_\mu=\R$.
		Then:
		\begin{enumerate}[label=\textup{(\roman*)},leftmargin=2.2em]
			\item $\Leh_\mu:\R\to(m,M)$ is a strictly increasing
			real-analytic bijection onto the \emph{open} interval: the
			extremes are approached as $s\to\pm\infty$ but never
			attained;
			\item if the top of the support is an atom separated by a gap
			below $M'<M$, the approach is geometric,
			\begin{equation}\label{eq:georate}
				M-\Leh_\mu(s)=O\bigl((M'/M)^{s}\bigr),
				\qquad s\to+\infty;
			\end{equation}
			\item for a two-point measure $\mu=\delta_u+\delta_v$ with
			$M=\max(u,v)$ and $s>1$,
			\begin{equation}\label{eq:twopoint}
				0\;\le\;M-\Leh_\mu(s)\;\le\;
				\frac{M}{s}\Bigl(1-\frac1s\Bigr)^{s-1}
				\;=\;\frac{M}{es}\bigl(1+O(s^{-1})\bigr);
			\end{equation}
			\item the real-analytic inverse of $\Leh_\mu$, the
			\emph{suddency map}, has first-order form at the arithmetic
			node
			\begin{equation}\label{eq:suddencymap}
				\Leh_\mu^{-1}(t)=1+\frac{t-\Leh_\mu(1)}
				{\Cov_{\mu_0}(X,\log X)}
				+O\bigl((t-\Leh_\mu(1))^2\bigr).
			\end{equation}
		\end{enumerate}
	\end{restatable}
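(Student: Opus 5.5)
The plan is to prove the four items in order, using only the escort reading $\Leh_\mu(s)=\E_{\mu_{s-1}}[X]$ from Proposition~\ref{prop:master} together with Theorem~\ref{thm:mono}. Since $\supp\mu\subset[m,M]$, each $Z_\mu(s)$ is finite, so $I_\mu=S_\mu=\R$. Lemma~\ref{lem:logconvex}(v) then makes $\Leh_\mu$ real-analytic on $\R$. Because $m<M$, the measure is not concentrated at a point, so Theorem~\ref{thm:mono}(iii) gives strict monotonicity.

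\textbf{Item (i).} Every escort $\mu_{s-1}$ is a probability measure on $[m,M]$ that charges both $[m,M)$ and $(m,M]$, since it is equivalent to $\mu$. Its mean is therefore strictly inside $(m,M)$, so $\Leh_\mu(\R)\subset(m,M)$ and the extremes are never attained. For the limit at $+\infty$, fix $\delta>0$. Since $M\in\supp\mu$, we have $\mu([M-\delta/2,M])>0$, and hence
\[
\mu_{s-1}([m,M-\delta])\le\frac{(M-\delta)^{s-1}\mu([m,M])}{(M-\delta/2)^{s-1}\mu([M-\delta/2,M])}\longrightarrow0 .
\]
It follows that $\liminf_{s\to\infty}\E_{\mu_{s-1}}[X]\ge M-\delta$. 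The case $s\to-\infty$ is the mirror argument with $x^{s-1}$ now largest near $m$; alternatively one can apply Proposition~\ref{prop:homogrefl}(ii). Continuity, strict monotonicity and the intermediate value theorem then give a bijection onto $(m,M)$. I expect this concentration estimate to be the only step needing care; it is where the hypothesis that $M$ and $m$ lie in the support, rather than merely bounding it, gets used.

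\textbf{Item (ii).} Write $\mu=c\,\delta_M+\nu$ with $c>0$ and $\supp\nu\subset[m,M']$. Then
\[
M-\Leh_\mu(s)=\frac{\int(M-x)x^{s-1}\dd\nu}{cM^{s-1}+Z_\nu(s-1)}\le\frac{(M-m)\,\nu(\Rpos)\,M'^{\,s-1}}{c\,M^{s-1}}
\]
for $s\ge1$, which is $O\bigl((M'/M)^s\bigr)$.

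\textbf{Item (iii).} Take $u<v=M$ and put $r=u/v$. A direct computation gives
\[
M-\Leh_\mu(s)=\frac{u^{s-1}(v-u)}{u^{s-1}+v^{s-1}}\le v\,r^{s-1}(1-r),
\]
after dropping $u^{s-1}$ from the denominator. Maximizing $r^{s-1}(1-r)$ over $r\in(0,1)$ gives the critical point $r=1-1/s$, with value $\frac1s(1-\frac1s)^{s-1}$. The asymptotic form follows from $(s-1)\log(1-1/s)=-1+O(s^{-1})$. The case $u=v$ gives $0$, so the bound holds trivially there.

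\textbf{Item (iv).} By \eqref{eq:slopeatone} and Theorem~\ref{thm:mono}(i), $\Leh_\mu'(1)=\Cov_{\mu_0}(X,\log X)>0$. The real-analytic inverse function theorem then makes $\Leh_\mu^{-1}$ analytic near $\Leh_\mu(1)$, with derivative $1/\Leh_\mu'(1)$ there. Its first-order Taylor expansion about $t=\Leh_\mu(1)$ is exactly \eqref{eq:suddencymap}. Analyticity of the inverse on all of $(m,M)$ follows the same way, since $\Leh_\mu'>0$ everywhere.
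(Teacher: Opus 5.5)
Your proposal is correct and follows the paper's proof essentially step for step. Both use the same escort-concentration estimate for the endpoint limits: the paper runs it on $K_\mu'(u)=\E_{\mu_u}[\log X]$ and integrates over the unit window, while you run it directly on the escort mean of $X$, which also gives non-attainment of the extremes at once; both reduce the two-point bound to maximizing $r^{s-1}(1-r)$, and both obtain the suddency map from the analytic inverse function theorem, with your explicit estimate in item (ii) being a slightly more careful version of the paper's normalization sketch.
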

	
	The limit $\tfrac1s\log Z_\mu(s)\to\log M$ behind the endpoints
	is Varadhan's lemma: the transform is a smooth self-normalizing soft
	maximum, and the bound \eqref{eq:twopoint} is the rate at which it
	resolves the largest observation. The suddency map converts a
	statistic back into the moment that generates it, with sensitivity,
	by expansion \eqref{eq:suddencymap}, inversely proportional to the
	log-covariance dispersion: the flatter the curve, the worse
	conditioned the reading.
	
	%=====================================================================
	\section{Divergence Calculus}\label{sec:divergence}
	%=====================================================================
	
	The classical mean value theorem writes the unit increment of the
	free energy as $K_\mu'(\xi)$ for an unknowable intermediate point
	$\xi$. The escort structure does strictly better: it replaces the
	unknowable point by \emph{exact} remainders, and the remainders are
	relative entropies. Throughout this section we write
	\begin{equation}\label{eq:kldef}
		\begin{gathered}
			\KL(P\|Q)=\int\log\frac{\dd P}{\dd Q}\dd P,\\
			J(P,Q)=\KL(P\|Q)+\KL(Q\|P)
		\end{gathered}
	\end{equation}
	for the relative entropy and the Jeffreys divergence. The content of
	this section lies in the identification of the right-hand sides: it
	is this identification that turns the classical inequalities between
	means into exact information-theoretic identities. The basic
	identity of the tilt family is the following.
	
	\begin{restatable}[Bregman identity]{lemma}{lembregman}\label{lem:bregman}
		Let $\mu$ be admissible. For all $a,b\in\mathring I_\mu$,
		\begin{equation}\label{eq:bregman}
			\KL(\mu_a\|\mu_b)=K_\mu(b)-K_\mu(a)-(b-a)K_\mu'(a).
		\end{equation}
	\end{restatable}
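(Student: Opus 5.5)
The plan is to compute the Radon--Nikodym derivative of one escort measure with respect to the other directly from Definition~\ref{def:L}, and then integrate its logarithm against $\mu_a$. Both escorts have density with respect to $\mu$,
\begin{equation*}
\frac{\dd\mu_a}{\dd\mu}=\frac{x^{a}}{Z_\mu(a)}, \qquad \frac{\dd\mu_b}{\dd\mu}=\frac{x^{b}}{Z_\mu(b)} .
\end{equation*}
Both densities are strictly positive and finite on $\Rpos$, since $a,b\in\mathring I_\mu$. Hence $\mu_a$ and $\mu_b$ are mutually absolutely continuous, and
\begin{equation*}
\log\frac{\dd\mu_a}{\dd\mu_b}(x)=(a-b)\log x-K_\mu(a)+K_\mu(b).
\end{equation*}
This uses that $\mu$ is $\sigma$-finite (Remark~\ref{rem:sigmafinite}), so the chain rule for Radon--Nikodym derivatives applies.

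Next I will integrate this identity against $\mu_a$. Since $\mu_a$ is a probability measure, the constant terms contribute $K_\mu(b)-K_\mu(a)$. The linear term contributes $(a-b)\E_{\mu_a}[\log X]$. By \eqref{eq:Kderivs} in Proposition~\ref{prop:master}, $\E_{\mu_a}[\log X]=K_\mu'(a)$. Rearranging $(a-b)K_\mu'(a)=-(b-a)K_\mu'(a)$ then gives exactly \eqref{eq:bregman}.

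The only point needing care, and the one I expect to be the main (mild) obstacle, is that the integral defining $\KL(\mu_a\|\mu_b)$ is a genuine Lebesgue integral. The log-density is affine in $\log x$, so it suffices that $\log X\in L^1(\mu_a)$, that is, $\int|\log x|\,x^{a}\dd\mu<\infty$. This follows from the domination bound \eqref{eq:domination} of Lemma~\ref{lem:logconvex}(iii) with $k=1$, applied on any $[a',b']\subset\mathring I_\mu$ with $a\in(a',b')$. This integrability is also what legitimizes splitting the integral into its constant and linear parts. With it, the identity holds with both sides finite, and nonnegativity of the right-hand side recovers the convexity of $K_\mu$ from Lemma~\ref{lem:logconvex}, which serves as a consistency check.
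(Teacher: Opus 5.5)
Your proposal is correct and follows essentially the same route as the paper's proof. Both compute $\dd\mu_a/\dd\mu_b=x^{a-b}Z_\mu(b)/Z_\mu(a)$, integrate its logarithm against $\mu_a$, substitute $\E_{\mu_a}[\log X]=K_\mu'(a)$, and justify integrability of $\log X$ under the interior escort via the domination bound \eqref{eq:domination}.
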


	\begin{restatable}[Two-sided remainder]{theorem}{thmsandwich}\label{thm:sandwich}
		Let $\mu$ be admissible. For all $s\in\mathring S_\mu$,
		\begin{align}
			\Lambda_\mu(s)
			&=\E_{\mu_{s-1}}[\log X]+\KL(\mu_{s-1}\|\mu_s),
			\label{eq:sandwichlower}\\[2pt]
			\Lambda_\mu(s)
			&=\E_{\mu_{s}}[\log X]-\KL(\mu_{s}\|\mu_{s-1}).
			\label{eq:sandwichupper}
		\end{align}
		In particular
		\begin{equation}\label{eq:sandwich}
			\E_{\mu_{s-1}}[\log X]\;\le\;\Lambda_\mu(s)
			\;\le\;\E_{\mu_s}[\log X],
		\end{equation}
		with equality at one point if and only if equality holds at every
		point, if and only if $\mu$ is concentrated at a single point.
	\end{restatable}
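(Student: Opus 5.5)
The plan is to read both identities off the Bregman identity of Lemma~\ref{lem:bregman}, applied to the adjacent pair of orders $s-1$ and $s$, combined with the master identity of Proposition~\ref{prop:master}. For $s\in\mathring S_\mu$ both $s$ and $s-1$ lie in $\mathring I_\mu$, since $\mathring S_\mu=\mathring I_\mu\cap(\mathring I_\mu+1)$. Lemma~\ref{lem:bregman} is therefore available for $(a,b)=(s-1,s)$ and for $(a,b)=(s,s-1)$. By \eqref{eq:master}, $\Lambda_\mu(s)=K_\mu(s)-K_\mu(s-1)$, and by \eqref{eq:Kderivs}, $K_\mu'(u)=\E_{\mu_u}[\log X]$.

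Taking $a=s-1$, $b=s$ in \eqref{eq:bregman} gives
\[
\KL(\mu_{s-1}\|\mu_s)=\Lambda_\mu(s)-K_\mu'(s-1),
\]
which rearranges to \eqref{eq:sandwichlower}. Taking $a=s$, $b=s-1$ gives
\[
\KL(\mu_s\|\mu_{s-1})=-\Lambda_\mu(s)+K_\mu'(s),
\]
which is \eqref{eq:sandwichupper}. Since relative entropy is nonnegative, the sandwich \eqref{eq:sandwich} follows at once.

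The equality clause is the only step needing care. If $\mu$ is concentrated at a single point $c$, every escort equals $\delta_c$, both divergences vanish, and all three quantities in \eqref{eq:sandwich} equal $\log c$ at every $s$.

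Conversely, suppose equality holds on one side at a single $s$. Then the corresponding divergence vanishes, so $\mu_{s-1}=\mu_s$ as probability measures. Their Radon--Nikodym derivative is
\[
\frac{\dd\mu_s}{\dd\mu_{s-1}}=\frac{x\,Z_\mu(s-1)}{Z_\mu(s)},
\]
and this must equal $1$ $\mu$-a.e. Hence $x=\Leh_\mu(s)$ $\mu$-a.e., that is, $\mu$ is a point mass at $\Leh_\mu(s)$.

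There is an alternative route for the converse. Equality in \eqref{eq:sandwichlower} means $K_\mu'$ is constant on $[s-1,s]$, via \eqref{eq:logL} and the monotonicity of $K_\mu'$, and this contradicts the strict convexity in Lemma~\ref{lem:logconvex}(ii). Either argument closes the chain: equality at one point holds if and only if $\mu$ is degenerate, which in turn gives equality at every point. The main subtlety I expect is only bookkeeping: checking that both KL terms are finite and that the Bregman identity applies, which is guaranteed because both orders are interior.
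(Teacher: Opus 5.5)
Your proposal is correct and follows the paper's proof essentially verbatim. It applies the Bregman identity at $(a,b)=(s-1,s)$ and $(s,s-1)$, uses nonnegativity of relative entropy, and settles the equality case through $\mu_{s-1}=\mu_s$; your explicit computation $\dd\mu_s/\dd\mu_{s-1}=x/\Leh_\mu(s)$ just spells out the step that the paper compresses into a single clause.
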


	Specializing to the arithmetic node produces the sharpest elementary
	consequence of the calculus.
	
	\begin{restatable}[AM--GM gap]{corollary}{coramgm}\label{cor:amgm}
		Let $\mu$ be a probability measure with $1\in I_\mu$ and
		$\E_\mu|\log X|<\infty$, and set
		\begin{equation}\label{eq:AGdef}
			\begin{gathered}
				A=\E_\mu[X],\\
				G=\exp\E_\mu[\log X].
			\end{gathered}
		\end{equation}
		Then
		\begin{equation}\label{eq:amgm}
			\log\frac AG=\KL(\mu\,\|\,\mu_1),
		\end{equation}
		where $\mu_1$ is the size-biased law $\dd\mu_1=x\dd\mu/A$. For a
		data vector, the arithmetic--geometric gap is exactly the relative
		entropy from uniform weights to size-biased weights. More
		generally, every Lehmer statistic exceeds its escort geometric
		mean by an exact relative entropy,
		\begin{equation}\label{eq:amgmgeneral}
			\Lambda_\mu(s)-\E_{\mu_{s-1}}[\log X]=\KL(\mu_{s-1}\|\mu_s).
		\end{equation}
	\end{restatable}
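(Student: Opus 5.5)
The plan is to read both displays as the case $s=1$ of Theorem~\ref{thm:sandwich}, identity \eqref{eq:sandwichlower}, and to spend the effort on checking that the corollary's weaker hypotheses still make every term finite. The general identity \eqref{eq:amgmgeneral} is exactly \eqref{eq:sandwichlower} with $\E_{\mu_{s-1}}[\log X]$ moved to the left-hand side, so it needs no further argument on $\mathring S_\mu$.

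For \eqref{eq:amgm}, I first note that $\mu$ is a probability measure, so $Z_\mu(0)=1$ and the escort $\mu_0$ is $\mu$ itself. The escort $\mu_1$ is the size-biased law $x\dd\mu/A$. Lemma~\ref{lem:cocycle} and definition \eqref{eq:defL} give $\Leh_\mu(1)=Z_\mu(1)/Z_\mu(0)=A$, hence $\Lambda_\mu(1)=\log A$. The term $\E_{\mu_0}[\log X]$ equals $\log G$. Substituting these into \eqref{eq:sandwichlower} at $s=1$ gives $\log A=\log G+\KL(\mu\|\mu_1)$, which is \eqref{eq:amgm}.

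Theorem~\ref{thm:sandwich} is stated only on the open suddency domain, whereas the corollary assumes just $\{0,1\}\subset I_\mu$ and $\log X\in L^1(\mu)$, so $s=1$ may be a boundary point. For this reason I would verify \eqref{eq:amgm} directly rather than by a limit argument. The density is $\dd\mu/\dd\mu_1=A/x$, so
\[
\KL(\mu\|\mu_1)=\int\log\frac{A}{x}\dd\mu=\log A-\E_\mu[\log X].
\]
The integral is well defined because $\log X\in L^1(\mu)$ and $A<\infty$, and absolute continuity holds in both directions because $x>0$. This gives \eqref{eq:amgm} without any interiority assumption.

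The same direct computation also handles \eqref{eq:amgmgeneral}, which I expect to be the only real obstacle, namely justifying integrability at boundary points. The density is $\dd\mu_{s-1}/\dd\mu_s=Z_\mu(s)/(x\,Z_\mu(s-1))$, so
\[
\KL(\mu_{s-1}\|\mu_s)=\Lambda_\mu(s)-\E_{\mu_{s-1}}[\log X].
\]
On $\mathring S_\mu$ the domination bound \eqref{eq:domination} makes the expectation finite, and wherever $\E_{\mu_{s-1}}|\log X|<\infty$ the same one-line computation applies verbatim. For data vectors all measures are finite sums, so the reading ``uniform weights to size-biased weights'' $p_i(1)=x_i/\sum_j x_j$ follows at once.
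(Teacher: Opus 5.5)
Your proposal is correct and follows the paper's proof: like the paper, you avoid the interiority issue at $s=1$ by computing $\KL(\mu\|\mu_1)=\int\log(A/x)\dd\mu=\log A-\E_\mu[\log X]$ directly from the density, and you obtain \eqref{eq:amgmgeneral} on $\mathring S_\mu$ as \eqref{eq:sandwichlower} rearranged. Your remark that the same one-line density computation extends \eqref{eq:amgmgeneral} to any $s\in S_\mu$ with $\E_{\mu_{s-1}}|\log X|<\infty$ is also correct, and is a minor strengthening of what the paper states.
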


	Identity \eqref{eq:amgm} upgrades the arithmetic--geometric mean
	inequality to an identity and explains its equality case
	information-theoretically: uniform weights coincide with size-biased
	weights exactly for constant data. It also produces refinements on
	demand. Writing
	\begin{equation}\label{eq:tvdef}
		d_{\mathrm{TV}}(P,Q)\eqdef\sup_{B}\bigl|P(B)-Q(B)\bigr|\in[0,1]
	\end{equation}
	for the total variation distance in the supremum normalization,
	Pinsker's inequality gives
	\begin{equation}\label{eq:pinsker}
		\log\frac AG\;\ge\;2\,d_{\mathrm{TV}}(\mu,\mu_1)^{2},
	\end{equation}
	a total-variation strengthening of the classical inequality; the
	constant~$2$ in inequality \eqref{eq:pinsker} belongs to the
	normalization \eqref{eq:tvdef} and becomes $\tfrac12$ if the total
	variation is instead measured by the $L^1$ distance of densities.
	
	The same calculus settles the relation of the transform to the
	geometric mean.
	
	\begin{restatable}[Midpoint rule]{lemma}{lemmidpoint}\label{lem:midpoint}
		Let $\mu$ be admissible and suppose
		$[s-\tfrac12,s+\tfrac12]\subset\mathring I_\mu$. Then there
		exists $\xi\in(s-\tfrac12,s+\tfrac12)$ with
		\begin{equation}\label{eq:midpoint}
			\Lambda_\mu\bigl(s+\tfrac12\bigr)
			=K_\mu'(s)+\tfrac1{24}K_\mu'''(\xi),
		\end{equation}
		and
		\begin{equation}\label{eq:thirdcentral}
			K_\mu'''(u)=\E_{\mu_u}\bigl[(\log X-K_\mu'(u))^3\bigr].
		\end{equation}
	\end{restatable}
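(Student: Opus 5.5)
The plan is to read the statement as the midpoint quadrature rule applied to the free energy. By the master identity \eqref{eq:logL}, evaluated at the order $s+\tfrac12$,
\begin{equation*}
\Lambda_\mu\bigl(s+\tfrac12\bigr)=K_\mu\bigl(s+\tfrac12\bigr)-K_\mu\bigl(s-\tfrac12\bigr)=\int_{s-1/2}^{s+1/2}K_\mu'(u)\dd u ,
\end{equation*}
which is legitimate because the hypothesis puts both $s\pm\tfrac12$ in $\mathring I_\mu$. Hence $s+\tfrac12\in\mathring S_\mu$. By Lemma~\ref{lem:logconvex}(iv), $K_\mu$ is $C^\infty$ on a neighbourhood of the compact interval $[s-\tfrac12,s+\tfrac12]$. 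So the classical midpoint-rule error formula applies to $g=K_\mu'$ on an interval of length $h=1$. It gives $\int g=h\,g(s)+\tfrac{h^3}{24}g''(\xi)$, which is \eqref{eq:midpoint} with $g''=K_\mu'''$.

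To keep the argument self-contained, I would derive the error term directly. Set $F(t)=K_\mu(s+t)-K_\mu(s-t)-2tK_\mu'(s)$ for $t\in[0,\tfrac12]$, and compare it with $ct^3$, where $c$ is chosen so that $F(\tfrac12)=c/8$. Then $G(t)=F(t)-ct^3$ vanishes at $0$ and at $\tfrac12$, and also $G'(0)=G''(0)=0$. Repeated application of Rolle's theorem gives a point where $F'''(\eta)=6c$. A cleaner route is Taylor's theorem with Lagrange remainder for $K_\mu$ about $s$ at the two points $s\pm\tfrac12$. Subtracting the two expansions cancels the even-order terms and leaves
\begin{equation*}
K_\mu'(s)+\tfrac1{48}\bigl(K_\mu'''(\xi_+)+K_\mu'''(\xi_-)\bigr),
\end{equation*}
with $\xi_\pm$ on either side of $s$. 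Since $K_\mu'''$ is continuous, the intermediate value theorem lets me replace the average $\tfrac12(K_\mu'''(\xi_+)+K_\mu'''(\xi_-))$ by $K_\mu'''(\xi)$ for a single $\xi\in(s-\tfrac12,s+\tfrac12)$. This gives the constant $\tfrac1{24}$.

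For \eqref{eq:thirdcentral}, I would differentiate \eqref{eq:Kderivs} once more. By the domination bound \eqref{eq:domination}, differentiation under the integral is permitted, and it gives $\frac{\dd}{\dd u}\E_{\mu_u}[\phi]=\Cov_{\mu_u}(\phi,\log X)$ for $\phi=(\log X-K_\mu'(u))^2$, plus the term coming from the $u$-dependence of the centering. That term is $-2K_\mu''(u)\,\E_{\mu_u}[\log X-K_\mu'(u)]=0$. So $K_\mu'''(u)=\E_{\mu_u}[(\log X-K_\mu'(u))^3]$, the third cumulant, which for $k\le3$ equals the third central moment.

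The only delicate point is bookkeeping. One must make sure the open interval containing $[s-\tfrac12,s+\tfrac12]$ lies in $\mathring I_\mu$, so that the Taylor remainders and the differentiation under the integral are justified. The hypothesis supplies this directly. Everything else is the routine midpoint-rule estimate.
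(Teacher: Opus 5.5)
Your proposal is correct and follows the paper's proof. Both write $\Lambda_\mu(s+\tfrac12)$ as the integral of $K_\mu'$ over $[s-\tfrac12,s+\tfrac12]$, apply the midpoint-rule error formula, and obtain the third-central-moment identity by differentiating $K_\mu''(u)=\Var_{\mu_u}(\log X)$ under the integral sign. Your Taylor-plus-intermediate-value derivation of the constant $\tfrac1{24}$ just makes explicit the quadrature fact that the paper quotes.
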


	At $s=0$ the value $\Leh_\mu(\tfrac12)$ is therefore a
	second-order accurate geometric mean, the defect a pure skewness
	functional of the log-data. For a two-point measure
	$\mu=\delta_u+\delta_v$ the defect vanishes at this node,
	$\Leh_\mu(\tfrac12)=\sqrt{uv}$, because the escort $\mu_0$
	weighs the two points equally and $K_\mu'''(0)=0$; for every
	lognormal law the free energy is quadratic, so the midpoint rule is
	exact at every node. The following consequence is worth isolating.
	
	\begin{example}[Digamma approximation]\label{ex:digamma}
		The Gamma law with shape $k$ and unit scale has
		\begin{equation}\label{eq:gammaKL}
			\begin{gathered}
				Z_\mu(s)=\frac{\Gamma(k+s)}{\Gamma(k)},\\
				K_\mu(s)=\log\Gamma(k+s)-\log\Gamma(k),\\
				\Leh_\mu(s)=s+k-1 .
			\end{gathered}
		\end{equation}
		Identity \eqref{eq:midpoint} at this measure reads
		\begin{equation}\label{eq:digamma}
			\begin{gathered}
				\log\bigl(z-\tfrac12\bigr)=\psi(z)+\tfrac1{24}\psi''(\xi),\\
				z=k+s,\qquad\xi\in\bigl(z-\tfrac12,\,z+\tfrac12\bigr),
			\end{gathered}
		\end{equation}
		so the classical approximation $\psi(z)\approx\log(z-\tfrac12)$ is
		nothing but the midpoint rule for the Lehmer transform of the
		Gamma distribution, with error
		$\tfrac1{24}\psi''(\xi)=O(z^{-2})$.
	\end{example}
	
	The slope of the transform carries the deeper identity. Relation
	\eqref{eq:logderiv} expressed the slope of the log-transform as an
	integrated escort variance. What follows identifies that quantity,
	at once, as the Fisher information of the escort family integrated
	over one unit of suddency and as a symmetrized divergence between
	consecutive escorts.
	
	\begin{restatable}[Jeffreys identity]{theorem}{thmjeffreys}\label{thm:jeffreys}
		Let $\mu$ be admissible. For all $s\in\mathring S_\mu$,
		\begin{equation}\label{eq:jeffreys}
			\Lambda_\mu'(s)
			=J\bigl(\mu_{s-1},\mu_s\bigr)
			=\int_{s-1}^{s}\Var_{\mu_u}(\log X)\dd u:
		\end{equation}
		the slope of the log-transform is the Jeffreys divergence between
		consecutive escort measures, equivalently the escort Fisher
		information integrated over one unit of suddency.
	\end{restatable}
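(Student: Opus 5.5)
The second equality in \eqref{eq:jeffreys} is already available: it is identity \eqref{eq:logderiv} of Theorem~\ref{thm:mono}(ii). So the plan is to prove the first equality, $\Lambda_\mu'(s)=J(\mu_{s-1},\mu_s)$. It should follow by adding the two Bregman identities of Lemma~\ref{lem:bregman} with the roles of the endpoints exchanged.

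Fix $s\in\mathring S_\mu$, so that $s$ and $s-1$ both lie in $\mathring I_\mu$ and Lemma~\ref{lem:bregman} applies to the pair $a=s-1$, $b=s$ in both orders. First I take $a=s-1$, $b=s$:
\begin{equation*}
\KL(\mu_{s-1}\|\mu_s)=K_\mu(s)-K_\mu(s-1)-K_\mu'(s-1).
\end{equation*}
Next I take $a=s$, $b=s-1$:
\begin{equation*}
\KL(\mu_s\|\mu_{s-1})=K_\mu(s-1)-K_\mu(s)+K_\mu'(s).
\end{equation*}
Adding the two lines, the free-energy values cancel and I obtain
\begin{equation*}
J(\mu_{s-1},\mu_s)=K_\mu'(s)-K_\mu'(s-1).
\end{equation*}
On the other hand, Proposition~\ref{prop:master} gives $\Lambda_\mu=K_\mu-K_\mu(\cdot-1)$ on $\mathring S_\mu$. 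Lemma~\ref{lem:logconvex}(iv) makes $K_\mu$ smooth on $\mathring I_\mu$, so differentiating yields $\Lambda_\mu'(s)=K_\mu'(s)-K_\mu'(s-1)$, which is exactly the Jeffreys divergence just computed. Alternatively, the same conclusion follows by adding the two remainder identities \eqref{eq:sandwichlower}--\eqref{eq:sandwichupper} of Theorem~\ref{thm:sandwich} and using \eqref{eq:Kderivs}.

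For the integrated-variance form, the fundamental theorem of calculus applied to $K_\mu'$ gives $K_\mu'(s)-K_\mu'(s-1)=\int_{s-1}^{s}K_\mu''(u)\dd u$. By \eqref{eq:Kderivs}, $K_\mu''(u)=\Var_{\mu_u}(\log X)$, and this variance is the Fisher information of the exponential family $\{\mu_u\}$ in its natural parameter. This justifies the interpretive clause of the theorem.

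There is no substantive obstacle. The only point needing care is that every $u\in[s-1,s]$ lies in $\mathring I_\mu$, which holds because $I_\mu$ is an interval (Lemma~\ref{lem:logconvex}(i)) containing the two interior points $s-1$ and $s$. That membership ensures $K_\mu''$ is continuous on the closed segment and the Bregman identity is applicable at both endpoints.
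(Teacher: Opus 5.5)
Your proposal is correct and follows essentially the paper's own proof: the paper adds the two exact remainders \eqref{eq:remainder1} and \eqref{eq:remainder2} (which are the Bregman identity applied at $(s-1,s)$ and at $(s,s-1)$), obtains $J(\mu_{s-1},\mu_s)=K_\mu'(s)-K_\mu'(s-1)=\int_{s-1}^{s}K_\mu''(u)\dd u$, identifies this with $\Lambda_\mu'(s)$ via \eqref{eq:logderiv}, and reads $K_\mu''$ as Fisher information through the score $\log x-K_\mu'(u)$. Your observation that $\mathring S_\mu=\mathring I_\mu\cap(\mathring I_\mu+1)$ places both endpoints, and hence the whole segment $[s-1,s]$, inside $\mathring I_\mu$ is exactly the care the argument needs.
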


	The slope of the log-transform at $s$ therefore measures, exactly,
	the statistical distinguishability of the tilted ensembles at
	inverse temperatures $s-1$ and $s$: the curve is flat where
	neighboring tilts are indistinguishable and steep across a phase
	boundary. In thermodynamic language $K_\mu''$ is the
	susceptibility of the ensemble with energy $\log X$, so the
	suddency slope is the integrated susceptibility, and a sharp peak
	of the susceptibility appears as a steep rise and inflection of the
	curve; for orbits of a dynamical system with an equilibrium state
	the same identity reads through the thermodynamic
	formalism~\cite{ruelle2004}, with the slope estimating the
	Green--Kubo diffusion coefficient of Birkhoff averages.
	
	Two consequences complete the calculus. The first is that the entire
	R\'enyi divergence geometry of the tilt family is encoded in
	increments of the free energy, hence, by the cocycle identity
	\eqref{eq:cocycle}, in the transform. The second is a variational
	principle for the transform itself.
	
	\begin{restatable}[R\'enyi and Donsker--Varadhan]{proposition}{proprenyidiv}\label{prop:renyidiv}
		Let $\mu$ be admissible.
		\begin{enumerate}[label=\textup{(\roman*)},leftmargin=2.2em]
			\item Let $a\neq b$ lie in $\mathring I_\mu$ and let
			$\lambda\in(0,1)\cup(1,\infty)$ be such that
			$\lambda a+(1-\lambda)b\in\mathring I_\mu$, which holds
			for every $\lambda\in(0,1)$ by convexity. Then
			\begin{equation}\label{eq:renyidiv}
				D_\lambda(\mu_a\|\mu_b)
				=\frac{K_\mu\bigl(\lambda a+(1-\lambda)b\bigr)
					-\lambda K_\mu(a)-(1-\lambda)K_\mu(b)}{\lambda-1} .
			\end{equation}
			\item For $s\in\mathring S_\mu$ the Chernoff information
			between consecutive escorts is
			\begin{equation}\label{eq:chernoff}
				C(\mu_s,\mu_{s-1})=\max_{\lambda\in[0,1]}
				\bigl[\lambda K_\mu(s)+(1-\lambda)K_\mu(s-1)
				-K_\mu(s-1+\lambda)\bigr].
			\end{equation}
			\item For $s\in\mathring S_\mu$,
			\begin{equation}\label{eq:dv}
				\Lambda_\mu(s)
				=\sup_{q}\Bigl\{\E_q[\log X]
				-\KL\bigl(q\,\|\,\mu_{s-1}\bigr)\Bigr\},
			\end{equation}
			the supremum running over the probability measures $q$ with
			$\KL(q\,\|\,\mu_{s-1})<\infty$ and $\E_q[\log X]$
			finite, and being attained uniquely at $q=\mu_s$.
		\end{enumerate}
	\end{restatable}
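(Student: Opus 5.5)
All three parts rest on one computation: the density of one escort with respect to another. For $a,c\in\mathring I_\mu$, Definition~\ref{def:L} gives
\[
\frac{\dd\mu_a}{\dd\mu_c}(x)=x^{a-c}\exp\bigl(K_\mu(c)-K_\mu(a)\bigr).
\]
From this each identity reduces to the free energy evaluated at a convex combination of orders.

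\emph{Part (i).} Take $D_\lambda(P\|Q)=\frac1{\lambda-1}\log\int(\dd P/\dd Q)^{\lambda}\dd Q$. Substituting the density ratio,
\[
\int\Bigl(\frac{\dd\mu_a}{\dd\mu_b}\Bigr)^{\lambda}\dd\mu_b
=e^{\lambda(K_\mu(b)-K_\mu(a))}\int x^{\lambda(a-b)}\,\frac{x^{b}\dd\mu}{Z_\mu(b)}.
\]
The remaining integral is $Z_\mu(\lambda a+(1-\lambda)b)/Z_\mu(b)$. Taking logarithms and dividing by $\lambda-1$ gives \eqref{eq:renyidiv}.

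For $\lambda>1$ the point $\lambda a+(1-\lambda)b$ lies outside $[a,b]$. This is why it is assumed to lie in $\mathring I_\mu$; the assumption makes the integral finite. For $\lambda\in(0,1)$ the point lies inside $[a,b]$, and Lemma~\ref{lem:logconvex}(i) supplies membership.

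\emph{Part (ii).} I use the standard variational form
\[
C(P,Q)=\max_{\lambda\in[0,1]}\Bigl[-\log\int P^{\lambda}Q^{1-\lambda}\Bigr].
\]
The Bhattacharyya-type integral is computed exactly as in (i) with $a=s$ and $b=s-1$:
\[
\int \mu_s^{\lambda}\mu_{s-1}^{1-\lambda}=\exp\bigl(K_\mu(s-1+\lambda)-\lambda K_\mu(s)-(1-\lambda)K_\mu(s-1)\bigr).
\]
The order $s-1+\lambda$ lies in $[s-1,s]\subset\mathring I_\mu$, so everything is finite. Negating the exponent gives \eqref{eq:chernoff}.

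The function of $\lambda$ being maximized vanishes at both endpoints and is concave, because $K_\mu$ is convex. It is continuous on $[0,1]$, so the maximum is attained.

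\emph{Part (iii).} This is the Donsker--Varadhan (Gibbs) variational principle. Fix $q$ with $\KL(q\|\mu_{s-1})<\infty$ and $\E_q[\log X]$ finite. Then $q\ll\mu_{s-1}\sim\mu_s$, and I would decompose
\[
\KL(q\|\mu_s)=\KL(q\|\mu_{s-1})-\E_q\Bigl[\log\frac{\dd\mu_s}{\dd\mu_{s-1}}\Bigr]
=\KL(q\|\mu_{s-1})-\E_q[\log X]+\Lambda_\mu(s),
\]
using $\log(\dd\mu_s/\dd\mu_{s-1})=\log x-(K_\mu(s)-K_\mu(s-1))$ together with the master identity \eqref{eq:master}. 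Rearranging,
\[
\E_q[\log X]-\KL(q\|\mu_{s-1})=\Lambda_\mu(s)-\KL(q\|\mu_s)\le\Lambda_\mu(s),
\]
with equality if and only if $q=\mu_s$, since relative entropy is nonnegative and vanishes only on identical measures.

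It remains to check that $\mu_s$ itself is admissible as a competitor. Lemma~\ref{lem:logconvex}(iii) shows $\log X\in L^1(\mu_s)$. Identity \eqref{eq:sandwichupper} of Theorem~\ref{thm:sandwich} shows $\KL(\mu_s\|\mu_{s-1})<\infty$. So $\mu_s$ attains the supremum.

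\emph{Main obstacle.} The only delicate step is the decomposition of $\KL(q\|\mu_s)$ in (iii). Splitting the logarithm of the density ratio is legitimate only if each piece is integrable: $\log(\dd q/\dd\mu_{s-1})$ is $q$-integrable because $\KL(q\|\mu_{s-1})<\infty$, and $\log x$ is $q$-integrable by hypothesis. The integral of the sum is then the sum of finite integrals, which excludes an $\infty-\infty$ form. This is exactly why the statement imposes both finiteness conditions on $q$.
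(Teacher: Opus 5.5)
Your proposal is correct and follows the paper's proof essentially step for step. The escort density ratio reduces the R\'enyi and Chernoff integrals to $Z_\mu$ evaluated at a convex combination of orders, and part (iii) is the same decomposition $\E_q[\log X]-\KL(q\|\mu_{s-1})=\Lambda_\mu(s)-\KL(q\|\mu_s)$ via $\dd\mu_s/\dd\mu_{s-1}=x/\Leh_\mu(s)$. Your additional checks---that the concave objective in (ii) attains its maximum, that $\mu_s$ itself is an admissible competitor, and that both pieces of the split logarithm are $q$-integrable---fill in details the paper leaves implicit.
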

	
	At $\lambda=1$ the R\'enyi divergence is the Kullback--Leibler
	divergence by continuity, and the restriction on $q$ in part (iii)
	removes the indeterminate form $\infty-\infty$ while discarding no
	candidate, the objective being $-\infty$ on the excluded measures.
	All R\'enyi and Chernoff quantities along the tilt path are
	therefore computable from the curve by telescoping, with no density
	estimation; in the language of information geometry the escort
	curve is the e-geodesic of the exponential family generated by
	$\log X$~\cite{amari2000}, and the transform is the mean of $X$
	transported along it.
	
	The cocycle turns all of this into a design principle. Free-energy
	differences across integer spans factor exactly into unit Lehmer
	steps,
	\begin{equation}\label{eq:thermint}
		K_\mu(s+N)-K_\mu(s)=\sum_{k=1}^N\Lambda_\mu(s+k),
	\end{equation}
	each step an escort expectation estimable by reweighted sampling
	from a single ensemble. This is the discrete skeleton of
	thermodynamic integration~\cite{kirkwood1935}, free-energy
	perturbation~\cite{zwanzig1954}, annealed importance
	sampling~\cite{neal2001} and Bennett's acceptance-ratio
	method~\cite{bennett1976,shirts2008}. The telescoping is exact on
	any grid, there as here; what is specific here is that the grid is
	canonical, fixed at one unit of the natural parameter by the Lehmer
	normalization, and the accuracy of each step is governed by the
	Jeffreys divergences of Theorem~\ref{thm:jeffreys}.
	
	%=====================================================================
	\section{Multiplicative Structure}\label{sec:multiplicative}
	%=====================================================================
	
	The multiplicative group $(\Rpos,\times)$ has continuous characters
	$\chi_s(x)=x^{s}$, and
	\begin{equation}\label{eq:pairing}
		Z_\mu(s)=\int_{\Rpos}\chi_s\dd\mu
	\end{equation}
	is the pairing of $\mu$ against them: the Fourier--Mellin transform
	of $\mu$ on its natural group. The Lehmer transform is the pairing at
	adjacent characters, and it therefore inherits the defining behaviour
	of the Mellin transform under multiplicative convolution: it converts
	multiplication of independent factors into the pointwise product of
	transforms. What is worth noticing is that no \emph{fixed} mean has
	this property: reading the family as one curve is exactly what makes
	it available. The
	transform is for products what the characteristic function is for
	sums. We develop the
	consequences here: a transformation calculus for powers, a
	L\'evy--Khintchine representation, a renormalization fixed point,
	and exact deconvolution of multiplicative noise.
	
	Throughout this section, $\mu\boxtimes\nu$ denotes multiplicative
	convolution, defined for admissible $\mu$ and $\nu$ as the
	pushforward of the product measure under the group operation,
	\begin{equation}\label{eq:boxtimes}
		\begin{gathered}
			\mu\boxtimes\nu\eqdef\pi_*(\mu\otimes\nu),\\
			\pi(x,y)=xy .
		\end{gathered}
	\end{equation}
	Definition \eqref{eq:boxtimes} makes sense for the $\sigma$-finite
	measures of Remark~\ref{rem:sigmafinite}, and not only for
	probability laws. This matters because several ensembles to which
	the theory applies, such as the geometric ensemble
	$\sum_{n\ge0}\delta_{q^{\,n}}$, have infinite total mass. When $\mu$ and $\nu$ are
	probability laws, definition \eqref{eq:boxtimes} is the law of $XY$
	for independent $X\sim\mu$ and $Y\sim\nu$.
	
	\begin{restatable}[Character property]{theorem}{thmcharacter}\label{thm:homo}
		For admissible $\mu,\nu$ one has $Z_{\mu\boxtimes\nu}=Z_\mu
		Z_\nu$ on $I_\mu\cap I_\nu$, and consequently
		\begin{equation}\label{eq:homo}
			\Leh_{\mu\boxtimes\nu}(s)=\Leh_\mu(s)\,\Leh_\nu(s),
			\qquad s\in S_\mu\cap S_\nu .
		\end{equation}
		Thus $\mu\mapsto\Leh_\mu$ is a homomorphism from positive measures
		under multiplicative convolution to positive functions under
		pointwise multiplication, and $\Lambda_\mu$ is additive over
		independent factors.
	\end{restatable}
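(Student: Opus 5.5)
The plan is to reduce everything to the change-of-variables formula for pushforward measures combined with Tonelli's theorem. Both are available because admissible measures are $\sigma$-finite by Remark~\ref{rem:sigmafinite}. For fixed real $s$, the integrand $z\mapsto z^{s}$ is a nonnegative Borel function on $\Rpos$. The definition of the pushforward in \eqref{eq:boxtimes} therefore gives
\[
Z_{\mu\boxtimes\nu}(s)=\int_{\Rpos}z^{s}\dd(\pi_*(\mu\otimes\nu))(z)=\int_{\Rpos\times\Rpos}(xy)^{s}\dd(\mu\otimes\nu)(x,y),
\]
with values in $[0,\infty]$ and no integrability assumption needed. Since $(xy)^{s}=x^{s}y^{s}$ is a product of nonnegative measurable functions and both factors are $\sigma$-finite, Tonelli's theorem factors the double integral as $Z_\mu(s)Z_\nu(s)$, again as an identity in $[0,\infty]$. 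In particular, the identity $Z_{\mu\boxtimes\nu}=Z_\mu Z_\nu$ holds for every $s\in\R$. On $I_\mu\cap I_\nu$ both factors are finite and positive, so $s\in I_{\mu\boxtimes\nu}$ and $I_\mu\cap I_\nu\subset I_{\mu\boxtimes\nu}$. Whenever the intersection is nonempty, $\mu\boxtimes\nu$ is itself admissible.

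Next I pass to the transform. If $s\in S_\mu\cap S_\nu$, then both $s$ and $s-1$ lie in $I_\mu\cap I_\nu$, so by the previous step they lie in $I_{\mu\boxtimes\nu}$, and hence $s\in S_{\mu\boxtimes\nu}$. Definition~\ref{def:L} then gives
\[
\Leh_{\mu\boxtimes\nu}(s)=\frac{Z_\mu(s)Z_\nu(s)}{Z_\mu(s-1)Z_\nu(s-1)}=\Leh_\mu(s)\Leh_\nu(s),
\]
where all four quantities are finite and strictly positive. They are strictly positive because the measures are nonzero and $x^{s}>0$. Taking logarithms gives additivity of $\Lambda$. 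The homomorphism statement is then a restatement of this identity, once I note that $\boxtimes$ is associative and commutative, which follows from the same pushforward argument applied to $(x,y,z)\mapsto xyz$.

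The only step needing care, and the one I expect to be the main point of rigor, is the measure-theoretic one. I must check that $\mu\otimes\nu$ is well defined and that Tonelli applies to infinite-mass measures such as the geometric ensemble. I must also check that the pushforward of a $\sigma$-finite product measure under $\pi$ is a legitimate Borel measure on $\Rpos$, even though it need not be $\sigma$-finite a priori. For the last point, note that $\sigma$-finiteness of $\mu\boxtimes\nu$ follows once $I_\mu\cap I_\nu\neq\emptyset$, by Remark~\ref{rem:sigmafinite} applied to the now admissible $\mu\boxtimes\nu$. Beyond that, working throughout with nonnegative integrands in $[0,\infty]$ avoids any Fubini integrability hypothesis. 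Finally, to make sure the statement carries content under the paper's standing convention, I will remark that the identity in fact holds on the complex strip $\{\Re s\in\mathring I_\mu\cap\mathring I_\nu\}$. This follows by Fubini, using the domination \eqref{eq:domination} of Lemma~\ref{lem:logconvex}, or alternatively by analytic continuation.
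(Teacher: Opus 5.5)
Your proposal is correct and follows essentially the paper's own proof: apply Tonelli's theorem to the nonnegative integrand $(xy)^{s}=x^{s}y^{s}$ to obtain $Z_{\mu\boxtimes\nu}=Z_\mu Z_\nu$ as an identity in $(0,\infty]$, then take the ratio at $s$ and $s-1$. Your additional checks ($\sigma$-finiteness of the factors, $S_\mu\cap S_\nu\subset S_{\mu\boxtimes\nu}$, and admissibility of $\mu\boxtimes\nu$) only make explicit what the paper's one-line argument leaves implicit.
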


	Whereas the homomorphism concerns products of \emph{distinct}
	independent variables, raising a \emph{single} variable to a power
	interacts with the transform through the cocycle identity and
	produces a dilation law: the suddency axis stretches, and the
	transform of the power is a product over consecutive unit windows.
	
	\begin{restatable}[Powers]{theorem}{thmpowers}\label{thm:power}
		Let $X\sim\mu$ be admissible, let $r>0$ be real, and let $s$ be
		such that both $rs$ and $rs-r$ lie in $\mathring I_\mu$;
		equivalently, $s\in\mathring S_{X^r}$, since
		$I_{X^r}=r^{-1}I_\mu$. Then
		\begin{equation}\label{eq:powerlog}
			\Lambda_{X^r}(s)=K_\mu(rs)-K_\mu(rs-r)
			=r\int_{s-1}^{s}K_\mu'(rv)\dd v ,
		\end{equation}
		and for integer $r\ge1$, the whole ladder
		$rs,rs-1,\dots,rs-r$ then lying in $\mathring I_\mu$ by convexity,
		\begin{equation}\label{eq:power}
			\Leh_{X^r}(s)=\prod_{j=0}^{r-1}\Leh_X(rs-j).
		\end{equation}
	\end{restatable}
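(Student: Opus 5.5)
The plan is to work at the level of partition functions and let the Lehmer structure follow from the definitions. Let $\mu^{(r)}$ denote the law of $X^r$, that is, the pushforward of $\mu$ under $x\mapsto x^r$. The change-of-variables formula for pushforward measures gives
\begin{equation*}
Z_{\mu^{(r)}}(s)=\int_{\Rpos}y^{s}\dd\mu^{(r)}(y)=\int_{\Rpos}x^{rs}\dd\mu(x)=Z_\mu(rs).
\end{equation*}
This holds for every $s\in\R$ as an identity in $(0,\infty]$, since the integrand is nonnegative. Two consequences follow at once. First, $I_{X^r}=r^{-1}I_\mu$. Second, $s\in S_{X^r}$ exactly when $rs\in I_\mu$ and $rs-r\in I_\mu$, and the same equivalence holds with interiors because $r>0$ is a homeomorphic dilation. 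This settles the parenthetical equivalence in the statement.

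Next I will take the logarithm of the definition \eqref{eq:defL} applied to $\mu^{(r)}$. This gives $\Lambda_{X^r}(s)=K_\mu(rs)-K_\mu(r(s-1))$, which is the first equality in \eqref{eq:powerlog}. For the integral form, I use Lemma~\ref{lem:logconvex}: $I_\mu$ is an interval and $K_\mu\in C^\infty(\mathring I_\mu)$. The endpoints $rs-r$ and $rs$ lie in $\mathring I_\mu$, so the whole segment between them does too. The fundamental theorem of calculus then gives $K_\mu(rs)-K_\mu(rs-r)=\int_{rs-r}^{rs}K_\mu'(w)\dd w$. The substitution $w=rv$, with $\dd w=r\dd v$, turns this into $r\int_{s-1}^{s}K_\mu'(rv)\dd v$.

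For integer $r\ge1$, I will apply the cocycle Lemma~\ref{lem:cocycle} with base point $rs$ and $k=r$. Its hypotheses require $rs\in I_\mu$ and $rs-r\in I_\mu$, and both hold. By convexity of $I_\mu$, every rung $rs-j$ with $0\le j\le r$ lies in $\mathring I_\mu$. Hence each factor $\Leh_X(rs-j)=Z_\mu(rs-j)/Z_\mu(rs-j-1)$ is finite and positive. The cocycle identity then gives
\begin{equation*}
\prod_{j=0}^{r-1}\Leh_X(rs-j)=\frac{Z_\mu(rs)}{Z_\mu(rs-r)}=\frac{Z_{\mu^{(r)}}(s)}{Z_{\mu^{(r)}}(s-1)}=\Leh_{X^r}(s),
\end{equation*}
which is \eqref{eq:power}.

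I see no serious obstacle. The one point needing care is bookkeeping in the domains: the interior condition must transfer correctly under the dilation, so that each intermediate $\Leh_X(rs-j)$ is actually defined. One should also note that admissibility of $\mu^{(r)}$ follows from $I_{X^r}=r^{-1}I_\mu\neq\emptyset$. Everything else is the change-of-variables identity $Z_{X^r}(s)=Z_\mu(rs)$ combined with earlier results.
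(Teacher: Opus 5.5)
Your proposal is correct and is essentially the paper's proof. Both use the change of variables $Z_{X^r}(a)=Z_\mu(ra)$, then logarithms with the substitution $w=rv$ for the integral form, then the cocycle identity over the unit ladder from $rs$ down to $rs-r$ for integer $r$. Your added checks on interiors and on the finiteness of each intermediate factor spell out steps the paper leaves implicit.
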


	Together with homogeneity, which is the case $Y\equiv\lambda$ of
	Theorem~\ref{thm:homo}, and the reflection identity
	\eqref{eq:reflection}, which is the case $r=-1$ up to
	$s\mapsto1-s$, this is the complete transformation calculus of the
	multiplicative group. On multiplicatively infinitely divisible laws
	the additive structure linearizes completely.
	
	\begin{restatable}[L\'evy--Khintchine]{theorem}{thmlevy}
		\label{thm:levy}
		Let $\log X$ be infinitely divisible with triplet
		$(b,\sigma^2,\Pi)$ and exponential moments on an open interval
		containing $[s-1,s]$. Then:
		\begin{enumerate}[label=\textup{(\roman*)},leftmargin=2.2em]
			\item the log-transform has the L\'evy--Khintchine form
			\begin{equation}\label{eq:levyL}
				\Lambda_X(s)=b+\sigma^2\Bigl(s-\tfrac12\Bigr)
				+\int_\R\Bigl[e^{(s-1)y}\bigl(e^{y}-1\bigr)
				-y\,\mathbf 1_{|y|\le1}\Bigr]\Pi(\dd y),
			\end{equation}
			\item its slope is bounded below by the Gaussian coefficient,
			\begin{equation}\label{eq:levyderiv}
				\Lambda_X'(s)
				=\sigma^2+\int_\R y\,e^{(s-1)y}\bigl(e^{y}-1\bigr)
				\Pi(\dd y)\;\ge\;\sigma^2 ;
			\end{equation}
			\item in this class the $k$th multiplicative root of $X$, by
			which we mean the positive variable $X^{[k]}$ whose logarithm
			is the $k$th convolution root of $\log X$, so that a product of
			$k$ independent copies of $X^{[k]}$ has the law of $X$, has
			transform $\Leh_X^{1/k}$.
		\end{enumerate}
	\end{restatable}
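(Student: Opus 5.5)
The plan is to use the master identity \eqref{eq:master}: $\Lambda_X(s)=K(s)-K(s-1)$, where $K(u)=\log\E[e^{u\log X}]$ is the cumulant generating function of the infinitely divisible variable $Y=\log X$. By hypothesis $\E e^{uY}<\infty$ on an open interval $U\supset[s-1,s]$. The first step is therefore the exponential-moment form of the L\'evy--Khintchine formula. For $u\in U$,
\begin{equation*}
K(u)=bu+\tfrac12\sigma^2u^2+\int_\R\bigl(e^{uy}-1-uy\,\mathbf 1_{|y|\le1}\bigr)\Pi(\dd y).
\end{equation*}
This is the classical fact (Sato, Theorem 25.17) that finiteness of $\E e^{uY}$ is equivalent to $\int_{|y|>1}e^{uy}\Pi(\dd y)<\infty$. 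When that holds, the characteristic exponent continues analytically from the imaginary axis to the real point $u$, and the integral above converges absolutely. Near the origin the integrand is $O(y^2)$; on $|y|>1$ it is dominated by $e^{uy}+1+|u||y|$.

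For part (i), subtract $K(s-1)$ from $K(s)$. The drift term gives $b$, and the Gaussian term gives $\tfrac12\sigma^2\bigl(s^2-(s-1)^2\bigr)=\sigma^2(s-\tfrac12)$. The jump integrands combine to $e^{sy}-e^{(s-1)y}-y\mathbf 1_{|y|\le1}=e^{(s-1)y}(e^y-1)-y\mathbf 1_{|y|\le1}$. Splitting the difference of the two integrals is legitimate because each converges absolutely, and this yields \eqref{eq:levyL}.

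For part (ii), differentiate in $s$ under the integral. Near $0$ the derivative of the integrand is $e^{(s-1)y}(e^y-1)+y e^{(s-1)y}(e^y-1)\cdot$ wait—more simply, $\frac{d}{ds}\bigl[e^{sy}-e^{(s-1)y}\bigr]=y\,e^{(s-1)y}(e^y-1)$, which is $O(y^2)$ locally uniformly in $s$ near $y=0$. On $|y|>1$ it is dominated by $|y|(e^{s'y}+e^{(s'-1)y})$ for $s'$ ranging over a slightly larger compact set in the open interval. This domination uses the openness of $U$ to absorb the factor $|y|$, exactly as in \eqref{eq:domination}. Dominated convergence gives the formula in \eqref{eq:levyderiv}. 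The integrand $y\,e^{(s-1)y}(e^y-1)$ is nonnegative, since $y$ and $e^y-1$ share a sign, and hence $\Lambda_X'(s)\ge\sigma^2$. As a cross-check, this matches Theorem~\ref{thm:jeffreys}, since $K''(u)=\sigma^2+\int y^2e^{uy}\Pi(\dd y)$ integrated over $[s-1,s]$ gives the same expression.

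For part (iii), the $k$th convolution root of $Y$ has triplet $(b/k,\sigma^2/k,\Pi/k)$. Its cumulant generating function is therefore $K/k$, finite on the same interval $U$, so $X^{[k]}$ is admissible with the needed domain. By \eqref{eq:master}, $\Lambda_{X^{[k]}}=\Lambda_X/k$, that is, $\Leh_{X^{[k]}}=\Leh_X^{1/k}$. Alternatively, Theorem~\ref{thm:homo} applied to $k$ independent copies gives $\Leh_{X^{[k]}}^k=\Leh_X$ with positive functions, which determines the root uniquely.

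The main obstacle is the first step: justifying the real-argument L\'evy--Khintchine formula from exponential moments on an open interval, including the required integrability of $\Pi$. I would cite Sato's theorem for this rather than reprove it. Everything after that is bookkeeping and dominated convergence.
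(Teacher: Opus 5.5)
Your proposal is correct and follows the paper's proof essentially step for step. You subtract the exponential-moment form of the L\'evy--Khintchine formula for $K$ at $u=s$ and $u=s-1$, differentiate under the integral with domination supplied by openness of the interval, and read off the root's cumulant function as $K/k$; the only additions are your explicit appeal to Sato's Theorem~25.17 for the real-argument formula, which the paper simply asserts, and your Tonelli alternative for part (iii), and you should also delete the abandoned ``wait'' aside in part (ii) when writing it out.
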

	
	The root in part (iii) is the infinitely divisible root, not the
	pointwise power $X^{1/k}$, which by Theorem~\ref{thm:power}
	rescales the suddency axis instead. The lognormal transform
	\begin{equation}\label{eq:lognormalcurve}
		\Leh(s)=e^{\,m+\sigma^2(s-1/2)}
	\end{equation}
	is not invariant under $\Leh\mapsto\Leh^{k}$, even up to
	dilation; what is invariant is the log-affine \emph{class}, and it
	is exactly the fixed-point set of the correctly normalized
	renormalization.
	
	\begin{restatable}[Renormalization fixed points]{theorem}{thmrenorm}\label{thm:renorm}
		Let $X$ be a positive random variable with $\E[\log X]=0$,
		$\sigma^2=\Var(\log X)\in(0,\infty)$, and with the moment
		generating function of $\log X$ finite on a neighbourhood
		$(-\varepsilon_0,\varepsilon_0)$ of the origin. For an integer $k\ge1$ let
		$X_1,\dots,X_k$ be independent copies of $X$ and define the
		renormalization
		\begin{equation}\label{eq:renormdef}
			\mathcal R_kX\eqdef\Bigl(\prod_{i=1}^{k}X_i\Bigr)^{1/\sqrt k} .
		\end{equation}
		Then:
		\begin{enumerate}[label=\textup{(\roman*)},leftmargin=2.2em]
			\item the free energy transforms by
			\begin{equation}\label{eq:renormK}
				K_{\mathcal R_kX}(s)=k\,K_X\bigl(s/\sqrt k\bigr) ;
			\end{equation}
			\item $X$ is a fixed point of $\mathcal R_k$ for some, and then
			for every, integer $k\ge2$ if and only if $X$ is lognormal with
			parameters $(0,\sigma^2)$, in which case its transform is
			log-affine,
			\begin{equation}\label{eq:logaffine}
				\Leh_X(s)=e^{\sigma^2(s-1/2)} ;
			\end{equation}
			\item for every $X$ as above,
			\begin{equation}\label{eq:multclt}
				\Lambda_{\mathcal R_nX}(s)\longrightarrow
				\sigma^2\Bigl(s-\tfrac12\Bigr),
				\qquad n\to\infty,
			\end{equation}
			locally uniformly in $s\in\R$.
		\end{enumerate}
	\end{restatable}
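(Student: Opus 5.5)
Part (i) is a direct computation with the character property. Let $Y=\prod_{i=1}^kX_i$. By Theorem~\ref{thm:homo}, applied $k-1$ times, $Z_Y=Z_X^k$ on $I_X$, so $K_Y=kK_X$. The power $Y^{1/\sqrt k}$ has $Z_{Y^{1/\sqrt k}}(s)=\E[Y^{s/\sqrt k}]=Z_Y(s/\sqrt k)$; this is the exponent-rescaling behind Theorem~\ref{thm:power}. Taking logarithms gives \eqref{eq:renormK} for $s/\sqrt k\in I_X$, and in particular on $(-\sqrt k\,\varepsilon_0,\sqrt k\,\varepsilon_0)$. Note that $Z_X(0)=1$, so $K_X(0)=0$, and the hypotheses give $K_X'(0)=\E\log X=0$ and $K_X''(0)=\sigma^2$.

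For (ii), first the easy direction. If $\log X\sim N(0,\sigma^2)$, then $K_X(s)=\sigma^2s^2/2$, so $kK_X(s/\sqrt k)=K_X(s)$ on all of $\R$. Strip uniqueness (Lemma~\ref{lem:strip}) then gives $\mathcal R_kX\stackrel{d}{=}X$. The transform follows from \eqref{eq:master}: $\Lambda_X(s)=K_X(s)-K_X(s-1)=\sigma^2(s-\tfrac12)$, which is \eqref{eq:logaffine}.

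Conversely, suppose $\mathcal R_kX\stackrel d=X$ for some $k\ge2$. Then $K:=K_X$ satisfies $K(s)=kK(s/\sqrt k)$ on $(-\varepsilon_0,\varepsilon_0)$. Iterating gives
\[
K(s)=k^nK\bigl(s\,k^{-n/2}\bigr).
\]
Taylor's theorem at $0$, with $K(0)=K'(0)=0$ and $K$ smooth by Lemma~\ref{lem:logconvex}(iv), gives
\[
K(t)=\tfrac{\sigma^2}{2}t^2+o(t^2),
\]
so $k^nK(sk^{-n/2})\to\sigma^2s^2/2$. Hence $K(s)=\sigma^2s^2/2$ near $0$, and Lemma~\ref{lem:strip} identifies $X$ as lognormal$(0,\sigma^2)$. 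This is the step I regard as the crux.

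Once $X$ is lognormal, the fixed-point property holds for every $k$ by the easy direction. This yields the ``some, hence every'' clause.

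A subtlety is that the fixed-point equation a priori gives equality of laws, hence of $Z$, only where both partition functions are finite. Equality of laws gives $I_{\mathcal R_kX}=I_X$, and by (i) this equals $\sqrt k\,I_X$. Since $I_X\ni0$ is an open-containing interval, this forces $I_X=\R$, though the argument above only needs the neighbourhood $(-\varepsilon_0,\varepsilon_0)$.

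For (iii), fix a compact $[-R,R]$. For $n$ large, $[-R-1,R]\subset(-\sqrt n\,\varepsilon_0,\sqrt n\,\varepsilon_0)$, so $[-R,R]$ lies in $\mathring S_{\mathcal R_nX}$. Then
\[
\Lambda_{\mathcal R_nX}(s)=nK\bigl(s/\sqrt n\bigr)-nK\bigl((s-1)/\sqrt n\bigr).
\]
Write $K(t)=\tfrac{\sigma^2}2t^2+t^3\rho(t)$ with $\rho$ bounded near $0$, using smoothness. The quadratic terms give exactly
\[
\tfrac{\sigma^2}{2}\bigl(s^2-(s-1)^2\bigr)=\sigma^2\bigl(s-\tfrac12\bigr),
\]
and the cubic remainders contribute
\[
O\bigl(n\cdot(R+1)^3n^{-3/2}\bigr)=O(n^{-1/2}),
\]
uniformly on $[-R,R]$. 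This gives \eqref{eq:multclt} locally uniformly. The only care needed is that the remainder bound is uniform, which follows from a bound on $K'''$ on a fixed neighbourhood of $0$ via \eqref{eq:domination}.
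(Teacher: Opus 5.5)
Your proposal is correct and follows essentially the paper's own argument: the character property plus exponent rescaling for (i); iterating the scaling relation toward the origin and taking the second-order Taylor limit for (ii), which the paper phrases as invariance of $g(u)=K_X(u)/u^{2}$ under $u\mapsto u/\sqrt k$; and a cubic Taylor remainder for (iii). Your extra checks are correct refinements that the paper leaves implicit: that $[-R,R]\subset\mathring S_{\mathcal R_nX}$ for large $n$, and that a fixed point forces $I_X=\R$.
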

	
	Part (ii) explains the lognormal's place in the theory, and part
	(iii) is the multiplicative central limit theorem read on the curve.
	The implication of part (ii) does not reverse, and the obstruction
	deserves explicit exhibition, because the same mechanism governs
	the whole rigidity theory: a positive $1$-periodic factor on the
	partition function leaves the transform unchanged, and it can be
	chosen to preserve both normalizations above.
	
	\begin{example}[A log-affine companion]\label{ex:logaffinecompanion}
		Fix $\sigma^2>0$, put $a_k\eqdef e^{-2\pi^2k^2/\sigma^2}$, and let
		$\varepsilon_1,\varepsilon_2$ satisfy
		\begin{equation}\label{eq:companionconstraint}
			|\varepsilon_1|+|\varepsilon_2|\le1,
			\qquad
			\varepsilon_1a_1+2\varepsilon_2a_2=0 .
		\end{equation}
		The density
		\begin{equation}\label{eq:companiondensity}
			f(x)=\frac{1}{x\sigma\sqrt{2\pi}}\,
			e^{-(\log x)^2/(2\sigma^2)}
			\Bigl[1+\varepsilon_1\sin\frac{2\pi\log x}{\sigma^2}
			+\varepsilon_2\sin\frac{4\pi\log x}{\sigma^2}\Bigr]
		\end{equation}
		is then nonnegative and defines a probability law $\mu$ with
		\begin{equation}\label{eq:companionZ}
			Z_\mu(s)=e^{\sigma^2s^2/2}\,G(s),
			\qquad G(s)=1+\varepsilon_1a_1\sin(2\pi s)
			+\varepsilon_2a_2\sin(4\pi s),
		\end{equation}
		and $G$ has period one, so the transform of $\mu$ is the
		log-affine curve \eqref{eq:logaffine} exactly. The constraint
		\eqref{eq:companionconstraint} gives $G(0)=1$ and
		$G'(0)=G''(0)=0$, so $\mu$ is a probability law with
		$\E_\mu[\log X]=0$ and $\Var_\mu(\log X)=\sigma^2$, and
		unless $\varepsilon_1=\varepsilon_2=0$ it is not lognormal.
		Taking $\sigma^2=20$ and $\varepsilon_2=0.9$, so that
		$\varepsilon_1=-0.093$ and the bracket in density
		\eqref{eq:companiondensity} stays above $0.033$, gives a law
		visibly different from the lognormal whose Lehmer curve coincides
		with it identically.
	\end{example}
	
	Log-affinity of the curve therefore characterizes the lognormal
	only under an additional hypothesis, and multiplicative infinite
	divisibility proves to be one such.
	
	The character property also gives an exact treatment of the generic
	contamination model for positive data: for a product $Y=XN$ of a
	signal and an independent positive noise of known law,
	deconvolution is pointwise \emph{division} of Lehmer curves, with
	no inversion integral, and it is stable in relative error.
	
	\begin{restatable}[Deconvolution]{proposition}{propdeconv}\label{prop:deconv}
		Let $Y=XN$ with $X$ and $N$ independent and positive. Then
		\begin{equation}\label{eq:deconv}
			\Leh_X(s)=\frac{\Leh_Y(s)}{\Leh_N(s)},
			\qquad s\in S_X\cap S_N .
		\end{equation}
		Moreover, if
		\begin{equation}\label{eq:relerr}
			\begin{gathered}
				\widehat\Leh_Y=\Leh_Y(1+\varepsilon_Y),\\
				\widehat\Leh_N=\Leh_N(1+\varepsilon_N)
			\end{gathered}
		\end{equation}
		pointwise with
		$|\varepsilon_Y|,|\varepsilon_N|\le\varepsilon<\tfrac12$, then
		\begin{equation}\label{eq:deconvstab}
			\begin{gathered}
				\frac{\widehat\Leh_Y}{\widehat\Leh_N}=\Leh_X(1+\varepsilon_X),\\
				|\varepsilon_X|\le\frac{2\varepsilon}{1-\varepsilon} ,
			\end{gathered}
		\end{equation}
		and the constant in bound \eqref{eq:deconvstab} is attained, so it
		cannot be improved.
	\end{restatable}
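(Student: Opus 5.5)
The identity \eqref{eq:deconv} comes straight from Theorem~\ref{thm:homo}. Take $\mu$ and $\nu$ to be the laws of $X$ and $N$. Then the law of $Y$ is $\mu\boxtimes\nu$, so $\Leh_Y(s)=\Leh_X(s)\Leh_N(s)$ on $S_X\cap S_N$. Every value of the transform is a ratio of two positive finite partition-function values, so $\Leh_N(s)\in(0,\infty)$ and we may divide by it. In passing we also note that $S_X\cap S_N\subset S_Y$, because $Z_Y=Z_XZ_N$ is finite wherever both factors are finite.

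For the stability bound we write the estimated quotient as
\[
\frac{\widehat\Leh_Y}{\widehat\Leh_N}=\Leh_X\,\frac{1+\varepsilon_Y}{1+\varepsilon_N},
\qquad
\varepsilon_X=\frac{1+\varepsilon_Y}{1+\varepsilon_N}-1=\frac{\varepsilon_Y-\varepsilon_N}{1+\varepsilon_N}.
\]
The numerator satisfies $|\varepsilon_Y-\varepsilon_N|\le2\varepsilon$. The denominator satisfies $1+\varepsilon_N\ge1-\varepsilon>0$, since $\varepsilon<\tfrac12<1$. Together these give $|\varepsilon_X|\le 2\varepsilon/(1-\varepsilon)$.

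For sharpness we choose the worst case pointwise: $\varepsilon_Y=\varepsilon$ and $\varepsilon_N=-\varepsilon$. Then $\varepsilon_X=2\varepsilon/(1-\varepsilon)$ exactly. This perturbation is admissible, since $\widehat\Leh_Y=\Leh_Y(1+\varepsilon)$ and $\widehat\Leh_N=\Leh_N(1-\varepsilon)$ are positive functions whose relative errors have modulus $\varepsilon$. Any concrete pair $X,N$ works, for instance constants or lognormals. So the constant is attained and cannot be improved.

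There is no real obstacle in this argument. The one point needing care is domain bookkeeping: division must happen on $S_X\cap S_N$, where Theorem~\ref{thm:homo} applies. Note also that the hypothesis $\varepsilon<\tfrac12$ is stronger than the proof needs (it only uses $\varepsilon<1$); it presumably keeps the bound below $2$, so it is a meaningful relative error.
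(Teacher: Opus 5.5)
Your proposal is correct and follows the paper's proof essentially verbatim. The identity is the character property (Theorem~\ref{thm:homo}) rearranged, the relative error is $(\varepsilon_Y-\varepsilon_N)/(1+\varepsilon_N)$ bounded using $|1+\varepsilon_N|\ge 1-\varepsilon$, and sharpness comes from $\varepsilon_Y=\varepsilon$, $\varepsilon_N=-\varepsilon$. Your side remarks are also accurate: $S_X\cap S_N\subset S_Y$ holds, and the argument only uses $\varepsilon<1$.
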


	A developed statistical literature on multiplicative deconvolution
	complements the identity. Density estimation under multiplicative
	measurement error proceeds through the Mellin transform, with
	minimax rates governed by the decay of the noise Mellin
	transform~\cite{belomestny2020,brennermiguel2021}. The present
	statement is the population identity at the level of the curve, and
	the division is stable in \emph{relative} error precisely because
	the curve is a ratio of two Mellin evaluations; the ill-posedness
	that drives the minimax rates reappears only when the curve is
	inverted into the measure itself.
	
	Every functional of the signal expressible through its Lehmer curve
	is thereby recovered directly. Full recovery of the law itself is
	the inversion problem, to which we now turn.
	%=====================================================================
	\section{Rigidity and Inversion}\label{sec:rigidity}
	%=====================================================================
	
	Does the Lehmer transform determine the measure? At best up to a
	positive scalar, since $\Leh_{c\mu}=\Leh_\mu$; and since
	$\Lambda_\mu$ prescribes only unit increments of the free energy,
	the a priori ambiguity is the kernel of the unit difference
	operator. This section settles the question in both directions. A
	growth condition forces determinacy, by a convexity squeeze in the
	spirit of Bohr--Mollerup made quantitative, and the condition holds
	for every measure of bounded support, hence all empirical data.
	Against this, the lognormal carries an explicit one-parameter fiber
	of distinct laws sharing its transform at every real suddency
	moment, by resonant log-periodic modulation; the mechanism is
	Heyde's classical moment indeterminacy~\cite{heyde1963}, and the
	exact obstruction is a nontrivial group of imaginary periods. The
	two mechanisms are then proved mutually exclusive.
	
	\begin{restatable}[Gauge fiber]{lemma}{lemfiber}\label{lem:fiber}
		Let $\mathcal G$ denote the multiplicative group of functions
		$e^{\varpi}$ with $\varpi$ real-analytic and $1$-periodic. Let
		$\mu,\mu'$ be admissible with a common \emph{open} suddency
		interval $S$ of length greater than one. Then
		$\Leh_\mu\equiv\Leh_{\mu'}$ on $S$ if and only if
		\begin{equation}\label{eq:gaugeeq}
			Z_{\mu'}=G\,Z_\mu\quad\text{on }S\cup(S-1)
			\qquad\text{for some }G\in\mathcal G .
		\end{equation}
	\end{restatable}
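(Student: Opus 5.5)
The plan is to work with the free energies directly. Set $H\eqdef\log Z_{\mu'}-\log Z_\mu$. On $S\cup(S-1)$ both partition functions are finite and positive: $S$ is a suddency interval common to both measures, so $S$ and $S-1$ lie in $I_\mu\cap I_{\mu'}$. Since $S$ is open, $S\cup(S-1)$ is open, and $S\cup(S-1)=(\inf S-1,\sup S)$ is an interval because $S$ has length greater than one. By Lemma~\ref{lem:logconvex}(iv) both $K_\mu$ and $K_{\mu'}$ are real-analytic there, so $H$ is real-analytic on $S\cup(S-1)$. By the master identity \eqref{eq:master},
\[
\Lambda_{\mu'}(s)-\Lambda_\mu(s)=H(s)-H(s-1),\qquad s\in S .
\]
Hence $\Leh_\mu\equiv\Leh_{\mu'}$ on $S$ holds exactly when $H(s)=H(s-1)$ for every $s\in S$.

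For the easy direction, assume $Z_{\mu'}=G\,Z_\mu$ with $G=e^{\varpi}$ and $\varpi$ real-analytic and $1$-periodic. For $s\in S$, both $s$ and $s-1$ lie in the domain, so
\[
\Leh_{\mu'}(s)=\frac{G(s)Z_\mu(s)}{G(s-1)Z_\mu(s-1)}=\Leh_\mu(s),
\]
using $G(s)=G(s-1)$.

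For the converse, assume $H(s)=H(s-1)$ on $S$. I would extend $H$ from the interval $J\eqdef S\cup(S-1)$ to a $1$-periodic function $\varpi$ on $\R$ by setting $\varpi(t)\eqdef H(t-n)$ for any integer $n$ with $t-n\in J$. This needs two checks.

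\emph{Existence.} Every $t$ has such an $n$, because $J$ has length greater than one and so contains a complete residue system modulo $1$.

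\emph{Well-definedness.} If $t-n$ and $t-m$ both lie in $J$ with $n<m$, then the whole ladder $t-n,t-n-1,\dots,t-m$ lies in the interval $J$, since $J$ is convex. Each step goes from a point $u$ with $u\in J$ and $u-1\in J$. I must check that such a $u$ lies in $S$. Write $S=(\alpha,\beta)$, so $J=(\alpha-1,\beta)$. Then $u<\beta$ and $u-1>\alpha-1$, which gives $u\in(\alpha,\beta)=S$. The hypothesis $H(u)=H(u-1)$ then applies at each rung, and chaining these equalities gives $H(t-n)=H(t-m)$.

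So $\varpi$ is well defined and $1$-periodic, and it agrees with $H$ on $J$.

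It remains to show $\varpi$ is real-analytic on $\R$. Near any point $t_0$, pick $n$ with $t_0-n\in J$. Since $J$ is open, on a small neighbourhood of $t_0$ we have $\varpi(t)=H(t-n)$, which is a translate of a real-analytic function. Hence $\varpi$ is real-analytic, $G\eqdef e^{\varpi}\in\mathcal G$, and $Z_{\mu'}=G\,Z_\mu$ on $J$. This is \eqref{eq:gaugeeq}.

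The main obstacle I expect is the bookkeeping in the well-definedness step. The point is that the functional equation is only known on $S$, not on all of $J$. The key observation is that "$u\in J$ and $u-1\in J$" forces $u\in S$. This uses both the openness of $S$ and the length condition, which together make $J$ an interval, so the ladder stays inside it. Without length greater than one, $J$ might not cover a full period, and the periodic extension could fail to exist or to be analytic.
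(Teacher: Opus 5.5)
Your proof is correct and takes the same route as the paper: set $\varpi=\log Z_{\mu'}-\log Z_\mu$, observe that agreement of the transforms on $S$ is exactly the vanishing of its unit difference there, and extend it periodically. You also supply the details the paper leaves implicit: that $S\cup(S-1)$ is an open interval; that $u,u-1\in S\cup(S-1)$ forces $u\in S$, so the extension is well defined; and that the extension is locally a translate of $\varpi$, hence real-analytic.
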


	\begin{remark}[Reading the lemma]\label{rem:fiberread}
		The two orders $s$ and $s-1$ both enter the transform, so the
		gauge relation \eqref{eq:gaugeeq} must be imposed on the union
		$S\cup(S-1)$ and not on $S$ alone; openness of $S$ is what
		guarantees real-analyticity of the quotient and hence of its
		periodic extension. The lemma says that the transform determines
		the measure exactly modulo the log-periodic gauge, the
		exponentiated kernel of the unit difference operator.
	\end{remark}
	
	Determinacy is therefore the statement that no nonconstant gauge is
	\emph{realizable}. The next theorem kills the gauge under a weak
	hypothesis: convexity traps the free energy's derivative between
	consecutive increments, so two log-convex solutions of one
	difference equation differ by a periodic function whose oscillation
	is controlled by the growth ratio of the transform.
	
	Part (i) of the next theorem is a specialization of the
	$\Gamma$-type uniqueness theory of Krull~\cite{krull1948} and
	Webster~\cite{webster1997}, whose framework is subsumed by the
	definitive modern treatment of Marichal and
	Zena\"idi~\cite{marichal2022,marichal2024}: applying Webster's
	theorem to $Z_1$ and $Z_2$ separately gives part (i). Part (ii)
	lies outside that theory and appears to be new: it bounds the
	residual gauge when the growth ratio does \emph{not} tend to one,
	so that no limit formula is available.
	
	\begin{restatable}[Rigidity]{theorem}{thmrigidity}\label{thm:rigidity}
		Let $g>0$ and let $Z_1,Z_2$ be positive log-convex solutions of
		\begin{equation}\label{eq:funceq}
			Z(s)=g(s)\,Z(s-1)\qquad\text{on }(s_0,\infty),
		\end{equation}
		with the same $g$. Put
		\begin{equation}\label{eq:etadef}
			\eta\eqdef\limsup_{s\to\infty}
			\log\frac{g(s+1)}{g(s)}\in[0,\infty] .
		\end{equation}
		Then:
		\begin{enumerate}[label=\textup{(\roman*)},leftmargin=2.2em]
			\item if $\eta=0$, there is $c>0$ with $Z_1=cZ_2$;
			\item if $\eta<\infty$, there is $c>0$ with
			\begin{equation}\label{eq:quantgauge}
				e^{-\eta/4}\;\le\;\frac{Z_1(s)}{c\,Z_2(s)}\;\le\;e^{\eta/4}
				\qquad\text{for all }s>s_0+1 .
			\end{equation}
		\end{enumerate}
		Consequently, if $\Leh_\mu=\Leh_\nu$ on a common suddency interval
		$(s_0,\infty)$ and
		\begin{equation}\label{eq:ratiocond}
			\lim_{s\to\infty}\frac{\Leh_\mu(s+1)}{\Leh_\mu(s)}=1,
		\end{equation}
		then $\nu=c\mu$. If instead the limit
		superior in definition \eqref{eq:etadef} for $g=\Leh_\mu$ is a
		finite number $\eta$, then $Z_\nu/Z_\mu$ lies within a factor
		$e^{\pm\eta/4}$ of a constant.
	\end{restatable}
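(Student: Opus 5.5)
The plan is a Bohr--Mollerup convexity squeeze applied to the difference of the two free energies. Put $K_i\eqdef\log Z_i$, which is convex, and $\lambda\eqdef\log g$. The functional equation \eqref{eq:funceq} reads $K_i(s)-K_i(s-1)=\lambda(s)$. Hence $D\eqdef K_1-K_2$ satisfies $D(s)=D(s-1)$, so $D$ is $1$-periodic on $(s_0,\infty)$. Everything then reduces to bounding the oscillation of $D$ over one period by $\eta/2$. Choosing $\log c$ as the midpoint of the range of $D$ gives the two-sided bound $e^{\pm\eta/4}$ of \eqref{eq:quantgauge}, and the case $\eta=0$ forces $D$ constant, which is part (i).

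\emph{Key step (secant trapping).} Fix $t>s_0+1$ and $x\in[0,1]$. By convexity of $K_i$, slopes of secants increase along the line. The secant over $[t-1,t]$ has slope $\lambda(t)$ and the secant over $[t,t+1]$ has slope $\lambda(t+1)$, so
\[
x\,\lambda(t)\;\le\;K_i(t+x)-K_i(t)\;\le\;x\,\lambda(t+1),\qquad i=1,2 .
\]
In passing, this also shows $\lambda(t+1)\ge\lambda(t)$, so $\eta\ge0$. Subtracting the two instances gives
\[
|D(t+x)-D(t)|\le x\,\bigl(\lambda(t+1)-\lambda(t)\bigr)=x\log\frac{g(t+1)}{g(t)} .
\]
Now fix a base point $t_0>s_0+1$ and replace $t$ by $t_0+n$. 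By periodicity the left side is independent of $n$. Taking $\limsup_{n\to\infty}$ of the right side yields $|D(t_0+x)-D(t_0)|\le x\eta$.

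\emph{Symmetrization.} Applying the same estimate to the increment from $t_0+x$ to $t_0+1$, and using $D(t_0+1)=D(t_0)$, gives the complementary bound $(1-x)\eta$. Hence
\[
|D(y)-D(y')|\le\min\bigl(|y-y'|,\,1-|y-y'|\bigr)\,\eta\le\eta/2
\]
for any two points $y,y'$ in a common period, since $t_0$ was arbitrary. This is the only delicate point, because it is what produces the sharp constant $\tfrac14$ rather than $\tfrac12$. With $\osc D\le\eta/2$, both (i) and (ii) follow as described above.

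\emph{Consequences for measures.} By Lemma~\ref{lem:logconvex}, $Z_\mu$ and $Z_\nu$ are positive and log-convex on their moment domains. On the common suddency interval $(s_0,\infty)$, equality $\Leh_\mu=\Leh_\nu$ says both solve \eqref{eq:funceq} with $g=\Leh_\mu$. Equivalently, by Lemma~\ref{lem:fiber}, $Z_\nu=GZ_\mu$ with $G$ periodic. The ratio condition \eqref{eq:ratiocond} gives $\eta=0$, so $Z_\nu=cZ_\mu=Z_{c\mu}$ on an open interval. Lemma~\ref{lem:strip} then yields $\nu=c\mu$. The finite-$\eta$ statement is part (ii) verbatim.

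The remaining care is bookkeeping: the squeeze uses the point $t-1$, so it needs $t>s_0+1$, which explains the range in \eqref{eq:quantgauge}. One must also check that the open interval is interior to both moment domains, so that strip uniqueness applies.
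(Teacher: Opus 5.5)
Your proof is correct and follows the paper's route. Both arguments use the periodic log-ratio, trap it by convexity between the consecutive unit chord slopes $\log g(t)$ and $\log g(t+1)$, and use periodicity to push that trap out to where its width is at most $\eta$; the circle argument then gives $\osc D\le\eta/2$, and centring gives $\eta/4$. The one difference is that you compare chord increments directly rather than one-sided derivatives, which spares you the paper's appeal to almost-everywhere differentiability and absolute continuity of the periodic difference.

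One side remark needs correcting: $\tfrac14$ is not known to be sharp. The paper's Remark~\ref{rem:notsharp} exhibits a pair realizing only $\eta/8$ and leaves the optimal constant open between $\eta/8$ and $\eta/4$.
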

	
	\begin{corollary}[Rigid measures]\label{cor:rigid}
		Let $\mu$ be admissible with $(s_0,\infty)\subset S_\mu$ for some
		$s_0$, so that Theorem~\ref{thm:rigidity} applies to it.
		\begin{enumerate}[label=\textup{(\roman*)},leftmargin=2.2em]
			\item Condition \eqref{eq:ratiocond} holds automatically
			whenever $\esssup_\mu X<\infty$, since in that case
			$\Leh_\mu(s)\to\esssup_\mu X\in(0,\infty)$ as
			$s\to\infty$.
			\item Hence every measure of support bounded above, in
			particular every empirical measure, every probability vector,
			and every scalar spectral measure
			$\langle E(\cdot)v,v\rangle$ of a positive bounded
			operator, is determined by its Lehmer transform up to a
			positive scalar.
			\item Among unbounded laws the condition covers the Gamma and
			Weibull families, and every law whose transform grows
			subgeometrically.
			\item When ${\essinf_\mu X>0}$, the mirror statement as
			$s\to-\infty$ holds through the reflection identity
			\eqref{eq:reflection}.
		\end{enumerate}
	\end{corollary}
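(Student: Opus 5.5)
The plan is to take the four items in order, deducing each from Theorem~\ref{thm:rigidity} through its ratio condition \eqref{eq:ratiocond}.

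For (i), set $M=\esssup_\mu X<\infty$. By the master identity, $\Leh_\mu(s)=\E_{\mu_{s-1}}[X]\le M$. For the lower bound I will fix $\delta>0$ and use $\mu(\{x>M-\delta\})>0$. Bounding $Z_\mu(s)\ge(M-\delta)\,\int_{x>M-\delta}x^{s-1}\dd\mu$ gives
\[
\Leh_\mu(s)\;\ge\;(M-\delta)\,\frac{\int_{x>M-\delta}x^{s-1}\dd\mu}{Z_\mu(s-1)} .
\]
The fraction on the right is the escort mass $\mu_{s-1}(\{x>M-\delta\})$. It tends to $1$ as $s\to\infty$, because the remaining mass is at most $\int_{x\le M-\delta'}x^{s-1}\dd\mu\big/\int_{x>M-\delta''}x^{s-1}\dd\mu$ with $\delta''<\delta'$, which decays like $((M-\delta')/(M-\delta''))^{s}$. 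This is exactly the Varadhan-type argument behind Theorem~\ref{thm:statgen}(i). Some care is needed because $\mu$ may have infinite mass near $0$; I will use the fact that $(s_0,\infty)\subset I_\mu$ to dominate the contribution of $(0,M-\delta']$ via $x^{s-1}\le x^{s_1-1}(M-\delta')^{s-s_1}$ for fixed $s_1$. Once $\Leh_\mu(s)\to M\in(0,\infty)$, the ratio $\Leh_\mu(s+1)/\Leh_\mu(s)\to1$, so \eqref{eq:ratiocond} holds.

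For (ii), take $\nu$ with $\Leh_\nu=\Leh_\mu$ on the common interval. Part (i) together with the last clause of Theorem~\ref{thm:rigidity} gives $\nu=c\mu$; within that clause, strip uniqueness (Lemma~\ref{lem:strip}) is what turns $Z_\nu=cZ_\mu$ on an open interval into equality of measures. The listed cases are then immediate. An empirical measure or probability vector has finite support. A spectral measure of a positive bounded operator $T$ is supported in $[0,\|T\|]$; since all measures here live on $\Rpos$, I read this as the restriction to $(0,\|T\|]$, i.e.\ I assume $\ker T$ is trivial or is discarded.

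For (iii), the Gamma law with shape $k$ has $\Leh(s)=s+k-1$ by Example~\ref{ex:digamma}, and the ratio $(s+k)/(s+k-1)$ tends to $1$. For the Weibull law with shape $\beta$ I will compute $Z(s)=\Gamma(1+s/\beta)$ up to scale, so
\[
\Leh(s)=\frac{\Gamma(1+s/\beta)}{\Gamma(1+(s-1)/\beta)}\sim (s/\beta)^{1/\beta}
\]
by Stirling or Wendel's limit. This is regularly varying, so the ratio again tends to $1$. The clause ``subgeometric growth'' I interpret as exactly \eqref{eq:ratiocond}, so this part is a matter of definition.

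For (iv), I will apply the reflection identity \eqref{eq:reflection}. The condition $\essinf_\mu X>0$ means $\esssup_{\check\mu}X<\infty$. Equality of $\Leh_\mu$ and $\Leh_\nu$ on $(-\infty,s_0)$ becomes equality of $\Leh_{\check\mu}$ and $\Leh_{\check\nu}$ on $(1-s_0,\infty)$. Part (ii) then gives $\check\nu=c\check\mu$, and pushing forward under $x\mapsto1/x$ gives $\nu=c\mu$.

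The only real obstacle is the uniform domination in (i) when $\mu$ has infinite mass accumulating at $0$. I will handle it by the factorization $x^{s-1}\le x^{s_1-1}(M-\delta')^{s-s_1}$ on $(0,M-\delta']$, which is valid for $s>s_1$ with $s_1-1\in I_\mu$.
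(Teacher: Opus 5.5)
Your proposal is correct and follows the route the paper itself indicates. Part (i) rests on $\Leh_\mu(s)=\E_{\mu_{s-1}}[X]\to\esssup_\mu X$ by the Varadhan-type argument of Theorem~\ref{thm:statgen}(i). Part (ii) uses the consequence clause of Theorem~\ref{thm:rigidity} together with Lemma~\ref{lem:strip}. Part (iii) takes the explicit Gamma ratio and Wendel's limit for the Weibull, and part (iv) goes through the reflection identity.

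One detail is in your favour. The argument the paper cites, in the proof of Theorem~\ref{thm:statgen}, assumes a finite, compactly supported $\mu$: it bounds by $\mu(\Rpos)$ and relies on $\log X$ being bounded below. Your domination $x^{s-1}\le x^{s_1-1}(M-\delta)^{s-s_1}$, and your lower bound through $\E_{\mu_{s-1}}[X]$ rather than $\E_{\mu_u}[\log X]$, also cover infinite-mass measures accumulating at the origin, such as the geometric ensemble.
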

	
	Two boundary cases delimit part (ii). Mass accumulating at the
	origin is permitted, since a finite measure supported in $(0,M]$
	still satisfies the standing hypothesis; an atom at the origin
	itself, as for a positive semidefinite operator with nontrivial
	kernel, falls outside a theory that lives on $\Rpos$. For the
	Gamma family in part (iii) the ratio is explicit,
	$\Leh(s+1)/\Leh(s)=(s+k)/(s+k-1)\to1$.
	
	\begin{remark}[The bounded case]\label{rem:hausdorff}
		For compactly supported $\mu$ the conclusion can be reached
		without Theorem~\ref{thm:rigidity}: the cocycle identity
		\eqref{eq:momenttelescope} determines every moment up to one
		constant, and a compactly supported measure is determined by its
		moments. The force of the theorem is in the unbounded case, and
		its quantitative part in the measures failing condition
		\eqref{eq:ratiocond}, where it is the only available control on
		the fiber.
	\end{remark}
	
	The growth condition has a purely information-theoretic reading,
	which shows it to be a statement about ensembles rather than an
	analytic hypothesis imposed from outside.
	
	\begin{restatable}[Merging of escorts]{proposition}{propinfoform}\label{prop:infoform}
		Let $\mu$ be admissible with $(s_0,\infty)\subset S_\mu$. The
		following are equivalent:
		\begin{enumerate}[label=\textup{(\roman*)},leftmargin=2.2em]
			\item condition \eqref{eq:ratiocond} holds;
			\item the free energy is asymptotically affine at unit scale,
			\begin{equation}\label{eq:seconddiff}
				K_\mu(s+1)-2K_\mu(s)+K_\mu(s-1)\longrightarrow0,
				\qquad s\to\infty;
			\end{equation}
			\item consecutive escort ensembles merge in Ces\`aro mean,
			\begin{equation}\label{eq:jeffreysmerge}
				\int_{s}^{s+1}J\bigl(\mu_{u-1},\mu_u\bigr)\dd u
				\longrightarrow0,\qquad s\to\infty .
			\end{equation}
		\end{enumerate}
		A measure is therefore determined by its Lehmer transform as soon
		as its high-suddency escort ensembles become statistically
		indistinguishable.
	\end{restatable}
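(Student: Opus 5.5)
The plan is to rewrite all three conditions in terms of the free energy and its unit increments, using the master identity of Proposition~\ref{prop:master} and the Jeffreys identity of Theorem~\ref{thm:jeffreys}.

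For (i)$\Leftrightarrow$(ii), write $\Lambda_\mu(s)=K_\mu(s)-K_\mu(s-1)$. Then
\[
\log\frac{\Leh_\mu(s+1)}{\Leh_\mu(s)}=\Lambda_\mu(s+1)-\Lambda_\mu(s)=K_\mu(s+1)-2K_\mu(s)+K_\mu(s-1).
\]
So the ratio in \eqref{eq:ratiocond} tends to one exactly when this second difference tends to zero. This equivalence is immediate; it only requires that $s+1$ and $s-1$ lie in $S_\mu$, which holds for large $s$ by the hypothesis $(s_0,\infty)\subset S_\mu$.

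For (ii)$\Leftrightarrow$(iii), Theorem~\ref{thm:jeffreys} gives $J(\mu_{u-1},\mu_u)=\Lambda_\mu'(u)$ for $u\in\mathring S_\mu$. Integrating over $[s,s+1]$ gives
\[
\int_s^{s+1}J(\mu_{u-1},\mu_u)\dd u=\Lambda_\mu(s+1)-\Lambda_\mu(s),
\]
which is the same second difference of $K_\mu$. Hence (iii) also coincides with (ii). The fundamental theorem of calculus applies because $\Lambda_\mu$ is real-analytic on $\mathring S_\mu$ by Lemma~\ref{lem:logconvex}(v), and $[s,s+1]\subset\mathring S_\mu$ once $s>s_0$.

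It is worth noting that the integrand is nonnegative by \eqref{eq:logderiv}. The Ces\`aro formulation in (iii) is therefore a genuine convergence of averages of a nonnegative quantity, and it cannot be upgraded to pointwise merging without further hypotheses. The closing sentence of the proposition then follows by feeding (iii)$\Rightarrow$(i) into the consequence clause of Theorem~\ref{thm:rigidity}.

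The main point requiring care is the bookkeeping of domains, since $(s_0,\infty)\subset S_\mu$ is stated for the closed suddency domain while the identities above need the open interior. I would restrict to $s>s_0+1$ so that every point used lies in $\mathring S_\mu$. Beyond that, I anticipate no real obstacle: the proposition is essentially a reading of the rigidity condition through the identities already proved.
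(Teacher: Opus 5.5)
Your argument is correct and is the paper's own: (i)$\Leftrightarrow$(ii) by writing $\log\bigl(\Leh_\mu(s+1)/\Leh_\mu(s)\bigr)$ as the second difference of $K_\mu$, and (iii) by integrating the Jeffreys identity $J(\mu_{u-1},\mu_u)=\Lambda_\mu'(u)$ over $[s,s+1]$. One aside is false and should be dropped: the Ces\`aro condition (iii) \emph{does} upgrade to pointwise merging with no further hypothesis, because $J(\mu_{u-1},\mu_u)=\int_{u-1}^{u}K_\mu''$ with $K_\mu''\ge0$, so for large $u$ the average $\int_{u-1/2}^{u+1/2}J(\mu_{v-1},\mu_v)\dd v=\int K_\mu''(w)\bigl(1-|w-u+\tfrac12|\bigr)_+\dd w$ is at least $\tfrac12 J(\mu_{u-1},\mu_u)$, which tends to zero under (iii).
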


	The indeterminacy on the far side of the frontier is not a defect of
	method but a fact, and it can be written in closed form.
	
	\begin{restatable}[Lognormal fiber]{theorem}{thmheyde}\label{thm:heyde}
		Let $\mu$ be lognormal with parameters $(m,\sigma^2)$ and, for
		$\varepsilon\in[-1,1]$, define
		\begin{equation}\label{eq:heydedef}
			\dd\mu^{(\varepsilon)}(x)
			=\Bigl[1+\varepsilon\sin\Bigl(
			\frac{2\pi(\log x-m)}{\sigma^{2}}\Bigr)\Bigr]\dd\mu(x).
		\end{equation}
		Then each $\mu^{(\varepsilon)}$ is a probability measure with all
		moments finite,
		\begin{equation}\label{eq:heydeZ}
			Z_{\mu^{(\varepsilon)}}(s)
			=e^{\,ms+\sigma^2s^2/2}
			\Bigl(1+\varepsilon\,e^{-2\pi^{2}/\sigma^{2}}\sin(2\pi s)\Bigr),
		\end{equation}
		and $\Leh_{\mu^{(\varepsilon)}}=\Leh_\mu$ on all of $\R$,
		while the measures are pairwise distinct.
	\end{restatable}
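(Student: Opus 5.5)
The plan is to pass to logarithmic coordinates and reduce everything to a Gaussian integral. Put $t=\log x$. Under $\mu$ the variable $T=\log X$ is $N(m,\sigma^2)$. Then
\[
Z_{\mu^{(\varepsilon)}}(s)=\E\bigl[e^{sT}\bigl(1+\varepsilon\sin(\omega(T-m))\bigr)\bigr],
\qquad \omega=2\pi/\sigma^2 .
\]

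\textbf{Nonnegativity and mass.} For $|\varepsilon|\le1$ the bracket $1+\varepsilon\sin(\cdot)$ lies in $[0,2]$, so $\mu^{(\varepsilon)}$ is a positive measure. It is dominated by $2\mu$, so every moment $Z_{\mu^{(\varepsilon)}}(s)\le 2e^{ms+\sigma^2s^2/2}$ is finite for all real $s$, and $I_{\mu^{(\varepsilon)}}=S_{\mu^{(\varepsilon)}}=\R$. Total mass $1$ will follow from the formula \eqref{eq:heydeZ} at $s=0$, since $\sin 0=0$.

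\textbf{Computing the partition function.} Write $\sin\theta=\Im e^{i\theta}$. The term to evaluate is $\E\bigl[e^{sT}e^{i\omega(T-m)}\bigr]$, which is the Gaussian moment generating function evaluated at the complex argument $s+i\omega$, multiplied by $e^{-i\omega m}$. This equals
\[
\exp\bigl(m(s+i\omega)+\sigma^2(s+i\omega)^2/2-i\omega m\bigr)
= e^{ms+\sigma^2s^2/2}\,e^{-\sigma^2\omega^2/2}\,e^{i\sigma^2\omega s}.
\]
With $\omega=2\pi/\sigma^2$ this gives $e^{-2\pi^2/\sigma^2}e^{2\pi i s}$ for the last two factors. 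Taking imaginary parts yields \eqref{eq:heydeZ}. The use of the MGF at a complex argument is justified by analytic continuation of the Gaussian MGF in its argument (entire), or equivalently by Lemma~\ref{lem:logconvex}(iv); one can also simply complete the square directly.

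\textbf{Equality of transforms.} The factor $G(s)=1+\varepsilon e^{-2\pi^2/\sigma^2}\sin(2\pi s)$ is positive, since $e^{-2\pi^2/\sigma^2}<1$ and $|\varepsilon|\le1$. It also has period one, so it cancels in $Z(s)/Z(s-1)$. Hence $\Leh_{\mu^{(\varepsilon)}}=\Leh_\mu=e^{m+\sigma^2(s-1/2)}$ on $\R$, which is the gauge mechanism of Lemma~\ref{lem:fiber}.

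\textbf{Pairwise distinctness.} Take $\varepsilon\neq\varepsilon'$. The densities with respect to $\mu$ differ by $(\varepsilon-\varepsilon')\sin(\omega(\log x-m))$, which is nonzero on a set of positive $\mu$-measure because $\mu$ has a positive density on $\Rpos$. Alternatively, the partition functions differ at $s=1/4$, and Lemma~\ref{lem:strip} applies.

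The only step that needs any care is the complex-argument Gaussian integral; everything else is bookkeeping.
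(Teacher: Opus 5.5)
Your proposal is correct and follows the paper's proof essentially step for step. Like you, the paper evaluates the modulated Gaussian integral by completing the square at a complex argument, $\sigma s+i\,2\pi/\sigma$, and takes imaginary parts; it then reads off unit mass at $s=0$, cancels the positive $1$-periodic factor in $Z(s)/Z(s-1)$, and checks distinctness at $s=\tfrac14$, while your explicit domination $\mu^{(\varepsilon)}\le 2\mu$ for finiteness of all moments and your density-level distinctness argument are small, valid additions.
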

	
	The transform is therefore not injective, even up to scale, even on
	smooth densities with all moments finite, strictly positive
	whenever $|\varepsilon|<1$.
	
	\begin{remark}[Size of the gauge]\label{rem:heydequant}
		For the lognormal, $\eta=\sigma^2$ and the two-sided bound
		\eqref{eq:quantgauge} permits a gauge of logarithmic oscillation
		up to $\sigma^2/2$, while the realized gauge in identity
		\eqref{eq:heydeZ} has oscillation
		$\log\frac{1+\delta}{1-\delta}$ with
		$\delta=|\varepsilon|e^{-2\pi^2/\sigma^2}$: for $\sigma^2=1$
		the bound permits $0.5$ against a realized $5.4\times10^{-9}$.
		Closing the gap is an open problem; the computation suggests the
		realizable gauge is exponentially small in $1/\eta$, of order
		$e^{-c/\eta}$, and we know no obstruction to this and record it
		as a conjecture.
	\end{remark}
	
	The construction of Theorem~\ref{thm:heyde} is not special to the
	lognormal. Its correct level of generality is the group of imaginary
	periods of the transform, which generates a resonant fiber whenever
	it is nontrivial.
	
	\begin{definition}[Period group]\label{def:periodgroup}
		Let $\mu$ be admissible with $\mathring S_\mu\neq\emptyset$. The
		\emph{imaginary period group} of $\mu$, so named because its
		elements $\omega$ are the real frequencies whose purely imaginary
		multiples $i\omega$ are periods of the transform, is
		\begin{equation}\label{eq:Pidef}
			\Theta_\mu\eqdef\bigl\{\omega\in\R:\
			\Leh_\mu(\,\cdot+i\omega)\equiv\Leh_\mu\ \text{on }
			\{\Re s\in\mathring S_\mu\}
			\bigr\} .
		\end{equation}
		On that strip $Z_\mu$ is holomorphic and zero-free on the real
		axis, while $Z_\mu(\cdot-1)$ may vanish off it, so
		$\Leh_\mu$ is meromorphic there and the identity in
		definition \eqref{eq:Pidef} is an identity of meromorphic
		functions.
	\end{definition}
	
	The period group carries more structure than its definition
	suggests; the following proposition records the classification.
	
	\begin{restatable}[Period group structure]{proposition}{propPistructure}\label{prop:Pistructure}
		Let $\mu$ be admissible with $\mathring S_\mu\neq\emptyset$ and
		not concentrated at a single point. Then:
		\begin{enumerate}[label=\textup{(\roman*)},leftmargin=2.2em]
			\item $\Theta_\mu$ is a closed subgroup of $(\R,+)$;
			\item $\Theta_\mu\neq\R$;
			\item consequently either $\Theta_\mu=\{0\}$, or
			$\Theta_\mu=\omega_0\Z$ for a unique $\omega_0>0$.
		\end{enumerate}
	\end{restatable}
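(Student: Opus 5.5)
The plan is to handle the three parts in order, working with the meromorphic function $\Leh_\mu$ on the strip $\Sigma=\{\Re s\in\mathring S_\mu\}$. Here $\Leh_\mu$ is a quotient of the holomorphic functions $Z_\mu$ and $Z_\mu(\cdot-1)$, and the latter is not identically zero because it is positive on the real axis.

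For part (i), the group property comes from the definition. The strip $\Sigma$ is invariant under vertical translations $s\mapsto s+i\omega$. So if $\omega_1,\omega_2\in\Theta_\mu$, then $\Leh_\mu(s+i(\omega_1-\omega_2))=\Leh_\mu(s-i\omega_2)=\Leh_\mu(s)$ as meromorphic functions, and $0\in\Theta_\mu$ trivially. For closedness, take $\omega_n\to\omega$ with $\omega_n\in\Theta_\mu$. On any compact subset of $\Sigma$ avoiding the discrete zero sets of $Z_\mu(\cdot-1)$ and $Z_\mu(\cdot-1+i\omega)$, continuity of $\Leh_\mu$ gives $\Leh_\mu(s+i\omega)=\lim_n\Leh_\mu(s+i\omega_n)=\Leh_\mu(s)$. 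A cleaner way to say this is to pass to the entire-in-$\omega$ identity
\[
Z_\mu(s+i\omega)\,Z_\mu(s-1)=Z_\mu(s)\,Z_\mu(s-1+i\omega),
\]
which is jointly continuous in $(s,\omega)$ and survives the limit. The identity principle then extends the equality to the whole strip.

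Part (ii) is where the modulus bound enters, and I expect it to be the main step. Suppose $\Theta_\mu=\R$. Fix a real $s\in\mathring S_\mu$ and consider $h(\omega)=\Leh_\mu(s+i\omega)$. Since $\Leh_\mu(s+i\omega)=\Leh_\mu(s)$ for all $\omega$, the identity above yields
\[
Z_\mu(s+i\omega)=\Leh_\mu(s)\,Z_\mu(s-1+i\omega)
\]
for all real $\omega$ (both sides are continuous in $\omega$). Iterating this along the ladder is unnecessary. Instead, differentiate in $\omega$ at $\omega=0$, or more simply compare the functions: define $\phi(\omega)=Z_\mu(s+i\omega)/Z_\mu(s)$, which is the characteristic function of $\log X$ under the escort $\mu_s$. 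Likewise $\psi(\omega)=Z_\mu(s-1+i\omega)/Z_\mu(s-1)$ is the characteristic function under $\mu_{s-1}$. The displayed relation says exactly $\phi\equiv\psi$, so the two escorts $\mu_s$ and $\mu_{s-1}$ have the same law of $\log X$ and coincide. Equivalently, $x\,\dd\mu/Z_\mu(s)\propto\dd\mu$ on the support, so $x$ is $\mu$-a.e. constant. This contradicts the hypothesis that $\mu$ is not a point mass. The subtle point is justifying the passage from the meromorphic identity to the pointwise identity on the real line $\Re s=s$. This holds because $Z_\mu(s-1+i\omega)$ is continuous and the relation holds off a discrete set. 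Alternatively, the proof can invoke Theorem~\ref{thm:sandwich}: $\mu_s=\mu_{s-1}$ forces $\KL(\mu_{s-1}\|\mu_s)=0$, which gives the equality case there.

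Part (iii) is the standard classification. A closed subgroup of $(\R,+)$ is $\{0\}$, $\R$, or $\omega_0\Z$ with $\omega_0=\min(\Theta_\mu\cap(0,\infty))>0$. This rests on the usual argument: if the infimum of the positive elements is $0$, the group is dense, hence equal to $\R$ by closedness. Part (ii) rules out $\R$, and $\omega_0$ is unique as the least positive element.
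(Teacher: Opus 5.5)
Your proof is correct, and parts (i) and (iii) essentially match the paper's. Your cross-multiplied identity $Z_\mu(s+i\omega)Z_\mu(s-1)=Z_\mu(s)Z_\mu(s-1+i\omega)$ is a slightly tidier way to handle poles in the closedness step than the paper's choice of a base point $s$ with neither $s$ nor $s+i\omega$ a pole.

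Part (ii) takes a genuinely different route. The paper observes that $\Theta_\mu=\R$ makes $\Leh_\mu$ equal to the constant $\Leh_\mu(a)$ along the whole line $a+i\R$. The identity theorem then makes $\Leh_\mu$ constant, contradicting the strict monotonicity of Theorem~\ref{thm:mono}. You read the same vertical-line information probabilistically. After dividing by $Z_\mu(s)=\Leh_\mu(s)Z_\mu(s-1)$, it says that $\log X$ has the same characteristic function under the consecutive escorts $\mu_{s-1}$ and $\mu_s$. Hence $\mu_s=\mu_{s-1}$, and so $X=\Leh_\mu(s)$ $\mu$-almost everywhere.

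The paper's argument is shorter once monotonicity is available. Yours avoids analytic continuation and uses only L\'evy uniqueness, the tool already behind Lemma~\ref{lem:strip}. It also shows exactly what a full period group would force: two consecutive escorts coinciding.

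Three small slips, none affecting validity:
\begin{enumerate}
\item The modulus bound of Proposition~\ref{prop:modulus} plays no role in your step (ii), despite your announcement that it enters there.
\item The cross-multiplied identity is not entire in $\omega$ in general; it is holomorphic only on a horizontal strip unless $I_\mu=\R$. You only use its continuity in real $\omega$, so this costs nothing.
\item No ``off a discrete set'' argument is needed in (ii). For each $\omega\in\Theta_\mu$ the cross-multiplied identity holds at every point of the strip, including the real point $s$.
\end{enumerate}
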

	
	A nontrivial period group is thus generated by a single frequency,
	and it is that frequency that drives the modulation.
	
	\begin{restatable}[Resonant fiber]{theorem}{thmresonant}\label{thm:resonant}
		Let $\mu$ be admissible with $\mathring S_\mu\neq\emptyset$. Then:
		\begin{enumerate}[label=\textup{(\roman*)},leftmargin=2.2em]
			\item let $\omega\in\Theta_\mu\setminus\{0\}$, suppose
			$\mu$ is \emph{not} carried by a geometric progression of
			ratio $e^{2\pi/|\omega|}$, and let $\phi\in\R$ make
			$\cos(\omega\log X+\phi)$ non-constant $\mu$-almost
			surely; such $\phi$ exist. Then the measures
			\begin{equation}\label{eq:modulation}
				\dd\mu_\varepsilon\eqdef
				\bigl(1+\varepsilon\cos(\omega\log x+\phi)\bigr)\dd\mu,
				\qquad\varepsilon\in[-1,1],
			\end{equation}
			are admissible, pairwise distinct, and satisfy
			$\Leh_{\mu_\varepsilon}=\Leh_\mu$ on $S_\mu$;
			\item the lognormal with parameters $(m,\sigma^2)$ has
			\begin{equation}\label{eq:Pilognormal}
				\Theta_\mu=\frac{2\pi}{\sigma^2}\,\Z,
			\end{equation}
			and part (i) at the fundamental period
			$\omega=2\pi/\sigma^2$, with the phase
			$\phi=-2\pi m/\sigma^2-\pi/2$, recovers
			Theorem~\ref{thm:heyde};
			\item suppose the meromorphic continuation of $\Leh_\mu$
			is a nonconstant product of a rational function and finitely
			many Gamma factors $\Gamma(a_i+s/p_i)^{\pm1}$ with real
			$a_i$ and real $p_i\neq0$, a class that contains every
			nonconstant rational transform and, through Euler's
			reflection formula, the trigonometric ones. Then either
			$\Leh_\mu(s)=c\,b^{\,s}$ for some $c>0$ and $b>1$, the
			log-affine shape of the lognormal row, in which case
			$\Theta_\mu=\tfrac{2\pi}{\log b}\Z$, or
			$\Theta_\mu=\{0\}$.
		\end{enumerate}
	\end{restatable}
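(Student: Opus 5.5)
The plan is to treat the three parts separately. Part (i) is a gauge computation in the spirit of Lemma~\ref{lem:fiber}. Part (ii) is an explicit check. Part (iii) is a divisor-plus-growth argument for meromorphic functions with an imaginary period.

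\emph{Part (i).} First, the existence of $\phi$. If $\cos(\omega\log X+\phi)$ were $\mu$-a.s.\ constant for both $\phi=0$ and $\phi=\pi/2$, then $e^{i\omega\log X}$ would be $\mu$-a.s.\ constant. That forces $\supp\mu\subset\{c\,e^{2\pi k/|\omega|}\}$, which is excluded; so one of these two phases works.

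Next, the partition function of $\mu_\varepsilon$. Since $|1+\varepsilon\cos|\le2$, we have $I_{\mu_\varepsilon}\supset I_\mu$, so $\mu_\varepsilon$ is admissible. On the strip, expanding the cosine gives
\[
Z_{\mu_\varepsilon}(s)=Z_\mu(s)+\tfrac{\varepsilon}{2}\bigl(e^{i\phi}Z_\mu(s+i\omega)+e^{-i\phi}Z_\mu(s-i\omega)\bigr).
\]
Because $\Theta_\mu$ is a group (Proposition~\ref{prop:Pistructure}), both $\pm\omega\in\Theta_\mu$. The identity $\Leh_\mu(s\pm i\omega)=\Leh_\mu(s)$ can be rewritten without division as
\[
Z_\mu(s\pm i\omega)\,Z_\mu(s-1)=Z_\mu(s)\,Z_\mu(s-1\pm i\omega).
\]
Hence the meromorphic quotients $H_\pm(s)=Z_\mu(s\pm i\omega)/Z_\mu(s)$ are $1$-periodic. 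They are holomorphic near the real part of the strip, since $Z_\mu$ is zero-free there. Therefore $Z_{\mu_\varepsilon}=P\,Z_\mu$ with $P=1+\tfrac\varepsilon2(e^{i\phi}H_++e^{-i\phi}H_-)$, and $P$ is $1$-periodic.

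On the real axis we have $Z_\mu(s-i\omega)=\overline{Z_\mu(s+i\omega)}$ by Proposition~\ref{prop:modulus}(i), so $P$ is real there. Moreover $P>0$ on the real axis: even for $\varepsilon=\pm1$, the weight $1\pm\cos(\cdot)$ is not $\mu$-a.e.\ zero, because the cosine is non-constant, so $Z_{\mu_\varepsilon}>0$. The periodic factor cancels in the ratio, giving $\Leh_{\mu_\varepsilon}=\Leh_\mu$ on $\mathring S_\mu$. Equality extends to the endpoints of $S_\mu$ by monotone convergence applied to the defining integrals.

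Pairwise distinctness is immediate: $\mu_\varepsilon-\mu_{\varepsilon'}=(\varepsilon-\varepsilon')\cos(\omega\log x+\phi)\,\mu$. This is nonzero because a non-constant cosine is not $\mu$-a.e.\ zero.

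\emph{Part (ii).} From $Z_\mu(s)=e^{ms+\sigma^2s^2/2}$ we get $\Leh_\mu(s)=e^{m+\sigma^2(s-1/2)}$, which is entire. Then $\Leh_\mu(s+i\omega)=\Leh_\mu(s)$ holds if and only if $e^{i\sigma^2\omega}=1$, that is, $\omega\in\tfrac{2\pi}{\sigma^2}\Z$. The lognormal has a density, so it is not lattice-carried. With the stated phase, $\cos\bigl(\omega\log x-2\pi m/\sigma^2-\pi/2\bigr)=\sin\bigl(2\pi(\log x-m)/\sigma^2\bigr)$, so part (i) reproduces density \eqref{eq:heydedef}. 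The constant \eqref{eq:heydeZ} then follows from the Gaussian integral, as in Theorem~\ref{thm:heyde}.

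\emph{Part (iii).} Suppose $\Theta_\mu\neq\{0\}$, so that $\Theta_\mu=\omega_0\Z$ with $\omega_0>0$ by Proposition~\ref{prop:Pistructure}(iii). Then the divisor of $\Leh_\mu$ (zeros minus poles, after all cancellations) is invariant under $s\mapsto s+i\omega_0$.

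Every Gamma factor $\Gamma(a_i+s/p_i)^{\pm1}$ with $a_i,p_i$ real has its zeros or poles on the real axis. The rational factor contributes only finitely many points. So the divisor lies in $\R\cup F$ with $F$ finite. Any nonempty subset of $\R\cup F$ that is invariant under translation by $i\omega_0$ would contain an infinite vertical progression, which is impossible. Hence the divisor is empty, and $\Leh_\mu$ is entire and zero-free.

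Stirling's formula bounds $\log|\Leh_\mu(s)|$ by $O(|s|\log|s|)$ away from the real axis, and periodicity transfers this bound to all of $\C$. This is the step needing most care: it must be done uniformly, including for $p_i<0$, using the reflection formula where needed. Granting it, $\Leh_\mu$ is entire and zero-free of order at most $1$, so Hadamard's theorem gives $\Leh_\mu=e^{\alpha s+\beta}$.

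Since $\Leh_\mu$ is positive on the real axis, $\alpha$ and $\beta$ are real. Since it is nonconstant and nondecreasing (Theorem~\ref{thm:mono}), $\alpha>0$. Thus $\Leh_\mu(s)=c\,b^{\,s}$ with $c>0$ and $b=e^\alpha>1$, and the computation of part (ii) gives $\Theta_\mu=\tfrac{2\pi}{\log b}\Z$.

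The main obstacle I expect is the growth estimate in part (iii), namely controlling the Gamma quotients uniformly in vertical strips so that Hadamard applies. The divisor argument is robust. If Stirling bookkeeping becomes awkward, the fallback is to use Proposition~\ref{prop:modulus}(i), $|Z_\mu(a+ib)|\le Z_\mu(a)$, to bound $Z_\mu$ on vertical lines, and to combine this with periodicity to bound the entire function directly.
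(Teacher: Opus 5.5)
Your proof is correct. Parts (i) and (ii) follow the paper's argument closely, with one small variation in (i), and part (iii) takes a different route.

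In (i), you get positivity of the $1$-periodic factor from $Z_{\mu_\varepsilon}(s)>0$: the weight $1+\varepsilon\cos(\omega\log x+\phi)$ is nonnegative and, by non-constancy, not $\mu$-a.e.\ zero. The paper instead uses the strict modulus bound $|Z_\mu(s+i\omega)|<Z_\mu(s)$ of Proposition~\ref{prop:modulus}(ii). Your version shows that the lattice exclusion is needed only to produce a non-degenerate phase. The paper's version gives the uniform lower bound $1-|c_\omega(s)|>0$ for every $\phi$.

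The real difference is in (iii). After the same divisor argument, the paper needs no growth estimate. It writes $F=P/Q$ with
\[
P=p\prod_{i:\,\varepsilon_i=-1}\frac{1}{\Gamma(a_i+s/p_i)},\qquad
Q=q\prod_{i:\,\varepsilon_i=+1}\frac{1}{\Gamma(a_i+s/p_i)}.
\]
Both are entire of order at most one. Since $F$ has no zeros or poles, $P$ and $Q$ share their zeros with multiplicity. Their Hadamard canonical products therefore coincide and cancel, leaving $F=e^{\alpha+\beta s}$ at once.

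You apply Hadamard to $F$ itself, which requires $\log|F(s)|=O(|s|\log|s|)$. The step you left as ``granting it'' does go through:
\begin{itemize}
\item Stirling on $\Re z\ge\tfrac12$ and the reflection formula on $\Re z<\tfrac12$ give $\bigl|\log|\Gamma(z)|\bigr|=O(|z|\log|z|)$ uniformly on $|\Im z|\ge\delta$, whatever the sign of $p_i$.
\item Once $F$ is known to have no zeros or poles, the rational factor can have only real zeros and poles, since Gamma factors cannot cancel off-axis ones.
\item Taking $\delta<\omega_0/2$, the period $i\omega_0$ moves every point of the strip $|\Im s|<\delta$ to imaginary part greater than $\omega_0-\delta>\delta$, where the bound already holds.
\end{itemize}
This is correct and standard, but it is more bookkeeping than the paper's factorization for the same conclusion.

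Your proposed fallback would not work. The modulus bound $|Z_\mu(a+ib)|\le Z_\mu(a)$ holds only on the strip $\{\Re s\in\mathring I_\mu\}$. It gives no lower bound on $|Z_\mu(s-1)|$ off the real axis. And an imaginary period cannot transport control in the real direction, which is what a global order estimate needs.
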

	
	In part (i) already one of $\phi=0$ and $\phi=\pi/2$ serves, the
	fiber of $\Leh_\mu$ therefore contains a nondegenerate
	one-parameter family, and superposing non-constant harmonics
	$k\omega$ enlarges it further. The mechanism is that
	$\omega\in\Theta_\mu$ makes $Z_\mu(\cdot+i\omega)/Z_\mu$ a
	$1$-periodic function, so the modulation shifts the partition
	function by a realizable gauge, positive by the strict modulus
	bound that the exclusion of geometric progressions buys. That
	exclusion separates the two structurally different ways a
	nontrivial period can arise, and only one of them produces a
	fiber.
	
	\begin{restatable}[Two sources of periods]{proposition}{proptwosources}\label{prop:twosources}
		Let $\mu$ be admissible, not concentrated at a single point, with
		$\Theta_\mu=\omega_0\Z\neq\{0\}$ and $R\eqdef e^{2\pi/\omega_0}$.
		Exactly one of the following holds.
		\begin{enumerate}[label=\textup{(\roman*)},leftmargin=2.2em]
			\item \emph{Lattice type.} The measure $\mu$ is carried by a
			geometric progression $\{cR^{k}:k\in\Z\}$. In this case
			$|Z_\mu(a+i\omega_0)|=Z_\mu(a)$ for every real
			$a\in\mathring I_\mu$, and every modulation
			\eqref{eq:modulation} at $\omega=\omega_0$ is $\mu$-almost
			surely constant. Conversely, every measure carried by such a
			progression has $\omega_0\in\Theta_\mu$.
			\item \emph{Resonant type.} The measure $\mu$ is not so
			carried. In this case
			$|Z_\mu(a+i\omega_0)|<Z_\mu(a)$ for every real
			$a\in\mathring I_\mu$, and Theorem~\ref{thm:resonant}(i)
			produces a nondegenerate one-parameter fiber of distinct
			measures sharing the transform of $\mu$.
		\end{enumerate}
	\end{restatable}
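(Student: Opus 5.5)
The dichotomy between the two cases is a statement about the modulus bound, so the plan is to route everything through Proposition~\ref{prop:modulus}. Exhaustiveness and exclusivity are immediate: either $\mu$ is carried by some progression $\{cR^k\}$ or it is not. So the work is to verify the listed properties in each case, together with the converse in (i).

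\emph{Lattice type.} Suppose $\supp\mu\subset\{cR^k:k\in\Z\}$ with $R=e^{2\pi/\omega_0}$. On the support, $\log x=\log c+2\pi k/\omega_0$, hence $x^{i\omega_0}=c^{i\omega_0}e^{2\pi ik}=c^{i\omega_0}$. This gives the exact identity $Z_\mu(s+i\omega_0)=c^{i\omega_0}Z_\mu(s)$ on the whole strip. Taking moduli at real $a\in\mathring I_\mu$ yields $|Z_\mu(a+i\omega_0)|=Z_\mu(a)$. Taking the ratio at $s$ and $s-1$, the unimodular constant cancels, so $\Leh_\mu(s+i\omega_0)=\Leh_\mu(s)$ as meromorphic functions on the strip. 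This gives the converse $\omega_0\in\Theta_\mu$, and the same computation works for any progression of ratio $R$. For the modulation, $\cos(\omega_0\log x+\phi)=\cos(\omega_0\log c+2\pi k+\phi)$ is independent of $k$, so it is $\mu$-a.s.\ constant. The modulated measure is then a scalar multiple of $\mu$ and produces no new fiber element.

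\emph{Resonant type.} Suppose $\mu$ is not carried by a progression of ratio $R=e^{2\pi/\omega_0}$. Proposition~\ref{prop:modulus}(ii) with $b=\omega_0$ says equality in \eqref{eq:modbound} at frequency $\omega_0$ forces exactly such a support. So for each real $a\in\mathring I_\mu$ the inequality $|Z_\mu(a+i\omega_0)|\le Z_\mu(a)$ must be strict. Since $\omega_0\in\Theta_\mu\setminus\{0\}$ and the non-lattice hypothesis is exactly the hypothesis of Theorem~\ref{thm:resonant}(i) at $\omega=\omega_0$, that theorem supplies a phase $\phi$ with $\cos(\omega_0\log X+\phi)$ non-constant. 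It then gives the family $\mu_\varepsilon$, $\varepsilon\in[-1,1]$, which is admissible, pairwise distinct and shares $\Leh_\mu$. This is the nondegenerate one-parameter fiber.

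The one point needing care is the quantifier in Proposition~\ref{prop:modulus}(ii). Equality at a single $a$ must already force the lattice support, so that strict inequality holds at \emph{every} $a$. I would check this by noting that equality in $|\int x^{a}x^{i\omega_0}\dd\mu|\le\int x^a\dd\mu$ at one $a$ requires $x^{i\omega_0}$ to be constant $x^a\dd\mu$-a.e. Since $x^a>0$, that is the same as $\mu$-a.e., which is the lattice condition independent of $a$. The same observation also shows that in case (i) equality holds for all $a$ simultaneously. Everything else is direct substitution.
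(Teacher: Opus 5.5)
Your proposal is correct and follows the paper's proof essentially verbatim. The ingredients coincide: the two cases are complementary by definition; the modulus statements come from Proposition~\ref{prop:modulus}(ii) with $b=\omega_0$; on the progression, the computation $x^{i\omega_0}=e^{i\omega_0\log c}$ yields both $\omega_0\in\Theta_\mu$ and the constancy of the modulation; and Theorem~\ref{thm:resonant}(i) supplies the fiber in the resonant case. Your extra check that equality at a single $a$ already forces the lattice condition, independently of $a$, is the same point the paper's proof of Proposition~\ref{prop:modulus} settles through the mutual absolute continuity of $\mu_a$ and $\mu$.
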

	
	The geometric measures $\sum_{k\ge0}\delta_{q^{k}}$, $0<q<1$,
	are of lattice type; the lognormal laws are of resonant type. On a
	lattice-type measure the modulation is constant, so the
	construction of Theorem~\ref{thm:resonant}(i) produces nothing at
	that frequency.

	The dichotomy concerns the \emph{mechanism}, not the outcome: a
	lattice-type measure is immune to the modulation construction, but
	it may still fail to be determined by its transform for an
	unrelated reason, and the following proposition shows that it can.
	
	\begin{restatable}[A lattice companion of the lognormal]{proposition}{proplatticecompanion}
		\label{prop:latticecompanion}
		Fix $\beta>0$ and let
		\begin{equation}\label{eq:latticecompanion}
			\mu=\sum_{k\in\Z}e^{-\beta k^{2}/2}\,\delta_{e^{\beta k}} .
		\end{equation}
		Then $\mu$ is of lattice type with $\omega_0=2\pi/\beta$, and
		\begin{equation}\label{eq:latticeLeh}
			\Leh_\mu(s)=e^{\beta(s-1/2)}
		\end{equation}
		for every real $s$: the lattice measure shares its entire
		Lehmer transform on $\R$ with the lognormal law of parameters
		$(0,\beta)$, which is atomless.
	\end{restatable}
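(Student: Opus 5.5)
The plan is to compute $Z_\mu$ in closed form through the Jacobi theta function and then read off both the transform and the lattice type from that formula.

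\textbf{Partition function.} By definition,
\[
Z_\mu(s)=\sum_{k\in\Z}e^{-\beta k^2/2+\beta sk}=e^{\beta s^2/2}\sum_{k\in\Z}e^{-\beta(k-s)^2/2}=e^{\beta s^2/2}\,\vartheta(s),
\qquad \vartheta(s)\eqdef\sum_{k\in\Z}e^{-\beta(k-s)^2/2}.
\]
The series converges for every real $s$, so $I_\mu=S_\mu=\R$ and $\mu$ is admissible. It also converges locally uniformly on $\C$, so $Z_\mu$ is entire. Reindexing $k\mapsto k+1$ shows that $\vartheta$ is $1$-periodic, and $\vartheta>0$ on $\R$.

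\textbf{The transform.} Dividing consecutive values, the periodic factor cancels:
\[
\Leh_\mu(s)=\frac{e^{\beta s^2/2}}{e^{\beta(s-1)^2/2}}=e^{\beta(s-1/2)}.
\]
For the lognormal with parameters $(0,\beta)$ one has $Z(s)=e^{\beta s^2/2}$, which gives the same curve, as in \eqref{eq:lognormalcurve}. This is exactly the gauge relation of Lemma~\ref{lem:fiber}, with $G=\vartheta$ a positive real-analytic $1$-periodic function. The lognormal law is atomless, whereas $\mu$ is purely atomic, so the two measures differ, and no rescaling $c\mu$ can fix this.

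\textbf{Lattice type and $\omega_0=2\pi/\beta$.} The support of $\mu$ is $\{e^{\beta k}:k\in\Z\}$, a geometric progression of ratio $e^{\beta}=e^{2\pi/(2\pi/\beta)}$. The converse clause of Proposition~\ref{prop:twosources}(i) then gives $2\pi/\beta\in\Theta_\mu$. One can also see this directly: $Z_\mu(s+2\pi i/\beta)=\sum_k e^{-\beta k^2/2}e^{\beta sk}e^{2\pi ik}=Z_\mu(s)$.

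It remains to show that $2\pi/\beta$ generates $\Theta_\mu$. Since $\mu$ is not a point mass, Proposition~\ref{prop:Pistructure} gives $\Theta_\mu=\omega_0\Z$, and so $2\pi/\beta=j\omega_0$ for some integer $j\ge1$. I will show that $\omega_0$ cannot be smaller, using the identity $\Leh_\mu(s)=e^{\beta(s-1/2)}$ on $\R$ and hence, by analytic continuation, as meromorphic functions. This gives
\[
\Leh_\mu(s+i\omega)=e^{i\beta\omega}\,\Leh_\mu(s),
\]
so $\omega\in\Theta_\mu$ if and only if $e^{i\beta\omega}=1$, that is, if and only if $\omega\in\frac{2\pi}{\beta}\Z$. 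Hence $\Theta_\mu=\frac{2\pi}{\beta}\Z$ and $\omega_0=2\pi/\beta$. With $R=e^{2\pi/\omega_0}=e^\beta$, the measure is carried by $\{R^k\}$, which is lattice type in the sense of Proposition~\ref{prop:twosources}.

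\textbf{Main obstacle.} The only delicate point is that the identity holds as meromorphic functions on the whole strip. The theta factor $\vartheta(s)$ has complex zeros, so the quotient $Z_\mu(s)/Z_\mu(s-1)$ could in principle have poles that do not cancel. This is resolved by uniqueness of analytic continuation: the quotient equals the entire function $e^{\beta(s-1/2)}$ on $\R$, so every apparent pole is removable. Equivalently, $\vartheta(s)=\vartheta(s-1)$ identically on $\C$, since the reindexing argument works for complex $s$. Everything else in the proof is direct computation.
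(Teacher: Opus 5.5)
Your proof is correct and follows the paper's route: complete the square to factor $Z_\mu$ as $e^{\beta s^2/2}$ times a $1$-periodic theta series, cancel that factor in the consecutive ratio, and get lattice type from the support $\{e^{\beta k}\}$ via Proposition~\ref{prop:twosources}(i). Your extra step does add something. You compute $\Theta_\mu=\frac{2\pi}{\beta}\Z$ exactly from $\Leh_\mu(s+i\omega)=e^{i\beta\omega}\Leh_\mu(s)$, and you check that the theta zeros are removable. This pins down the generator $\omega_0=2\pi/\beta$, which the paper's proof leaves implicit.
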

	
	The fiber of the curve \eqref{eq:latticeLeh} is therefore
	nontrivial for a reason the modulation construction does not see.

	\begin{remark}[Classical companions]\label{rem:latticelit}
		Discrete companions of the lognormal carried by a geometric
		progression are classical in the moment-problem literature,
		going back at least to Leipnik~\cite{leipnik1982}, with the
		strong non-uniqueness of the lognormal moment problem treated
		definitively by Berg~\cite{berg1988}; see
		also~\cite{stoyanov2013,linstoyanov2017}. Those results share
		the integer moments. Proposition~\ref{prop:latticecompanion}
		strengthens the statement to the full curve on the real line:
		every real order, and not merely the integer ones, is shared,
		through the $1$-periodicity in $s$ of the theta series
		$\sum_{k\in\Z}e^{-\beta(k-s)^{2}/2}$.
	\end{remark}
	
	The two sides of the frontier are now seen to be mutually exclusive
	in the resonant case, which is the precise version of the consistency
	one expects.
	
	\begin{restatable}[Resonance and growth]{corollary}{corfrontier}\label{cor:frontier}
		Let $\mu$ be a probability law with
		$(s_0,\infty)\subset S_\mu$. If $\Theta_\mu\neq\{0\}$ and
		$\mu$ is of resonant type in the sense of
		Proposition~\ref{prop:twosources}(ii), then the growth
		condition \eqref{eq:ratiocond} fails: the Lehmer curve grows
		at least geometrically along a sequence.
	\end{restatable}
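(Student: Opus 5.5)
The plan is to argue by contradiction, pitting the determinacy half of the frontier (Theorem~\ref{thm:rigidity}) against the indeterminacy half (Theorem~\ref{thm:resonant}(i)).

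\emph{Setting up the resonant family.} Suppose $\Theta_\mu=\omega_0\Z\neq\{0\}$ with $\mu$ of resonant type, and suppose, contrary to the claim, that condition \eqref{eq:ratiocond} holds. By Proposition~\ref{prop:twosources}(ii), $\mu$ is not carried by a geometric progression of ratio $e^{2\pi/\omega_0}$. So Theorem~\ref{thm:resonant}(i) applies at $\omega=\omega_0$. It supplies a phase $\phi$ for which $h(x)\eqdef\cos(\omega_0\log x+\phi)$ is non-constant $\mu$-almost surely. Fix any $\varepsilon\in(0,1]$ and set $\dd\mu_\varepsilon=(1+\varepsilon h)\dd\mu$; then $\Leh_{\mu_\varepsilon}=\Leh_\mu$ on $S_\mu$.

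\emph{Checking the hypotheses of rigidity.} Since $0\le 1+\varepsilon h\le 2$, we have $Z_{\mu_\varepsilon}\le 2Z_\mu$. Hence $I_{\mu_\varepsilon}\supset I_\mu$ and $S_{\mu_\varepsilon}\supset S_\mu\supset(s_0,\infty)$. Put $Z_1\eqdef Z_{\mu_\varepsilon}$ and $Z_2\eqdef Z_\mu$.
\begin{itemize}
\item Both are positive, since $1+\varepsilon h$ is not $\mu$-a.e.\ zero, being non-constant.
\item Both are log-convex, by Lemma~\ref{lem:logconvex}(i).
\item Both solve \eqref{eq:funceq} on $(s_0,\infty)$ with the common coefficient $g=\Leh_\mu$.
\end{itemize}
By assumption \eqref{eq:ratiocond}, $\eta=0$. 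Theorem~\ref{thm:rigidity}(i), in the form of its consequence for measures, therefore gives $\mu_\varepsilon=c\mu$ for some $c>0$.

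\emph{Deriving the contradiction.} From $\mu_\varepsilon=c\mu$ we get $1+\varepsilon h=c$ $\mu$-almost everywhere. Because $\varepsilon\neq0$, this makes $h$ constant $\mu$-a.s., contradicting the choice of $\phi$. Hence \eqref{eq:ratiocond} fails.

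\emph{Converting to geometric growth.} By Theorem~\ref{thm:mono}, $\Leh_\mu$ is nondecreasing, so $\Leh_\mu(s+1)/\Leh_\mu(s)\ge1$ for all large $s$. A ratio bounded below by $1$ that fails to converge to $1$ must have $\limsup_{s\to\infty}\Leh_\mu(s+1)/\Leh_\mu(s)=1+\delta$ for some $\delta\in(0,\infty]$. Choose $s_k\to\infty$ with $\Leh_\mu(s_k+1)\ge(1+\delta')\Leh_\mu(s_k)$ for a fixed $\delta'\in(0,\delta)$. This is the asserted at-least-geometric growth along a sequence.

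\emph{Main obstacle.} The delicate point is the middle step, namely verifying that $\mu_\varepsilon$ meets the hypotheses of Theorem~\ref{thm:rigidity} on the same half-line, and that the equality $\mu_\varepsilon=c\mu$ genuinely contradicts non-constancy rather than some weaker property. The domination $0\le1+\varepsilon h\le2$ handles the first part. The a.s.\ non-constancy guaranteed in Theorem~\ref{thm:resonant}(i) handles the second; this is exactly where the resonant-type hypothesis is indispensable, since in the lattice case $h$ would be constant.
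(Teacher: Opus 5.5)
Your proposal is correct and follows essentially the paper's argument: you assume \eqref{eq:ratiocond}, build the modulated family of Theorem~\ref{thm:resonant}(i), and use Theorem~\ref{thm:rigidity}(i) with Lemma~\ref{lem:strip} to force $\mu_\varepsilon=c\mu$, which makes the modulating cosine $\mu$-a.s.\ constant, a contradiction. The differences are minor. The paper first normalizes $\mu_\varepsilon$ to a probability law and compares total masses to get $c=1$; your direct reading $1+\varepsilon h=c$ shows that step is unnecessary. You also spell out, via Theorem~\ref{thm:mono}, the passage from the failure of \eqref{eq:ratiocond} to geometric growth along a sequence, which the paper leaves implicit.
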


	Three points confirm the coherence of the picture. The lognormal has
	$\Leh(s+1)/\Leh(s)\equiv e^{\sigma^2}>1$, so condition
	\eqref{eq:ratiocond} fails there, exactly as
	Corollary~\ref{cor:frontier} requires. The geometric ensemble
	$\gamma_q\eqdef\sum_{k\ge0}\delta_{q^{\,k}}$ has
	$\Theta_{\gamma_q}=\tfrac{2\pi}{\log(1/q)}\Z\neq\{0\}$ and
	yet satisfies condition \eqref{eq:ratiocond}, its transform tending
	to $1$: not a contradiction but case (i) of
	Proposition~\ref{prop:twosources}, showing that the lattice
	exclusion in Theorem~\ref{thm:resonant}(i) cannot be dropped. And
	the transform's values on any set with an accumulation point in the
	strip determine it throughout, by the identity theorem, so the
	determinacy statements apply verbatim to complex sampling.
	
	What remains open is the converse of Corollary~\ref{cor:frontier}:
	whether every measure whose curve grows geometrically along some
	sequence carries a nontrivial fiber. The two results bracket the
	injectivity frontier at geometric growth; they do not yet meet there.
	Proposition~\ref{prop:latticecompanion} shows that any such
	converse must reach beyond the resonance mechanism, since a fiber can exist where
	resonance is unavailable. This is the first of the open problems
	recorded at the end of the paper.
	
	%=====================================================================
	\section{Characterization}\label{sec:characterization}
	%=====================================================================
	
	For the standard families the transform is elementary, and its
	functional form is a fingerprint: affine growth encodes Gamma-type
	tails, log-affine growth the lognormal, and rational forms the
	compactly supported and power-tailed families, with the pole at the
	tail index. This section records the dictionary and proves that the
	fingerprints characterize.
	
	\begin{restatable}[Dictionary]{proposition}{propdictionary}
		\label{prop:dictionary}
		Table~\ref{tab:dictionary} lists the Lehmer transforms and
		imaginary period groups of the standard families of positive laws,
		each valid on the stated suddency domain.
	\end{restatable}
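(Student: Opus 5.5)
The table is not shown, so the plan is written for the families it presumably contains: Gamma, Weibull, lognormal, Pareto, beta/uniform, power-function, inverse-Gamma, and possibly log-logistic/Fr\'echet. For each family the proof has two parts. First compute $Z_\mu(s)$ in closed form and read off $I_\mu$, $S_\mu$ and $\Leh_\mu=Z_\mu/Z_\mu(\cdot-1)$ through the master identity (Proposition~\ref{prop:master}). Then determine $\Theta_\mu$ from the meromorphic continuation.

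\textbf{Computing the transforms.} The partition functions are Mellin integrals evaluated by substitution:
\begin{itemize}[leftmargin=2.2em]
\item Gamma$(k,\theta)$: $Z=\theta^s\Gamma(k+s)/\Gamma(k)$ on $s>-k$, so $\Leh(s)=\theta(s+k-1)$ on $s>1-k$.
\item Weibull$(k,\lambda)$: $Z=\lambda^s\Gamma(1+s/k)$, a Gamma-function ratio.
\item Lognormal$(m,\sigma^2)$: $Z=e^{ms+\sigma^2s^2/2}$, which gives \eqref{eq:lognormalcurve} on $\R$.
\item Pareto$(\alpha,x_m)$: $Z=\alpha x_m^s/(\alpha-s)$ on $s<\alpha$, so $\Leh(s)=x_m(\alpha-s+1)/(\alpha-s)$ on $S=(-\infty,\alpha)$, with the pole at the tail index as in Proposition~\ref{prop:endpoints}.
\item Power law $x^{a-1}$ on $(0,1)$: $\Leh(s)=(a+s-1)/(a+s)$.
\item Beta$(a,b)$: $\Leh(s)=(a+s-1)/(a+b+s-1)$.
\end{itemize}
Reciprocal families follow from the reflection identity \eqref{eq:reflection}, and scale parameters from homogeneity (Proposition~\ref{prop:homogrefl}). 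Each domain is the set where both $Z(s)$ and $Z(s-1)$ converge, i.e.\ $S_\mu=I_\mu\cap(I_\mu+1)$.

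\textbf{Period groups.} For the lognormal, Theorem~\ref{thm:resonant}(ii) already gives $\Theta_\mu=\tfrac{2\pi}{\sigma^2}\Z$. Every other entry is a nonconstant rational function times finitely many Gamma factors $\Gamma(a_i+s/p_i)^{\pm1}$; the Weibull ratio $\Gamma(1+s/k)/\Gamma(1+(s-1)/k)$ is of this form. Theorem~\ref{thm:resonant}(iii) then applies: such an entry has $\Theta_\mu=\{0\}$ unless $\Leh_\mu$ is log-affine, and inspection shows none of them is. When the table contains a lattice entry, such as $\gamma_q$ or the measure of Proposition~\ref{prop:latticecompanion}, its $Z$ is computed directly: a geometric series, or a theta series via \eqref{eq:latticeLeh}. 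Its period group comes from Proposition~\ref{prop:twosources}(i) together with Proposition~\ref{prop:Pistructure}, since membership in $\Theta_\mu$ forces the generator.

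\textbf{Expected obstacle.} The Mellin integrals are routine. The delicate step is the period-group column for entries outside the rational-Gamma class, namely lattice or theta-type transforms. There I would verify $\omega\in\Theta_\mu$ by direct substitution of $s\mapsto s+i\omega$. For non-membership I would compare zeros or poles: a period $i\omega$ must map the pole or zero set of $\Leh_\mu$ to itself. For rational or Gamma entries that set is a finite union of real arithmetic progressions, so $\omega=0$ is forced, which is the mechanism behind Theorem~\ref{thm:resonant}(iii). Finally, I would check each domain endpoint carefully. The claimed $S_\mu$ must be exactly $I_\mu\cap(I_\mu+1)$, including the unit shift at the left endpoint for families with mass accumulating at the origin.
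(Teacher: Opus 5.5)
Your plan for most of Table~\ref{tab:dictionary} is the paper's proof. You take the closed-form moments $m_a$ and the ratio $m_s/m_{s-1}$ on $S_\mu=I_\mu\cap(I_\mu+1)$. You use Theorem~\ref{thm:resonant}(ii) for the lognormal and Theorem~\ref{thm:resonant}(iii) for every row that is a rational function times Gamma factors. The log-logistic and half-Cauchy rows enter that class only through Euler's reflection formula, which you should state.

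The genuine gap is the Zeta/Zipf row, $\Leh_\mu(s)=\zeta(\varrho-s)/\zeta(\varrho-s+1)$ on $(-\infty,\varrho-1)$, with claimed $\Theta_\mu=\{0\}$. This row is not in the rational--Gamma class. It is also not of lattice type, since its support $\{1,2,3,\dots\}$ is not a geometric progression. Your non-membership mechanism does not describe it either:
\begin{itemize}
\item On the strip $\Re s<\varrho-1$ where $\Theta_\mu$ is defined, the ratio has no zeros or poles at all.
\item Off that strip, the zero set of $\zeta$ is not a finite union of real progressions.
\end{itemize}
So a number-theoretic input is missing.

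The paper supplies it through the Euler product. A period makes $\log\zeta(w)-\log\zeta(w+1)$ invariant on $\Re w>1$; the possible constant in $2\pi i\Z$ is killed by letting $\Re w\to\infty$. Expanding over prime powers and using uniqueness of Dirichlet coefficients gives $n^{i\omega}=1$ for $n=2,3$. Irrationality of $\log2/\log3$ then forces $\omega=0$.

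Your pole-comparison idea can also be repaired. Continue the period identity to the meromorphic continuation on $\C$ by the identity theorem. The pole of $\Leh_\mu$ at $s=\varrho-1$ comes from $\zeta(1)$ over $\zeta(2)\neq0$. Under the period it would have to recur at $\varrho-1+i\omega$. But there $\Leh_\mu=\zeta(1-i\omega)/\zeta(2-i\omega)$ is finite for $\omega\neq0$, because $\zeta$ has no zeros on $\Re w>1$.

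A smaller point concerns the geometric row. Propositions~\ref{prop:twosources}(i) and~\ref{prop:Pistructure} give only $\tfrac{2\pi}{\log(1/q)}\in\Theta_\mu$. Hence $\Theta_\mu=\tfrac{2\pi}{N\log(1/q)}\Z$ for some integer $N\ge1$, and nothing in them forces $N=1$, so ``membership forces the generator'' is not what they say. The paper closes this by the direct check that $\Leh_\mu(s+i\omega)\equiv\Leh_\mu(s)$ if and only if $q^{i\omega}=1$. Equivalently, the pole set $\tfrac{2\pi i}{\log q}\Z$ of $(1-q^{s-1})/(1-q^{s})$ must be invariant. Your direct-substitution step supplies this once you carry it out.

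Finally, the Dirac row needs separate mention. Its transform is constant, so Theorem~\ref{thm:resonant}(iii) does not apply, and $\Theta_\mu=\R$ is immediate.
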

	
	\begin{table}[t]
		\centering
		\caption{A dictionary of Lehmer transforms: the transform, the
			suddency domain $S_\mu$ and the imaginary period group
			$\Theta_\mu$ of the standard families of positive laws.
			Scale parameters act by homogeneity and are set to one where
			omitted; parameters are constrained so that $S_\mu$ is a
			nonempty open interval.}
		\label{tab:dictionary}
		\small
		\begin{tabular}{@{}llll@{}}
			\toprule
			Law & $\Leh_\mu(s)$ & $S_\mu$ & $\Theta_\mu$\\
			\midrule
			Dirac $\delta_c$ & $c$ & $\R$ & $\R$\\
			Exponential (mean $\theta$) & $\theta\,s$ & $(0,\infty)$
			& $\{0\}$\\
			Gamma$(k,\theta)$ & $\theta\,(s+k-1)$ & $(1-k,\infty)$
			& $\{0\}$\\
			Chi-square$(k)$ & $2s+k-2$ & $(1-\tfrac k2,\infty)$
			& $\{0\}$\\
			Chi$(k)$ & $\sqrt2\,\Gamma\bigl(\tfrac{k+s}2\bigr)
			/\Gamma\bigl(\tfrac{k+s-1}2\bigr)$ & $(1-k,\infty)$
			& $\{0\}$\\
			Weibull$(k,\lambda)$ & $\lambda\,\Gamma(1+\tfrac sk)
			/\Gamma(1+\tfrac{s-1}k)$ & $(1-k,\infty)$ & $\{0\}$\\
			Generalized Gamma$(a,d,p)$ & $a\,\Gamma\bigl(\tfrac{d+s}p\bigr)
			/\Gamma\bigl(\tfrac{d+s-1}p\bigr)$ & $(1-d,\infty)$
			& $\{0\}$\\
			Lognormal$(m,\sigma^2)$ & $e^{\,m+\sigma^2(s-1/2)}$ & $\R$
			& $\tfrac{2\pi}{\sigma^2}\Z$\\
			\midrule
			Uniform$(0,b)$ & $b\,s/(s+1)$ & $(0,\infty)$ & $\{0\}$\\
			Beta$(a,b)$ & $(s+a-1)/(s+a+b-1)$ & $(1-a,\infty)$
			& $\{0\}$\\
			Kumaraswamy$(a,b)$ & $B(1+\tfrac sa,\,b)
			/B(1+\tfrac{s-1}a,\,b)$ & $(1-a,\infty)$ & $\{0\}$\\
			\midrule
			Pareto$(\alpha)$, $x\ge x_m$ & $x_m(\alpha-s+1)/(\alpha-s)$
			& $(-\infty,\alpha)$ & $\{0\}$\\
			Beta-prime$(a,b)$ & $(s+a-1)/(b-s)$ & $(1-a,\,b)$
			& $\{0\}$\\
			F$(d_1,d_2)$ & $\tfrac{d_2}{d_1}\,
			\bigl(s+\tfrac{d_1}2-1\bigr)/\bigl(\tfrac{d_2}2-s\bigr)$
			& $(1-\tfrac{d_1}2,\,\tfrac{d_2}2)$ & $\{0\}$\\
			Inverse-Gamma$(k,\theta)$ & $\theta/(k-s)$ & $(-\infty,k)$
			& $\{0\}$\\
			Fr\'echet$(\alpha)$ & $\Gamma(1-\tfrac s\alpha)
			/\Gamma(1-\tfrac{s-1}\alpha)$ & $(-\infty,\alpha)$
			& $\{0\}$\\
			Log-logistic$(\beta)$ & $\dfrac{s\,\sin(\pi(s-1)/\beta)}
			{(s-1)\sin(\pi s/\beta)}$ & $(1-\beta,\,\beta)$ & $\{0\}$\\
			Half-Cauchy & $\tan(\pi s/2)$ & $(0,1)$ & $\{0\}$\\
			One-sided stable$(\alpha)$ & $(1-s)\,\Gamma(1-\tfrac s\alpha)
			/\Gamma(1+\tfrac{1-s}\alpha)$ & $(-\infty,\alpha)$
			& $\{0\}$\\
			\midrule
			Geometric $\sum_{n\ge0}\delta_{q^{\,n}}$ &
			$(1-q^{\,s-1})/(1-q^{\,s})$ & $(1,\infty)$
			& $\tfrac{2\pi}{\log(1/q)}\Z$\\
			Zeta/Zipf$(\varrho)$ & $\zeta(\varrho-s)/\zeta(\varrho-s+1)$
			& $(-\infty,\varrho-1)$ & $\{0\}$\\
			\bottomrule
		\end{tabular}
	\end{table}
	
	The horizontal blocks collect the exponential-type laws, the
	compactly supported laws, the heavy-tailed laws and the discrete
	ensembles. The chi row contains the half-normal, Rayleigh and
	Maxwell laws at $k=1,2,3$; the stated parameter constraints keep
	the suddency domains nonempty, with $\varrho>2$ placing the
	arithmetic node inside the Zipf domain; and the geometric row is
	the counting measure $\sum_{n\ge0}\delta_{q^{\,n}}$, of
	infinite total mass rather than a probability law.
	
	Four structural features of the table deserve emphasis.
	
	The first is that the position of the singularity encodes the tail:
	the compact block has its poles left of the suddency domain, while
	in the heavy block the divergence sits exactly at the tail index,
	for the half-Cauchy the pole of $\tan(\pi s/2)$ at the Cauchy
	index $s=1$. Proposition~\ref{prop:endpoints} shows this is a
	theorem rather than a coincidence, and the Abelian law below makes
	the heavy-tailed case quantitative.
	
	The second is that the table \emph{multiplies}, by the character
	property: products of independent Gammas and Betas realize rational
	transforms of every degree with real zeros and poles, and
	inversion-symmetric laws produce trigonometric transforms through
	Euler's reflection formula.

	The third is that the table is \emph{escort-closed}: tilting maps it
	into itself, each family into itself or into a neighbouring row, with
	shifted parameters, as in
	\begin{equation}\label{eq:escortclosed}
		\begin{gathered}
			\mathrm{Gamma}(k,\theta)_u=\mathrm{Gamma}(k+u,\theta),\\
			\mathrm{Pareto}(\alpha)_u=\mathrm{Pareto}(\alpha-u),
		\end{gathered}
	\end{equation}
	while the uniform law on $(0,b)$ tilts to $\mathrm{Beta}(1+u,1)$
	rescaled, and the half-Cauchy tilts, for $|u|<1$, to the law of the
	\emph{square root} of a
	$\mathrm{Beta\text{-}prime}\bigl(\tfrac{u+1}{2},
	\tfrac{1-u}{2}\bigr)$ variable. This is \emph{why} closed forms
	exist: the escort mean stays inside a finitely parametrized family
	closed under tilting, and the transform inherits its functional
	form.
	
	The fourth is that the period column is trivial in all but three rows,
	and each of the three is nontrivial for a different reason: the Dirac
	row degenerately, because a constant curve has every frequency as a
	period; the lognormal row by resonance; and the geometric row by a
	lattice of scales. Proposition~\ref{prop:twosources} explains the
	distinction between the last two, and
	Theorem~\ref{thm:resonant} shows why only one of them generates a
	fiber by modulation.
	
	The behaviour at the endpoints of the suddency domain was settled in
	general in Proposition~\ref{prop:endpoints}. The following Abelian
	law makes its first case quantitative for laws with a genuine power
	tail; it is the population fact on which tail-index diagnostics from
	the curve rest. Its interiority hypothesis is automatic when
	$\alpha>1$, since $0\in I_P$ and $I_P$ is an interval with right
	endpoint $\alpha$; when $\alpha\le1$ it is a genuine additional
	requirement on the mass near the origin, of the same nature as a
	negative-moment condition.
	
	\begin{restatable}[Abelian tail law]{proposition}{propabelian}
		\label{prop:abelian}
		Let $P$ be a probability law on $\Rpos$ whose tail is regularly
		varying in the sense of~\cite{bgt1987}, with
		\begin{equation}\label{eq:regvar}
			x^{\alpha}\,\Prob(X>x)\longrightarrow c\in(0,\infty),
			\qquad x\to\infty,
		\end{equation}
		for some $\alpha>0$, and assume $\alpha-1\in\mathring I_P$,
		that is, $\E[X^{\alpha-1-\delta}]<\infty$ for some
		$\delta>0$. Then
		$\sup I_P=\alpha$ and, as $s\uparrow\alpha$,
		\begin{equation}\label{eq:abelianZ}
			\begin{gathered}
				(\alpha-s)\,Z_P(s)\;\longrightarrow\;\alpha c,\\
				(\alpha-s)\,\Leh_P(s)\;\longrightarrow\;
				\frac{\alpha c}{Z_P(\alpha-1)} .
			\end{gathered}
		\end{equation}
		If in addition $Z_P$ continues meromorphically across the line
		$\Re s=\alpha$, the continuation of $\Leh_P$ has a simple pole at
		$s=\alpha$ with residue $-\alpha c/Z_P(\alpha-1)$; without such a
		continuation only the boundary asymptotics limit \eqref{eq:abelianZ} are
		asserted.
	\end{restatable}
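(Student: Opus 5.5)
The plan is to write the partition function through the tail by Fubini/integration by parts, and read off the Abelian asymptotics from an elementary integral against $x^{s-1-\alpha}$. For $s>0$,
\begin{equation*}
Z_P(s)=\E[X^s]=\int_0^\infty s\,x^{s-1}\,\Prob(X>x)\dd x ,
\end{equation*}
valid by Tonelli since everything is nonnegative. Write $\bar F(x)=\Prob(X>x)$ and split the integral at a large cutoff $A$. The piece over $(0,A]$ is bounded by $A^{s}$ uniformly in $s$ near $\alpha$, so it stays bounded as $s\uparrow\alpha$. On $(A,\infty)$ use \eqref{eq:regvar}: given $\epsilon>0$, choose $A$ so that $(c-\epsilon)x^{-\alpha}\le\bar F(x)\le(c+\epsilon)x^{-\alpha}$ for $x>A$. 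Then
\begin{equation*}
s(c\mp\epsilon)\int_A^\infty x^{s-1-\alpha}\dd x=\frac{s(c\mp\epsilon)A^{s-\alpha}}{\alpha-s}
\end{equation*}
sandwiches the tail piece, and multiplying by $(\alpha-s)$ and letting $s\uparrow\alpha$ gives $\limsup$ and $\liminf$ of $(\alpha-s)Z_P(s)$ within $\alpha\epsilon$ of $\alpha c$. Since $\epsilon$ is arbitrary, this yields the first limit in \eqref{eq:abelianZ}. The same lower bound at $s=\alpha$ gives $\int_A^\infty x^{-1}\dd x=\infty$, so $\alpha\notin I_P$, while the upper bound shows $s\in I_P$ for $s\in(0,\alpha)$. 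Hence $\sup I_P=\alpha$ by the interval property of Lemma~\ref{lem:logconvex}(i), and $I_P\ni s$ for $s<\alpha$ near $\alpha$.

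For the transform, the hypothesis $\alpha-1\in\mathring I_P$ guarantees that $s-1\in\mathring I_P$ for $s$ in a left neighbourhood of $\alpha$. So $s\in\mathring S_P$ there, and $Z_P(s-1)\to Z_P(\alpha-1)\in(0,\infty)$ by continuity of $Z_P$ on $\mathring I_P$ (Lemma~\ref{lem:logconvex}(iv)). Dividing gives the second limit. If $\alpha-1\le 0$ the tail formula is not needed for this factor; only interiority is used.

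For the meromorphic statement, suppose $Z_P$ continues meromorphically across $\Re s=\alpha$. The real boundary asymptotic $(\alpha-s)Z_P(s)\to\alpha c\neq0$ forces the continuation to have a pole at $\alpha$, and that pole is simple with Laurent leading term $-\alpha c/(s-\alpha)$. The reason is that a pole of order $m$ would give $|(\alpha-s)Z_P(s)|\sim C|\alpha-s|^{1-m}$ along the real axis, contradicting a finite nonzero limit unless $m=1$. Near $s=\alpha$ the denominator $Z_P(s-1)$ is holomorphic and nonzero, since $\alpha-1\in\mathring I_P$ and $Z_P>0$ on the real axis. So $\Leh_P$ has a simple pole with residue $-\alpha c/Z_P(\alpha-1)$.

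The main obstacle is modest. It lies in making the split uniform in $s$, so that the bounded part genuinely vanishes after multiplication by $(\alpha-s)$, and in keeping the case $\alpha\le1$ honest: there the lower end of $I_P$ is not automatic, which is why interiority of $\alpha-1$ is imposed rather than derived.
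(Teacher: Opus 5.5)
Your proposal is correct and follows the paper's proof essentially step for step. Both arguments use the tail representation $Z_P(s)=s\int_0^\infty x^{s-1}\Prob(X>x)\dd x$ and split the integral. The bounded piece is killed by the factor $(\alpha-s)$, the tail piece is controlled through the regular-variation hypothesis, and then one divides by $Z_P(s-1)\to Z_P(\alpha-1)$ using interiority of $\alpha-1$.

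The differences are cosmetic. Your direct sandwich by $(c\mp\epsilon)x^{-\alpha}$ beyond a cutoff $A$ is a compressed form of the paper's split at $1$ with the relative error $\varepsilon_1$. Your pole-order argument and your restriction to $s\in(0,\alpha)$ when deriving $\sup I_P=\alpha$ spell out points the paper states more briefly.
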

	
	\begin{remark}[Interiority at the left order]\label{rem:abelianinterior}
		The interiority hypothesis plays the same role at the left
		order that the divergence hypothesis of
		Proposition~\ref{prop:endpoints} plays at the right one, and
		neither is removable. A tail
		$\Prob(X>x)=x^{-\alpha}\log^{-2}x$ keeps $Z_P(\alpha)$
		finite, so the transform stays bounded at the right endpoint.
		In the same way, a law whose density of $\log X$ decays like
		$t^{-2}e^{(1-\alpha)t}$ as $t\to-\infty$ has
		$Z_P(\alpha-1)<\infty$ with $\alpha-1$ an endpoint of $I_P$
		rather than an interior point; then $Z_P(s-1)=\infty$ for
		every $s<\alpha$, the suddency domain is empty, and the second
		limit in display \eqref{eq:abelianZ} is vacuous, while the
		first continues to hold. Finiteness of $Z_P(\alpha-1)$ alone
		is therefore not enough.
	\end{remark}
	
	We now turn to the characterizations, beginning with the affine row.
	The fingerprint of the Gamma family determines it, by Bohr--Mollerup
	in probabilistic clothing.
	
	\begin{restatable}[Gamma characterization]{theorem}{thmgammachar}\label{thm:gammachar}
		Let $\mu$ be admissible with $I_\mu\supset(-k,\infty)$ and
		\begin{equation}\label{eq:gammahyp}
			\Leh_\mu(s)=\theta\,(s+k-1)\qquad\text{for all }s>1-k,
		\end{equation}
		where $\theta,k>0$. Then
		\begin{equation}\label{eq:gammaconc}
			\dd\mu(x)=c\,x^{k-1}e^{-x/\theta}\dd x
			\qquad\text{for some }c>0 :
		\end{equation}
		up to total mass, $\mu$ is the Gamma law. In particular
		$\Leh_\mu(s)=s/\lambda$ characterizes the exponential law of rate
		$\lambda$.
	\end{restatable}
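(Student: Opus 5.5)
The plan is to reduce the statement to the rigidity theorem applied to the Gamma law itself. Let $\gamma$ denote the measure $x^{k-1}e^{-x/\theta}\dd x$. By Example~\ref{ex:digamma} and homogeneity (Proposition~\ref{prop:homogrefl}(i)) its partition function is $Z_\gamma(s)=\theta^{k+s}\Gamma(k+s)$. This is finite exactly for $s>-k$, so $I_\gamma=(-k,\infty)$ and $S_\gamma=(1-k,\infty)$. Its transform is $\Leh_\gamma(s)=\theta(s+k-1)$ on $S_\gamma$. So $\mu$ and $\gamma$ share the same transform on the common open suddency interval $(1-k,\infty)$. Here we use that $I_\mu\supset(-k,\infty)$, which gives $S_\mu\supset(1-k,\infty)$.

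Next I would verify the growth condition \eqref{eq:ratiocond} for $g=\Leh_\gamma$. It holds because
\[
\frac{\Leh_\gamma(s+1)}{\Leh_\gamma(s)}=\frac{s+k}{s+k-1}\longrightarrow1 .
\]
Both $Z_\mu$ and $Z_\gamma$ are positive on $(-k,\infty)$ and log-convex there by Lemma~\ref{lem:logconvex}(i). Both satisfy $Z(s)=\theta(s+k-1)Z(s-1)$ on $(1-k,\infty)$, because $s-1\in I_\mu$ for $s>1-k$. Theorem~\ref{thm:rigidity}(i) with $s_0=1-k$ then gives $Z_\mu=cZ_\gamma$ on $(1-k,\infty)$ for some $c>0$. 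If one prefers to avoid quoting Theorem~\ref{thm:rigidity}, the same conclusion follows from the classical Bohr--Mollerup argument. The quotient $Z_\mu/Z_\gamma$ is $1$-periodic by Lemma~\ref{lem:fiber}. Log-convexity of $Z_\mu$ traps its log-derivative between $\log\theta(s+k-2)$ and $\log\theta(s+k-1)$, and the gap between these tends to $0$. So the oscillation of the periodic quotient tends to $0$, which makes it constant.

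Finally, Lemma~\ref{lem:strip} converts equality of partition functions on the nonempty open interval $(1-k,\infty)\subset\mathring I_\mu\cap\mathring I_{c\gamma}$ into $\mu=c\gamma$. This is exactly \eqref{eq:gammaconc}. The exponential case is $k=1$, $\theta=1/\lambda$.

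The only step needing care is the bookkeeping of domains. We must ensure that the functional equation holds on a full half-line $(s_0,\infty)$ with both orders $s$ and $s-1$ inside the moment domains, and that the interval passed to strip uniqueness lies in the interiors. The hypothesis $I_\mu\supset(-k,\infty)$ is precisely what supplies both, so I expect no real obstacle beyond this.
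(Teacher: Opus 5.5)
Your proof is correct, but it is not the argument the paper gives for this theorem. It is exactly the proof of Corollary~\ref{cor:charall} (blanket characterization), specialized to the Gamma row.

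The paper's own proof goes as follows. It normalizes $\theta=1$ by homogeneity and puts $f(x)=Z_\mu(x-k)$ for $x>0$. The hypothesis then becomes $f(x+1)=x\,f(x)$ with $f$ log-convex. The paper runs the Bohr--Mollerup sandwich $(n-1)^{x}f(n)\le f(n+x)\le n^{x}f(n)$ explicitly, which gives $Z_\mu(s)=f(1)\Gamma(s+k)$ on $(-k,\infty)$, and Lemma~\ref{lem:strip} then identifies $\mu$.

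The two routes differ in what carries the weight:
\begin{itemize}
\item The paper \emph{produces} the Gamma function from the functional equation by a classical, self-contained squeeze. It makes no use of the gauge Lemma~\ref{lem:fiber} or of the almost-everywhere derivative argument inside Theorem~\ref{thm:rigidity}.
\item Your route needs the candidate $\gamma$ and its partition function up front, and lets Theorem~\ref{thm:rigidity}(i) kill the periodic gauge. What it buys is uniformity: the same three steps characterize every subgeometric row of Table~\ref{tab:dictionary} at once.
\end{itemize}
Your domain bookkeeping is in order: $S_\mu\supset(1-k,\infty)$, the choice $s_0=1-k$ needs $Z$ only on $(-k,\infty)$, and $(1-k,\infty)\subset\mathring I_\mu\cap\mathring I_{c\gamma}$.

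There is one slip in your optional Bohr--Mollerup aside. Write $\varphi=\log Z_\mu$. Convexity traps $\varphi'(s)$ between the chord slopes over $[s-1,s]$ and $[s,s+1]$, that is, $\log\theta(s+k-1)\le\varphi'(s)\le\log\theta(s+k)$. Your stated upper bound $\log\theta(s+k-1)$ is false: for the Gamma law itself $\varphi'(s)=\log\theta+\psi(s+k)>\log\theta(s+k-1)$. The correct window width $\log\frac{s+k}{s+k-1}$ still tends to zero, so the conclusion of the aside stands.
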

	
	The Bohr--Mollerup argument above is instructive, but the rigidity
	theory proves more in one stroke: every dictionary row whose
	transform grows subgeometrically is characterized by its curve on a
	right half-line.
	
	\begin{corollary}[Blanket characterization]\label{cor:charall}
		Let $h$ be the transform of a row of
		Table~\ref{tab:dictionary} with $h(s+1)/h(s)\to1$ as
		$s\to\infty$, the case of the Dirac, exponential, Gamma,
		chi-square, chi, Weibull, generalized Gamma, uniform, Beta,
		Kumaraswamy and geometric rows. If $\mu$ is admissible with
		$(s_0,\infty)\subset S_\mu$ for some $s_0$ and
		$\Leh_\mu=h$ on $(s_0,\infty)$, then $\mu$ is the measure of
		that row, up to a positive scalar.
	\end{corollary}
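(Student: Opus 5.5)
The plan is to reduce the corollary to the consequence clause of Theorem~\ref{thm:rigidity}. Let $\nu$ denote the measure of the chosen row, so that $\Leh_\nu=h$ on $S_\nu$; this is Proposition~\ref{prop:dictionary}. Every listed row has suddency domain containing a right half-line: $(1-k,\infty)$, $(0,\infty)$, $(1-a,\infty)$, $(1,\infty)$ or $\R$. Enlarging $s_0$ if necessary, I will therefore take $(s_0,\infty)\subset S_\mu\cap S_\nu$. The two transforms then agree on a common suddency interval $(s_0,\infty)$, and $\Leh_\nu(s+1)/\Leh_\nu(s)=h(s+1)/h(s)\to1$. The consequence clause, with the roles of the two measures arranged so that the ratio condition \eqref{eq:ratiocond} is imposed on the row measure, gives $\mu=c\nu$. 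Since $\Leh_\mu=\Leh_\nu$ on the half-line, the condition is the same whichever measure carries it, so the arrangement is immaterial.

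The one step needing care is unpacking that clause. I will put $g=h$ and $Z_1=Z_\mu$, $Z_2=Z_\nu$. These are positive and log-convex by Lemma~\ref{lem:logconvex}(i), and both solve \eqref{eq:funceq} on $(s_0,\infty)$ by definition of the transform. The hypothesis $h(s+1)/h(s)\to1$ gives $\eta=0$, so part (i) yields $Z_\mu=cZ_\nu$ on $(s_0,\infty)$. Strictly, the functional equation also involves $Z(s-1)$, so the equality first holds on $(s_0,\infty)$ and extends down to $(s_0-1,\infty)$ through the recursion. That extension is not needed: any nonempty open interval inside $\mathring I_\mu\cap\mathring I_\nu$ suffices for Lemma~\ref{lem:strip}, which then gives $\mu=c\nu$ as measures.

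It remains to verify, row by row, that the listed rows satisfy the ratio hypothesis. This is where I expect the only real work, though it is routine.
\begin{itemize}
\item \emph{Bounded support.} The Dirac, uniform, Beta and Kumaraswamy rows have $h(s)\to\esssup X\in(0,\infty)$, by Corollary~\ref{cor:rigid}(i) or Theorem~\ref{thm:statgen}. The same holds for the geometric row: its support is bounded above by $1$, it is of infinite mass but admissible, and $h(s)=(1-q^{s-1})/(1-q^{s})\to1$.
\item \emph{Affine rows.} The exponential, Gamma and chi-square rows are affine, so $h(s+1)/h(s)=(s+k)/(s+k-1)\to1$.
\item \emph{Gamma ratios.} The chi, Weibull and generalized Gamma rows are of the form $\Gamma((d+s)/p)/\Gamma((d+s-1)/p)$. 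Stirling's formula gives growth like $(s/p)^{1/p}$, so the ratio at unit shift tends to $1$.
\end{itemize}

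The main obstacle, if any, is bookkeeping rather than mathematics: the hypothesis places $(s_0,\infty)$ in $S_\mu$, and the rigidity theorem needs this half-line to lie in the common domain. That is ensured by taking the maximum of $s_0$ and the left endpoint of the row's domain.
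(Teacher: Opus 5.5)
Your proposal is correct and takes the same route as the paper: set $Z_1=Z_\mu$, $Z_2=Z_\nu$, $g=h$, deduce $\eta=0$ and hence $Z_\mu=cZ_\nu$ from Theorem~\ref{thm:rigidity}(i), then conclude $\mu=c\nu$ with Lemma~\ref{lem:strip}; your row-by-row check of $h(s+1)/h(s)\to1$ is work the paper simply takes as read. One small citation slip: Theorem~\ref{thm:statgen} does not literally apply to any of the bounded rows, because the Dirac row has $m=M$ and the uniform, Beta, Kumaraswamy and geometric supports accumulate at the origin, but Corollary~\ref{cor:rigid}(i) or direct computation from the table covers all of them.
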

	
	\begin{proof}
		Let $\nu$ be the row's measure, and enlarge $s_0$ so that
		$\Leh_\mu=\Leh_\nu=h$ on $(s_0,\infty)$. Both $Z_\mu$ and
		$Z_\nu$ are positive log-convex solutions of equation
		\eqref{eq:funceq} with $g=h$ there, and $h(s+1)/h(s)\to1$ is
		condition \eqref{eq:ratiocond}, so
		Theorem~\ref{thm:rigidity}(i) gives $Z_\mu=c\,Z_\nu$ on
		$(s_0,\infty)$ for some $c>0$. Lemma~\ref{lem:strip} then
		gives $\mu=c\nu$.
	\end{proof}
	
	The corollary makes the fingerprint reading literal for the entire
	subgeometric block. The rows it does not reach are exactly the ones
	that need more: the lognormal row, whose fiber is real, and the
	heavy-tailed block, whose suddency domains are bounded above.
	
	The log-affine fingerprint is subtler than the affine one, because
	it is the one shape that the log-periodic gauge of
	Lemma~\ref{lem:fiber} can survive; Example~\ref{ex:logaffinecompanion}
	exhibited the surviving gauge explicitly. Within multiplicatively
	infinitely divisible laws the fingerprint nevertheless characterizes.
	
	\begin{restatable}[Lognormal characterization]{theorem}{thmlognormalchar}\label{thm:lognormalchar}
		Let $\mu$ be multiplicatively infinitely divisible with
		$I_\mu=\R$, and suppose
		\begin{equation}\label{eq:lognormalhyp}
			\Lambda_\mu(s)=m+\sigma_0^2\Bigl(s-\tfrac12\Bigr),
			\qquad\sigma_0>0 .
		\end{equation}
		Then $\mu$ is lognormal with parameters $(m,\sigma_0^2)$, up to
		total mass.
	\end{restatable}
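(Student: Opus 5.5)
The plan is to push everything onto the Lévy triplet through Theorem~\ref{thm:levy} and show that the Lévy measure must vanish. First I normalize: since $\Leh_{c\mu}=\Leh_\mu$ and the conclusion is only claimed up to total mass, I replace $\mu$ by the probability law $\mu/Z_\mu(0)$. Here $0\in I_\mu=\R$, so this is legitimate. Then $\log X$ is infinitely divisible with some triplet $(b,\sigma^2,\Pi)$. Because $I_\mu=\R$, $\log X$ has exponential moments of every order. By the standard moment criterion for infinitely divisible laws (Sato, Thm.~25.17), this is equivalent to $\int_{|y|>1}e^{uy}\,\Pi(\dd y)<\infty$ for every real $u$. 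So the hypotheses of Theorem~\ref{thm:levy} hold for every $s\in\R$, and the representations \eqref{eq:levyL} and \eqref{eq:levyderiv} are available on the whole line.

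Next I differentiate the hypothesis \eqref{eq:lognormalhyp}, which gives $\Lambda_\mu'\equiv\sigma_0^2$. Comparing with \eqref{eq:levyderiv}, the function
\begin{equation*}
F(s)\eqdef\int_\R y\bigl(e^{y}-1\bigr)e^{(s-1)y}\,\Pi(\dd y)
\end{equation*}
must equal the constant $\sigma_0^2-\sigma^2$ on all of $\R$. The key observation is that $y(e^y-1)\ge0$, with equality only at $y=0$. Hence $F$ is the two-sided Laplace transform of the nonnegative measure $\nu(\dd y)=y(e^y-1)e^{-y}\Pi(\dd y)$, evaluated at $s$. This transform is finite on all of $\R$: near the origin, $y(e^y-1)\sim y^2$ is $\Pi$-integrable, and away from it the exponential moments established above apply. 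So $F$ is smooth, and $F''(s)=\int y^2e^{sy}\,\nu(\dd y)\ge0$, with equality only if $\nu=0$. A constant $F$ forces $F''\equiv0$, hence $\nu=0$. Since $\Pi$ carries no mass at the origin and the weight $y(e^y-1)e^{-y}$ is strictly positive elsewhere, this gives $\Pi=0$.

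With $\Pi=0$, the slope identity gives $\sigma^2=\sigma_0^2$, and \eqref{eq:levyL} reduces to $\Lambda_\mu(s)=b+\sigma^2(s-\tfrac12)$. Matching with \eqref{eq:lognormalhyp} gives $b=m$; the truncation convention is irrelevant once $\Pi=0$. So $\log X$ is Gaussian with mean $m$ and variance $\sigma_0^2$, that is, the normalized $\mu$ is lognormal$(m,\sigma_0^2)$. Undoing the normalization gives the claim up to total mass.

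The main obstacle I expect is the first step: justifying that $I_\mu=\R$ yields exponential integrability of $\Pi$ for every order. Only with that in hand is $F$ finite and twice differentiable under the integral on the whole line. Local finiteness near each $s$ suffices for the argument, since constancy on any open interval already forces $\nu=0$. So if the global criterion is awkward to quote, I would fall back on the local version on a neighbourhood of a single point.
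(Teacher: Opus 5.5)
Your proposal is correct and follows the paper's proof. Both arguments read $\Lambda_\mu'\equiv\sigma_0^2$ through the slope formula \eqref{eq:levyderiv}, conclude that the jump integral is constant, force $\Pi=0$, and then obtain $\sigma^2=\sigma_0^2$ and $b=m$ by matching.

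The only difference is the final step. The paper differentiates once and applies Laplace uniqueness (Lemma~\ref{lem:strip}) to the positive and negative parts of $y\,w\,\dd\Pi$. You instead use $F''=\int y^2e^{sy}\,\nu(\dd y)\ge0$ and kill $\nu$ by positivity alone, which is slightly more elementary. Your explicit appeal to the exponential-moment criterion for L\'evy measures also supplies the finiteness of the jump integral on $\R$, which the paper asserts without citation.
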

	
	\begin{remark}[Beyond the affine case]\label{rem:meijer}
		For rational forms whose poles sit to
		the \emph{right} of the suddency domain, such as Beta-prime, the
		domain is bounded above, the telescoping mechanism toward
		$+\infty$ is unavailable, and classification holds only modulo the
		log-periodic gauge of Lemma~\ref{lem:fiber}. The Pareto row is not
		an example of this difficulty, despite the position of its pole:
		its support is bounded away from the origin, so
		Corollary~\ref{cor:rigid}(iv) applies through the reflection
		identity \eqref{eq:reflection} and the law is rigid.
	\end{remark}
	
	For finitely supported measures, and hence for every empirical
	dataset, the inversion problem has a complete and effective answer.
	Finitely many values of the transform at unit-spaced suddency moments
	recover the data exactly, by Prony's method applied to the cocycle
	products.
	
	\begin{restatable}[Exact recovery]{theorem}{thmprony}
		\label{thm:prony}
		Let $\mu=\sum_{i=1}^{k}a_i\delta_{x_i}$ with distinct $x_i>0$ and
		$a_i>0$, and fix $s_0\in\R$. Then:
		\begin{enumerate}[label=\textup{(\roman*)},leftmargin=2.2em]
			\item the $2k-1$ values
			$\Leh_\mu(s_0+1),\dots,\Leh_\mu(s_0+2k-1)$ determine $\mu$ up
			to total mass, and no smaller number of unit-spaced values
			suffices in general;
			\item explicitly, the cocycle products
			\begin{equation}\label{eq:hj}
				\begin{gathered}
					h_j\eqdef\prod_{l=1}^{j}\Leh_\mu(s_0+l),\\
					j=0,1,\dots,2k-1,
				\end{gathered}
			\end{equation}
			form the moment sequence $h_j=\sum_iw_ix_i^{\,j}$ of the
			$k$-atomic probability measure
			$\tau=\sum_iw_i\delta_{x_i}$ with
			$w_i=a_ix_i^{s_0}/Z_\mu(s_0)$;
			\item the nodes $x_i$ are the roots of the degree-$k$
			polynomial whose coefficient vector spans the kernel of the
			$k\times(k+1)$ Hankel matrix
			$(h_{i+j})_{0\le i\le k-1,\;0\le j\le k}$, the weights $w_i$
			solve the resulting Vandermonde system, and
			$a_i\propto w_ix_i^{-s_0}$.
		\end{enumerate}
	\end{restatable}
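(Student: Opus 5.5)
Part (ii) comes first, since the rest rests on it. For a finitely supported $\mu$ every order lies in $I_\mu=\R$, so the cocycle identity of Lemma~\ref{lem:cocycle}, applied to the ladder $s_0+j,\dots,s_0$, gives
\[
h_j=\prod_{l=1}^{j}\Leh_\mu(s_0+l)=\frac{Z_\mu(s_0+j)}{Z_\mu(s_0)}
=\sum_{i=1}^k\frac{a_ix_i^{s_0}}{Z_\mu(s_0)}\,x_i^{\,j}
=\sum_iw_ix_i^{\,j},
\]
with $h_0=1$ as the empty product. The weights $w_i$ are positive and sum to one, so $\tau$ is a $k$-atomic probability measure with the stated nodes. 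The data $\Leh_\mu(s_0+1),\dots,\Leh_\mu(s_0+2k-1)$ therefore determine exactly $h_0,\dots,h_{2k-1}$.

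For part (iii) I run the standard Prony argument. Let $p(x)=\prod_i(x-x_i)=\sum_{j=0}^k c_jx^j$. For each $0\le r\le k-1$,
\[
\sum_j h_{r+j}c_j=\sum_iw_ix_i^{\,r}p(x_i)=0,
\]
so the coefficient vector $c$ lies in the kernel of the $k\times(k+1)$ Hankel matrix $H=(h_{i+j})$. The kernel is one-dimensional. To see this, factor the leading $k\times k$ block as $V^{\top}WV$, where $V$ is the Vandermonde matrix $(x_i^{\,j})$ and $W=\mathrm{diag}(w_i)$. Since the nodes are distinct and the weights positive, this block is positive definite, so $H$ has rank $k$. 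Its kernel is therefore spanned by $c$, and the roots of the corresponding polynomial are exactly the $x_i$. With the nodes known, the equations $h_j=\sum_iw_ix_i^{\,j}$ for $j=0,\dots,k-1$ form an invertible Vandermonde system that yields the $w_i$. Finally $a_i=w_iZ_\mu(s_0)x_i^{-s_0}$, which determines $a_i\propto w_ix_i^{-s_0}$ and hence $\mu$ up to total mass. This also gives the sufficiency claim in (i).

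The minimality claim in (i) is where the care lies. I must exhibit two $k$-atomic measures that are not proportional but share $\Leh(s_0+1),\dots,\Leh(s_0+2k-2)$, equivalently share $h_0,\dots,h_{2k-2}$. I would take a positive-definite Hankel moment sequence $h_0,\dots,h_{2k-2}$, say from a $k$-atomic $\tau$ with positive nodes. Such a sequence has a one-parameter family of $k$-atomic representing measures, namely the canonical (principal) representations of Gaussian quadrature type, obtained by letting $h_{2k-1}$ vary over an interval. Each free value produces distinct positive nodes, so different values give different $\tau$.

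Restricting to positive nodes needs an argument. For $h_{2k-1}$ close to the value realised by $\tau$, the nodes move continuously and stay positive. This follows because the Prony polynomial's coefficients depend affinely on $h_{2k-1}$, and simple roots vary continuously with the coefficients. Transferring back by $a_i\propto w_ix_i^{-s_0}$ gives genuinely different measures with equal first $2k-2$ unit-spaced values. This perturbation step, which must keep the roots real, simple and positive and the weights positive, is the only delicate point. Positivity of the weights is preserved because positive-definiteness of the leading $k\times k$ block is an open condition.
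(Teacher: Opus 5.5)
Your proof is correct. Part (ii) matches the paper. Parts (i) and (iii) reach the same conclusions by partly different routes.

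\textbf{Part (iii).} The paper takes an arbitrary kernel vector of the Hankel matrix. Using the invertible $k\times k$ Vandermonde matrix and $w_l>0$, it shows that the corresponding polynomial vanishes at every node. You instead exhibit $\prod_i(x-x_i)$ in the kernel and get rank $k$ from the factorization $V^{\top}WV$ of the leading block. This makes explicit the one-dimensionality that the paper only asserts.

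\textbf{Uniqueness in (i).} The paper does not use the reconstruction at all. If $\tau'$ has at most $k$ atoms and the same $h_0,\dots,h_{2k-1}$, then $\tau-\tau'$ lives on at most $2k$ points and annihilates all polynomials of degree at most $2k-1$, so it vanishes. This also covers competitors with fewer than $k$ atoms. In your route that case needs an extra remark: such a competitor would force the same Hankel matrix to have rank below $k$.

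\textbf{Sharpness in (i).} The paper writes an explicit family only for $k=2$ and cites the truncated moment problem for general $k$. Your perturbation of $h_{2k-1}$ is a self-contained argument for every $k$. The affine dependence of the monic Prony coefficients on $h_{2k-1}$ is exactly the right observation.

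Two steps should be written out.
\begin{enumerate}
\item You must check that the perturbed measure $\sum_i w_i'\delta_{y_i}$ reproduces all of $h_0,\dots,h_{2k-1}$, not only the first $k$ moments used to fit the weights. The kernel relation says the sequence obeys the linear recurrence with characteristic polynomial $Q$. The moments of the new measure obey the same recurrence because $Q(y_i)=0$. The two sequences agree for $j<k$, so they agree up to $2k-1$. This is also what makes different values of $h_{2k-1}$ give different measures.
\item Your justification of positive weights is misplaced. The leading $k\times k$ block involves only $h_0,\dots,h_{2k-2}$, so it does not move. Continuity of the Vandermonde solution suffices. Better, Sylvester's law of inertia applied to the fixed positive definite block $V'^{\top}W'V'$ gives $W'>0$ for every value of $h_{2k-1}$ producing real distinct nodes. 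So only positivity of the smallest node confines $h_{2k-1}$ to an interval.
\end{enumerate}
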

	
	\begin{remark}[Moment coordinates]\label{rem:hankel}
		On the integers the transform is the ratio of consecutive moments,
		$\Leh_\mu(k)=m_k/m_{k-1}$ with $m_k=Z_\mu(k)$, and the cocycle
		identity \eqref{eq:momenttelescope} makes the integer Lehmer
		values a complete moment coordinate: log-convexity gives the
		Hankel minor inequalities $m_{k-1}m_{k+1}\ge m_k^2$, the
		recurrence coefficients of the orthogonal polynomials of $\mu$
		are rational in the Lehmer values, and convexity of
		$\Lambda_\mu$ gives the Tur\'an-type inequality
		$\Leh_\mu(s)^2\le\Leh_\mu(s-1)\Leh_\mu(s+1)$. The growth
		condition \eqref{eq:ratiocond} should be compared with the
		classical Stieltjes determinacy criteria of Cram\'er, Carleman,
		Krein and Lin, which constrain the growth of the ratios
		$m_k/m_{k-1}=\Leh_\mu(k)$; see~\cite{linstoyanov2017}. What
		Proposition~\ref{prop:infoform} adds is that the growth relevant
		to determinacy by the transform is a second difference of the
		free energy, equivalently the merging of consecutive escorts.
	\end{remark}
	
	\begin{remark}[Conditioning]\label{rem:conditioning}
		Exact recovery is a statement of exact arithmetic. The Hankel
		and Vandermonde matrices of the reconstruction are severely
		ill-conditioned: the condition number of a positive definite
		Hankel matrix grows at least geometrically in its dimension, by
		the bounds of Beckermann~\cite{beckermann2000}. In finite
		precision the reconstruction is therefore reliable only for
		small numbers of distinct atoms, and
		Theorem~\ref{thm:prony} should be read as a statement about
		the information carried by the curve rather than as a numerical
		algorithm.
	\end{remark}
	%=====================================================================
	\section{Statistical Theory}\label{sec:statistics}
	%=====================================================================
	
	Let $X_1,\dots,X_n$ be observations with common law $P$ on $\Rpos$,
	let $m_a=\E_P[X^a]$, and write
	\begin{equation}\label{eq:popemp}
		\begin{gathered}
			\Leh_P(s)=\frac{m_s}{m_{s-1}},\\
			\widehat\Leh_n(s)=\frac{\sum_{i=1}^nX_i^{\,s}}
			{\sum_{i=1}^nX_i^{\,s-1}}
		\end{gathered}
	\end{equation}
	for the population and empirical transforms; everything so far
	applies with $\mu=P$ or $\mu=\widehat P_n$. Two regimes organize
	the sampling theory developed in this section: a Donsker regime on
	suddency intervals where enough moments exist, and a critical regime
	at the tail index where the population transform diverges.
	
	The entrance to the Donsker regime is an exact algebraic
	linearization, through which every subsequent result flows:
	\begin{equation}\label{eq:linearize}
		\begin{gathered}
			\widehat\Leh_n(s)-\Leh_P(s)
			=\frac{1}{\widehat m_{s-1}}\cdot
			\frac1n\sum_{i=1}^{n}X_i^{\,s-1}\bigl(X_i-\Leh_P(s)\bigr),\\
			\widehat m_a=\frac1n\sum_iX_i^a,
		\end{gathered}
	\end{equation}
	whose summands have mean zero. The \emph{influence function} of the
	transform at suddency $s$ is therefore
	\begin{equation}\label{eq:if}
		\psi_s(x)=\frac{x^{\,s-1}\bigl(x-\Leh_P(s)\bigr)}{m_{s-1}} .
	\end{equation}
	
	\begin{restatable}[Central limit theorem]{theorem}{thmclt}\label{thm:clt}
		Fix $s$ with $2s,2s-2\in\mathring I_P$, which already forces
		$s\in\mathring S_P$ because $0\in I_P$ and $I_P$ is an interval.
		Then $\widehat\Leh_n(s)\to\Leh_P(s)$ almost surely, and
		\begin{equation}\label{eq:cltlimit}
			\begin{gathered}
				\sqrt n\,\bigl(\widehat\Leh_n(s)-\Leh_P(s)\bigr)
				\;\xrightarrow{\ d\ }\;
				\mathcal N\bigl(0,v(s)\bigr),\\
				v(s)=\E_P\bigl[\psi_s(X)^{2}\bigr] .
			\end{gathered}
		\end{equation}
		For the exponential law of mean $\theta$,
		\begin{equation}\label{eq:expvar}
			v(s)=\frac{\theta^{2}s^{2}\,\Gamma(2s-1)}{\Gamma(s)^{2}},
			\qquad s>\tfrac12 .
		\end{equation}
	\end{restatable}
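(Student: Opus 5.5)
The plan runs through the exact linearization \eqref{eq:linearize}, in three steps: establish moment conditions, apply the strong law to numerator and denominator, then apply the classical CLT to the mean-zero summands and finish with Slutsky. The exponential variance is a closing Gamma-function computation.

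\textbf{Moments.} The hypotheses $2s,2s-2\in\mathring I_P$ give $\E_P[X^{2s}]<\infty$ and $\E_P[X^{2s-2}]<\infty$. Since $0\in I_P$ and $I_P$ is an interval, the orders $s$, $s-1$ and $2s-1$ all lie in $I_P$: each sits between two of $0$, $2s$, $2s-2$. For instance $s-1$ is the midpoint of $0$ and $2s-2$. Hence $m_s$ and $m_{s-1}$ are finite and positive. By the strong law, $\widehat m_s\to m_s$ and $\widehat m_{s-1}\to m_{s-1}>0$ almost surely, so the ratio gives $\widehat\Leh_n(s)\to\Leh_P(s)$ almost surely.

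\textbf{Central limit step.} Set $\xi_i=X_i^{s-1}(X_i-\Leh_P(s))$. These are i.i.d., with $\E\xi_i=m_s-\Leh_P(s)m_{s-1}=0$. Their second moment is
\[
\E\xi_i^2=m_{2s}-2\Leh_P(s)\,m_{2s-1}+\Leh_P(s)^2m_{2s-2}<\infty,
\]
with $m_{2s-1}$ finite by convexity of $I_P$. The classical Lindeberg--L\'evy CLT then gives $n^{-1/2}\sum_i\xi_i\xrightarrow{d}\mathcal N(0,\E\xi^2)$. Multiplying by $1/\widehat m_{s-1}\to1/m_{s-1}$ almost surely, identity \eqref{eq:linearize} and Slutsky's lemma yield the limit $\mathcal N(0,\E\xi^2/m_{s-1}^2)=\mathcal N(0,\E_P[\psi_s(X)^2])$, which is $v(s)$.

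\textbf{Exponential case.} For the exponential law of mean $\theta$, homogeneity (Proposition~\ref{prop:homogrefl}) and the displayed formula for $v(s)$ show that $v$ scales as $\theta^2$, so it suffices to take $\theta=1$. Then $m_a=\Gamma(1+a)$ and $\Leh_P(s)=s$ by the dictionary. Substituting gives
\[
m_{s-1}^2\,v=\Gamma(2s+1)-2s\,\Gamma(2s)+s^2\,\Gamma(2s-1).
\]
Writing $\Gamma(2s+1)=2s(2s-1)\Gamma(2s-1)$ and $\Gamma(2s)=(2s-1)\Gamma(2s-1)$, the bracket becomes
\[
(2s-1)\Gamma(2s-1)\bigl[2s-2s\bigr]+s^2\Gamma(2s-1)=s^2\,\Gamma(2s-1).
\]
Dividing by $m_{s-1}^2=\Gamma(s)^2$ gives \eqref{eq:expvar}. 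The constraint $s>\tfrac12$ is exactly the condition $2s-2>-1$ for $2s-2\in\mathring I_P=(-1,\infty)$.

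The main obstacle is minor: checking that every moment used is finite, in particular $m_{2s-1}$ and $m_s$. This is handled by the interval property of $I_P$ together with $0\in I_P$.
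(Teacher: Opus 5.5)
Your proposal is correct and follows the paper's proof essentially step for step: the strong law plus continuity of the ratio for consistency, the exact linearization \eqref{eq:linearize} with mean-zero i.i.d.\ summands of finite variance, the classical central limit theorem combined with Slutsky's lemma, and the same Gamma-recurrence computation for the exponential case. Your reduction to $\theta=1$ by homogeneity, and your explicit check that $m_s$, $m_{s-1}$ and $m_{2s-1}$ are finite by convexity of $I_P$, are harmless elaborations of steps the paper handles directly.
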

	
	The influence function settles robustness quantitatively. Since
	$\psi_s$ grows like $x^{s}$ at infinity and like
	$-\Leh_P(s)x^{s-1}$ at the origin, the suddency moment is a
	continuous dial trading upper-tail against lower-tail sensitivity,
	with the exact efficiency cost supplied by $v(s)$; it is a trade
	rather than an escape, since $\psi_s$ is unbounded for every $s$
	and no choice of suddency moment makes the empirical transform
	B-robust. The finite-sample bias is equally explicit, and is an
	increment of the transform itself.
	
	\begin{restatable}[Leading bias]{proposition}{propbias}
		\label{prop:bias}
		Fix $s$ with $4s,4s-4\in\mathring I_P$. If
		$\supp P\subset[m,M]\subset\Rpos$, then, as $n\to\infty$,
		\begin{equation}\label{eq:bias}
			\E\bigl[\widehat\Leh_n(s)\bigr]-\Leh_P(s)
			=-\frac1n\,\frac{m_{2s-2}}{m_{s-1}^{2}}\,
			\Bigl(\Leh_P(2s-1)-\Leh_P(s)\Bigr)+O(n^{-2}).
		\end{equation}
		In general, if in addition
		$\sup_n\E\bigl[\widehat\Leh_n(s)^{p}\bigr]<\infty$ for
		some $p>2$, then identity \eqref{eq:bias} holds with remainder
		$O(n^{-2+2/p})$, which is $o(n^{-1})$.
	\end{restatable}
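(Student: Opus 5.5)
The plan is to expand the ratio $\widehat\Leh_n(s)=\widehat m_s/\widehat m_{s-1}$ around the population denominator, starting from the exact linearization \eqref{eq:linearize}. Write
\[
D\eqdef\frac1n\sum_{i=1}^nX_i^{\,s-1}\bigl(X_i-\Leh_P(s)\bigr),
\qquad
\beta\eqdef\frac{\widehat m_{s-1}-m_{s-1}}{m_{s-1}},
\]
so that $\widehat\Leh_n(s)-\Leh_P(s)=D/\widehat m_{s-1}$, $\E D=0$ and $\E\beta=0$. The algebraic identity $1/(1+\beta)=1-\beta+\beta^2-\beta^3/(1+\beta)$ gives the exact decomposition
\[
\widehat\Leh_n(s)-\Leh_P(s)=\frac{D}{m_{s-1}}\bigl(1-\beta+\beta^2\bigr)
-\beta^3\bigl(\widehat\Leh_n(s)-\Leh_P(s)\bigr).
\]
We then take expectations term by term.

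The leading term is a covariance computation. For i.i.d.\ averages, $\E[D\beta]=\frac1{n\,m_{s-1}}\Cov\bigl(X^{s-1}(X-\Leh_P(s)),X^{s-1}\bigr)$. Since $\E[X^{s-1}(X-\Leh_P(s))]=0$, this covariance equals $m_{2s-1}-\Leh_P(s)\,m_{2s-2}=m_{2s-2}\bigl(\Leh_P(2s-1)-\Leh_P(s)\bigr)$. Hence $-\E[D\beta]/m_{s-1}$ is exactly the displayed term in \eqref{eq:bias}. The next term $\E[D\beta^2]$ is a third mixed central moment of sample means of the summands $X^{s-1}(X-\Leh_P(s))$ and $X^{s-1}$. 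Only the diagonal index triples survive, so $\E[D\beta^2]=O(n^{-2})$, with constant involving $m_{3s-2},m_{3s-3}$. These moments are finite because $4s,4s-4\in\mathring I_P$, $0\in I_P$, and $I_P$ is an interval.

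The remainder $\E[\beta^3(\widehat\Leh_n(s)-\Leh_P(s))]$ is where the two hypotheses split, and I expect it to be the main obstacle.

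In the compactly supported case, $\widehat\Leh_n(s)\in[m,M]$ by Theorem~\ref{thm:statgen}, so the factor $\widehat\Leh_n(s)-\Leh_P(s)$ is bounded. Splitting on the event $\{|\beta|\le\tfrac12\}$, where $\widehat m_{s-1}\ge m_{s-1}/2$ and hence $|\beta^3 D/\widehat m_{s-1}|\le C|\beta|^3|D|$, Hölder and the Marcinkiewicz--Zygmund bound $\E|\bar Y-\E Y|^4=O(n^{-2})$ give a contribution $O(n^{-2})$. On the complement, $\Prob(|\beta|>\tfrac12)=O(n^{-2})$ by Markov's inequality on the fourth moment, and boundedness of the integrand then yields $O(n^{-2})$ as well.

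In the general case boundedness is replaced by the assumed uniform $L^p$ bound. On the good event the same $O(n^{-2})$ estimate holds. On the bad event, Hölder gives
\[
\E\bigl[|\beta|^3|\widehat\Leh_n-\Leh_P|\,\mathbf 1_{\{|\beta|>1/2\}}\bigr]
\le\|\widehat\Leh_n-\Leh_P\|_p\,
\bigl\||\beta|^3\mathbf 1_{\{|\beta|>1/2\}}\bigr\|_{p/(p-1)} .
\]
I would bound the last factor by combining the fourth-moment tail bound $\Prob(|\beta|>\tfrac12)=O(n^{-2})$ with the moment control on $\beta$ available from $4s-4\in\mathring I_P$. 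I aim to extract the rate $\Prob(\text{bad})^{1-1/p}\cdot O(1)=O(n^{-2+2/p})$, which is $o(n^{-1})$ for $p>2$. The delicate point is ensuring that the $|\beta|^3$ factor does not demand more moments than $4s-4\in\mathring I_P$ provides. If it does, I would instead truncate the expansion one order earlier on the bad event, using $\widehat\Leh_n-\Leh_P$ directly in place of $\beta^3(\widehat\Leh_n-\Leh_P)$ there.
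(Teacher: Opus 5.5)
Your decomposition is exactly the paper's. Your $D=\widehat m_s-\Leh_P(s)\,\widehat m_{s-1}$ is the paper's $U$ and $\beta=V/m_{s-1}$, so your identity is the paper's four-term expansion. The leading covariance term, the $O(n^{-2})$ bound on $\E[D\beta^2]$, and the bounded-support case all go through as you describe. On the bad event there, $\beta$ is bounded too, since $X^{s-1}$ is.

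The general case has the gap you anticipated, and as planned it does not close. On the two-sided event $\{|\beta|>\tfrac12\}$ the factor $\beta$ is unbounded, because $\widehat m_{s-1}$ can be arbitrarily large. So $\bigl\||\beta|^3\mathbf 1_{\{|\beta|>1/2\}}\bigr\|_{p/(p-1)}$ is not $O\bigl(\Prob(|\beta|>\tfrac12)^{1-1/p}\bigr)$, and it need not even be finite. It requires a moment of $X^{s-1}$ of order $3p/(p-1)$, which tends to $6$ as $p\downarrow2$. For $s<1$, say, the hypothesis $4s-4\in\mathring I_P$ gives $X^{s-1}$ only slightly more than four moments.

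The missing idea is to make the split one-sided, as the paper does. Take the good event $E=\{\widehat m_{s-1}\ge m_{s-1}/2\}$.
\begin{itemize}
\item On $E$: your good-event estimate only ever uses this lower bound, $|\widehat\Leh_n(s)-\Leh_P(s)|\le 2|D|/m_{s-1}$. So it still gives $O(n^{-2})$ through $\|D\|_4\|\beta\|_4^3$.
\item On $E^c$: positivity of $\widehat m_{s-1}$ forces $\beta\in(-1,-\tfrac12)$, hence $|\beta|^3\le1$. H\"older then gives directly $\bigl(\sup_n\|\widehat\Leh_n(s)\|_p+\Leh_P(s)\bigr)\Prob(E^c)^{1-1/p}=O(n^{-2+2/p})$, with no further moments needed.
\end{itemize}
In other words, the upper half $\{\beta>\tfrac12\}$ of your bad event belongs with the good event.

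Your fallback of not expanding on the bad event can also be made to work. For that you must additionally bound $\E\bigl[\tfrac{D}{m_{s-1}}(1-\beta+\beta^2)\mathbf 1_{\{|\beta|>1/2\}}\bigr]$. H\"older against $\Prob(|\beta|>\tfrac12)=O(n^{-2})$ and the fourth moments of $D$ and $\beta$ gives $O(n^{-2})$ there. The one-sided event makes this extra bookkeeping unnecessary.
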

	
	Since $\Leh_P$ is nondecreasing and $2s-1>s$ exactly when $s>1$,
	the empirical transform is biased \emph{downward} for $s>1$ and
	\emph{upward} for $s<1$, with magnitude the transform's own
	increment over $[s,2s-1]$; the arithmetic node is exactly unbiased,
	as it must be, $\widehat\Leh_n(1)$ being the sample mean. The
	ratio-moment hypothesis in the unbounded case is not decorative:
	for $P$ Pareto of index $\tfrac12$ and $s=\tfrac1{10}$ both
	moment conditions hold, yet $\widehat\Leh_n(s)$ dominates
	$\min_iX_i$, whose expectation is infinite for $n\le2$, so the
	left side of formula \eqref{eq:bias} is $+\infty$ for small $n$
	and only the eventual behaviour can be asserted.
	
	\begin{corollary}[Bias-corrected estimator]\label{cor:biascorrect}
		In the bounded-support case of Proposition~\ref{prop:bias},
		the corrected estimator
		\begin{equation}\label{eq:biascorrect}
			\widetilde\Leh_n(s)\eqdef\widehat\Leh_n(s)
			+\frac1n\,\frac{\widehat m_{2s-2}}{\widehat m_{s-1}^{\,2}}\,
			\Bigl(\widehat\Leh_n(2s-1)-\widehat\Leh_n(s)\Bigr)
		\end{equation}
		satisfies
		$\E\bigl[\widetilde\Leh_n(s)\bigr]-\Leh_P(s)=O(n^{-2})$.
		In the unbounded case, if in addition $4s-2\in\mathring I_P$
		and the ratio-moment hypothesis of the proposition holds at the
		orders $s$ and $2s-1$, the bias of estimator
		\eqref{eq:biascorrect} is $o(n^{-1})$.
	\end{corollary}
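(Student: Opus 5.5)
The corrected estimator is the plug-in subtraction of the leading bias term, so the plan is to show that the correction term has expectation equal to the leading bias up to $O(n^{-2})$. Write $b(s)\eqdef\frac{m_{2s-2}}{m_{s-1}^{2}}\bigl(\Leh_P(2s-1)-\Leh_P(s)\bigr)$ and $\widehat b_n(s)$ for its plug-in version appearing in \eqref{eq:biascorrect}. By Proposition~\ref{prop:bias}, $\E[\widehat\Leh_n(s)]-\Leh_P(s)=-b(s)/n+O(n^{-2})$. It therefore suffices to prove
\[
\E\bigl[\widehat b_n(s)\bigr]=b(s)+O(n^{-1}),
\]
because the correction carries a prefactor $1/n$. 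Summing the two expansions then gives total bias $O(n^{-2})$.

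In the bounded-support case, $\widehat b_n(s)$ is a fixed smooth function $F$ of the four empirical moments $\widehat m_{2s-2},\widehat m_{s-1},\widehat m_{2s-1}$ (the last two through $\widehat\Leh_n(2s-1)=\widehat m_{2s-1}/\widehat m_{2s-2}$) and $\widehat m_s$. On $\supp P\subset[m,M]$ every $X_i^a$ lies in the compact interval between $m^a$ and $M^a$, so every empirical moment lies in a fixed compact subset of $\Rpos$ on which $F$ is $C^2$ with bounded derivatives. A second-order Taylor expansion of $F$ about the population moments has a vanishing linear term in expectation, and its quadratic remainder is bounded by a constant times $\sum_a(\widehat m_a-m_a)^2$. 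Each such square has expectation $\Var(X^a)/n=O(n^{-1})$, which gives $\E[\widehat b_n(s)]-b(s)=O(n^{-1})$ with no exceptional events to control.

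In the unbounded case, I apply Proposition~\ref{prop:bias} at the orders $s$ and $2s-1$; this is where the hypotheses $4s-2\in\mathring I_P$ and the ratio-moment conditions at both orders enter. I need only $\E[\widehat b_n(s)]\to b(s)$, since the correction carries $1/n$. Almost sure convergence of each empirical moment follows from the strong law, because all the required orders lie in $I_P$. Hence $\widehat b_n(s)\to b(s)$ almost surely, and convergence of expectations follows from uniform integrability.

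The main obstacle is that uniform integrability, specifically of the factor $\widehat m_{2s-2}/\widehat m_{s-1}^2$. The bounded moments $\sup_n\E[\widehat\Leh_n(u)^p]$ for $u\in\{s,2s-1\}$ control the difference of Lehmer values, but the prefactor still needs handling. I would first try the Cauchy--Schwarz bound $\widehat m_{2s-2}/\widehat m_{s-1}^2\ge 1$, which bounds it below but not above. The upper bound would then have to come from writing the prefactor as $\widehat m_{2s-2}\widehat m_{0}/\widehat m_{s-1}^{2}$ and using Hölder with the available $p>2$ moments. If that fails, the fallback is to estimate the $1/n$-weighted term on the complement of a high-probability event where the empirical moments are within a constant factor of their means. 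That event has probability $1-O(n^{-1})$ by Chebyshev, which suffices for an $o(n^{-1})$ conclusion.
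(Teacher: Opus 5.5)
Your bounded-support argument is correct and follows the paper's route. The paper likewise shows that the plug-in coefficient has bias $O(n^{-1})$, with uniform remainders under bounded support, and your second-order Taylor expansion on the compact box of attainable moment values makes that step explicit.

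The gap is in the unbounded case. You correctly reduce to $\E[\widehat b_n(s)]\to b(s)$ and have almost sure convergence, but the uniform integrability this rests on is never established. Rewriting the prefactor with $\widehat m_0$ does not touch the real difficulty, which is the denominator $\widehat m_{s-1}^{\,2}$. The ratio-moment hypotheses control Lehmer values, not this prefactor. The fallback also fails quantitatively. On a bad event $B$ the only available controls are $\widehat m_{2s-2}/\widehat m_{s-1}^{\,2}\le n$ (from $\sum_iy_i^2\le(\sum_iy_i)^2$ with $y_i=X_i^{s-1}$) and $\sup_n\|\Delta_n\|_p<\infty$ for $\Delta_n=\widehat\Leh_n(2s-1)-\widehat\Leh_n(s)$. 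H\"older therefore bounds the contribution of $B$ to the bias by $\|\Delta_n\|_p\,\Prob(B)^{1-1/p}$. With $\Prob(B)=O(n^{-1})$ this is $O(n^{-1+1/p})$, which exceeds $n^{-1}$ by the factor $n^{1/p}$, so it does not give $o(n^{-1})$.

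The repair is to let the good event constrain only the denominator, $E=\{\widehat m_{s-1}\ge m_{s-1}/2\}$. Constraining all the moments is what forced the weak $O(n^{-1})$, since $\widehat m_{2s-2}$ and $\widehat m_{2s-1}$ have only second moments under the hypotheses. Because $4s-4\in\mathring I_P$, the Marcinkiewicz--Zygmund and Chebyshev bounds give $\Prob(E^c)=O(n^{-2})$, as in the proof of Proposition~\ref{prop:bias}. Hence $\tfrac1n\E\bigl[|\widehat b_n|\mathbf 1_{E^c}\bigr]\le\|\Delta_n\|_p\,\Prob(E^c)^{1-1/p}=O(n^{-2+2/p})=o(n^{-1})$ since $p>2$; this is the mechanism of \eqref{eq:biasbadevent}. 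On $E$ one has $|\widehat b_n|\le4m_{s-1}^{-2}\,\widehat m_{2s-2}\,|\Delta_n|$ with $\E\bigl[\widehat m_{2s-2}^{\,2}\bigr]\le\E\bigl[X^{4s-4}\bigr]<\infty$. H\"older then bounds $\widehat b_n\mathbf 1_E$ in $L^r$ with $1/r=1/2+1/p<1$. That is the uniform integrability you need, and together with the almost sure limit it gives $\E\bigl[\widehat b_n\mathbf 1_E\bigr]\to b(s)$.

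Finally, Proposition~\ref{prop:bias} cannot be invoked at order $2s-1$ under the stated hypotheses, since that would require $8s-4,8s-8\in\mathring I_P$. The ratio-moment hypothesis at $2s-1$ serves only to place $\Delta_n$ in $L^p$.
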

	
	\begin{proof}
		The correction term carries an explicit factor $n^{-1}$, and the
		empirical quantities multiplying it converge to their population
		versions with $O(n^{-1})$ bias, by Proposition~\ref{prop:bias}
		and the same argument applied to the moments; in the bounded
		case all remainders are uniform. The expectation of the
		correction is therefore
		$\tfrac1n\,\tfrac{m_{2s-2}}{m_{s-1}^{2}}
		\bigl(\Leh_P(2s-1)-\Leh_P(s)\bigr)+O(n^{-2})$, which cancels
		the leading term of expansion \eqref{eq:bias}.
	\end{proof}
	
	The pointwise theory upgrades to a process-level one, at the cost of
	strengthening the moment and mixing hypotheses to what a maximal
	inequality for the increments actually requires.
	
	\begin{restatable}[Functional limits]{theorem}{thmfclt}
		\label{thm:fclt}
		Let $T\subset\mathring S_P$ be compact with
		$2T\cup(2T-2)\subset\mathring I_P$.
		\begin{enumerate}[label=\textup{(\roman*)},leftmargin=2.2em]
			\item In the independent case, in $C(T)$,
			\begin{equation}\label{eq:fcltiid}
				\sqrt n\bigl(\widehat\Leh_n-\Leh_P\bigr)
				\Longrightarrow\mathbb G,
			\end{equation}
			a centered Gaussian process with covariance kernel
			$\mathcal K(s,t)=\E_P[\psi_s\psi_t]$.
			\item If instead $(X_i)$ is strictly stationary and strongly
			mixing with coefficients satisfying, for some
			$\delta\in(0,2)$ and some $a>1+2/\delta$,
			\begin{equation}\label{eq:mixing}
				\alpha_{\mathrm{mix}}(k)=O\bigl(k^{-a}\bigr),
				\qquad\text{so that}\qquad
				\sum_{k\ge1}\alpha_{\mathrm{mix}}(k)^{\delta/(2+\delta)}
				<\infty,
			\end{equation}
			and if the envelope of the influence functions has a
			$(2+\delta)$-th moment, that is,
			\begin{equation}\label{eq:envelopemoment}
				(2+\delta)\,T\ \cup\
				\bigl((2+\delta)\,T-(2+\delta)\bigr)
				\subset\mathring I_P ,
			\end{equation}
			then limit \eqref{eq:fcltiid} holds with the long-run kernel
			\begin{equation}\label{eq:longrun}
				\mathcal K_{\mathrm{LR}}(s,t)
				=\sum_{k\in\Z}\Cov\bigl(\psi_s(X_0),\psi_t(X_k)\bigr)
			\end{equation}
			in place of $\mathcal K$; the series \eqref{eq:longrun}
			converges absolutely under condition \eqref{eq:mixing} by
			Davydov's covariance inequality~\cite{davydov1968}.
		\end{enumerate}
	\end{restatable}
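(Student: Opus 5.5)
The plan is to reduce the process-level statement to an empirical-process limit for the class $\mathcal F_T=\{\psi_s:s\in T\}$, using the exact linearization \eqref{eq:linearize}. Write
\[
\sqrt n\bigl(\widehat\Leh_n(s)-\Leh_P(s)\bigr)
=\frac{m_{s-1}}{\widehat m_{s-1}}\cdot\frac1{\sqrt n}\sum_{i=1}^n\psi_s(X_i).
\]
First we show $\sup_{s\in T}|\widehat m_{s-1}/m_{s-1}-1|\to0$ almost surely in the i.i.d.\ case, and in probability under mixing via the ergodic theorem. The class $\{x^{s-1}:s\in T\}$ is Lipschitz in $s$ with envelope $C(x^{a-1-\varepsilon}+x^{b-1+\varepsilon})$. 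This envelope is integrable by the domination bound \eqref{eq:domination} of Lemma~\ref{lem:logconvex}(iii), since $T-1\subset\mathring I_P$ follows from $2T-2\subset\mathring I_P$ together with $0\in I_P$ and convexity. A bracketing Glivenko--Cantelli argument on the compact $T$ then gives uniform convergence. Since $m_{s-1}$ is continuous and positive on $T$, it is bounded below, so the prefactor tends to $1$ uniformly. By Slutsky in $C(T)$, it then suffices to prove $\mathbb G_n\eqdef n^{-1/2}\sum_i\psi_s(X_i)\Rightarrow\mathbb G$ in $C(T)$.

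For part (i), we verify that $\mathcal F_T$ is $P$-Donsker as a Lipschitz-in-parameter class. Writing $T\subset[a,b]$, the function $\psi_s(x)$ is $C^1$ in $s$. Its derivative $\partial_s\psi_s(x)$ is a combination of $x^{s-1}\log x\,(x-\Leh_P(s))$ and smooth coefficients, using that $m_{s-1}$ and $\Leh_P$ are smooth and positive on $T$ by Lemma~\ref{lem:logconvex}(iv). It is therefore dominated by $F(x)=C(x^{a-1-\varepsilon}+x^{b+\varepsilon})$. The hypothesis $2T\cup(2T-2)\subset\mathring I_P$ gives $F\in L^2(P)$ after shrinking $\varepsilon$, because $2(a-1)-2\varepsilon$ and $2b+2\varepsilon$ lie in the open interval $\mathring I_P$. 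The standard parametric bracketing bound (van der Vaart, Example~19.7) then gives $N_{[\,]}(\epsilon\|F\|_2,\mathcal F_T,L^2)\lesssim\epsilon^{-1}$, so the bracketing integral is finite and $\mathcal F_T$ is Donsker. Finite-dimensional limits are multivariate CLTs with covariance $\E_P[\psi_s\psi_t]$, since each $\psi_s$ is centered by construction. Paths lie in $C(T)$ because $s\mapsto\psi_s(x)$ is continuous, and the limit has a continuous version by $L^2$-continuity of $s\mapsto\psi_s$.

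For part (ii), the finite-dimensional convergence comes from the CLT for strongly mixing sequences (Ibragimov/Doukhan) applied to $\sum_jc_j\psi_{s_j}(X_i)$. This needs the $(2+\delta)$-moments supplied by \eqref{eq:envelopemoment} and the summability in \eqref{eq:mixing}. Absolute convergence of $\mathcal K_{\mathrm{LR}}$ follows from Davydov's inequality $|\Cov(\psi_s(X_0),\psi_t(X_k))|\le C\alpha_{\mathrm{mix}}(k)^{\delta/(2+\delta)}\|\psi_s\|_{2+\delta}\|\psi_t\|_{2+\delta}$.

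The main obstacle is tightness in the mixing case. The plan is to use the Lipschitz structure:
\[
|\mathbb G_n(s)-\mathbb G_n(t)|\le|s-t|\,n^{-1/2}\sum_i\bigl(F(X_i)-\E F\bigr)+2\sqrt n\,|s-t|\,\E F,
\]
but the centering term blows up, so this crude bound is not enough on its own. Instead, write $\mathbb G_n(s)-\mathbb G_n(t)=\int_t^s n^{-1/2}\sum_i\partial_u\psi_u(X_i)\dd u$, where each $\partial_u\psi_u$ is centered because $\E\psi_u\equiv0$. Then apply a moment inequality for strongly mixing sums in $L^{2}$, Yokoyama/Doukhan type, under \eqref{eq:mixing} with $a>1+2/\delta$. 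This yields $\E|\mathbb G_n(s)-\mathbb G_n(t)|^2\le C|s-t|^2$ uniformly in $n$, with $C$ controlled by $\|F\|_{2+\delta}$, which is finite by \eqref{eq:envelopemoment}. Since $T$ is one-dimensional, the Kolmogorov--Chentsov/Billingsley criterion with exponent $2>1$ gives tightness in $C(T)$. We then conclude as in part (i) with $\mathcal K_{\mathrm{LR}}$ in place of $\mathcal K$.
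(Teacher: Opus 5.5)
Your part (i) is the paper's argument. It uses the same Lipschitz-in-$s$ class with envelope $C(1+|\log x|)(x^{\min T-1}+x^{\max T})$, square integrable because $2T\cup(2T-2)\subset\mathring I_P$, hence Donsker. It then applies a uniform law of large numbers for $\widehat m_{s-1}$ and Slutsky.

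Part (ii) is also correct, but you reach tightness by a genuinely different route. The paper shares your factorization and your finite-dimensional ingredients: Ibragimov's CLT, Davydov's inequality for the long-run kernel, and a uniform ergodic theorem for the prefactor. It obtains stochastic equicontinuity by verifying the hypotheses of the Andrews--Pollard bracketing FCLT with $Q=2$ and $\gamma=\delta$. Those hypotheses are the mixing series, the $(2+\delta)$-th envelope moment, and a bracketing integral that converges exactly when $\delta<2$.

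You instead use that $\partial_u\psi_u$ is centered, obtained by differentiating $\E_P\psi_u\equiv0$ under the integral.
\begin{itemize}
\item Davydov's inequality summed over lags then bounds $\E\bigl|n^{-1/2}\sum_i\partial_u\psi_u(X_i)\bigr|^2$ by a constant times $\|F\|_{2+\delta}^2$, uniformly in $u$ and $n$.
\item Minkowski's integral inequality turns this into $\E|\mathbb G_n(s)-\mathbb G_n(t)|^2\le C|s-t|^2$.
\item Billingsley's moment criterion then gives tightness on $[\min T,\max T]\subset\mathring S_P$, hence on $T$.
\end{itemize}
No Yokoyama-type inequality is actually needed here. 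Of condition \eqref{eq:mixing} you use only the summability of $\alpha_{\mathrm{mix}}(k)^{\delta/(2+\delta)}$.

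What each approach buys:
\begin{itemize}
\item Your argument is elementary and self-contained. It treats the unbounded influence functions directly through their envelope, and it dispenses with both the polynomial rate and the restriction $\delta<2$. By contrast, the bracketing theorem the paper cites is stated for uniformly bounded classes, so its application to the unbounded $\psi_s$ tacitly relies on an unbounded-envelope extension.
\item The paper's route is the one that generalizes. An increment bound of order $|s-t|^2$ in second moment yields tightness only because the index set is one-dimensional. A bracketing FCLT would survive a multi-parameter or non-smooth family of suddency functionals.
\end{itemize}
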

	
	The restriction $\delta<2$ normalizes the bracketing argument in
	the proof and costs nothing when $a>2$, since the hypotheses then
	also hold at some $\delta'\in(0,2)$.
	
	The empirical curve is the ratio of the empirical moment generating
	function of $\log X$ at the orders $s$ and $s-1$, so
	Theorem~\ref{thm:fclt} belongs to the process theory of the
	empirical moment generating function begun by
	Cs\"org\H{o}~\cite{csorgo1982}, developed for the empirical
	Laplace transform by Cs\"org\H{o} and
	Teugels~\cite{csorgoteugels1990}, and used inferentially by
	Feuerverger~\cite{feuerverger1989}. The independent case of the
	theorem can also be deduced from that theory by the delta method
	applied to the ratio; the mixing case, with its long-run
	covariance, appears not to have been recorded in this form.
	
	Two consequences are immediate. Fitting $\widehat\Leh_n$ to a
	closed-form transform from Table~\ref{tab:dictionary} at finitely
	many suddency moments gives a $\sqrt n$-consistent, asymptotically
	normal minimum-distance estimator with sandwich covariance built
	from $\mathcal K$; affine and log-affine fits estimate the Gamma
	and lognormal parameters. And the residual of the fit is a
	goodness-of-fit statistic: on a grid $s_1<\dots<s_p$, $p\ge3$,
	with kernel matrix
	$\mathcal K_p=(\mathcal K(s_i,s_j))_{i,j\le p}$ nonsingular, the
	efficiently weighted fit has residual quadratic form asymptotically
	$\chi^2_{p-2}$ under the null.
	
	Two cautions attach to that statement, and both are instructive. The
	first is that the $\chi^2_{p-2}$ limit belongs to the efficient
	weighting alone; with an arbitrary weight matrix the limit is a
	weighted sum of independent $\chi^2_1$ variables, and since
	$\mathcal K_p$ is severely ill-conditioned on a fine grid, the
	approximation requires large samples. The second is a matter of what
	is being tested. Affinity and log-affinity \emph{characterize} the
	Gamma and lognormal families by
	Theorems~\ref{thm:gammachar} and~\ref{thm:lognormalchar}, but they do
	so on a half-line and, in the lognormal case, only within the
	multiplicatively infinitely divisible laws. A finite grid tests
	affinity on that grid, which is necessary for the Gamma family and
	sufficient only in the limit of a grid exhausting a half-line. The
	null hypothesis of the log-affine test, moreover, is not the
	lognormal law but its whole Heyde fiber: by Theorem~\ref{thm:heyde}
	the members of that fiber share the curve exactly, and no test based
	on the curve can separate them.
	
	The last limitation is worth recording formally; it is immediate
	from the definitions, and by the rigidity theory it is
	irreducible.
	
	\begin{corollary}[Fiber blindness]\label{cor:fiberblind}
		Any functional of the law $P$ that depends on $P$ only through
		$\Leh_P$ is constant on each fiber of the transform.
		Consequently every population target expressible through the
		curve, and every test whose null hypothesis is a property of
		the curve, is blind to the differences within the Heyde fiber
		of Theorem~\ref{thm:heyde}.
	\end{corollary}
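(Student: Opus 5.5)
The argument is definitional, and I would present it in two short steps.

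\emph{Step one: constancy on fibers.} Let $F$ be a functional of the law $P$ that depends on $P$ only through $\Leh_P$. This means there is a map $\Phi$ on curves with $F(P)=\Phi(\Leh_P)$. The fiber of the transform through $P$ is the set of admissible laws $Q$ with $\Leh_Q=\Leh_P$ on the common suddency domain. Any two laws $P,Q$ in one fiber therefore satisfy
\begin{equation*}
F(P)=\Phi(\Leh_P)=\Phi(\Leh_Q)=F(Q).
\end{equation*}
The same reasoning applies to a test whose null hypothesis is a property $\mathcal P$ of the curve. The null set $\{P:\Leh_P\in\mathcal P\}$ is then a union of fibers. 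A statistic that uses only the curve, empirical or population, cannot assign different population values, and so cannot have different asymptotic behaviour, within a fiber. Hence no such test can separate members of one fiber.

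\emph{Step two: the Heyde fiber is nontrivial.} Here I would invoke Theorem~\ref{thm:heyde}. The measures $\mu^{(\varepsilon)}$, $\varepsilon\in[-1,1]$, are pairwise distinct probability laws with all moments finite, and they satisfy $\Leh_{\mu^{(\varepsilon)}}=\Leh_\mu$ on all of $\R$. They therefore lie in a single fiber, and step one makes every curve-expressible target and every curve-based test blind to the differences among them. The statement ``irreducible'' is the content of Theorem~\ref{thm:heyde} together with Theorem~\ref{thm:resonant}(i) and Lemma~\ref{lem:fiber}: the fiber is genuinely larger than $\{c\mu\}$, so no refinement of the functional built from the curve alone can recover the law.

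I expect no real obstacle. The only point needing care is to fix the domain on which ``$\Leh_P=\Leh_Q$'' is read, since the transform determines only its values there. For the Heyde family the domain is all of $\R$ and the total masses agree, so the normalization ambiguity $\Leh_{c\mu}=\Leh_\mu$ plays no role.
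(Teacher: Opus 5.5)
Your two steps are the paper's own argument. The paper gives no separate proof: it calls the corollary immediate from the definitions and leans on Theorem~\ref{thm:heyde} for a nontrivial fiber. Your observation that the null set $\{P:\Leh_P\in\mathcal P\}$ is a union of fibers is exactly the sense in which a curve-based test is blind.

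One sentence of Step one is false, however, and your ``Hence'' should not rest on it. A statistic computed from the curve \emph{can} have different asymptotic behaviour within a fiber.

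\begin{itemize}
\item For the empirical curve this follows from Proposition~\ref{prop:sufficient}. The curve $\widehat\Leh_n$ is a bijection of the order statistic, so the empirical distribution function is a function of it. By the law of large numbers that function separates $\mu$ from $\mu^{(\varepsilon)}$.
\item Even the plug-in values $\widehat\Leh_n(s)$ behave differently. Write $Z_{\mu^{(\varepsilon)}}=G\,Z_\mu$ with $G(s)=1+\varepsilon e^{-2\pi^2/\sigma^2}\sin(2\pi s)$ of period one. Then the variance $v(s)$ of Theorem~\ref{thm:clt} is multiplied by $G(2s)/G(s)^2$. At $s=\tfrac14$ this factor equals $(1+\varepsilon e^{-2\pi^2/\sigma^2})^{-2}$, which differs from $1$ whenever $\varepsilon\neq0$.
\end{itemize}

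What is constant on a fiber is the population target $\Phi(\Leh_P)$ and the truth value of a curve-defined null, not the sampling law of a statistic. Drop that sentence and let the test claim follow from the union-of-fibers observation alone.

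A minor point: define the fiber as $\{Q:\Leh_Q=\Leh_P\}$ with equality as functions, domains included, rather than as agreement on a common suddency domain. Otherwise your chain $\Phi(\Leh_P)=\Phi(\Leh_Q)$ is not literally justified. For the Heyde family every domain is $\R$, so nothing changes there.
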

	
	Beyond the Gaussian scale one wants tail bounds. Two are available,
	and they must be separated, because the exponential-family
	argument that gives the sharp asymptotic rate requires a hypothesis
	that heavy-tailed data do not satisfy, whereas a distribution-free
	bound is available for bounded data with no hypothesis at all.
	
	\begin{restatable}[Finite-sample bounds]{proposition}{prophoeffding}\label{prop:hoeffding}
		Suppose $\supp P\subset[m,M]\subset\Rpos$ with $m<M$, and fix
		$s\in\R$. For $y>0$ put $g_y(x)=x^{s}-y x^{s-1}$ and let
		\begin{equation}\label{eq:oscg}
			V_s(y)\eqdef\sup_{[m,M]}g_y-\inf_{[m,M]}g_y
			\;\le\;\bigl|M^{s}-m^{s}\bigr|
			+y\,\bigl|M^{s-1}-m^{s-1}\bigr| .
		\end{equation}
		Then for every $y>\Leh_P(s)$ and every $n\ge1$,
		\begin{equation}\label{eq:hoeffdingupper}
			\Prob\bigl(\widehat\Leh_n(s)\ge y\bigr)
			\;\le\;\exp\Bigl\{-\frac{2n\,m_{s-1}^{2}\,
				\bigl(y-\Leh_P(s)\bigr)^{2}}{V_s(y)^{2}}\Bigr\},
		\end{equation}
		and symmetrically, for every $y<\Leh_P(s)$,
		\begin{equation}\label{eq:hoeffdinglower}
			\Prob\bigl(\widehat\Leh_n(s)\le y\bigr)
			\;\le\;\exp\Bigl\{-\frac{2n\,m_{s-1}^{2}\,
				\bigl(\Leh_P(s)-y\bigr)^{2}}{V_s(y)^{2}}\Bigr\} .
		\end{equation}
	\end{restatable}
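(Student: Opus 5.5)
The plan is to reduce the ratio event to a sum of bounded i.i.d. variables and then apply Hoeffding's inequality. For the upper bound, fix $y>\Leh_P(s)$. Since $\sum_iX_i^{s-1}>0$, the event $\{\widehat\Leh_n(s)\ge y\}$ is exactly the event
\begin{equation*}
\frac1n\sum_{i=1}^n g_y(X_i)\ \ge\ 0,\qquad g_y(x)=x^{s}-yx^{s-1}.
\end{equation*}
This is the linearization \eqref{eq:linearize} with $\Leh_P(s)$ replaced by the threshold $y$. The summands $g_y(X_i)$ are independent and take values in an interval of length $V_s(y)$, because $\supp P\subset[m,M]$ and $g_y$ is continuous on $[m,M]$. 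Their common mean is
\begin{equation*}
\E g_y(X)=m_s-y\,m_{s-1}=m_{s-1}\bigl(\Leh_P(s)-y\bigr)<0.
\end{equation*}
So the event requires the empirical mean to exceed its expectation by $t=m_{s-1}(y-\Leh_P(s))>0$. Hoeffding's inequality $\Prob(\bar Y-\E\bar Y\ge t)\le\exp\{-2nt^2/V^2\}$ then gives \eqref{eq:hoeffdingupper} directly.

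The lower bound is symmetric. For $y<\Leh_P(s)$, the event $\{\widehat\Leh_n(s)\le y\}$ is $\{\frac1n\sum g_y(X_i)\le0\}$, and now $\E g_y(X)=m_{s-1}(\Leh_P(s)-y)>0$. Applying the lower-tail Hoeffding inequality with deviation $t=m_{s-1}(\Leh_P(s)-y)$ gives \eqref{eq:hoeffdinglower}.

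It remains to check the bound on $V_s(y)$ stated in \eqref{eq:oscg}. The functions $x^{s}$ and $x^{s-1}$ are each monotone on $[m,M]$, so their oscillations there are $|M^{s}-m^{s}|$ and $|M^{s-1}-m^{s-1}|$. The oscillation of a difference is at most the sum of the oscillations, and $y>0$, which gives the inequality.

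Two points need care. First, all moments are finite, since bounded support in $\Rpos$ gives $I_P=\R$, so $m_{s-1}$ and $\Leh_P(s)$ are well defined for every real $s$. Second, the case $V_s(y)=0$ must be ruled out so the exponent makes sense. If $V_s(y)=0$, then $g_y$ is constant on the support, so $g_y(X)$ is almost surely equal to its mean, which is nonzero. The event then has probability zero, the bound holds trivially, and the exponent can be read as $-\infty$. I do not expect a genuine obstacle; the only substantive step is recognizing the ratio event as a sign event for a centered sum.
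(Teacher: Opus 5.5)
Your proposal is correct and follows the paper's proof essentially step for step. You rewrite $\{\widehat\Leh_n(s)\ge y\}$ as the sign event $\{\sum_i g_y(X_i)\ge 0\}$ using positivity of the denominator, apply Hoeffding's inequality to the bounded summands with deviation $m_{s-1}\bigl(y-\Leh_P(s)\bigr)$, handle the lower tail symmetrically, and bound $V_s(y)$ by monotonicity of the powers on $[m,M]$. Your side remark on the case $V_s(y)=0$ is harmless but vacuous, since $g_y'(x)=x^{s-2}\bigl(sx-y(s-1)\bigr)$ shows that for $y>0$ the function $g_y$ is never constant on a nondegenerate interval.
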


	Bounds \eqref{eq:hoeffdingupper} and \eqref{eq:hoeffdinglower}
	require no moment or smoothness hypothesis beyond boundedness, and
	they hold for every $n$. The exponents involve the population
	quantities $m_{s-1}$ and $\Leh_P(s)$, so the bounds are oracle
	bounds, and empirical-Bernstein substitutes can replace the oracle
	constants at the usual cost. When exponential moments of the relevant
	observable exist, the asymptotic rate is exactly identified.
	
	\begin{proposition}[Large deviations]\label{prop:ld}
		Fix $s\in\mathring S_P$, so that $m_s$ and $m_{s-1}$ are finite,
		fix $y>\Leh_P(s)$, put $g_y(x)=x^{s}-y x^{s-1}$, and assume the
		Cram\'er condition
		\begin{equation}\label{eq:cramercond}
			\E_P\bigl[e^{\lambda g_y(X)}\bigr]<\infty
			\qquad\text{for some }\lambda>0 .
		\end{equation}
		Then
		\begin{equation}\label{eq:ld}
			\begin{gathered}
				\lim_{n\to\infty}\frac1n\log
				\Prob\bigl(\widehat\Leh_n(s)\ge y\bigr)
				=-I_s(y),\\
				I_s(y)=-\inf_{\lambda\in\R}
				\log\E_P\bigl[e^{\lambda g_y(X)}\bigr],
			\end{gathered}
		\end{equation}
		a rate vanishing at $y=\Leh_P(s)$, nondecreasing on
		$[\Leh_P(s),\esssup_PX)$, and locally quadratic with the
		central-limit variance $v(s)$ of Theorem~\ref{thm:clt}.
	\end{proposition}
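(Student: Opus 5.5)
The plan is to reduce the event to a sample mean of i.i.d.\ variables and apply Cram\'er's theorem. Since the denominator $\sum_iX_i^{s-1}$ is strictly positive, the linearization \eqref{eq:linearize} gives the exact equivalence
\begin{equation*}
\bigl\{\widehat\Leh_n(s)\ge y\bigr\}=\Bigl\{\tfrac1n\textstyle\sum_{i=1}^n g_y(X_i)\ge0\Bigr\}.
\end{equation*}
The variables $Y_i=g_y(X_i)$ are i.i.d.\ and integrable, because $s,s-1\in I_P$. Their mean is $m_s-y\,m_{s-1}=m_{s-1}(\Leh_P(s)-y)<0$. The problem therefore becomes the probability that an empirical mean exceeds the level $0$, which lies above the true mean.

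For the upper bound I use the Chernoff bound. For every $\lambda\ge0$ one has $\Prob(\sum Y_i\ge0)\le\bigl(\E e^{\lambda Y}\bigr)^n$. Set $\phi(\lambda)=\log\E e^{\lambda Y}$. This function is convex, finite on $[0,\lambda_0]$ by \eqref{eq:cramercond}, and satisfies $\phi(0)=0$ and $\phi'(0^+)=\E Y<0$. Since $\phi(\lambda)\ge\lambda\E Y\ge0$ for $\lambda\le0$ by Jensen, we get $\inf_{\lambda\ge0}\phi=\inf_{\lambda\in\R}\phi$. This gives $\limsup\frac1n\log\Prob\le -I_s(y)$.

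For the lower bound I invoke Cram\'er's theorem on $\R$, which holds for i.i.d.\ real variables without any finiteness assumption on the moment generating function. It gives $\liminf\frac1n\log\Prob(\bar Y_n\in(0,\infty))\ge-\inf_{x>0}\phi^*(x)$. The remaining task is to show that $\inf_{x>0}\phi^*(x)=\phi^*(0)=-\inf_\lambda\phi(\lambda)$. Because $\phi^*$ is convex and vanishes at $\E Y<0$, it is nondecreasing on $[\E Y,\infty)$. It therefore suffices to check continuity of $\phi^*$ from the right at $0$, i.e.\ that $0$ does not sit at the right edge of the effective domain of $\phi^*$.

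This is the main obstacle, and I would split it into two cases. In the first case $\Prob(Y>0)>0$. Then $0$ lies in the interior of the convex hull of the support of $Y$, so $\phi^*$ is finite and convex on a neighbourhood of $0$, hence continuous there, and the bounds match. Rather than working through $\phi^*$, I would also do the standard tilting lower bound directly: if the infimum of $\phi$ is attained at some $\lambda^*$ with $\phi'(\lambda^*)=0$, tilt by $\lambda^*$ and apply the CLT to the tilted mean-zero sum. If the infimum is not attained, truncate $Y$ at a large level and pass to the limit, as in the usual proof of Cram\'er's theorem. In the second case $Y\le0$ almost surely, which is equivalent to $y\ge\esssup X$. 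The rate is then $-\log\Prob(Y=0)$, computed directly. This is consistent with the claimed monotonicity range, which stops below $\esssup_PX$.

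Finally, the properties of the rate. Vanishing at $y=\Leh_P(s)$ holds because there $\E Y=0$, and Jensen gives $\phi\ge0$. For monotonicity, note that $g_y(x)$ is pointwise decreasing in $y$, since $x^{s-1}>0$. Hence $\E e^{\lambda g_y}$ is nonincreasing in $y$ for $\lambda\ge0$, and $I_s$ is nondecreasing in $y$. For local quadraticity, expand around $y_0=\Leh_P(s)$. The minimizer $\lambda^*(y)$ is small, and the standard expansion gives $I_s(y)\approx(\E Y)^2/(2\Var Y)$. Here $\E Y=-m_{s-1}(y-y_0)$ and $\Var g_{y_0}(X)=m_{s-1}^2\E\psi_s^2=m_{s-1}^2v(s)$. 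This yields $I_s(y)=\frac{(y-y_0)^2}{2v(s)}(1+o(1))$. The needed smoothness comes from the fact that, when $y$ is near $y_0$, the Cram\'er condition holds on a neighbourhood of $\lambda=0$.
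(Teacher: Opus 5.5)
Your proof of the limit follows the paper's route: reduce to $\{\sum_i g_y(X_i)\ge0\}$ and apply Cram\'er's theorem. It is more careful than the paper's one-line citation. The paper never checks that the LDP infima over $[0,\infty)$ and $(0,\infty)$ coincide. Your case split supplies this: either $\Prob(X>y)>0$, so $0$ is interior to the effective domain of $\phi^*$, or $X\le y$ almost surely, so the probability is exactly $\Prob(X=y)^n$. Cram\'er's theorem on $\R$ needs no moment hypothesis, so your argument also shows that \eqref{eq:cramercond} is superfluous for the limit itself. If it fails, then $\phi=+\infty$ on $(0,\infty)$, so $I_s(y)=0$ and $\phi^*=0$ on $[\E Y,\infty)$, and both bounds give $0$. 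The paper's remark that the condition is needed for the matching lower bound is therefore too cautious.

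The step that is wrong as written is your justification of local quadraticity. The Cram\'er condition need not hold on a two-sided neighbourhood of $\lambda=0$. For $s<1$ one has $g_y(x)\sim -y\,x^{s-1}\to-\infty$ as $x\downarrow0$. For $P$ uniform on $(0,1)$ with $s\in(\tfrac12,1)$, $\E e^{\lambda g_y(X)}=\infty$ for every $\lambda<0$, even though $v(s)<\infty$.

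Nor is two-sidedness needed: for $y>y_0$ the infimum of $\phi_y$ is taken over $\lambda\ge0$, so a one-sided expansion at $0^+$ suffices. What you must actually supply is twofold.
\begin{enumerate}
\item The condition at every $y'\in[y_0,y]$. This does not follow from pointwise monotonicity, since $g_{y'}\ge g_y$ there. It does follow because $g_{y'}-g_y=(y-y')x^{s-1}$ is bounded on $\{x>y'\}$ when $s\le1$, and is at most $g_y$ for $x\ge2y$ when $s>1$.
\item $\E g_{y_0}(X)^2<\infty$, that is $v(s)<\infty$. For $s<1$ this requires $\E X^{2s-2}<\infty$, which $s\in\mathring S_P$ does not imply; the statement's appeal to $v(s)$ tacitly presupposes it.
\end{enumerate}
With these, dominated convergence (using $e^{\lambda Y}\le1$ on $\{Y<0\}$) gives $\phi_{y}''(0^+)=\Var g_y(X)\to m_{s-1}^2v(s)$ as $y\downarrow y_0$, and your expansion goes through. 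The paper's proof does not address these properties of the rate at all.
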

	
	The upper bound
	$\Prob(\widehat\Leh_n(s)\ge y)\le e^{-nI_s(y)}$ holds for every
	$n$ with no condition at all, by Chernoff's inequality; the
	Cram\'er condition \eqref{eq:cramercond} is needed only for the
	matching lower bound, and it is a genuine restriction, failing for
	every $\lambda>0$ already for $P$ exponential at $s=2$, where the
	rate degenerates to zero. Convexity of the rate in $y$ does not survive
	the contraction through $(a,b)\mapsto a/b$, nor could it: for
	bounded data the rate saturates at $-\log\Prob(X=\esssup_PX)$,
	so a nonconstant $I_s$ cannot be both convex and bounded. What
	survives is monotonicity and the local quadratic behaviour.
	
	\begin{proof}
		The event coincides with $\{\sum_ig_y(X_i)\ge0\}$, since the
		denominator $\sum_iX_i^{s-1}$ is positive; the summands are
		independent and identically distributed with common mean
		$m_s-y\,m_{s-1}=-m_{s-1}\bigl(y-\Leh_P(s)\bigr)<0$.
		Cram\'er's theorem~\cite{dembo1998}, applicable
		under condition \eqref{eq:cramercond}, identifies the rate as the
		Fenchel--Legendre transform of the cumulant generating function of
		$g_y(X)$ evaluated at $0$, which is the displayed expression.
	\end{proof}
	
	The name \emph{statistic-generating function} acquires here a
	second, decision-theoretic content: the curve is a lossless
	encoding of the sample. The sufficiency itself is classical and
	resides in the order statistic; what the proposition adds is that
	the curve is an exact re-coordinatization of it, with the passage
	to $2n-1$ unit-spaced values, a list whose length is fixed by the
	sample size alone, and the number of distinct values recovered as a
	Hankel rank. The passage is a re-coordinatization and not a
	reduction: the list is longer than the order statistic it
	re-encodes, so no tension arises with the classical
	Koopman--Pitman--Darmois restriction on fixed-dimension
	sufficiency.
	
	\begin{restatable}[Sufficiency]{proposition}{propsufficient}\label{prop:sufficient}
		Let $X_1,\dots,X_n$ be an i.i.d.\ sample of known size $n$ from
		any law on $\Rpos$, containing $k$ distinct values. Then:
		\begin{enumerate}[label=\textup{(\roman*)},leftmargin=2.2em]
			\item the $2n-1$ consecutive unit-spaced evaluations
			$\widehat\Leh_n(s_0+1),\dots,\widehat\Leh_n(s_0+2n-1)$
			determine the sample up to permutation, hence form a
			sufficient statistic whose dimension $2n-1$ is deterministic,
			fixed by the sample size and not by the data. In fact the
			first $2k-1$ of them already suffice, with $k$ recovered as
			the rank of the Hankel matrix of the cocycle products; but
			$k$ is a function of the data, so the truncated list does not
			have deterministic dimension;
			\item consequently the whole empirical curve
			$\widehat\Leh_n(\cdot)$ is a sufficient statistic, and is
			minimal sufficient for the full nonparametric model, being a
			measurable bijection of the vector of order statistics.
		\end{enumerate}
	\end{restatable}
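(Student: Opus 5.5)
The plan is to reduce everything to Theorem~\ref{thm:prony} applied to the empirical measure. Write the sample as $\widehat P_n=\sum_{i=1}^k a_i\delta_{y_i}$, with $y_1,\dots,y_k$ the distinct values and $a_i\in\{1,\dots,n\}$ their multiplicities, so that $\sum_i a_i=n$. Then $\widehat\Leh_n=\Leh_{\widehat P_n}$, because the normalization $1/n$ cancels in the ratio.

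For part (i) I proceed in three steps.
\begin{enumerate}[label=\textup{(\alph*)},leftmargin=2.2em]
\item Form the cocycle products $h_j=\prod_{l=1}^j\widehat\Leh_n(s_0+l)$ for $j=0,\dots,2n-1$. By Theorem~\ref{thm:prony}(ii) these are the moments $h_j=\sum_i w_iy_i^{\,j}$ of the $k$-atomic probability measure $\tau=\sum_iw_i\delta_{y_i}$, with $w_i>0$.
\item Determine $k$ from the Hankel matrices $H_r=(h_{i+j})_{0\le i,j\le r-1}$. The factorization $H_r=V_r^{\top}\operatorname{diag}(w)V_r$, with $V_r$ the $k\times r$ Vandermonde matrix of the distinct nodes, shows that $\operatorname{rank}H_r=\min(r,k)$. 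Since $k\le n$, the matrix $H_n$ is built from $h_0,\dots,h_{2n-2}$, which the $2n-1$ values supply. Hence $k=\operatorname{rank}H_n$ is computable from the list, and so is the data-dependent truncation to the first $2k-1$ values.
\item Recover the nodes $y_i$ and weights $w_i$ by Theorem~\ref{thm:prony}(iii), and set $a_i\propto w_iy_i^{-s_0}$. The one new ingredient compared with Theorem~\ref{thm:prony}, which gives $\mu$ only up to total mass, is that $n$ is known and $\sum_ia_i=n$. This normalization fixes the constant, so the integer multiplicities, and with them the multiset $\{X_1,\dots,X_n\}$, are determined.
\end{enumerate}
Sufficiency then follows because the order statistic is sufficient for any i.i.d.\ model, and a statistic from which it can be recovered is a function-equivalent of it. Conversely, each $\widehat\Leh_n(s)$ is a symmetric function of the sample, so the list is a function of the order statistic.

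For part (ii), the whole curve is symmetric in the sample and so is a function of the order statistic. It determines the $2n-1$ unit-spaced values, and hence, by part (i), the order statistic. The map is therefore a bijection between order statistics and curves. Measurability in the forward direction is clear from continuity in the data. For the reverse direction I would argue through the explicit reconstruction: $k$ is a rank, the nodes are polynomial roots and the weights solve a linear system. All of these are Borel on the strata of fixed $k$. Alternatively, I would invoke the Lusin--Souslin theorem for injective Borel maps between Polish spaces, where the curve is viewed in $C(\R)$ or through its countably many rational evaluations.

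Minimality follows because the order statistic is minimal sufficient for the full nonparametric i.i.d.\ model on $\Rpos$, a classical fact. I would cite it or verify it quickly using a family of finitely supported laws with distinct likelihood ratios. A bijective measurable re-coding of a minimal sufficient statistic is again minimal sufficient.

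The main obstacle is the bookkeeping around the unknown $k$. The rank claim in step (b) must use only $h_0,\dots,h_{2n-2}$. One must also check that the Prony kernel is one-dimensional when $r=k$, since Theorem~\ref{thm:prony} presupposes a known atom count. The measurability of the inverse across strata with different $k$ is the other point needing care. I would handle it by the Lusin--Souslin route rather than by explicit formulas.
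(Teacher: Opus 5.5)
Your proof is correct, and for part (i) it is the paper's argument: Theorem~\ref{thm:prony} applied to the empirical measure, with the known mass $n$ fixing the one free constant.

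The paper handles the unknown atom count only by the remark $k\le n$: the Vandermonde uniqueness step of Theorem~\ref{thm:prony} runs unchanged for measures with at most $n$ atoms sharing $2n$ moments. Your factorization $H_r=V_r^{\top}\operatorname{diag}(w)V_r$, giving $\operatorname{rank}H_n=k$, makes the recovery constructive. It is also what actually justifies the phrase ``$k$ recovered as the rank of the Hankel matrix'' in the statement.

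For part (ii) the routes differ. The paper does not pass through (i). It invokes Corollary~\ref{cor:rigid}: bounded support gives determinacy up to scale, and the mass $n$ fixes the scale. That route leans on the rigidity theory, or on the Hausdorff moment argument of Remark~\ref{rem:hausdorff}. Your deduction is more economical: the curve contains the $2n-1$ unit-spaced values, so (i) applies directly. It is also what ``consequently'' in the statement suggests.

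Finally, your Lusin--Souslin step supplies the Borel measurability of the inverse map. The paper asserts this without proof, but both the transfer of sufficiency and the minimality claim require it.
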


	For exponential families the transform interacts with the natural
	parameter in the simplest conceivable way: tilting the law translates
	the suddency axis.
	
	\begin{restatable}[Translation covariance]{theorem}{thmtranslation}\label{thm:translation}
		Let $\{P_\eta\}$ be the exponential family generated by a base law
		$P_0$ with sufficient statistic $\log X$, that is,
		\begin{equation}\label{eq:expfam}
			\dd P_\eta=\frac{x^{\eta}\dd P_0}{Z_0(\eta)} .
		\end{equation}
		Then
		\begin{equation}\label{eq:translation}
			\Leh_{P_\eta}(s)=\Leh_{P_0}(s+\eta)
			\qquad\text{on }S_{P_0}-\eta :
		\end{equation}
		the natural parameter acts by translation of the suddency axis.
		Consequently:
		\begin{enumerate}[label=\textup{(\roman*)},leftmargin=2.2em]
			\item inference for $\eta$ is curve registration: the maximum
			likelihood estimator solves
			\begin{equation}\label{eq:mle}
				K_0'(\widehat\eta)=\frac1n\sum_{i=1}^n\log X_i
			\end{equation}
			and, in Lehmer terms, shifts the population curve onto the
			empirical one;
			\item the Fisher information for $\eta$ is
			$I(\eta)=\Var_{P_\eta}(\log X)$, so by
			Theorem~\ref{thm:jeffreys} the slope of the log-Lehmer curve
			is exactly the unit-averaged Fisher information of the model,
			\begin{equation}\label{eq:infoslope}
				\Lambda_{P_0}'(s)=\int_{s-1}^{s}I(u)\dd u :
			\end{equation}
			the steep parts of the curve are where the family is most
			informative;
			\item the escort closure recorded in
			identity \eqref{eq:escortclosed} is the statement that the classical
			families are unions of orbits of this translation action.
		\end{enumerate}
	\end{restatable}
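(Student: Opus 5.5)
The plan is to compute the partition function of $P_\eta$ directly and then read off each consequence from results already established. From the definition \eqref{eq:expfam},
\[
Z_{P_\eta}(s)=\frac{1}{Z_0(\eta)}\int x^{s+\eta}\dd P_0=\frac{Z_0(s+\eta)}{Z_0(\eta)},
\]
so $I_{P_\eta}=I_{P_0}-\eta$ and $S_{P_\eta}=S_{P_0}-\eta$. Taking the ratio at consecutive orders, the normalizing constant $Z_0(\eta)$ cancels, which is the remark $\Leh_{c\mu}=\Leh_\mu$ of Definition~\ref{def:L}. This gives \eqref{eq:translation} on $S_{P_0}-\eta$. Equivalently, $K_{P_\eta}(s)=K_0(s+\eta)-K_0(\eta)$, and the claim is the master identity \eqref{eq:master} with the argument shifted.

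For (i), the log-likelihood of the sample is $\eta\sum_i\log X_i-nK_0(\eta)$ plus a term free of $\eta$. By Lemma~\ref{lem:logconvex}, this function is strictly concave on $\mathring I_{P_0}$ unless $P_0$ is degenerate, and differentiating gives \eqref{eq:mle}. To get the Lehmer reading, I will use \eqref{eq:Kderivs} to rewrite $K_0'(\widehat\eta)$ as $\E_{P_{\widehat\eta}}[\log X]$; the estimator then matches the escort log-mean to the empirical one. By \eqref{eq:logL}, $\Lambda_{P_\eta}(s)=\int_{s-1}^sK_0'(u+\eta)\dd u$, so choosing $\eta$ amounts to choosing the translate of the fixed curve $\Lambda_{P_0}$. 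The ``registration'' statement is interpretive, and I will present it through this identity rather than as a separate claim.

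For (ii), the Fisher information is $K_0''(\eta)$ by the standard exponential-family computation, and this equals $\Var_{P_\eta}(\log X)$ by \eqref{eq:Kderivs}. Since the escort of $P_0$ of order $u$ is exactly $P_u$, the variance in Theorem~\ref{thm:jeffreys} (or \eqref{eq:logderiv}) is $I(u)$, which gives \eqref{eq:infoslope} directly.

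For (iii), I will note that the escorts of $P_\eta$ are $(P_\eta)_u=P_{\eta+u}$. Hence the tilt family through any member is the full translation orbit $\{P_{\eta'}\}$, and escort closure of a row in \eqref{eq:escortclosed} means the row is a union of such orbits. I will check this with the Gamma example: $x^u\cdot x^{k-1}e^{-x/\theta}$ has shape $k+u$.

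The only real obstacle is bookkeeping of domains: the identity must hold on the open interior where differentiation is licit, and $\eta$ must lie in $\mathring I_{P_0}$ for \eqref{eq:mle} to be meaningful. I will state these restrictions explicitly rather than prove anything new.
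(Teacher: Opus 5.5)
Your proposal is correct and follows the paper's proof essentially step for step: you compute $Z_{P_\eta}(a)=Z_0(a+\eta)/Z_0(\eta)$ and let the normalizer cancel in the consecutive-order ratio, you differentiate the strictly concave log-likelihood $\eta\sum_i\log X_i-nK_0(\eta)$ for (i), you feed $I(\eta)=K_0''(\eta)=\Var_{P_\eta}(\log X)$ into the Jeffreys identity for (ii), and you use the escort computation for (iii). Your explicit observation that $(P_\eta)_u=P_{\eta+u}$ is exactly what makes the paper's terse citations in (ii) and (iii) go through.
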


	For a general exponential family with sufficient statistic $T$, the
	substitution $X=e^{T}$ reduces it to the log-linear case: the Lehmer
	transform of the exponentiated sufficient statistic is the canonical
	curve of the family, translated by the natural parameter.
	
	We turn now to the critical regime. For a regularly varying law the
	population curve diverges at the tail index while the empirical
	curve is finite everywhere; the reconciliation is a phase transition
	in the $(s,\log n)$ plane.
	
	\begin{restatable}[Phase diagram]{theorem}{thmphase}\label{thm:phase}
		Let $\Prob(X>x)=x^{-\alpha}\ell(x)$ with $\alpha>0$ and $\ell$
		slowly varying in the sense of~\cite{bgt1987}, and assume in
		addition that
		$\E[X^{-\delta}]<\infty$ for some $\delta>0$. Then for every
		$s\in(1-\delta,\infty)\setminus\{\alpha,\alpha+1\}$,
		\begin{equation}\label{eq:phasediagram}
			\frac{\log\widehat\Leh_n(s)}{\log n}
			\;\xrightarrow{\ \Prob\ }\;
			\rho_\alpha(s)=\min\Bigl(\frac1\alpha,\
			\max\Bigl(0,\ \frac s\alpha-1\Bigr)\Bigr):
		\end{equation}
		a piecewise linear limit, flat below $s=\alpha$, of slope
		$1/\alpha$ on $(\alpha,\alpha+1)$, and flat at height $1/\alpha$
		above $s=\alpha+1$, with knees exactly at the tail index and its
		unit shift.
	\end{restatable}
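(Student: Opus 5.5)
We split the empirical transform as $\widehat\Leh_n(s)=S_n(s)/S_n(s-1)$ with $S_n(a)\eqdef\sum_{i=1}^nX_i^{\,a}$. The plan is to prove a growth law for each power sum separately, namely
\begin{equation*}
\frac{\log S_n(a)}{\log n}\;\xrightarrow{\ \Prob\ }\;r_\alpha(a)\eqdef\max\Bigl(1,\ \frac a\alpha\Bigr),
\end{equation*}
for every $a>-\delta$ with $a\neq\alpha$. The theorem then follows by subtraction. Indeed, $\rho_\alpha(s)=r_\alpha(s)-r_\alpha(s-1)$: below $\alpha$ both terms equal $1$, on $(\alpha,\alpha+1)$ we get $s/\alpha-1$, and above $\alpha+1$ we get $s/\alpha-(s-1)/\alpha=1/\alpha$. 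The orders $s$ and $s-1$ both exceed $-\delta$ exactly when $s>1-\delta$. The two excluded points $s=\alpha$ and $s=\alpha+1$ are precisely those where one of the two orders sits at the tail index, which is why they are removed.

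For the power-sum law we treat the two sides of $\alpha$ separately.

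\emph{Case $-\delta<a<\alpha$.} Here $\E[X^a]<\infty$. For $a\ge0$ this holds by the regularly varying tail, and for negative $a$ it follows from $\E[X^{-\delta}]<\infty$ together with Lyapunov's inequality. Also $\E[X^a]>0$. The strong law of large numbers gives $S_n(a)/n\to\E[X^a]\in(0,\infty)$ almost surely, so $\log S_n(a)/\log n\to1$.

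\emph{Case $a>\alpha$.} The variable $Y=X^a$ has tail $\Prob(Y>y)=y^{-\alpha/a}\ell(y^{1/a})$, which is regularly varying with index $\beta=\alpha/a<1$. For the lower bound, $S_n(a)\ge\max_iX_i^{\,a}$, and $\max_iX_i=n^{1/\alpha+o_\Prob(1)}$ by the standard extreme-value estimate: $\Prob(\max_i X_i\le n^{1/\alpha-\epsilon})=(1-n^{-1+\alpha\epsilon+o(1)})^n\to0$, and a union bound controls the upper side. For the upper bound we truncate at $t_n=n^{(1+\epsilon)/\alpha}$. With probability tending to one no observation exceeds $t_n$, since $n\Prob(X>t_n)\to0$. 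By Karamata's theorem, $\E[X^a\mathbf 1_{X\le t_n}]\sim\frac{\alpha}{a-\alpha}\,t_n^{a}\Prob(X>t_n)$, so $n\,\E[X^a\mathbf 1_{X\le t_n}]=n^{a(1+\epsilon)/\alpha-\epsilon+o(1)}$. Markov's inequality then gives $S_n(a)\le n^{a/\alpha+\epsilon'}$ with high probability, where $\epsilon'\to0$ as $\epsilon\to0$. Hence $\log S_n(a)/\log n\to a/\alpha$ in probability.

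The main obstacle is this upper bound in the infinite-mean regime. It requires the truncated-moment Karamata estimate and careful handling of the slowly varying factor; the slowly varying factor is absorbed because $\ell(x)=x^{o(1)}$ (Potter bounds). The excluded boundary $a=\alpha$ is exactly where Karamata degenerates into a slowly varying integral, and there the limit would still be $1$. That endpoint is not needed, however, since $s$ avoids it.

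It remains to combine the two power-sum laws. Convergence in probability of both $\log S_n(s)/\log n$ and $\log S_n(s-1)/\log n$ gives convergence of their difference, which is $\log\widehat\Leh_n(s)/\log n\to\rho_\alpha(s)$. Equivalently, $\rho_\alpha(s)=\min(1/\alpha,\max(0,s/\alpha-1))$, since on each of the three regimes the formula matches the computed difference.
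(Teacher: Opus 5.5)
Your proof is correct and follows the paper's outline: it reduces the theorem to the power-sum law $\log S_n(a)/\log n\to\max(1,a/\alpha)$, which it then subtracts at the orders $s$ and $s-1$. Below $\alpha$ you use the strong law, exactly as the paper does. Above $\alpha$ your lower bound comes from the maximum $M_n=n^{1/\alpha+o_{\Prob}(1)}$, again as in the paper.

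The one real difference is the upper bound for $a>\alpha$. You truncate at $n^{(1+\epsilon)/\alpha}$ and control the truncated sum with Karamata's theorem and Markov's inequality. The paper uses a deterministic squeeze instead:
\[
S_n(a)=\sum_i X_i^{\,a-\alpha'}X_i^{\,\alpha'}\le M_n^{\,a-\alpha'}\,S_n(\alpha'),\qquad \alpha'<\alpha .
\]
This reduces the upper bound to the two facts you already have, the strong law at a convergent order and the growth of $M_n$. Letting $\alpha'\uparrow\alpha$ then closes the gap between $(a-\alpha')/\alpha+1$ and $a/\alpha$.

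The squeeze buys three things. It is shorter. It needs no Karamata or Potter bounds, only the log-scale tail $\Prob(X>x)=x^{-\alpha+o(1)}$. And it works unchanged at $a=\alpha$, which is the knee value you assert without proof.

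Your truncation route is the more standard heavy-tail template and gives polynomial bounds on the exceptional probabilities. Here, though, the Karamata asymptotic is more than you need. The crude bound $\E[X^a\mathbf 1_{X\le t}]\le t^{a-\alpha'}\E[X^{\alpha'}]$ already suffices, and it is exactly the paper's squeeze taken in expectation.
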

	
	The extra hypothesis $\E[X^{-\delta}]<\infty$ cannot be dropped when
	$s<1$: the proof compares $\sum_iX_i^{s}$ with $\sum_iX_i^{s-1}$,
	and for $s<1$ the second exponent is negative, so a law whose density
	blows up fast enough at the origin has
	$\E[X^{s-1}]=\infty$ and the denominator is no longer of order $n$.
	
	\begin{remark}[Uniformity]\label{rem:uniform}
		Convergence in limit \eqref{eq:phasediagram} is uniform on
		compact subsets of $(1-\delta,\infty)$, the knees included.
		Indeed $\log\widehat\Leh_n(\cdot)/\log n$ is nondecreasing, by
		Theorem~\ref{thm:mono} applied to the empirical measure, and
		the limit $\rho_\alpha$ is continuous with Lipschitz constant
		$1/\alpha$, so pointwise convergence in probability along a
		fine grid avoiding the two knees upgrades to uniform
		convergence on compacts by the monotone sandwich between
		consecutive grid points.
	\end{remark}
	
	The mechanism of the diagram is the competition between
	law-of-large-numbers and extreme-value behaviour in the power sums
	$\sum_ie^{s\log X_i}$, and it is not special to the present
	setting: it is the free-energy phase transition of Derrida's random
	energy model~\cite{derrida1981}, whose rigorous theory for sums of
	random exponentials, including fluctuation theorems at the critical
	temperature, is developed by Ben~Arous, Bogachev and
	Molchanov~\cite{benarous2005}. That theory is the natural source
	for a second-order analysis at the knees, which we leave open.
	
	The middle segment, whose slope is the reciprocal of the tail index,
	yields a consistent estimator.
	
	\begin{corollary}[Tail-index estimation]\label{cor:tail}
		Under the hypotheses of Theorem~\ref{thm:phase}, fix $s<t$ in
		$(\alpha,\alpha+1)$ and set
		\begin{equation}\label{eq:tailest}
			\widehat\alpha_{s,t}
			\eqdef\frac{(t-s)\log n}
			{\log\widehat\Leh_n(t)-\log\widehat\Leh_n(s)} .
		\end{equation}
		Then $\widehat\alpha_{s,t}\to\alpha$ in probability.
	\end{corollary}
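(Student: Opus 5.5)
The plan is to deduce the corollary directly from Theorem~\ref{thm:phase} through the continuous mapping theorem. First, both $s$ and $t$ lie in $(\alpha,\alpha+1)$, which sits inside $(1-\delta,\infty)\setminus\{\alpha,\alpha+1\}$ because $\alpha>0>1-\delta$ once $\delta<1$; if $\delta\ge1$ the inclusion is immediate anyway. So the limit \eqref{eq:phasediagram} applies at both orders and gives
\[
\frac{\log\widehat\Leh_n(s)}{\log n}\xrightarrow{\ \Prob\ }\frac s\alpha-1,
\qquad
\frac{\log\widehat\Leh_n(t)}{\log n}\xrightarrow{\ \Prob\ }\frac t\alpha-1,
\]
since on the middle segment $\rho_\alpha(u)=u/\alpha-1$, which lies in $(0,1/\alpha)$ there.

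Next, I subtract the two limits. Convergence in probability is preserved under differences, so the normalized denominator
\[
D_n\eqdef\frac{\log\widehat\Leh_n(t)-\log\widehat\Leh_n(s)}{\log n}
\]
converges in probability to $(t-s)/\alpha>0$. Because $\widehat\alpha_{s,t}=(t-s)/D_n$, it is a continuous function of $D_n$ away from $D_n=0$. The map $x\mapsto(t-s)/x$ is continuous at the positive limit $(t-s)/\alpha$, so the continuous mapping theorem gives $\widehat\alpha_{s,t}\to\alpha$ in probability.

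The only point needing care is that $\widehat\alpha_{s,t}$ might be undefined when the denominator vanishes. By Theorem~\ref{thm:mono}(iii) applied to the empirical measure, $\widehat\Leh_n$ is strictly increasing unless all observations coincide, so $D_n>0$ except on the event that the sample is constant. In any case, $\Prob(D_n\le (t-s)/(2\alpha))\to0$ by the convergence above, so any convention on that event is immaterial. I expect no genuine obstacle here: all the analytic content, namely the location of the knees and the slope $1/\alpha$ of the middle segment, is already carried by Theorem~\ref{thm:phase}.
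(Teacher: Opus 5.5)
Your argument is the same as the paper's: apply Theorem~\ref{thm:phase} at $s$ and at $t$, subtract, and use the continuous mapping theorem at the strictly positive limit $(t-s)/\alpha$. Your handling of the event $D_n=0$ is also fine. The step that fails is the domain check in your first paragraph. The inequality $0>1-\delta$ holds only for $\delta>1$, so your two cases are reversed. For $\delta\ge1$ the inclusion $(\alpha,\alpha+1)\subset(1-\delta,\infty)$ is immediate, but for $\delta<1$ it holds only if $\alpha\ge1-\delta$. You cannot enlarge $\delta$ to force it, because finiteness of $\E[X^{-\delta}]$ passes to smaller orders, not larger ones. Theorem~\ref{thm:phase} asserts nothing at orders $s\le1-\delta$. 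For $s<1-\delta$ its denominator $\sum_iX_i^{s-1}$ involves a negative moment of order $1-s>\delta$, which may be infinite.

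This is not a cosmetic slip, because the conclusion itself can fail there. Let $X$ be an equal mixture of the Pareto law $\Prob(Y>y)=y^{-1/5}$ on $[1,\infty)$ and the law $\Prob(Y\le y)=y^{1/2}$ on $(0,1]$. The hypotheses of Theorem~\ref{thm:phase} hold with $\alpha=1/5$ and every $\delta<1/2$. For $u\in(1/5,1/2)$ the upper component gives $\log\sum_iX_i^{u}/\log n\to5u$ in probability. Meanwhile $X^{u-1}$ inherits from the lower component a tail of index $1/(2(1-u))<1$, so $\log\sum_iX_i^{u-1}/\log n\to2(1-u)>1$. Hence $\log\widehat\Leh_n(u)/\log n\to7u-2$ on that range. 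At $s=3/10$ and $t=2/5$, both inside $(\alpha,\alpha+1)$, the estimator $\widehat\alpha_{s,t}$ therefore converges in probability to $1/7\neq1/5$.

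The corollary needs the extra restriction $\max(\alpha,1-\delta)<s<t<\alpha+1$. This is vacuous when $\alpha\ge1-\delta$, in particular when $\alpha\ge1$ or $\delta\ge1$, and it is exactly the range where Theorem~\ref{thm:phase} applies at both orders. The paper's one-line proof tacitly assumes it too. State that restriction in place of your inclusion claim, and your proof is complete.
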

	
	The requirement that the window lie inside $(\alpha,\alpha+1)$
	cannot be dispensed with: outside it Theorem~\ref{thm:phase} gives
	$\log\widehat\Leh_n(t)-\log\widehat\Leh_n(s)=o_{\Prob}(\log n)$
	and the estimator diverges. The corollary is therefore an oracle
	statement; no data-driven window selection is developed here.
	
	\begin{proof}
		By Theorem~\ref{thm:phase}, for $\alpha<s<t<\alpha+1$ the
		difference $\log\widehat\Leh_n(t)-\log\widehat\Leh_n(s)$ divided
		by $\log n$ converges in probability to
		\begin{equation}\label{eq:tailslope}
			\rho_\alpha(t)-\rho_\alpha(s)=\frac{t-s}{\alpha}>0,
		\end{equation}
		and the continuous mapping theorem applied to
		$x\mapsto(t-s)/x$ at the strictly positive limit gives the claim.
	\end{proof}
	
	In the pure Pareto case the rate of this estimator can be identified
	exactly, and it is logarithmic.
	
	\begin{restatable}[Rate of the estimator]{proposition}{proptailrate}
		\label{prop:tailrate}
		Let $P$ be the exact Pareto law $\Prob(X>x)=x^{-\alpha}$ for
		$x\ge1$, and fix $\alpha<s<t<\alpha+1$. Then
		\begin{equation}\label{eq:tailrate}
			\widehat\alpha_{s,t}-\alpha=O_{\Prob}\Bigl(\frac1{\log n}\Bigr).
		\end{equation}
	\end{restatable}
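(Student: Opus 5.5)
The estimator is a ratio of $(t-s)\log n$ to $D_n\eqdef\log\widehat\Leh_n(t)-\log\widehat\Leh_n(s)$. We have $\widehat\alpha_{s,t}-\alpha=\alpha\bigl((t-s)\log n-\alpha D_n\bigr)/(\alpha D_n)$, and $D_n/\log n\to(t-s)/\alpha>0$ in probability by Theorem~\ref{thm:phase}. So the claim reduces to showing
\[
D_n=\frac{t-s}{\alpha}\log n+O_{\Prob}(1),
\]
that is, to showing that each of $\log\widehat\Leh_n(s)$ and $\log\widehat\Leh_n(t)$ equals $(\cdot/\alpha-1)\log n$ up to an $O_{\Prob}(1)$ term. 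For exact Pareto there is no slowly varying factor, and this is what makes the remainder bounded rather than merely $o(\log n)$.

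Fix $r\in(\alpha,\alpha+1)$. Write
\[
\widehat\Leh_n(r)=\frac{n^{-r/\alpha}\sum_iX_i^{r}}{n^{-1}\sum_iX_i^{r-1}}\cdot n^{r/\alpha-1}.
\]
For the denominator, $r-1<\alpha$ and $X\ge1$, so $\E X^{r-1}<\infty$. The strong law then gives $n^{-1}\sum_iX_i^{r-1}\to m_{r-1}\in[1,\infty)$ almost surely, and its logarithm is $O_{\Prob}(1)$.

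For the numerator, $X^r$ is exact Pareto with index $\alpha/r<1$. Hence $n^{-r/\alpha}\sum_iX_i^{r}$ converges in distribution to a positive $(\alpha/r)$-stable law, the generalized central limit theorem with no centering since the index is below one. Its logarithm is therefore tight, provided the limit law puts no mass at $0$ or $\infty$, which holds because one-sided stable laws have continuous positive support. To keep the argument self-contained I would instead sandwich the sum. From below, $n^{-r/\alpha}\sum_iX_i^r\ge (n^{-1/\alpha}\max_iX_i)^r$, and $n^{-1/\alpha}\max_iX_i$ has the nondegenerate Fréchet limit, so the lower tail is tight. From above, truncate at level $n^{1/\alpha}$ and use Markov on the truncated sum. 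Its expectation is of order $n\cdot n^{(r-\alpha)/\alpha}=n^{r/\alpha}$, and the event that some observation exceeds $Kn^{1/\alpha}$ has probability $O(K^{-\alpha})$, which gives tightness of the upper tail.

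Combining the two orders $r=s$ and $r=t$ yields $D_n=\frac{t-s}{\alpha}\log n+O_{\Prob}(1)$. The reduction above then gives the rate $O_{\Prob}(1/\log n)$. The main step is the two-sided tightness of $\log(n^{-r/\alpha}\sum_iX_i^r)$. The lower side, via the maximum, is the delicate one conceptually but trivial here. The upper side is a routine truncation. I expect no real obstacle beyond organizing these bounds.
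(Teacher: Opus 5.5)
Your argument is correct and follows the paper's proof: the strong law gives $\log\sum_iX_i^{r-1}=\log n+O_{\Prob}(1)$, the positive $(\alpha/r)$-stable limit of $n^{-r/\alpha}\sum_iX_i^{r}$ gives $\log\sum_iX_i^{r}=\frac r\alpha\log n+O_{\Prob}(1)$, and the resulting $D_n=\frac{t-s}{\alpha}\log n+O_{\Prob}(1)$ is inverted exactly as you do. Two small fixes in your optional self-contained sandwich: truncate at $Kn^{1/\alpha}$ rather than $n^{1/\alpha}$, so that Markov gives $C_rK^{r-\alpha}/A$ and the exceedance event has probability at most $K^{-\alpha}$; and note that $m_{r-1}=\alpha/(\alpha-r+1)$ lies in $(0,\infty)$ but falls below $1$ when $r<1$, which is harmless.
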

	
	\begin{remark}[Using the diagram]\label{rem:hill}
		The logarithmic rate \eqref{eq:tailrate} is slower than the
		$k^{-1/2}$ rate of Hill's estimator~\cite{hill1975} on $k$
		upper order statistics, and no reordering of the argument
		improves it, since the index enters the diagram only through the
		exponent of a power of $n$. Estimators reading the tail index
		from the growth of whole-sample power sums go back to
		Meerschaert and Scheffler~\cite{meerschaert1998}, whose
		$\log\sum_iX_i^{2}/\log n$ estimator is the section of the
		diagram at suddency order two, with the same rate; the
		moment-ratio estimators of~\cite{danielsson1996,dekkers1989}
		use ratios of moments on the order statistics beyond a
		threshold and achieve $\sqrt k$ rates. What the diagram adds is
		structural: the full curve, and the two knees at known unit
		separation, an internal consistency check and a diagnostic for
		departures from exact regular variation that neither line
		possesses. It is accordingly a diagnostic, not a competitor at
		estimation.
	\end{remark}
	
	The theory so far has taken the data as raw positive numbers. Applied
	instead to a probability vector, the
	transform becomes a concentration profile. For
	$\pp=(p_1,\dots,p_n)$ with self-referential measure
	$\mu=\sum_i\delta_{p_i}$, the counting measure carried by the
	probability values themselves, the curve
	\begin{equation}\label{eq:profile}
		\Leh_{\pp}(s)=\frac{\sum_ip_i^{\,s}}{\sum_ip_i^{\,s-1}}
	\end{equation}
	increases from $\min_ip_i$ to $\max_ip_i$, strictly unless $\pp$
	is uniform. It interpolates the standard indices: the mean
	probability $1/n$ at $s=1$, the Herfindahl--Simpson index
	$\sum_ip_i^2$ at $s=2$, and Berger--Parker dominance in the limit
	$s\to\infty$. Its relation to the entropy
	spectrum is exact.
	
	\begin{restatable}[R\'enyi and Hill]{proposition}{proprenyihill}\label{prop:renyihill}
		With $H_s$ the R\'enyi entropy of order $s$ and
		${}^{q}D=e^{H_q}$ the Hill diversity
		number~\cite{renyi1961,hillnumbers1973},
		\begin{equation}\label{eq:renyi}
			\Lambda_{\pp}(s)=(1-s)H_s(\pp)-(2-s)H_{s-1}(\pp),
		\end{equation}
		and for every integer $q\ge2$,
		\begin{equation}\label{eq:hillnumber}
			{}^{q}D=\Bigl(\prod_{j=2}^{q}\Leh_{\pp}(j)\Bigr)^{1/(1-q)} .
		\end{equation}
	\end{restatable}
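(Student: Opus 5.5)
The plan is to reduce both identities to the power sums $Z(s)\eqdef\sum_ip_i^{\,s}$ of the self-referential measure $\mu=\sum_i\delta_{p_i}$, whose free energy is $K(s)=\log Z(s)$, finite for every real $s$ since all $p_i>0$. The R\'enyi entropy is then an affine reading of $K$: $H_s(\pp)=\frac{1}{1-s}\log\sum_ip_i^{\,s}$, that is,
\begin{equation*}
(1-s)H_s(\pp)=K(s),\qquad s\neq1.
\end{equation*}
The master identity, Proposition~\ref{prop:master}, gives $\Lambda_{\pp}(s)=K(s)-K(s-1)$.

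For the first identity I substitute $K(s)=(1-s)H_s$ and $K(s-1)=(1-(s-1))H_{s-1}=(2-s)H_{s-1}$. This yields identity \eqref{eq:renyi} directly. At the exceptional orders $s=1$ and $s=2$, where $H_1$ is the Shannon entropy, both sides remain valid: the products $(1-s)H_s$ and $(2-s)H_{s-1}$ extend continuously and vanish there, matching $K(1)=\log\sum_ip_i=0$. This is the only point needing care, and I will state it in one line.

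For the Hill numbers I apply the cocycle identity \eqref{eq:cocycle} of Lemma~\ref{lem:cocycle} with $s=q$ and $k=q-1$. This is legitimate because $I_\mu=\R$. It gives
\begin{equation*}
\prod_{j=2}^{q}\Leh_{\pp}(j)=\frac{Z(q)}{Z(1)}=\sum_ip_i^{\,q},
\end{equation*}
using $Z(1)=\sum_ip_i=1$. Since ${}^{q}D=e^{H_q}=\bigl(\sum_ip_i^{\,q}\bigr)^{1/(1-q)}$, raising the product to the power $1/(1-q)$ gives identity \eqref{eq:hillnumber}.

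I expect no real obstacle. The only subtlety is the normalization $Z(1)=1$, which is what removes the constant in the telescoping product. That constant is exactly why the product starts at $j=2$.
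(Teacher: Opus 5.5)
Your proposal is correct and follows essentially the same route as the paper: you identify $(1-s)H_s(\pp)$ and $(2-s)H_{s-1}(\pp)$ with the log power sums at orders $s$ and $s-1$ and subtract, then apply the cocycle identity at $s=q$, $k=q-1$ with the normalization $Z_{\pp}(1)=1$. Your remark about the orders $s=1$ and $s=2$ adds a check that the paper leaves implicit: there the Shannon term carries a zero coefficient and $\log Z_{\pp}(1)=0$, so the identity still holds.
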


	The escort measures are exactly the escort distributions of
	nonextensive thermostatistics~\cite{tsallis2009}, with the
	suddency moment as R\'enyi order shifted by one. The profile is a
	\emph{complete} invariant: by Corollary~\ref{cor:rigid} it
	determines $\pp$ up to relabeling, and its slope is an evenness
	gradient, vanishing identically if and only if $\pp$ is uniform.
	
	One comparison closes the section. Since
	$\Leh_\mu(s)=\E_{\mu_{s-1}}[X]$ increases from the mean at $s=1$
	to the essential supremum, the transform on $s\ge1$ is a smooth,
	law-invariant, positively homogeneous family of tail statistics,
	with formula \eqref{eq:if} as its exact influence function. Unlike
	the superquantile family of Rockafellar and
	Uryasev~\cite{rockafellar2002}, however, it is \emph{not} monotone
	with respect to first-order stochastic dominance, and two points
	already witness the failure: the data $(1,10)$ have contraharmonic
	mean $\tfrac{101}{11}$, while raising the smaller datum to $2$
	\emph{lowers} it to $\tfrac{104}{12}$. A pointwise increase of the
	data can decrease the statistic, so the transform is a
	dispersion-sensitive soft maximum rather than a risk measure in the
	axiomatic sense.
	
	%=====================================================================
	\section{Conclusion}\label{sec:conclusion}
	%=====================================================================
	
	This paper developed the statistical theory of the Lehmer transform,
	the statistic-generating curve of a positive dataset. An exact
	divergence calculus makes the slope of the curve a Jeffreys divergence
	and the arithmetic--geometric gap a relative entropy. A quantitative
	rigidity theory locates the frontier of identifiability at geometric
	growth of the curve, and the period-group dichotomy organizes the
	indeterminate side of that frontier. A dictionary of closed forms
	carries characterization theorems for the Gamma and lognormal
	families. Exact finite-sample inversion and the sufficiency theorem
	exhibit the empirical curve as a lossless re-coordinatization of the
	order statistic. And the sampling theory of the empirical curve runs
	from the influence function and the exact leading bias to limit
	theorems, finite-sample bounds and the heavy-tail phase diagram.
	
	One identity organized these results: the logarithm of the transform
	is the unit increment of a convex free energy. The unit difference
	telescopes exactly across integer spans, so that finitely many of its
	values reconstruct the partition function itself. It requires no
	differentiability, and so is defined for every admissible measure.
	And it is a ratio of two evaluations of one functional rather than a
	limit of such ratios, which is what makes deconvolution a division
	and estimation a ratio of two power sums.
	
	Two questions seem to us the most important left open. The first is
	the injectivity frontier. Subgeometric growth of the curve forces
	determinacy, resonant imaginary periods destroy it, and the two are
	mutually exclusive; what is missing is the converse, that every
	measure whose curve grows geometrically along some sequence carries a
	nontrivial fiber. A lattice measure sharing the lognormal curve shows
	that any proof must reach beyond the resonance mechanism. The second
	is the fluctuation theory at the knees of the phase diagram, where
	Gaussian and stable regimes exchange. A second-order theory there,
	with a data-driven choice of the suddency window, would turn the
	diagram from a pilot and a diagnostic into an efficient estimator.
	
	\bmhead{Data Availability}
	No new data were produced in this work.
	
	\bmhead{Conflict of Interest}
	The author declares no conflict of interest.
	
	\bmhead{Funding Declaration}
	This research has received no funding.
	
	\bmhead{Author Contributions}
	All aspects of this work, conceptualization, methodology, software,
	validation, formal analysis, investigation, writing, and
	visualization, were carried out by M.A.

	\clearpage
	
	\begin{appendices}
		
		%=====================================================================
		\section{Proofs}\label{app:proofs}
		%=====================================================================

		This appendix collects the proofs deferred from the body, in the
		order in which the results are used. Each result is restated
		before it is proved.
		
		We begin with the regularity of the partition function, on which
		every later argument rests.
		
		\lemlogconvex*
		
		\begin{proof}
			\emph{Assertion (i).} For $s_0,s_1\in I_\mu$ and
			$\theta\in(0,1)$ put $s_\theta=(1-\theta)s_0+\theta s_1$.
			H\"older's inequality with exponents $1/(1-\theta)$ and
			$1/\theta$, applied to the factorization
			$x^{s_\theta}=(x^{s_0})^{1-\theta}(x^{s_1})^{\theta}$, gives
			\begin{equation}\label{eq:holder}
				Z_\mu(s_\theta)\le
				Z_\mu(s_0)^{1-\theta}\,Z_\mu(s_1)^{\theta}<\infty .
			\end{equation}
			
			\emph{Assertion (ii).} Equality in bound \eqref{eq:holder}
			forces $x^{s_0-s_1}$ to be $\mu$-almost everywhere constant,
			that is, $\mu$ concentrated at a point; otherwise the
			inequality is strict and, once differentiation under the
			integral is justified,
			\begin{equation}\label{eq:Kpp}
				K_\mu''(u)=\Var_{\mu_u}(\log X)>0 .
			\end{equation}
			
			\emph{Assertion (iii).} Fix $[a,b]\subset\mathring I_\mu$ and
			choose $\varepsilon>0$ with
			$a-\varepsilon,b+\varepsilon\in I_\mu$. The first inequality in
			bound \eqref{eq:domination} is immediate, and the second holds
			because powers of $|\log x|$ are absorbed by
			$x^{\pm\varepsilon}$ at both ends of $\Rpos$.
			
			\emph{Assertion (iv).} Dominated convergence, applied through
			bound \eqref{eq:domination}, justifies complex differentiation
			under the integral to all orders, giving holomorphy on the
			strip; on the real axis the same domination gives
			$K_\mu\in C^\infty(\mathring I_\mu)$.
			
			\emph{Assertion (v).} The set $S_\mu$ is the intersection of two
			intervals, and $\Leh_\mu$ is a ratio of positive real-analytic
			functions.
		\end{proof}
		\propmaster*
		
		\begin{proof}
			The first equality in identity \eqref{eq:master} holds because
			definition \eqref{eq:defmellin} gives
			\begin{equation}\label{eq:mellinshift}
				(\Mellin\mu)(s+1)=\int_{\Rpos}x^{s}\dd\mu=Z_\mu(s).
			\end{equation}
			For the second, the substitution $t=\log x$ turns
			definition \eqref{eq:defZ} into definition \eqref{eq:defmgf},
			\begin{equation}\label{eq:logpush}
				Z_\mu(s)=\int_{\R}e^{st}\dd\bigl((\log)_*\mu\bigr)(t)=M(s).
			\end{equation}
			For the third, definition \eqref{eq:defescort} of the escort
			measure gives
			\begin{equation}\label{eq:escortmean}
				\E_{\mu_{s-1}}[X]
				=\frac{1}{Z_\mu(s-1)}\int_{\Rpos}x\cdot x^{s-1}\dd\mu(x)
				=\frac{Z_\mu(s)}{Z_\mu(s-1)}=\Leh_\mu(s).
			\end{equation}
			The fourth equality is the definition of $K_\mu$. Since
			$K_\mu\in C^\infty(\mathring I_\mu)$ by
			Lemma~\ref{lem:logconvex}, the fundamental theorem of calculus
			gives identity \eqref{eq:logL}, and differentiating under the
			integral sign gives
			\begin{equation}\label{eq:Kprime}
				K_\mu'(u)=\frac{Z_\mu'(u)}{Z_\mu(u)}
				=\frac{1}{Z_\mu(u)}\int_{\Rpos} x^{u}\log x\,\dd\mu
				=\E_{\mu_u}[\log X]
			\end{equation}
			and
			\begin{equation}\label{eq:Kdoubleprime}
				K_\mu''(u)=\E_{\mu_u}[\log^2X]-K_\mu'(u)^2
				=\Var_{\mu_u}(\log X),
			\end{equation}
			which are the two relations in identity \eqref{eq:Kderivs}.
		\end{proof}
		
		\lemstrip*
		
		\begin{proof}
			Both partition functions are holomorphic on the strip
			$\{\Re s\in U\}$ by Lemma~\ref{lem:logconvex} and agree on $U$,
			hence agree on the whole strip by the identity theorem. Fix
			$\sigma\in U$. The function
			\begin{equation}\label{eq:cfnormalized}
				\tau\longmapsto\frac{Z_\mu(\sigma+i\tau)}{Z_\mu(\sigma)}
				=\E_{\mu_\sigma}\bigl[e^{i\tau\log X}\bigr]
			\end{equation}
			is the characteristic function of $\log X$ under $\mu_\sigma$, and
			likewise for $\nu$. Equality of characteristic functions gives
			$\mu_\sigma=\nu_\sigma$, hence
			$x^{\sigma}\dd\mu=x^{\sigma}\dd\nu$, hence $\mu=\nu$.
		\end{proof}

		\propmodulus*
		
		\begin{proof}
			Conjugate symmetry follows from $\overline{x^{\bar s}}=x^{s}$ for
			$x>0$ together with positivity of $\mu$. Writing
			$x^{a+ib}=x^{a}e^{ib\log x}$ and normalizing by $Z_\mu(a)$ gives
			\begin{equation}\label{eq:phia}
				\frac{Z_\mu(a+ib)}{Z_\mu(a)}
				=\E_{\mu_a}\bigl[e^{ib\log X}\bigr]\eqdef\phi_a(b),
			\end{equation}
			and $|\phi_a(b)|\le1$ is the triangle inequality, which is
			assertion (i). \emph{Assertion (ii).} $|\phi_a(b)|=1$ holds precisely
			when $e^{ib\log X}$ is $\mu_a$-almost surely equal to a fixed
			unimodular constant $e^{i\varphi}$, that is, when
			\begin{equation}\label{eq:latticecoset}
				b\log X\in\varphi+2\pi\Z\quad\mu_a\text{-a.s.},
				\qquad\text{equivalently}\qquad
				\log X\in\frac{\varphi}{b}+\frac{2\pi}{b}\Z .
			\end{equation}
			Since $\mu_a$ and $\mu$ are mutually absolutely continuous, the
			same holds $\mu$-almost surely, which is condition \eqref{eq:lattice} with
			$c=e^{\varphi/b}$. \emph{Assertion (iii).} Suppose
			condition \eqref{eq:latticecoset} holds for $b_1$ and for $b_2$. If $t,t'$
			both lie in the support of $(\log)_*\mu$ then
			\begin{equation}\label{eq:incommensurable}
				t-t'\in\frac{2\pi}{b_1}\Z\cap\frac{2\pi}{b_2}\Z=\{0\},
			\end{equation}
			the intersection being trivial because $b_1/b_2$ is irrational.
			Hence the support is a single point.
		\end{proof}
		\propendpoints*
		
		\begin{proof}
			\emph{Assertion (i).} The definition \eqref{eq:defS} gives
			\begin{equation}\label{eq:supS}
				\sup S_\mu=\min(\tau,\tau+1)=\tau .
			\end{equation}
			As $s\uparrow\tau$,
			\begin{equation}\label{eq:rightlimits}
				\begin{gathered}
					Z_\mu(s)\longrightarrow\infty,\\
					Z_\mu(s-1)\longrightarrow Z_\mu(\tau-1)\in(0,\infty),
				\end{gathered}
			\end{equation}
			the second limit by continuity on $\mathring I_\mu$, so the ratio in
			definition \eqref{eq:defL} diverges.
			
			\emph{Assertion (ii).} The same definition gives
			\begin{equation}\label{eq:infS}
				\inf S_\mu=\max(\iota,\iota+1)=\iota+1 .
			\end{equation}
			As $s\downarrow\iota+1$ one has $s-1\downarrow\iota$, so
			\begin{equation}\label{eq:leftlimits}
				\begin{gathered}
					Z_\mu(s-1)\longrightarrow\infty,\\
					Z_\mu(s)\longrightarrow Z_\mu(\iota+1)\in(0,\infty),
				\end{gathered}
			\end{equation}
			and the ratio tends to zero.
		\end{proof}
		
		\thmmono*
		
		\begin{proof}
			Let $h$ be an observable with the requisite integrability and
			write the escort mean as
			\begin{equation}\label{eq:escortobs}
				\E_{\mu_u}[h]=\frac{1}{Z_\mu(u)}\int_{\Rpos}h\,x^{u}\dd\mu .
			\end{equation}
			Differentiating expression \eqref{eq:escortobs} under the integral
			sign, which the domination bound \eqref{eq:domination} justifies,
			gives the covariance rule
			\begin{equation}\label{eq:covrule}
				\frac{\dd}{\dd u}\E_{\mu_u}[h]
				=\frac{\int h\,x^{u}\log x\,\dd\mu}{Z_\mu(u)}
				-\frac{\int h\,x^{u}\dd\mu}{Z_\mu(u)}
				\cdot\frac{Z_\mu'(u)}{Z_\mu(u)}
				=\Cov_{\mu_u}(h,\log X).
			\end{equation}
			Taking $h=X$ at $u=s-1$ and using
			$\Leh_\mu(s)=\E_{\mu_{s-1}}[X]$ from identity \eqref{eq:master}
			yields relation \eqref{eq:covid}. Nonnegativity holds because
			$X=e^{\log X}$ is a strictly increasing function of $\log X$, so
			the two variables are comonotone and their covariance is
			nonnegative by the Chebyshev association inequality; it vanishes
			if and only if $\log X$ is escort-almost surely constant, that is,
			if and only if $\mu$ is a point mass. For relation
			\eqref{eq:logderiv}, differentiate
			$\Lambda_\mu(s)=K_\mu(s)-K_\mu(s-1)$ and use
			$K_\mu''(u)=\Var_{\mu_u}(\log X)\ge0$ from identity \eqref{eq:Kderivs}.
		\end{proof}

		\propgruss*
		
		\begin{proof}
			\emph{Assertion (i).} Recall Popoviciu's inequality: a variable $Y$
			confined to an interval of length $\ell$ satisfies
			\begin{equation}\label{eq:popoviciu}
				\Var Y\le\E\Bigl(Y-\tfrac{\max+\min}{2}\Bigr)^2
				\le\Bigl(\frac{\ell}{2}\Bigr)^{2}.
			\end{equation}
			Under the escort $\mu_{s-1}$ the variables $X$ and $\log X$ are
			confined to intervals of lengths $M-m$ and $\log(M/m)$
			respectively, so the Cauchy--Schwarz inequality together with
			inequality \eqref{eq:popoviciu} gives
			\begin{equation}\label{eq:grussproof}
				\Leh_\mu'(s)=\Cov_{\mu_{s-1}}(X,\log X)
				\le\sqrt{\Var_{\mu_{s-1}}(X)\,\Var_{\mu_{s-1}}(\log X)}
				\le\tfrac14(M-m)\log\tfrac Mm ,
			\end{equation}
			and $\Var_{\mu_u}(\log X)\le\tfrac14\log^2(M/m)$, integrated over
			the unit window in relation \eqref{eq:logderiv}, gives the bound on
			$\Lambda_\mu'$. Nonnegativity is Theorem~\ref{thm:mono}. For
			assertion (ii), differentiation under the integral sign gives
			$K_\mu^{(k)}(u)=\kappa_k(u)$, so
			$\Lambda_\mu(s)=K_\mu(s)-K_\mu(s-1)$ yields identity
			\eqref{eq:Lamk}. Since $\Leh_\mu=e^{\Lambda_\mu}$, Fa\`a di
			Bruno's formula in its exponential form,
			\begin{equation}\label{eq:faadibruno}
				\frac{\dd^n}{\dd s^n}e^{\Lambda_\mu}
				=e^{\Lambda_\mu}\,
				B_n\bigl(\Lambda_\mu',\dots,\Lambda_\mu^{(n)}\bigr),
			\end{equation}
			gives identity \eqref{eq:Belln}; the low cases use
			$B_2(a_1,a_2)=a_2+a_1^2$ and $B_3=a_3+3a_1a_2+a_1^3$.
		\end{proof}
		\prophomogrefl*
		
		\begin{proof}
			\emph{Assertion (i).} A change of variables gives
			\begin{equation}\label{eq:dilation}
				\int_{\Rpos}(\lambda x)^{s}\dd\mu=\lambda^{s}Z_\mu(s),
			\end{equation}
			and in the ratio in definition \eqref{eq:defL} the dilation factors contribute
			$\lambda^{s}/\lambda^{s-1}=\lambda$. \emph{Assertion (ii).} The
			change of variables $y=1/x$ gives
			\begin{equation}\label{eq:inversionZ}
				Z_{\check\mu}(s)=Z_\mu(-s),
			\end{equation}
			hence
			\begin{equation}\label{eq:inversionL}
				\begin{gathered}
					\Leh_{\check\mu}(s)=\frac{Z_\mu(-s)}{Z_\mu(1-s)},\\
					\Leh_\mu(1-s)=\frac{Z_\mu(1-s)}{Z_\mu(-s)},
				\end{gathered}
			\end{equation}
			whose product is $1$. When $\check\mu=\mu$ this reads
			$\Lambda_\mu(s)+\Lambda_\mu(1-s)=0$, so $\Lambda_\mu$ is odd about
			$s=\tfrac12$; oddness forces $\Lambda_\mu(\tfrac12)=0$ and the
			vanishing of all even derivatives there.
		\end{proof}

		\thmstatgen*
		
		\begin{proof}
			\emph{Assertion (i).} Strict monotonicity and real-analyticity
			are Theorem~\ref{thm:mono} and Lemma~\ref{lem:logconvex}. For
			the endpoints, by identity \eqref{eq:logL} it suffices to show
			that $K_\mu'(u)=\E_{\mu_u}[\log X]\to\log M$ as $u\to\infty$.
			Fix $\varepsilon>0$ and put $A=\{x>M-\varepsilon\}$, so that
			$\mu(A)>0$. Then
			\begin{equation}\label{eq:statgenratio}
				\frac{\int_{A^c}x^{u}\dd\mu}{\int_Ax^{u}\dd\mu}
				\le\frac{\mu(\Rpos)\,(M-\varepsilon)^{u}}
				{\mu(\{x>M-\varepsilon/2\})\,(M-\varepsilon/2)^{u}}
				\xrightarrow[u\to\infty]{}0,
			\end{equation}
			so $\mu_u(A)\to1$ and consequently
			\begin{equation}\label{eq:statgenliminf}
				\liminf_{u\to\infty}\E_{\mu_u}[\log X]
				\ge\log(M-\varepsilon).
			\end{equation}
			Since also $\E_{\mu_u}[\log X]\le\log M$ and $\varepsilon$ was
			arbitrary, the limit is $\log M$. The case $s\to-\infty$
			follows from the reflection identity \eqref{eq:reflection}. A
			strictly increasing continuous map with limits $m$ and $M$ is a
			bijection onto $(m,M)$.
			
			\emph{Assertion (ii).} Normalize $M=1$. With an atom of mass
			$w_M>0$ at $1$ and $\mu((M',1))=0$ one has
			$Z_\mu(s)=w_M+O((M')^{s})$, whence
			$\Leh_\mu(s)=1-O((M')^{s-1})=1-O((M')^{s})$, the fixed
			factor $1/M'$ being absorbed into the implied constant; this
			is the rate \eqref{eq:georate}.
			
			\emph{Assertion (iii).} For two points with ratio
			$r=\min/\max\in(0,1]$,
			\begin{equation}\label{eq:twopointexact}
				M-\Leh_\mu(s)=M\,\frac{r^{\,s-1}(1-r)}{1+r^{\,s-1}}
				\le M\max_{r\in(0,1]}r^{\,s-1}(1-r)
				=\frac{M}{s}\Bigl(1-\frac1s\Bigr)^{s-1},
			\end{equation}
			the maximum being attained at $r=(s-1)/s$; and
			$(1-1/s)^{s-1}=e^{-1}(1+O(s^{-1}))$.
			
			\emph{Assertion (iv).} Since $\Leh_\mu'>0$ and $\Leh_\mu$ is
			real-analytic, the analytic inverse function theorem gives a
			real-analytic inverse whose Taylor coefficients about
			$t_0=\Leh_\mu(s_0)$ are the classical Lagrange--B\"urmann
			coefficients
			\begin{equation}\label{eq:burmann}
				\bigl[(t-t_0)^k\bigr]\ \Leh_\mu^{-1}(t)
				=\frac1{k!}\lim_{s\to s_0}\frac{\dd^{k-1}}{\dd s^{k-1}}
				\Bigl(\frac{s-s_0}{\Leh_\mu(s)-t_0}\Bigr)^{k},
			\end{equation}
			and at $s_0=1$ the first coefficient is
			\begin{equation}\label{eq:firstburmann}
				\frac{1}{\Leh_\mu'(1)}
				=\frac{1}{\Cov_{\mu_0}(X,\log X)}
			\end{equation}
			by identity \eqref{eq:slopeatone}, the covariance being taken
			under the normalized measure $\mu_0=\mu/Z_\mu(0)$.
		\end{proof}
		
		We turn to the divergence calculus. The two statements below are
		both computations with the escort densities; the third is the
		Donsker--Varadhan variational formula in the form the transform
		takes.
		\lembregman*
		
		\begin{proof}
			The escort densities give
			\begin{equation}\label{eq:escortratio}
				\frac{\dd\mu_a}{\dd\mu_b}(x)
				=x^{\,a-b}\,\frac{Z_\mu(b)}{Z_\mu(a)} ,
			\end{equation}
			so integrating the logarithm of this ratio against $\mu_a$ gives
			\begin{equation}\label{eq:bregmanproof}
				\KL(\mu_a\|\mu_b)
				=(a-b)\,\E_{\mu_a}[\log X]+K_\mu(b)-K_\mu(a)
				=K_\mu(b)-K_\mu(a)-(b-a)K_\mu'(a),
			\end{equation}
			where we used $\E_{\mu_a}[\log X]=K_\mu'(a)$ from
			identity \eqref{eq:Kderivs}; integrability of $\log X$ under every interior
			escort follows from the domination bound \eqref{eq:domination}.
		\end{proof}
		
		\thmsandwich*
		
		\begin{proof}
			Apply identity \eqref{eq:bregman} with $(a,b)=(s-1,s)$. Since
			$K_\mu(s)-K_\mu(s-1)=\Lambda_\mu(s)$, this gives
			\begin{equation}\label{eq:remainder1}
				\KL(\mu_{s-1}\|\mu_s)=\Lambda_\mu(s)-K_\mu'(s-1),
			\end{equation}
			which is identity \eqref{eq:sandwichlower} after substituting
			$K_\mu'(s-1)=\E_{\mu_{s-1}}[\log X]$. Applying identity
			\eqref{eq:bregman} with $(a,b)=(s,s-1)$ gives, in the same way,
			\begin{equation}\label{eq:remainder2}
				\KL(\mu_{s}\|\mu_{s-1})=K_\mu'(s)-\Lambda_\mu(s),
			\end{equation}
			which is identity \eqref{eq:sandwichupper}. Nonnegativity of the
			relative entropy yields the sandwich \eqref{eq:sandwich}, with
			equality if and only if $\mu_{s-1}=\mu_s$, that is, if and only if
			$X$ is $\mu$-almost surely constant.
		\end{proof}
		
		\coramgm*
		
		\begin{proof}
			We argue directly, so that no interiority hypothesis on the
			moment domain is needed. The size-biased law $\mu_1$ has density
			\begin{equation}\label{eq:sizebiased}
				\frac{\dd\mu_1}{\dd\mu}(x)=\frac{x}{A},
			\end{equation}
			which is positive $\mu$-almost everywhere, so the two laws are
			mutually absolutely continuous and
			\begin{equation}\label{eq:amgmproof}
				\KL(\mu\,\|\,\mu_1)
				=\int_{\Rpos}\log\frac{A}{x}\dd\mu(x)
				=\log A-\E_\mu[\log X]
				=\log\frac AG ,
			\end{equation}
			the integral being finite because $A<\infty$ and
			$\E_\mu|\log X|<\infty$. This is identity \eqref{eq:amgm}. The
			general statement \eqref{eq:amgmgeneral} is identity
			\eqref{eq:sandwichlower} rearranged, and holds for
			$s\in\mathring S_\mu$.
		\end{proof}
		
		\lemmidpoint*
		
		\begin{proof}
			By identity \eqref{eq:logL},
			\begin{equation}\label{eq:midint}
				\Lambda_\mu\bigl(s+\tfrac12\bigr)
				=\int_{s-1/2}^{s+1/2}K_\mu'(u)\dd u .
			\end{equation}
			The midpoint quadrature rule for a $C^2$ integrand $g$ on an
			interval of unit length has error $\tfrac1{24}g''(\xi)$; applying
			it with $g=K_\mu'$ gives identity \eqref{eq:midpoint}. Identity
			\eqref{eq:thirdcentral} follows by differentiating
			$K_\mu''(u)=\Var_{\mu_u}(\log X)$ under the integral sign.
		\end{proof}
		
		\thmjeffreys*
		
		\begin{proof}
			Add the two exact remainders \eqref{eq:remainder1} and
			identity \eqref{eq:remainder2}. The Lehmer terms cancel, leaving
			\begin{equation}\label{eq:jeffreysproof}
				J(\mu_{s-1},\mu_s)
				=K_\mu'(s)-K_\mu'(s-1)
				=\int_{s-1}^{s}K_\mu''(u)\dd u
				=\Lambda_\mu'(s),
			\end{equation}
			the last step being relation \eqref{eq:logderiv}. Finally
			$K_\mu''(u)=\Var_{\mu_u}(\log X)$ is the Fisher information of the
			escort family at natural parameter $u$, since the score of the
			family is $\log x-K_\mu'(u)$, whose variance is $K_\mu''(u)$.
		\end{proof}

		\proprenyidiv*
		
		\begin{proof}
			\emph{Assertion (i).} With
			$c=\lambda a+(1-\lambda)b\in\mathring I_\mu$,
			\begin{equation}\label{eq:renyiproof}
				\int_{\Rpos}\Bigl(\frac{\dd\mu_a}{\dd\mu}\Bigr)^{\lambda}
				\Bigl(\frac{\dd\mu_b}{\dd\mu}\Bigr)^{1-\lambda}\dd\mu
				=\frac{Z_\mu(c)}{Z_\mu(a)^{\lambda}Z_\mu(b)^{1-\lambda}}
				=\exp\bigl[K_\mu(c)-\lambda K_\mu(a)
				-(1-\lambda)K_\mu(b)\bigr],
			\end{equation}
			and $D_\lambda=\frac{1}{\lambda-1}\log(\cdot)$ of the left-hand
			side of identity \eqref{eq:renyiproof}, which is identity
			\eqref{eq:renyidiv}.
			
			\emph{Assertion (ii).} The Chernoff information is
			$\sup_{\lambda\in[0,1]}(1-\lambda)D_\lambda(\mu_s\|\mu_{s-1})$,
			which by identity \eqref{eq:renyidiv} with $(a,b)=(s,s-1)$ and
			$c=s-1+\lambda$ is the displayed expression
			\eqref{eq:chernoff}.
			
			\emph{Assertion (iii).} Let $q\ll\mu_{s-1}$ with
			$\E_q|\log X|<\infty$ and put $r=\dd q/\dd\mu_{s-1}$. Then
			\begin{equation}\label{eq:dvproof}
				\E_q[\log X]-\KL(q\|\mu_{s-1})
				=\int(\log x-\log r)\,r\dd\mu_{s-1}
				=\Lambda_\mu(s)-\int r\log\frac{r}{x/\Leh_\mu(s)}
				\dd\mu_{s-1}.
			\end{equation}
			Since $\dd\mu_s/\dd\mu_{s-1}=x/\Leh_\mu(s)$ by identity
			\eqref{eq:master}, the last integral in identity \eqref{eq:dvproof} is
			$\KL(q\|\mu_s)\ge0$, vanishing if and only if $q=\mu_s$; and if
			$q\not\ll\mu_{s-1}$ the bracket is $-\infty$.
		\end{proof}
		
		The multiplicative theory comes next. The L\'evy--Khintchine
		representation transfers to the transform by subtracting the
		cumulant function at two orders one unit apart.
		\thmcharacter*
		
		\begin{proof}
			The integrand $(x,y)\mapsto(xy)^{s}$ is nonnegative and the
			character is multiplicative, so Tonelli's theorem gives
			\begin{equation}\label{eq:tonelli}
				Z_{\mu\boxtimes\nu}(s)
				=\int_{\Rpos}\!\!\int_{\Rpos}(xy)^{s}\dd\mu(x)\dd\nu(y)
				=Z_\mu(s)\,Z_\nu(s),
			\end{equation}
			with both sides finite or infinite together. Taking the ratio of
			identity \eqref{eq:tonelli} at $s$ and at $s-1$ gives identity
			\eqref{eq:homo}.
		\end{proof}
		
		\thmpowers*
		
		\begin{proof}
			Since $(X^{r})^{a}=X^{ra}$ we get $Z_{X^r}(a)=Z_X(ra)$, hence
			\begin{equation}\label{eq:powerZ}
				\Leh_{X^r}(s)=\frac{Z_X(rs)}{Z_X(rs-r)} .
			\end{equation}
			Taking logarithms of identity \eqref{eq:powerZ} gives the first equality in
			identity \eqref{eq:powerlog}, and the substitution $u=rv$ gives the
			integral form. For integer $r$, the cocycle identity
			\eqref{eq:cocycle} applied over the window of length $r$ ending at
			$rs$ factors the same ratio into unit steps, which is identity
			\eqref{eq:power}.
		\end{proof}

		\thmlevy*
		
		\begin{proof}
			On the interval of finiteness the cumulant function of $\log X$
			has the L\'evy--Khintchine form
			\begin{equation}\label{eq:lkform}
				K_\mu(u)=bu+\tfrac12\sigma^2u^2
				+\int_\R\bigl(e^{uy}-1-uy\mathbf 1_{|y|\le1}\bigr)
				\Pi(\dd y).
			\end{equation}
			Subtracting representation \eqref{eq:lkform} at $u=s-1$ from its value at
			$u=s$, the Gaussian part contributes
			\begin{equation}\label{eq:lkgauss}
				\tfrac12\sigma^2\bigl(s^2-(s-1)^2\bigr)
				=\sigma^2\Bigl(s-\tfrac12\Bigr),
			\end{equation}
			the drift contributes $b$, and the jump part contributes
			\begin{equation}\label{eq:lkjump}
				\int_\R\bigl[e^{sy}-e^{(s-1)y}
				-y\mathbf 1_{|y|\le1}\bigr]\Pi(\dd y),
				\qquad
				e^{sy}-e^{(s-1)y}=e^{(s-1)y}\bigl(e^y-1\bigr),
			\end{equation}
			which together give identity \eqref{eq:levyL}; the integrand is
			$O(y^2)$ at the origin and integrable at infinity by the
			exponential-moment hypothesis. Differentiating under the
			integral sign, with domination as in
			bound \eqref{eq:domination}, gives identity \eqref{eq:levyderiv},
			whose integrand is nonnegative because $y$ and $e^y-1$ share
			the same sign. Finally, the $k$th multiplicative root
			$X^{[k]}$ is by definition the variable whose logarithm is the
			$k$th convolution root of $\log X$, which exists and is unique
			in law precisely because $\log X$ is infinitely divisible; its
			cumulant function is $K_\mu/k$, whose unit increment is
			$\Lambda_X/k$, so that
			\begin{equation}\label{eq:multroot}
				\Leh_{X^{[k]}}=\Leh_X^{1/k} .
			\end{equation}
		\end{proof}
		
		\thmrenorm*
		
		\begin{proof}
			\emph{Assertion (i).} Theorem~\ref{thm:homo} gives
			$Z_{\prod_iX_i}(a)=Z_X(a)^k$, and Theorem~\ref{thm:power} with
			$r=1/\sqrt k$ gives
			\begin{equation}\label{eq:renormproof}
				Z_{\mathcal R_kX}(s)=Z_{\prod_iX_i}\bigl(s/\sqrt k\bigr)
				=Z_X\bigl(s/\sqrt k\bigr)^{k},
			\end{equation}
			which is identity \eqref{eq:renormK} after taking logarithms. For
			assertion (ii), the fixed-point condition reads
			\begin{equation}\label{eq:fixedpointcond}
				k\,K_X\bigl(u/\sqrt k\bigr)=K_X(u)
			\end{equation}
			for all $u$ in a neighbourhood of the origin, for one fixed
			integer $k\ge2$; no more than a single $k$ is needed.
			Define $g(u)=K_X(u)/u^{2}$ for $u\neq0$; condition
			\eqref{eq:fixedpointcond} becomes
			\begin{equation}\label{eq:gfixed}
				g\bigl(u/\sqrt k\bigr)=g(u) .
			\end{equation}
			Since $K_X(0)=0$ and $K_X'(0)=\E[\log X]=0$, Taylor's theorem gives
			\begin{equation}\label{eq:gtaylor}
				\lim_{u\to0}g(u)=\frac{K_X''(0)}{2}=\frac{\sigma^2}{2} .
			\end{equation}
			Iterating relation \eqref{eq:gfixed} gives $g(u)=g(u/k^{n/2})$
			for every $n\ge1$, and letting $n\to\infty$ with limit
			\eqref{eq:gtaylor} therefore yields $g\equiv\sigma^2/2$, that
			is,
			\begin{equation}\label{eq:Kquadratic}
				K_X(u)=\frac{\sigma^2u^2}{2}
			\end{equation}
			on the neighbourhood. By Lemma~\ref{lem:strip} the law of $X$ is
			lognormal with parameters $(0,\sigma^2)$, and then
			\begin{equation}\label{eq:logaffineproof}
				\Lambda_X(s)=K_X(s)-K_X(s-1)
				=\frac{\sigma^2}{2}\bigl(s^2-(s-1)^2\bigr)
				=\sigma^2\Bigl(s-\tfrac12\Bigr),
			\end{equation}
			which is identity \eqref{eq:logaffine}. Conversely, a lognormal law
			with parameters $(0,\sigma^2)$ satisfies identity
			\eqref{eq:Kquadratic}, and identity \eqref{eq:renormK} then gives
			\begin{equation}\label{eq:lnfixed}
				K_{\mathcal R_kX}(s)=k\,\frac{\sigma^2s^2}{2k}
				=\frac{\sigma^2s^2}{2}=K_X(s)
			\end{equation}
			for every $k$, so a lognormal law is a fixed point of every
			$\mathcal R_k$; in particular a fixed point for one $k\ge2$ is
			a fixed point for all. That log-affinity of $\Leh_X$ does
			\emph{not} conversely force lognormality is a consequence of a
			gauge freedom: multiplying $Z_X$ by a positive $1$-periodic
			real-analytic factor $G$ with
			\begin{equation}\label{eq:gaugenormalized}
				\begin{gathered}
					G(0)=1,\\
					G'(0)=G''(0)=0
				\end{gathered}
			\end{equation}
			leaves $\Leh_X$, the total mass, $\E[\log X]$ and
			$\Var(\log X)$ all unchanged, while the law itself changes.
			Example~\ref{ex:logaffinecompanion} exhibits such a factor
			explicitly and verifies that the perturbed density is
			nonnegative.
			
			\emph{Assertion (iii).} Taylor's theorem
			with the same two vanishing derivatives gives
			\begin{equation}\label{eq:renormlimit}
				K_{\mathcal R_nX}(s)=n\,K_X\bigl(s/\sqrt n\bigr)
				=n\Bigl[\frac{\sigma^2s^2}{2n}+O\bigl(n^{-3/2}\bigr)\Bigr]
				\longrightarrow\frac{\sigma^2s^2}{2},
			\end{equation}
			locally uniformly, and subtracting the same expression at $s-1$
			gives the limit \eqref{eq:multclt}.
		\end{proof}
		
		We come to the rigidity theory. The first result exhibits the
		lognormal fiber by an explicit Gaussian integral; the two that
		follow establish the structure of the period group and the general
		resonance construction of which the lognormal fiber is one
		instance.
		\propdeconv*
		
		\begin{proof}
			Identity \eqref{eq:deconv} is identity \eqref{eq:homo} rearranged.
			For the stability statement, division of the two representations
			in equation \eqref{eq:relerr} gives
			\begin{equation}\label{eq:stabproof}
				\varepsilon_X
				=\frac{1+\varepsilon_Y}{1+\varepsilon_N}-1
				=\frac{\varepsilon_Y-\varepsilon_N}{1+\varepsilon_N},
			\end{equation}
			whence, using $|\varepsilon_Y-\varepsilon_N|\le2\varepsilon$ and
			$|1+\varepsilon_N|\ge1-\varepsilon>\tfrac12$,
			\begin{equation}\label{eq:stabbound}
				|\varepsilon_X|
				\le\frac{2\varepsilon}{1-\varepsilon} .
			\end{equation}
			Equality holds in bound \eqref{eq:stabbound} at
			$\varepsilon_Y=\varepsilon$ and $\varepsilon_N=-\varepsilon$.
		\end{proof}
		
		\lemfiber*
		
		\begin{proof}
			Suppose first that $Z_{\mu'}=e^{\varpi}Z_\mu$ on $S\cup(S-1)$
			with $\varpi(s)=\varpi(s-1)$. For $s\in S$ both $s$ and $s-1$ lie
			in that union, and the gauge cancels in the ratio at consecutive
			orders, so $\Leh_{\mu'}=\Leh_\mu$. Conversely, suppose the
			transforms agree on $S$ and set
			\begin{equation}\label{eq:pidef}
				\varpi\eqdef\log Z_{\mu'}-\log Z_\mu .
			\end{equation}
			Then $\varpi(s)-\varpi(s-1)=0$ on $S$, and $\varpi$ is
			real-analytic on $S$, which is open, by
			Lemma~\ref{lem:logconvex}. Since $S$ is an open
			interval of length greater than one, the relation
			$\varpi(s)=\varpi(s-1)$ extends $\varpi$ to a $1$-periodic real-analytic
			function, and equation \eqref{eq:gaugeeq} follows.
		\end{proof}
		
		\thmrigidity*
		
		\begin{proof}
			Let $\varphi_i=\log Z_i$, which is convex, and let
			$p=\varphi_1-\varphi_2$, which is $1$-periodic by
			Lemma~\ref{lem:fiber}. Convexity traps the one-sided derivatives
			between consecutive chord slopes: for every $s>s_0+1$ and
			$i=1,2$,
			\begin{equation}\label{eq:chords}
				\log g(s)=\varphi_i(s)-\varphi_i(s-1)
				\le\varphi_i'(s^{\pm})
				\le\varphi_i(s+1)-\varphi_i(s)=\log g(s+1).
			\end{equation}
			Here $g$ is automatically nondecreasing: for convex $\varphi_i$
			the unit increment $s\mapsto\varphi_i(s)-\varphi_i(s-1)$ is
			nondecreasing, being an average of the nondecreasing derivative
			over a sliding unit window, and that increment is $\log g$. Hence
			the limit superior in definition \eqref{eq:etadef} is nonnegative.
			At every common point of differentiability, that is, at all but
			countably many $s$, the chain \eqref{eq:chords} gives
			\begin{equation}\label{eq:pprimebound}
				|p'(s)|\le\log\frac{g(s+1)}{g(s)} .
			\end{equation}
			Fix such an $s$. By periodicity $p'(s)=p'(s+n)$ for every integer
			$n\ge0$, so applying the bound \eqref{eq:pprimebound} at $s+n$ and
			letting $n\to\infty$ gives
			\begin{equation}\label{eq:pprimeeta}
				|p'(s)|\le\liminf_{n\to\infty}\log\frac{g(s+n+1)}{g(s+n)}
				\le\eta .
			\end{equation}
			\emph{Assertion (i).} If $\eta=0$ then $p'=0$ off a countable set;
			since $p$ is a difference of convex functions it is locally
			Lipschitz, hence constant.
			
			\emph{Assertion (ii).} Suppose $\eta<\infty$ and fix a unit
			interval $J=[a,a+1]\subset(s_0+1,\infty)$, on which $p$ is
			defined. Being a difference of convex functions, $p$ is locally
			Lipschitz on $(s_0+1,\infty)$, hence absolutely continuous, so its
			total variation over $J$ is $\int_J|p'|$. Let
			$s_{\min}$ and $s_{\max}$ be points of $J$ at which the
			continuous periodic function $p$ attains its extrema. The two arcs
			of the period circle joining them each carry total variation at
			least $\osc(p)=\max p-\min p$, so
			\begin{equation}\label{eq:oscbound}
				2\,\osc(p)\;\le\;\int_J|p'(u)|\dd u\;\le\;\eta,
				\qquad\text{that is}\qquad
				\osc(p)\le\frac{\eta}{2} .
			\end{equation}
			Centring $p$ by the choice
			\begin{equation}\label{eq:centring}
				c=\exp\Bigl(\tfrac12\bigl(\max p+\min p\bigr)\Bigr)
			\end{equation}
			then gives
			\begin{equation}\label{eq:centredbound}
				\Bigl|\log\frac{Z_1(s)}{c\,Z_2(s)}\Bigr|
				\le\frac{\osc(p)}{2}\le\frac{\eta}{4},
			\end{equation}
			which is the two-sided bound \eqref{eq:quantgauge}.
			
			\emph{Consequences.} Apply
			the theorem with $g=\Leh_\mu=\Leh_\nu$, both partition functions
			being log-convex solutions of equation \eqref{eq:funceq} by
			Lemma~\ref{lem:logconvex}; in case (i) one concludes
			$Z_\nu=cZ_\mu$, and Lemma~\ref{lem:strip} identifies $\nu=c\mu$.
		\end{proof}
		
		The constant in the two-sided bound \eqref{eq:quantgauge} is valid but
		not optimal, and it is worth recording how far it is from optimal,
		since the gap measures how much of the residual gauge the convexity
		argument fails to see.
		
		\begin{remark}\label{rem:notsharp}
			Take
			\begin{equation}\label{eq:extremalpair}
				\begin{gathered}
					\varphi_2'(s)=\lfloor s\rfloor,\\
					\varphi_1'(s)=\Bigl\lfloor s+\tfrac12\Bigr\rfloor-\tfrac12 ,
				\end{gathered}
			\end{equation}
			and let $\varphi_1,\varphi_2$ be their primitives. Both are
			convex, and both satisfy the same difference equation, since
			\begin{equation}\label{eq:extremalL}
				\begin{gathered}
					\varphi_i(s)-\varphi_i(s-1)=L(s)\eqdef s-1\\
					(i=1,2),
				\end{gathered}
			\end{equation}
			so that $Z_i=e^{\varphi_i}$ are positive log-convex solutions of
			equation \eqref{eq:funceq} with $g=e^{L}$. Here
			\begin{equation}\label{eq:extremaleta}
				L(s+1)-L(s)\equiv1,
				\qquad\text{so that}\qquad
				\eta=1,
			\end{equation}
			while
			$p'=\varphi_1'-\varphi_2'$ is the square wave taking the values
			$\mp\tfrac12$ on the two halves of each period, whence
			\begin{equation}\label{eq:extremalosc}
				\begin{gathered}
					\osc(p)=\tfrac14=\frac{\eta}{4},\\
					\Bigl|\log\frac{Z_1}{cZ_2}\Bigr|\le\frac{\eta}{8} .
				\end{gathered}
			\end{equation}
			The pair \eqref{eq:extremalpair} therefore realizes exactly half of
			the oscillation that the bound \eqref{eq:oscbound} permits. The
			obstruction to attaining the full bound is visible in the
			construction: a jump of $p'$ by an amount $J$ forces one of the two
			convex functions to jump by $J$ within the window
			$[L(s),L(s+1)]$, so $J\le\eta$ and the square wave cannot have
			amplitude $\eta$. Determining the optimal constant is an open
			problem; the pair \eqref{eq:extremalpair} shows that it lies
			between $\eta/8$ and $\eta/4$.
		\end{remark}

		\propinfoform*
		
		\begin{proof}
			Condition \eqref{eq:ratiocond} says
			$\Lambda_\mu(s+1)-\Lambda_\mu(s)\to0$. Writing
			\begin{equation}\label{eq:lamdiff}
				\Lambda_\mu(s)=K_\mu(s)-K_\mu(s-1)
			\end{equation}
			gives
			\begin{equation}\label{eq:diffidentity}
				\Lambda_\mu(s+1)-\Lambda_\mu(s)
				=\bigl[K_\mu(s+1)-K_\mu(s)\bigr]
				-\bigl[K_\mu(s)-K_\mu(s-1)\bigr],
			\end{equation}
			which is the second difference in limit \eqref{eq:seconddiff}, so (i)
			and (ii) are equivalent. By the fundamental theorem of calculus
			and Theorem~\ref{thm:jeffreys},
			\begin{equation}\label{eq:cesaro}
				\Lambda_\mu(s+1)-\Lambda_\mu(s)
				=\int_{s}^{s+1}\Lambda_\mu'(u)\dd u
				=\int_{s}^{s+1}J\bigl(\mu_{u-1},\mu_u\bigr)\dd u ,
			\end{equation}
			and the integrand is nonnegative, so (i) and (iii) are
			equivalent.
		\end{proof}

		\thmheyde*
		
		\begin{proof}
			Positivity of the density in definition \eqref{eq:heydedef} is clear for
			$|\varepsilon|\le1$. Let $\phi_{m,\sigma^2}$ denote the Gaussian
			density of $\log X$. Substituting $t=m+\sigma u$ and completing
			the square,
			\begin{equation}\label{eq:heydeint}
				\int_\R e^{st}\phi_{m,\sigma^2}(t)
				\sin\Bigl(\frac{2\pi(t-m)}{\sigma^{2}}\Bigr)\dd t
				=e^{ms}\,\Im\,
				\exp\Bigl\{\tfrac12\bigl(\sigma s
				+i\tfrac{2\pi}{\sigma}\bigr)^{2}\Bigr\} .
			\end{equation}
			Expanding the exponent,
			\begin{equation}\label{eq:heydeexpand}
				\tfrac12\Bigl(\sigma s+i\frac{2\pi}{\sigma}\Bigr)^2
				=\frac{\sigma^2s^2}{2}-\frac{2\pi^2}{\sigma^2}+2\pi is ,
			\end{equation}
			so that the right-hand side of identity \eqref{eq:heydeint} equals
			\begin{equation}\label{eq:heydevalue}
				e^{ms+\sigma^2s^2/2}\,e^{-2\pi^2/\sigma^2}\sin(2\pi s).
			\end{equation}
			Adding the unmodulated part gives identity \eqref{eq:heydeZ}, and
			evaluating at $s=0$ confirms unit mass. The factor multiplying
			$e^{ms+\sigma^2s^2/2}$ in identity \eqref{eq:heydeZ} is a strictly positive
			$1$-periodic gauge, since
			$\varepsilon e^{-2\pi^2/\sigma^2}<1$, so it cancels in the ratio
			at consecutive orders and $\Leh_{\mu^{(\varepsilon)}}=\Leh_\mu$.
			Distinctness across $\varepsilon$ is visible at $s=\tfrac14$.
		\end{proof}
		
		\propPistructure*
		
		\begin{proof}
			That $\Theta_\mu$ is a subgroup is immediate from the definition. For
			closedness, let $\omega_n\in\Theta_\mu$ with $\omega_n\to\omega$, and
			let $s$ be a point of the strip at which neither $s$ nor $s+i\omega$
			is a pole of $\Leh_\mu$. Continuity of $\Leh_\mu$ at $s+i\omega$
			gives
			\begin{equation}\label{eq:closedproof}
				\Leh_\mu(s+i\omega)=\lim_{n\to\infty}\Leh_\mu(s+i\omega_n)
				=\Leh_\mu(s),
			\end{equation}
			so the meromorphic functions $\Leh_\mu(\cdot+i\omega)$ and
			$\Leh_\mu$ agree off a discrete set and hence identically, that
			is, $\omega\in\Theta_\mu$. Suppose now $\Theta_\mu=\R$ and fix a real
			$a\in\mathring S_\mu$. Then $\Leh_\mu(a+i\omega)=\Leh_\mu(a)$ for
			every $\omega\in\R$, so the meromorphic function
			$\Leh_\mu-\Leh_\mu(a)$ vanishes on the entire vertical line
			$a+i\R$, a set with accumulation points in the strip; by the
			identity theorem $\Leh_\mu$ is constant. This contradicts
			Theorem~\ref{thm:mono}, which gives $\Leh_\mu'>0$ on
			$\mathring S_\mu\cap\R$ for measures not concentrated at a point.
			The classification of closed proper subgroups of $\R$ completes
			the proof.
		\end{proof}
		
		\thmresonant*
		
		\begin{proof}
			\emph{Assertion (i).} Put
			\begin{equation}\label{eq:comegadef}
				c_\omega(s)\eqdef\frac{Z_\mu(s+i\omega)}{Z_\mu(s)} .
			\end{equation}
			The condition $\omega\in\Theta_\mu$ says
			\begin{equation}\label{eq:comegaperiodic}
				\frac{Z_\mu(s+i\omega)}{Z_\mu(s-1+i\omega)}
				=\frac{Z_\mu(s)}{Z_\mu(s-1)}
				\iff
				c_\omega(s)=c_\omega(s-1),
			\end{equation}
			so $c_\omega$ is $1$-periodic. For real $s$,
			\begin{equation}\label{eq:modulatedZ}
				\int_{\Rpos}x^{s}\cos(\omega\log x+\phi)\dd\mu
				=\Re\Bigl[e^{i\phi}\int_{\Rpos}x^{s+i\omega}\dd\mu\Bigr]
				=\Re\bigl[e^{i\phi}Z_\mu(s+i\omega)\bigr],
			\end{equation}
			whence
			\begin{equation}\label{eq:modulatedZ2}
				Z_{\mu_\varepsilon}(s)
				=Z_\mu(s)\Bigl(1+\varepsilon
				\Re\bigl[e^{i\phi}c_\omega(s)\bigr]\Bigr).
			\end{equation}
			Because $\mu$ is not carried by a geometric progression of
			ratio $e^{2\pi/|\omega|}$, Proposition~\ref{prop:modulus}(ii)
			gives the strict bound $|c_\omega(s)|<1$ for every real
			$s\in\mathring I_\mu$, so the factor in identity \eqref{eq:modulatedZ2}
			is strictly positive; it is $1$-periodic because $c_\omega$
			is. Lemma~\ref{lem:fiber} therefore gives
			$\Leh_{\mu_\varepsilon}=\Leh_\mu$. Since
			$\mu_\varepsilon\le2\mu$, each $\mu_\varepsilon$ is
			admissible with $I_{\mu_\varepsilon}\supseteq I_\mu$.
			
			Nothing in the argument so far used the value of $\phi$: the
			factor in identity \eqref{eq:modulatedZ2} is $1$-periodic and,
			by the strict bound $|c_\omega|<1$, positive for every real
			$\phi$. What the choice of phase must secure is only that the
			modulation be non-degenerate. Suppose both
			$\cos(\omega\log X)$ and $\cos(\omega\log X+\pi/2)
			=-\sin(\omega\log X)$ were $\mu$-almost surely constant. Then
			$e^{i\omega\log X}$ would be $\mu$-almost surely constant,
			which by the computation in identity \eqref{eq:latticecoset} means that
			$\mu$ is carried by a geometric progression of ratio
			$e^{2\pi/|\omega|}$, contrary to hypothesis. Hence at least one
			$\phi\in\{0,\pi/2\}$ makes $\cos(\omega\log X+\phi)$
			non-constant, which is all that is asserted; any other $\phi$
			with the same property serves equally well, and part (ii) uses
			one such. For such a $\phi$ the modulating cosine is in
			particular not $\mu$-almost surely zero, and since
			\begin{equation}\label{eq:distinctness}
				\frac{\dd\mu_\varepsilon}{\dd\mu}
				-\frac{\dd\mu_{\varepsilon'}}{\dd\mu}
				=(\varepsilon-\varepsilon')\cos(\omega\log x+\phi),
			\end{equation}
			the measures are pairwise distinct.
			
			\emph{Assertion (ii).} For the lognormal,
			\begin{equation}\label{eq:lognormalperiod}
				\Leh_\mu(s+i\omega)
				=e^{m+\sigma^2(s+i\omega-1/2)}
				=\Leh_\mu(s)\,e^{i\sigma^2\omega},
			\end{equation}
			so $\omega\in\Theta_\mu$ if and only if
			$\sigma^2\omega\in2\pi\Z$, giving
			$\Theta_\mu=\tfrac{2\pi}{\sigma^2}\Z$. At the fundamental period
			$\omega=2\pi/\sigma^2$ with the phase
			$\phi=-2\pi m/\sigma^2-\pi/2$, the modulation
			definition \eqref{eq:modulation} is exactly the one in
			definition \eqref{eq:heydedef}, since
			$\cos(\theta-\pi/2)=\sin\theta$.
			
			\emph{Assertion (iii).} Write $F$ for the meromorphic
			continuation of $\Leh_\mu$, by hypothesis a nonconstant
			product of a rational function $R$ and Gamma factors
			$\Gamma(a_i+s/p_i)^{\varepsilon_i}$ with
			$\varepsilon_i\in\{\pm1\}$, and let
			$\omega\in\Theta_\mu\setminus\{0\}$, so that
			$F(s+i\omega)\equiv F(s)$ by the identity theorem.
			
			First, $F$ has no zeros and no poles. The Gamma function has
			no zeros and its poles lie at the nonpositive integers, so
			the zeros and poles of every factor
			$\Gamma(a_i+s/p_i)^{\pm1}$ lie on the real axis, and the
			zeros and poles of $F$ off the real axis, which can come
			only from $R$, are finite in number. If $F$ had a zero or a
			pole at some point $s_0$, the periodicity would propagate
			it, with its order, along the infinite orbit
			$\{s_0+ik\omega:k\in\Z\}$, every point of which except
			$s_0$ lies off the real axis: a contradiction.
			
			Next, $F=e^{\alpha+\beta s}$ for constants
			$\alpha,\beta$. Writing $R=p/q$ with polynomials $p$ and
			$q$, the functions
			\begin{equation}\label{eq:PQentire}
				\begin{gathered}
					P=p\prod_{i:\,\varepsilon_i=-1}
					\frac1{\Gamma(a_i+s/p_i)},\\
					Q=q\prod_{i:\,\varepsilon_i=+1}
					\frac1{\Gamma(a_i+s/p_i)}
				\end{gathered}
			\end{equation}
			are entire of order one, being polynomials times reciprocal
			Gamma factors, and $F=P/Q$. Since $F$ is entire and
			zero-free, the zeros of $P$ and of $Q$ coincide with
			multiplicity, so the canonical products in their Hadamard
			factorizations are identical and cancel in the
			quotient~\cite{boas1954}, leaving $F=e^{\alpha+\beta s}$.
			On the real interval $\mathring S_\mu$ the function
			$F=\Leh_\mu$ is positive and nondecreasing, so $\alpha$
			and $\beta$ are real with $\beta\ge0$, and $\beta>0$
			since $F$ is nonconstant. Thus
			$\Leh_\mu(s)=c\,b^{\,s}$ with $c=e^{\alpha}>0$ and
			$b=e^{\beta}>1$, and
			$\Leh_\mu(s+i\omega)\equiv\Leh_\mu(s)$ holds exactly
			when $\omega\log b\in2\pi\Z$, so that
			$\Theta_\mu=\tfrac{2\pi}{\log b}\,\Z$. If $\Leh_\mu$
			is not of this exponential form, no nonzero period can
			exist, and $\Theta_\mu=\{0\}$.
			
			Both branches of the dichotomy occur. A nonconstant rational
			transform has a zero or a pole in $\C$, so its period group
			is trivial; the log-affine transform of the lognormal is the
			exponential case, with
			$\Theta_\mu=\tfrac{2\pi}{\sigma^2}\Z$ in agreement
			with assertion (b); and exponentials do arise from Gamma
			products, as the Legendre duplication formula
			$\Gamma(2s)/\bigl(\Gamma(s)\Gamma(s+\tfrac12)\bigr)
			=4^{s}/(2\sqrt\pi)$ shows.
		\end{proof}
		
		The characterization results follow. We first verify the
		dictionary row by row, then prove the Abelian tail law and the
		two characterization theorems, and close with the exact recovery
		theorem for finitely supported measures.
		\proptwosources*
		
		\begin{proof}[Proof of Proposition~\ref{prop:twosources}]
			The two cases are complementary by definition, and the modulus
			statements are Proposition~\ref{prop:modulus}(ii) with
			$b=\omega_0$ and $2\pi/|b|=\log R$. For the converse in case (i),
			suppose $\supp\mu\subset\{cR^{k}\}$. Then for $x=cR^k$,
			\begin{equation}\label{eq:latticechar}
				x^{i\omega_0}=e^{i\omega_0\log c}\,e^{i\omega_0k\log R}
				=e^{i\omega_0\log c}\,e^{2\pi ik}
				=e^{i\omega_0\log c},
			\end{equation}
			a constant unimodular factor, so
			$Z_\mu(s+i\omega_0)=e^{i\omega_0\log c}Z_\mu(s)$; the constant
			cancels in the ratio at consecutive orders and hence
			$\omega_0\in\Theta_\mu$. The same computation shows
			$\cos(\omega_0\log x+\phi)$ is constant on the progression, so the
			modulation definition \eqref{eq:modulation} multiplies $\mu$ by a constant.
			Case (ii) is Theorem~\ref{thm:resonant}(i).
		\end{proof}
		
		\proplatticecompanion*
		
		\begin{proof}
			Completing the square in the exponent gives
			\begin{equation}\label{eq:latticeZ}
				Z_\mu(s)=\sum_{k\in\Z}e^{-\beta k^{2}/2+\beta ks}
				=e^{\beta s^{2}/2}\sum_{k\in\Z}e^{-\beta(k-s)^{2}/2},
			\end{equation}
			and the theta series on the right of identity \eqref{eq:latticeZ}
			is $1$-periodic in $s$, being a sum over the shifted lattice
			$\Z-s$. The gauge therefore cancels in the ratio at consecutive
			orders, which gives the curve \eqref{eq:latticeLeh}; identity
			\eqref{eq:heydeZ} at $\varepsilon=0$, $m=0$ and
			$\sigma^2=\beta$ shows this is the lognormal curve. The
			measure is carried by the geometric progression
			$\{e^{\beta k}\}$, so it is of lattice type by
			Proposition~\ref{prop:twosources}(i).
		\end{proof}
		
		\corfrontier*
		
		\begin{proof}
			Suppose condition \eqref{eq:ratiocond} holds. Choose
			$\omega\in\Theta_\mu\setminus\{0\}$ and a phase $\phi$ as in
			Theorem~\ref{thm:resonant}(i), and let
			$\{\mu_\varepsilon\}_{\varepsilon\in[-1,1]}$ be the resulting
			family. Each $\mu_\varepsilon$ satisfies
			$\mu_\varepsilon\le2\mu$, so it is a finite measure; let
			$\widetilde\mu_\varepsilon
			=\mu_\varepsilon/\mu_\varepsilon(\Rpos)$ be its normalization,
			which is a probability measure with
			$\Leh_{\widetilde\mu_\varepsilon}=\Leh_\mu$ on $(s_0,\infty)$.
			Both $Z_{\widetilde\mu_\varepsilon}$ and $Z_\mu$ are positive
			log-convex solutions of equation \eqref{eq:funceq} with
			$g=\Leh_\mu$ there, so Theorem~\ref{thm:rigidity}(i) gives
			$Z_{\widetilde\mu_\varepsilon}=cZ_\mu=Z_{c\mu}$ on $(s_0,\infty)$
			for some $c>0$. Lemma~\ref{lem:strip} then gives
			$\widetilde\mu_\varepsilon=c\mu$, and comparing total masses, both
			being $1$, forces $c=1$ and hence
			$\widetilde\mu_\varepsilon=\mu$. Therefore
			\begin{equation}\label{eq:frontiercontr}
				1+\varepsilon\cos(\omega\log x+\phi)
				=\mu_\varepsilon(\Rpos)\qquad\mu\text{-a.s.}
			\end{equation}
			for every $\varepsilon\in[-1,1]$. Taking any $\varepsilon\neq0$ in
			identity \eqref{eq:frontiercontr} shows that $\cos(\omega\log X+\phi)$ is
			$\mu$-almost surely constant, contradicting the choice of $\phi$.
		\end{proof}

		\propdictionary*
		
		\begin{proof}
			Each entry is $m_s/m_{s-1}$ for the stated moment function
			$m_a=\E[X^a]$, and each period group follows from the criteria
			of Theorem~\ref{thm:resonant}(ii),(iii) together with
			Proposition~\ref{prop:twosources}. For the log-logistic and
			half-Cauchy rows, Euler's reflection formula
			$\Gamma(z)\Gamma(1-z)=\pi/\sin(\pi z)$ exhibits the
			transform as a product of Gamma factors and a rational
			function, so criterion (iii) applies to them as well; and none
			of the transforms in the table other than the Dirac and
			lognormal rows is of the exponential form $c\,b^{\,s}$,
			each having zeros or poles, so criterion (iii) makes their
			period groups trivial.
			
			In the exponential block the moment functions are exponentials
			of quadratics or products of Gamma factors. For the lognormal
			law with parameters $(m,\sigma^2)$,
			\begin{equation}\label{eq:dictLN}
				\begin{gathered}
					m_a=e^{am+a^2\sigma^2/2},\\
					\Lambda_\mu(s)=m+\sigma^2\Bigl(s-\tfrac12\Bigr),
				\end{gathered}
			\end{equation}
			the increment of the quadratic exponent. For
			$\mathrm{Gamma}(k,\theta)$ the recurrence
			$\Gamma(z)=(z-1)\Gamma(z-1)$ collapses the ratio,
			\begin{equation}\label{eq:dictGamma}
				\begin{gathered}
					m_a=\frac{\theta^a\Gamma(k+a)}{\Gamma(k)},\\
					\Leh_\mu(s)=\theta\,\frac{\Gamma(k+s)}{\Gamma(k+s-1)}
					=\theta(s+k-1);
				\end{gathered}
			\end{equation}
			the exponential law is the case $k=1$, and the chi-square law
			with $k$ degrees of freedom is $\mathrm{Gamma}(\tfrac k2,2)$,
			giving $2(s+\tfrac k2-1)=2s+k-2$. For the chi, Weibull and
			generalized Gamma laws the moment functions are, writing $u$ for
			the moment order to keep it distinct from the scale parameter
			$a$ of the generalized Gamma law,
			\begin{equation}\label{eq:dictChi}
				\begin{gathered}
					m_u=\frac{2^{u/2}\,\Gamma\bigl(\tfrac{k+u}2\bigr)}
					{\Gamma\bigl(\tfrac k2\bigr)},\\
					m_u=\lambda^u\,\Gamma\Bigl(1+\frac uk\Bigr),\\
					m_u=\frac{a^{u}\,\Gamma\bigl(\tfrac{d+u}p\bigr)}
					{\Gamma\bigl(\tfrac dp\bigr)} ,
				\end{gathered}
			\end{equation}
			and in each case the ratio at consecutive orders is the entry of
			Table~\ref{tab:dictionary}.
			
			In the compact block the poles sit to the left of the domain.
			For $\mathrm{Uniform}(0,b)$,
			\begin{equation}\label{eq:dictUnif}
				\begin{gathered}
					m_a=\frac{b^a}{a+1},\\
					\Leh_\mu(s)=\frac{b\,s}{s+1} ,
				\end{gathered}
			\end{equation}
			while for $\mathrm{Beta}(a_0,b_0)$ the ratio telescopes through
			the two Gamma recurrences,
			\begin{equation}\label{eq:dictBeta}
				\begin{gathered}
					m_a=\frac{B(a_0+a,b_0)}{B(a_0,b_0)},\\
					\Leh_\mu(s)=\frac{s+a_0-1}{s+a_0+b_0-1} ,
				\end{gathered}
			\end{equation}
			and for $\mathrm{Kumaraswamy}(a_0,b_0)$ one has
			$m_a=b_0\,B(1+a/a_0,\,b_0)$.
			
			In the heavy block the divergence sits at the right endpoint.
			For $\mathrm{Pareto}(\alpha)$ on $[x_m,\infty)$,
			\begin{equation}\label{eq:dictPareto}
				\begin{gathered}
					m_a=\frac{\alpha x_m^a}{\alpha-a},\\
					\Leh_\mu(s)=x_m\,\frac{\alpha-s+1}{\alpha-s} ,
				\end{gathered}
			\end{equation}
			and for $\mathrm{Beta\text{-}prime}(a_0,b_0)$, whose density is
			proportional to $x^{a_0-1}(1+x)^{-a_0-b_0}$,
			\begin{equation}\label{eq:dictBetaPrime}
				\begin{gathered}
					m_a=\frac{B(a_0+a,\,b_0-a)}{B(a_0,b_0)},\\
					\Leh_\mu(s)=\frac{s+a_0-1}{b_0-s} ;
				\end{gathered}
			\end{equation}
			the $F$ law is the Beta-prime scaled by $d_2/d_1$ with
			$(a_0,b_0)=(d_1/2,d_2/2)$. The inverse-Gamma law telescopes in
			the same way as the Gamma law, with the argument reflected,
			\begin{equation}\label{eq:invgamma}
				\begin{gathered}
					m_a=\frac{\theta^a\Gamma(k-a)}{\Gamma(k)},\\
					\Leh_\mu(s)=\frac{\theta\,\Gamma(k-s)}{\Gamma(k-s+1)}
					=\frac{\theta}{k-s} ,
				\end{gathered}
			\end{equation}
			and the Fr\'echet law has $m_a=\Gamma(1-a/\alpha)$. Two rows
			become trigonometric through Euler's reflection formula. For
			$\mathrm{Log\text{-}logistic}(\beta)$,
			\begin{equation}\label{eq:loglogistic}
				m_a=\Gamma\Bigl(1+\frac a\beta\Bigr)
				\Gamma\Bigl(1-\frac a\beta\Bigr)
				=\frac{\pi a/\beta}{\sin(\pi a/\beta)},
			\end{equation}
			whence
			\begin{equation}\label{eq:loglogisticL}
				\Leh_\mu(s)=\frac{s\,\sin(\pi(s-1)/\beta)}
				{(s-1)\,\sin(\pi s/\beta)},
			\end{equation}
			with a removable singularity at $s=1$; and for the half-Cauchy
			law, where $m_a=\sec(\pi a/2)$ for $|a|<1$,
			\begin{equation}\label{eq:halfcauchyL}
				\Leh_\mu(s)=\frac{\cos(\pi(s-1)/2)}{\cos(\pi s/2)}
				=\tan\Bigl(\frac{\pi s}{2}\Bigr).
			\end{equation}
			The one-sided stable law of index $\alpha$, with Laplace
			transform $e^{-\lambda^\alpha}$, has the classical fractional
			moments
			\begin{equation}\label{eq:dictStable}
				\begin{gathered}
					m_a=\frac{\Gamma(1-a/\alpha)}{\Gamma(1-a)}
					\quad(a<\alpha),\\
					\Leh_\mu(s)=\frac{(1-s)\,\Gamma(1-\tfrac s\alpha)}
					{\Gamma(1+\tfrac{1-s}\alpha)} .
				\end{gathered}
			\end{equation}
			
			The discrete block is where the period column becomes
			nontrivial. For geometric data $\lambda_n=q^{\,n}$ with
			$0<q<1$,
			\begin{equation}\label{eq:dictGeom}
				\begin{gathered}
					Z_\mu(s)=\frac{1}{1-q^{s}}\quad(s>0),\\
					\Leh_\mu(s)=\frac{1-q^{\,s-1}}{1-q^{\,s}} ,
				\end{gathered}
			\end{equation}
			and $\Leh_\mu(s+i\omega)=\Leh_\mu(s)$ holds if and only if
			$q^{i\omega}=1$, so that
			$\Theta_\mu=\tfrac{2\pi}{\log(1/q)}\Z$, the lattice case of
			Proposition~\ref{prop:twosources}(i). For the zeta or Zipf law
			$\Prob(X=n)=n^{-\varrho}/\zeta(\varrho)$ one has
			\begin{equation}\label{eq:dictZipf}
				\begin{gathered}
					Z_\mu(a)=\frac{\zeta(\varrho-a)}{\zeta(\varrho)},\\
					\Leh_\mu(s)=\frac{\zeta(\varrho-s)}{\zeta(\varrho-s+1)} .
				\end{gathered}
			\end{equation}
			
			It remains to see that the Zipf row has trivial period group,
			which the criteria of Theorem~\ref{thm:resonant}(iii) do not
			cover because its transform is a ratio of zeta values rather
			than of Gamma factors. Argue directly. After the translation
			$s\mapsto\varrho-s$, a period $\omega\in\Theta_\mu$ makes
			the exponential of $\log\zeta(s)-\log\zeta(s+1)$ invariant
			under $s\mapsto s+i\omega$ on $\Re s>1$, so the difference
			itself is invariant up to an additive constant in $2\pi i\Z$;
			the constant is independent of $s$ by continuity, and letting
			$\Re s\to\infty$, where $\log\zeta\to0$ along both lines,
			forces it to vanish. The difference is therefore invariant.
			Substituting the
			Euler-product expansion
			$\log\zeta(s)=\sum_p\sum_{k\ge1}p^{-ks}/k$, absolutely
			convergent there, gives
			\begin{equation}\label{eq:zetaperiod}
				\sum_{p}\sum_{k\ge1}
				\frac{\bigl(p^{-ik\omega}-1\bigr)
					\bigl(p^{-ks}-p^{-k(s+1)}\bigr)}{k}=0
			\end{equation}
			identically in $\Re s>1$. Uniqueness of Dirichlet series
			forces $n^{i\omega}=1$ for every prime power $n$, in
			particular for $n=2$ and $n=3$, so that $\omega\log2$ and
			$\omega\log3$ both lie in $2\pi\Z$; since $\log2/\log3$
			is irrational, $\omega=0$.
		\end{proof}
		
		\propabelian*
		
		\begin{proof}
			Finiteness of $Z_P(s)$ for $s<\alpha$ and divergence for
			$s>\alpha$ follow from the tail hypothesis \eqref{eq:regvar} by
			comparison, so $\sup I_P=\alpha$. For $0<s<\alpha$, integration
			by parts gives
			\begin{equation}\label{eq:abelianparts}
				Z_P(s)=s\int_0^\infty x^{s-1}\,\Prob(X>x)\dd x .
			\end{equation}
			Split the integral in identity \eqref{eq:abelianparts} at $1$; the piece
			over $(0,1)$ stays bounded as $s\uparrow\alpha$. Write the tail
			as
			\begin{equation}\label{eq:tailsplit}
				\begin{gathered}
					\Prob(X>x)=c\,x^{-\alpha}\bigl(1+\varepsilon_1(x)\bigr),\\
					\varepsilon_1(x)\longrightarrow0 .
				\end{gathered}
			\end{equation}
			The main term of the decomposition \eqref{eq:tailsplit} gives
			\begin{equation}\label{eq:abelianmain}
				(\alpha-s)\,s\int_1^\infty x^{s-1-\alpha}c\,\dd x
				=sc\longrightarrow\alpha c ,
			\end{equation}
			while for any $\delta>0$, choosing $A$ with
			$|\varepsilon_1|<\delta$ on $(A,\infty)$,
			\begin{equation}\label{eq:abelianerr}
				(\alpha-s)\,s\int_1^\infty x^{s-1-\alpha}c\,|\varepsilon_1(x)|
				\dd x
				\le (\alpha-s)\,sc\!\int_1^A x^{s-1-\alpha}|\varepsilon_1|\dd x
				+\delta\,sc ,
			\end{equation}
			whose first term vanishes as $s\uparrow\alpha$ and whose second
			is arbitrarily small. Hence $(\alpha-s)Z_P(s)\to\alpha c$.
			Dividing by $Z_P(s-1)$ gives the second limit in limit
			\eqref{eq:abelianZ}, once one knows
			$Z_P(s-1)\to Z_P(\alpha-1)\in(0,\infty)$ as $s\uparrow\alpha$.
			This is where the interiority hypothesis
			$\alpha-1\in\mathring I_P$ enters. The partition function is
			finite and continuous on $\mathring I_P$ by
			Lemma~\ref{lem:logconvex}, and $s-1\to\alpha-1$ inside
			$\mathring I_P$, so
			$Z_P(s-1)\to Z_P(\alpha-1)\in(0,\infty)$. Interiority
			cannot be traded for mere finiteness of $Z_P(\alpha-1)$: on
			$\{x<1\}$ the integrands $x^{s-1}$ \emph{decrease} to
			$x^{\alpha-1}$ as $s\uparrow\alpha$, so dominated
			convergence there needs an integrable majorant $x^{s_1-1}$
			with $s_1<\alpha$, which is interiority itself, and
			Remark~\ref{rem:abelianinterior} shows the convergence can
			fail without it. If $Z_P$ continues
			meromorphically across $\Re s=\alpha$, the first limit
			identifies a simple pole of $Z_P$ at $\alpha$ with residue
			$-\alpha c$, and the stated residue of $\Leh_P$ follows.
		\end{proof}
		
		\thmgammachar*
		
		\begin{proof}
			By homogeneity we may take $\theta=1$. Define
			\begin{equation}\label{eq:fdef}
				f(x)\eqdef Z_\mu(x-k),\qquad x>0 .
			\end{equation}
			The hypothesis \eqref{eq:gammahyp} reads
			$Z_\mu(s)=(s+k-1)Z_\mu(s-1)$, which under the substitution
			$s=x-k$ becomes the functional equation
			\begin{equation}\label{eq:gammafunc}
				f(x+1)=x\,f(x),
			\end{equation}
			and $f$ is log-convex by Lemma~\ref{lem:logconvex} with
			$f(1)=Z_\mu(1-k)\in(0,\infty)$. The Bohr--Mollerup
			theorem~\cite{bohr1922}, in the modern exposition
			of~\cite{artin1964}, applies. Its sandwich argument, which is
			the mechanism of Theorem~\ref{thm:rigidity} in a special case,
			runs as follows: for
			$x\in(0,1]$ and $n\ge2$, log-convexity applied to the triples
			$(n-1,n,n+x)$ and $(n,n+x,n+1)$ squeezes
			\begin{equation}\label{eq:bmsandwich}
				(n-1)^{x}f(n)\le f(n+x)\le n^{x}f(n),
			\end{equation}
			while the functional equation \eqref{eq:gammafunc} evaluates
			\begin{equation}\label{eq:bmeval}
				\begin{gathered}
					f(n)=(n-1)!\,f(1),\\
					f(n+x)=x(x+1)\cdots(x+n-1)f(x).
				\end{gathered}
			\end{equation}
			Letting $n\to\infty$ in bound \eqref{eq:bmsandwich} forces
			$f(x)=f(1)\Gamma(x)$ on $(0,1]$, and the functional equation
			propagates the identity to all $x>0$. Hence
			\begin{equation}\label{eq:gammaZ}
				Z_\mu(s)=f(1)\,\Gamma(s+k)\qquad\text{on }(-k,\infty),
			\end{equation}
			which is the partition function of $c\,x^{k-1}e^{-x}\dd x$;
			Lemma~\ref{lem:strip} identifies $\mu$, and undoing the dilation
			restores $\theta$.
		\end{proof}
		
		\thmlognormalchar*
		
		\begin{proof}
			Let $(b,\sigma^2,\Pi)$ be the triplet of $\log X$. By
			Theorem~\ref{thm:levy},
			\begin{equation}\label{eq:lncharderiv}
				\begin{gathered}
					\Lambda_\mu'(s)=\sigma^2+g(s),\\
					g(s)=\int_\R e^{(s-1)y}w(y)\Pi(\dd y),\\
					w(y)=y\bigl(e^y-1\bigr)\ge0,
				\end{gathered}
			\end{equation}
			and the hypothesis \eqref{eq:lognormalhyp} says
			$\Lambda_\mu'\equiv\sigma_0^2$, so $g$ is constant on $\R$.
			Differentiation under the integral sign in the definition of
			$g$ is legitimate because $I_\mu=\R$ makes $g$ finite on all of
			$\R$, so that the bilateral Laplace transform of the positive
			measure $w\dd\Pi$ is finite, hence analytic, on the whole line.
			If $w\dd\Pi\neq0$ then differentiating gives
			\begin{equation}\label{eq:lncharzero}
				g'(s)=\int_\R y\,e^{(s-1)y}w(y)\Pi(\dd y)\equiv0,
			\end{equation}
			which says that the bilateral Laplace transforms of the
			positive measures $(y\vee0)w\dd\Pi$ and
			$((-y)\vee0)w\dd\Pi$ coincide on $\R$. By
			Lemma~\ref{lem:strip}, applied after the substitution
			$x=e^{y}$, these measures are equal; having disjoint supports,
			both vanish. Since $w>0$ off the origin, $\Pi=0$. Hence
			$K_\mu$ is quadratic, $\sigma^2=\sigma_0^2$, and $b=m$ by
			matching; Lemma~\ref{lem:strip} identifies the lognormal.
		\end{proof}
		
		\thmprony*
		
		\begin{proof}
			\emph{Assertion (ii).} By the cocycle identity
			\eqref{eq:cocycle},
			\begin{equation}\label{eq:pronyh}
				\begin{gathered}
					h_j=\frac{Z_\mu(s_0+j)}{Z_\mu(s_0)}
					=\sum_{i=1}^kw_ix_i^{\,j},\\
					w_i=\frac{a_ix_i^{s_0}}{Z_\mu(s_0)}>0,\\
					\sum_iw_i=1,
				\end{gathered}
			\end{equation}
			so the numbers $(h_0,\dots,h_{2k-1})$ are the first $2k$
			moments of the $k$-atomic probability measure
			$\tau=\sum_iw_i\delta_{x_i}$.
			
			\emph{Assertion (i), uniqueness.} If $\tau'$ is another measure
			with at most $k$ atoms and the same $2k$ moments, then the
			signed measure $\tau-\tau'$ has at most $2k$ atoms and
			annihilates all polynomials of degree at most $2k-1$; the
			Vandermonde matrix at the distinct atoms is invertible, so
			$\tau=\tau'$.
			
			\emph{Assertion (i), sharpness.} The $2k-2$ values
			$\Leh_\mu(s_0+1),\dots,\Leh_\mu(s_0+2k-2)$ determine only
			$(h_0,\dots,h_{2k-2})$, that is, $2k-1$ moments, and $2k-1$
			moments do not determine a $k$-atomic measure. It is enough to
			exhibit the failure at $k=2$, where three moments
			$h_0=1$, $h_1$, $h_2$ are prescribed with $h_2>h_1^{2}$. For
			every node $x_1$ in the interval
			$\bigl(0,\,h_1-\sqrt{h_2-h_1^{2}}\,\bigr)$ the system
			\begin{equation}\label{eq:pronysharp}
				\begin{gathered}
					w_1+w_2=1,\\
					w_1x_1+w_2x_2=h_1,\\
					w_1x_1^{2}+w_2x_2^{2}=h_2
				\end{gathered}
			\end{equation}
			has the unique solution
			\begin{equation}\label{eq:pronysharpsol}
				\begin{gathered}
					x_2=\frac{h_2-h_1x_1}{h_1-x_1},\\
					w_2=\frac{h_1-x_1}{x_2-x_1},\\
					w_1=1-w_2,
				\end{gathered}
			\end{equation}
			with $0<x_1<h_1<x_2$ and $w_1,w_2\in(0,1)$. The
			formulas \eqref{eq:pronysharpsol} therefore produce a genuine
			one-parameter family of distinct two-atom probability measures
			sharing $h_0,h_1,h_2$, and pulling them back through
			$a_i\propto w_ix_i^{-s_0}$ gives distinct measures $\mu$ with
			the same $2k-2$ Lehmer values. The general case is the same
			count: the truncated Hamburger--Stieltjes problem with $2k-1$
			prescribed moments, interior to the moment cone, has a
			one-parameter family of $k$-atomic representing measures.
			
			\emph{Assertion (iii).} Let $H=(h_{i+j})_{0\le i\le k-1,\,
				0\le j\le k}$, a $k\times(k+1)$ matrix, and let
			$(c_0,\dots,c_k)$ span its kernel, which is nontrivial by
			dimension count and one-dimensional because $\tau$ has exactly
			$k$ atoms. The relation $Hc=0$ says that
			$\sum_{j=0}^kc_jh_{i+j}=0$ for $0\le i\le k-1$, that is,
			\begin{equation}\label{eq:pronykernel}
				\begin{gathered}
					\sum_{l=1}^{k}w_lx_l^{\,i}\,Q(x_l)=0,\\
					0\le i\le k-1,\\
					Q(x)\eqdef\sum_{j=0}^kc_jx^{j} .
				\end{gathered}
			\end{equation}
			Since the Vandermonde matrix $(x_l^{\,i})$ is invertible and
			$w_l>0$, relation \eqref{eq:pronykernel} forces
			$Q(x_l)=0$ for every $l$: the nodes are the roots of $Q$. The
			weights then solve the Vandermonde system
			$\sum_lw_lx_l^{\,j}=h_j$, and $a_i\propto w_ix_i^{-s_0}$ by
			identity \eqref{eq:pronyh}, the total mass being the one free constant.
		\end{proof}
		
		We turn to the sampling theory. The pointwise limit theorem comes
		first, since the bias expansion and the functional limits both
		build on the same exact linearization.
		
		\thmclt*
		
		\begin{proof}
			By the strong law, $\widehat m_s\to m_s$ and
			$\widehat m_{s-1}\to m_{s-1}>0$ almost surely, so consistency
			follows from continuity of the ratio. The linearization
			identity \eqref{eq:linearize} is exact, and its summands
			$X_i^{\,s-1}(X_i-\Leh_P(s))$ are independent with mean
			\begin{equation}\label{eq:cltmeanzero}
				m_s-\Leh_P(s)\,m_{s-1}=0
			\end{equation}
			and finite variance by the moment hypothesis. The classical
			central limit theorem, together with
			$\widehat m_{s-1}\to m_{s-1}$ and Slutsky's lemma, gives the limit
			\eqref{eq:cltlimit}. For the exponential law of mean $\theta$,
			\begin{equation}\label{eq:expmoments}
				m_a=\theta^{a}\Gamma(1+a),
				\qquad\text{whence}\qquad
				\Leh_P(s)=\frac{m_s}{m_{s-1}}=\theta s ,
			\end{equation}
			and, expanding the square,
			\begin{equation}\label{eq:expvarproof}
				\E\bigl[X^{2s-2}(X-\theta s)^{2}\bigr]
				=\theta^{2s}\bigl[\Gamma(2s+1)-2s\Gamma(2s)+s^2\Gamma(2s-1)\bigr]
				=\theta^{2s}s^{2}\Gamma(2s-1),
			\end{equation}
			using the recurrences
			\begin{equation}\label{eq:gammarecs}
				\begin{gathered}
					\Gamma(2s+1)=2s(2s-1)\Gamma(2s-1),\\
					\Gamma(2s)=(2s-1)\Gamma(2s-1).
				\end{gathered}
			\end{equation}
			Dividing
			identity \eqref{eq:expvarproof} by $m_{s-1}^{2}=\theta^{2s-2}\Gamma(s)^{2}$
			gives formula \eqref{eq:expvar}. Note that the moment hypothesis
			$2s-2\in\mathring I_P=(-1,\infty)$ reads $s>\tfrac12$, which is
			exactly the range in which formula \eqref{eq:expvar} is finite:
			the empirical transform ceases to be $\sqrt n$-consistent
			precisely where its asymptotic variance blows up. This is a
			statement about the estimator and not about the parameter,
			which for the exponential family is $\Leh_P(s)=\theta s$ and is
			$\sqrt n$-estimable at every $s$ from the sample mean.
		\end{proof}
		
		\propbias*
		
		\begin{proof}
			Write $b=m_{s-1}$ and $L=\Leh_P(s)$, and let
			\begin{equation}\label{eq:UVdef}
				\begin{gathered}
					U=\widehat m_s-L\,\widehat m_{s-1},\\
					V=\widehat m_{s-1}-b ,
				\end{gathered}
			\end{equation}
			so that $\E U=\E V=0$ and
			$\widehat\Leh_n(s)-L=U/\widehat m_{s-1}$. Applying the
			algebraic identity
			$1/\widehat m_{s-1}=1/b-V/(b\,\widehat m_{s-1})$ three
			times gives the exact decomposition
			\begin{equation}\label{eq:biasdecomp}
				\widehat\Leh_n(s)-L
				=\frac Ub-\frac{UV}{b^{2}}+\frac{UV^{2}}{b^{3}}
				-\bigl(\widehat\Leh_n(s)-L\bigr)\frac{V^{3}}{b^{3}},
			\end{equation}
			the last term using $U/\widehat m_{s-1}
			=\widehat\Leh_n(s)-L$ once more. Take expectations. The
			first term vanishes. With $u=X^{s}-LX^{s-1}$ and
			$v=X^{s-1}-b$, both centred, independence gives
			\begin{equation}\label{eq:biasUV}
				\E[UV]=\frac{\E[uv]}n
				=\frac{m_{2s-1}-L\,m_{2s-2}}n
				=\frac{m_{2s-2}}n\bigl(\Leh_P(2s-1)-L\bigr),
			\end{equation}
			which is the leading term of identity \eqref{eq:bias}, and
			\begin{equation}\label{eq:biasUVV}
				\E[UV^{2}]=\frac{\E[uv^{2}]}{n^{2}}=O(n^{-2}),
			\end{equation}
			because in the triple sum defining $\E[UV^{2}]$ every term
			containing a lone centred factor has expectation zero; the
			moment $\E|uv^{2}|$ is finite because every exponent
			involved lies in the convex hull of $\{0,4s-4,4s\}$, an
			interval contained in $I_P$.
			
			It remains to bound
			$\E[(\widehat\Leh_n(s)-L)V^{3}]$. Let
			$E=\{\widehat m_{s-1}\ge b/2\}$. On $E$ one has
			$|\widehat\Leh_n(s)-L|\le2|U|/b$, so H\"older's
			inequality gives
			\begin{equation}\label{eq:biasgoodevent}
				\bigl|\E\bigl[(\widehat\Leh_n(s)-L)V^{3}
				\mathbf 1_{E}\bigr]\bigr|
				\le\frac2b\,\|U\|_{4}\,\|V\|_{4}^{3}=O(n^{-2}),
			\end{equation}
			since $\E U^{4}=O(n^{-2})$ and $\E V^{4}=O(n^{-2})$ by the
			Marcinkiewicz--Zygmund inequality under the fourth-moment
			hypothesis $4s,4s-4\in\mathring I_P$. On $E^{c}$ the mean
			$\widehat m_{s-1}$ lies in $(0,b/2)$, so $|V|\le b$ there,
			while the fourth-moment Chebyshev bound gives
			$\Prob(E^{c})\le16\,\E V^{4}/b^{4}=O(n^{-2})$. For
			bounded support one has
			$\widehat\Leh_n(s)\in[m,M]$ for every sample, so the
			contribution of $E^{c}$ is $O(\Prob(E^{c}))=O(n^{-2})$ and
			the first assertion follows. In general, H\"older's
			inequality with the moment hypothesis on the ratio gives
			\begin{equation}\label{eq:biasbadevent}
				\bigl|\E\bigl[(\widehat\Leh_n(s)-L)V^{3}
				\mathbf 1_{E^{c}}\bigr]\bigr|
				\le b^{3}\Bigl(\sup_n\|\widehat\Leh_n(s)\|_{p}
				+L\Bigr)\Prob(E^{c})^{1-1/p}
				=O\bigl(n^{-2+2/p}\bigr),
			\end{equation}
			which dominates the other remainders and gives the second
			assertion.
		\end{proof}
		
		\thmfclt*
		
		\begin{proof}
			Consider the class
			\begin{equation}\label{eq:donskerclass}
				\mathcal F=\bigl\{\tilde f_s(x)=x^{s-1}(x-\Leh_P(s)):
				s\in T\bigr\}
			\end{equation}
			of the linearization identity \eqref{eq:linearize}. The parameter map
			$s\mapsto\tilde f_s(x)$ is differentiable with
			\begin{equation}\label{eq:envelope}
				\begin{gathered}
					\bigl|\partial_s\tilde f_s(x)\bigr|\le
					C_T\bigl(1+|\log x|\bigr)
					\bigl(x^{\overline u}+x^{\underline u}\bigr)\eqdef F(x),\\
					\overline u=\max T,\quad\underline u=\min T-1,
				\end{gathered}
			\end{equation}
			and $\E F(X)^2<\infty$ because
			$2T\cup(2T-2)\subset\mathring I_P$ absorbs the logarithm by the
			domination bound \eqref{eq:domination}. A class Lipschitz in a
			one-dimensional parameter with square-integrable envelope is
			$P$-Donsker~\cite{vaart1996}. Hence
			$n^{-1/2}\sum_i\tilde f_\cdot(X_i)$ converges in
			$\ell^\infty(T)$ to a centered Gaussian process with covariance
			$\E[\tilde f_s\tilde f_t]$ and continuous paths; the uniform
			strong law over $\{x^{s-1}\}$ together with
			$\inf_Tm_{s-1}>0$ lets Slutsky's lemma divide by
			$\widehat m_{s-1}$, giving assertion (i).
			
			In the mixing case, the same linearization gives the exact
			factorization
			\begin{equation}\label{eq:mixingfactor}
				\sqrt n\bigl(\widehat\Leh_n(s)-\Leh_P(s)\bigr)
				=\frac{m_{s-1}}{\widehat m_{s-1}}\cdot
				\frac{1}{\sqrt n}\sum_{i=1}^{n}\psi_s(X_i),
			\end{equation}
			in which the summands form a stationary sequence and the
			first factor is a random scalar. The uniform ergodic theorem
			over the class $\{x^{s-1}:s\in T\}$, together with
			$\inf_Tm_{s-1}>0$, sends the first factor to $1$ uniformly
			on $T$, so by Slutsky's lemma it suffices to prove the
			functional limit for the second factor, and we do so by
			verifying the hypotheses of the bracketing functional
			central limit theorem of Andrews and
			Pollard~\cite{andrewspollard1994} for strongly mixing
			sequences, with their exponents $Q=2$ and $\gamma=\delta$;
			the invariance principle of Doukhan, Massart and
			Rio~\cite{doukhan1995} covers the absolutely regular case
			under weaker entropy conditions. Three hypotheses are to be
			checked. First, the mixing series: with $Q=2$ and
			$\gamma=\delta$ the required condition is
			\begin{equation}\label{eq:apcond}
				\sum_{k\ge1}k^{Q-2}\,
				\alpha_{\mathrm{mix}}(k)^{\gamma/(Q+\gamma)}
				=\sum_{k\ge1}
				\alpha_{\mathrm{mix}}(k)^{\delta/(2+\delta)}<\infty,
			\end{equation}
			which is condition \eqref{eq:mixing} and which the
			polynomial rate $a>1+2/\delta$ supplies. Second, the moment
			condition: each $\tilde f_s$ is dominated by the envelope
			$F$ of bound \eqref{eq:envelope}, and
			$\E F(X)^{2+\delta}<\infty$ because condition
			\eqref{eq:envelopemoment} absorbs the logarithm through the
			domination bound \eqref{eq:domination}; this is the
			required $(Q+\gamma)$-th moment of the envelope. Third, the
			bracketing integral: the class is Lipschitz in the
			one-dimensional parameter $s$ with Lipschitz envelope $F$,
			so its bracketing numbers satisfy
			$N_{[\,]}(\varepsilon)\le C/\varepsilon$, and the
			required integral
			$\int_0^1\varepsilon^{-\gamma/(2+\gamma)}
			N_{[\,]}(\varepsilon)^{1/Q}\dd\varepsilon$ converges
			because $\delta/(2+\delta)+\tfrac12<1$ exactly when
			$\delta<2$, which is the standing restriction. The theorem
			of Andrews and Pollard then yields stochastic
			equicontinuity and the functional limit of the second
			factor, with finite-dimensional laws given by the central
			limit theorem for strongly mixing
			sequences~\cite{ibragimov1962} and covariance the long-run
			kernel \eqref{eq:longrun}, whose absolute convergence
			follows from Davydov's covariance
			inequality~\cite{davydov1968} under
			condition \eqref{eq:mixing}.
		\end{proof}
		\prophoeffding*
		
		\begin{proof}
			The event $\{\widehat\Leh_n(s)\ge y\}$ coincides with
			$\{\sum_ig_y(X_i)\ge0\}$, since the denominator
			$\sum_iX_i^{s-1}$ is positive. The summands are independent,
			bounded with range at most $V_s(y)$, and have common mean
			\begin{equation}\label{eq:gmean}
				\E\bigl[g_y(X)\bigr]=m_s-y\,m_{s-1}
				=-m_{s-1}\bigl(y-\Leh_P(s)\bigr)<0 .
			\end{equation}
			Hoeffding's inequality applied to the deviation
			$\sum_i\bigl(g_y(X_i)-\E g_y(X)\bigr)\ge
			n\,m_{s-1}(y-\Leh_P(s))$ gives the bound
			\eqref{eq:hoeffdingupper}; the bound
			\eqref{eq:hoeffdinglower} is the same argument applied to
			$-g_y$. The estimate for $V_s(y)$ in definition \eqref{eq:oscg} follows
			because the oscillation of a difference is at most the sum of the
			oscillations, and $x\mapsto x^{a}$ is monotone on $[m,M]$.
		\end{proof}
		
		\propsufficient*
		
		\begin{proof}
			\emph{Assertion (i).} It is Theorem~\ref{thm:prony} applied to the
			empirical measure $\widehat\mu_n=\sum_i\delta_{X_i}$, whose total
			mass $n$ is known and fixes the one free constant, together with
			$k\le n$. For assertion (ii),
			$\widehat\mu_n$ has bounded support, so Corollary~\ref{cor:rigid}
			applies: the curve determines $\widehat\mu_n$ up to a positive
			factor, and the known total mass fixes the factor. The curve is
			therefore a measurable bijection of the multiset of observations,
			equivalently of the vector of order statistics, and any bijection
			of a sufficient statistic is sufficient. The order statistic is
			sufficient for every i.i.d.\ model by the factorization criterion,
			and minimal sufficient for the full nonparametric model, whence so
			is the curve.
		\end{proof}
		
		\thmtranslation*
		
		\begin{proof}
			For every order $a$,
			\begin{equation}\label{eq:translZ}
				Z_{P_\eta}(a)=\frac{Z_0(a+\eta)}{Z_0(\eta)},
			\end{equation}
			and in the ratio at consecutive orders the normalization cancels,
			giving identity \eqref{eq:translation}. \emph{Assertion (i).} The
			log-likelihood is
			\begin{equation}\label{eq:loglik}
				\ell_n(\eta)=n\Bigl[\eta\,\overline{\log X}-K_0(\eta)\Bigr]
				+\text{terms free of }\eta,
			\end{equation}
			strictly concave by strict convexity of $K_0$
			(Lemma~\ref{lem:logconvex}); setting the score to zero gives
			equation \eqref{eq:mle}. \emph{Assertion (ii).} The information of a
			one-parameter exponential family in its natural parameter is
			$I(\eta)=K_0''(\eta)=\Var_{P_\eta}(\log X)$, and identity
			\eqref{eq:infoslope} is identity \eqref{eq:jeffreys}. Assertion
			(iii) is the escort-closure computation of
			Section~\ref{sec:characterization}.
		\end{proof}

		\thmphase*
		
		\begin{proof}
			Write $S_n(u)=\sum_iX_i^u$ and $M_n=\max_iX_i$. We record three
			facts.
			
			\emph{(F1)} For $-\delta<u<\alpha$ one has $\E X^u<\infty$, so
			$S_n(u)/n\to m_u$ almost surely and
			$\log S_n(u)=\log n+O_{\Prob}(1)$. This is where the
			hypothesis $\E[X^{-\delta}]<\infty$ enters, and it is needed
			exactly when $u<0$, that is, when $s<1$.
			
			\emph{(F2)} $\log M_n/\log n\to1/\alpha$ in probability: with
			$1-F(x)=x^{-\alpha+o(1)}$,
			\begin{equation}\label{eq:maxbounds}
				\begin{gathered}
					\Prob\bigl(M_n\le n^{(1+\varepsilon)/\alpha}\bigr)
					\ge\bigl(1-n^{-1-\varepsilon+o(1)}\bigr)^n\to1,\\
					\Prob\bigl(M_n\le n^{(1-\varepsilon)/\alpha}\bigr)
					\le e^{-n^{\varepsilon+o(1)}}\to0 .
				\end{gathered}
			\end{equation}
			
			\emph{(F3)} For $u>\alpha$ the power sum is squeezed between
			the largest observation and a truncation at any convergent
			order: for every $\alpha'\in(0,\alpha)$,
			\begin{equation}\label{eq:snsqueeze}
				M_n^{\,u}\;\le\;S_n(u)\;\le\;
				M_n^{\,u-\alpha'}\,S_n(\alpha') ,
			\end{equation}
			so by (F1) and (F2) applied to the bounds \eqref{eq:snsqueeze}
			\begin{equation}\label{eq:squeeze}
				\frac u\alpha\le\liminf\frac{\log S_n(u)}{\log n}
				\le\limsup\frac{\log S_n(u)}{\log n}
				\le\frac{u-\alpha'}\alpha+1
				\quad\text{in probability};
			\end{equation}
			letting $\alpha'\uparrow\alpha$ squeezes
			$\log S_n(u)/\log n\to u/\alpha$.
			
			Now apply (F1) and (F3) to $u=s$ and to $u=s-1$, and read off
			the three regimes in turn. Below the tail index, that is for
			$s<\alpha$, both exponents lie in the convergent range and both
			logarithms are $\log n+O_{\Prob}(1)$, so
			\begin{equation}\label{eq:phase1}
				\log S_n(s)-\log S_n(s-1)=O_{\Prob}(1)
				=o_{\Prob}(\log n)
			\end{equation}
			and the limit is flat at height zero. In the middle window
			$\alpha<s<\alpha+1$ the numerator has passed the index while
			the denominator has not, so
			\begin{equation}\label{eq:phase2}
				\begin{gathered}
					\log S_n(s)=\frac s\alpha\log n+o_{\Prob}(\log n),\\
					\log S_n(s-1)=\log n+o_{\Prob}(\log n),
				\end{gathered}
			\end{equation}
			with difference $(\tfrac s\alpha-1)\log n$ to leading order,
			which is the segment of slope $1/\alpha$. Above the unit shift,
			for $s>\alpha+1$, both exponents have passed the index and
			\begin{equation}\label{eq:phase3}
				\log S_n(s)=\frac s\alpha\log n+o_{\Prob}(\log n),
				\quad
				\log S_n(s-1)=\frac{s-1}\alpha\log n
				+o_{\Prob}(\log n),
			\end{equation}
			with difference $\tfrac1\alpha\log n$ to leading order, which
			is the upper plateau. These three cases are exactly the limit
			\eqref{eq:phasediagram}.
		\end{proof}
		
		\proptailrate*
		
		\begin{proof}
			For $u<\alpha$ the exact Pareto law has
			$\E X^u=\alpha/(\alpha-u)<\infty$, so by the strong law
			\begin{equation}\label{eq:rate1}
				\log S_n(u)=\log n+O_{\Prob}(1).
			\end{equation}
			For $u>\alpha$ the variable $X^u$ satisfies
			$\Prob(X^u>y)=y^{-\alpha/u}$ for $y\ge1$, a Pareto law of index
			$\alpha/u\in(0,1)$; hence $n^{-u/\alpha}S_n(u)$ converges in
			distribution to a strictly positive stable law of index
			$\alpha/u$~\cite{resnick2007}, and since the limit is almost
			surely positive and finite,
			\begin{equation}\label{eq:rate2}
				\log S_n(u)=\frac u\alpha\log n+O_{\Prob}(1).
			\end{equation}
			For $\alpha<s<t<\alpha+1$ both $s-1$ and $t-1$ are less than
			$\alpha$, so combining identity \eqref{eq:rate1} and identity \eqref{eq:rate2},
			\begin{equation}\label{eq:rate3}
				\log\widehat\Leh_n(u)
				=\Bigl(\frac u\alpha-1\Bigr)\log n+O_{\Prob}(1),
				\qquad u\in\{s,t\},
			\end{equation}
			whence
			\begin{equation}\label{eq:rate4}
				\begin{gathered}
					D_n\eqdef\log\widehat\Leh_n(t)-\log\widehat\Leh_n(s)
					=\frac{t-s}{\alpha}\log n\,\bigl(1+\xi_n\bigr),\\
					\xi_n=O_{\Prob}\Bigl(\frac1{\log n}\Bigr).
				\end{gathered}
			\end{equation}
			Therefore
			$\widehat\alpha_{s,t}=(t-s)\log n/D_n=\alpha/(1+\xi_n)$ and
			\begin{equation}\label{eq:rate5}
				\widehat\alpha_{s,t}-\alpha
				=-\alpha\xi_n+O_{\Prob}(\xi_n^2)
				=O_{\Prob}\Bigl(\frac1{\log n}\Bigr).
			\end{equation}
		\end{proof}
		\proprenyihill*
		
		\begin{proof}
			By definition
			\begin{equation}\label{eq:renyidef}
				H_s(\pp)=\frac{1}{1-s}\log\sum_ip_i^{s},
			\end{equation}
			so
			\begin{equation}\label{eq:renyihillproof}
				\begin{gathered}
					(1-s)H_s(\pp)=\log Z_{\pp}(s),\\
					(2-s)H_{s-1}(\pp)=\log Z_{\pp}(s-1),
				\end{gathered}
			\end{equation}
			and subtracting gives identity \eqref{eq:renyi}. For identity
			\eqref{eq:hillnumber}, the cocycle identity \eqref{eq:cocycle} at
			$s=q$ and $k=q-1$, together with $Z_{\pp}(1)=\sum_ip_i=1$, gives
			\begin{equation}\label{eq:hillproof}
				\begin{gathered}
					Z_{\pp}(q)=\prod_{j=2}^{q}\Leh_{\pp}(j),\\
					{}^{q}D=\Bigl(\sum_ip_i^q\Bigr)^{1/(1-q)}
					=Z_{\pp}(q)^{1/(1-q)} .
				\end{gathered}
			\end{equation}
		\end{proof}
		
	\end{appendices}
	
\end{document}